\documentclass{article}
\usepackage{amsmath,amssymb,amsthm}
\usepackage{mathrsfs}
\usepackage[colorlinks=true]{hyperref}
\usepackage{pdfsync,float}
\usepackage{enumitem}

\topmargin -1cm
\textheight 21cm
\textwidth 15cm 
\oddsidemargin 1cm

\allowdisplaybreaks

\def\R{\mathbb{R}}
\def\N{\mathbb{N}}
\def\Q{\mathbb{Q}}
\def\Z{\mathbb{Z}}
\def\C{\mathbb{C}} 

\def\S{\mathscr{S}} 

\newcommand{\Op}{\mathrm{Op}}
\newcommand{\OpW}{\mathrm{Op^{\mathcal{W}}}}

\newcommand{\ord}{\mathrm{ord}}
\newcommand{\Tr}{\mathrm{Tr}}

\newcommand{\dsR}{\mathrm{d}_{\mathrm{sR}}}
\newcommand{\BsR}{\mathrm{B}_{\mathrm{sR}}}

\newcommand{\hatdsR}{\widehat{\mathrm{d}}_{\mathrm{sR}}}
\newcommand{\hatBsR}{\widehat{\mathrm{B}}_{\mathrm{sR}}}
\newcommand{\supp}{\mathrm{supp}}
\newcommand{\Cst}{\mathrm{Cst}}

\renewcommand{\geq}{\geqslant}
\renewcommand{\leq}{\leqslant}

\newtheorem{theorem}{Theorem}[section]

\newtheorem{proposition}{Proposition}[section]
\newtheorem{corollary}{Corollary}[section]
\newtheorem{definition}{Definition}[section]
\newtheorem{lemma}{Lemma}[section]
\theoremstyle{definition}\newtheorem{example}{Example}[section]
\theoremstyle{definition}\newtheorem{remark}{Remark}[section]

\newenvironment{customthm}[1]
{\innercustomthm}
{\endinnercustomthm}

\newcommand{\Qeq}{{\mathcal{Q}^{M\setminus\S}}} 

\title{Spectral asymptotics for sub-Riemannian Laplacians} 

\author{Yves Colin de Verdi\`ere\footnote{Institut Fourier, Universit\'e Grenoble Alpes, 100 rue des Math\'ematiques, 38610 Gi\`eres, France (\texttt{yves.colin-de-verdiere@univ-grenoble-alpes.fr}).}
\and
Luc Hillairet\footnote{Institut Denis Poisson, Universit\'e d'Orl\'eans, route de Chartres, 45067 Orl\'eans Cedex 2, France (\texttt{luc.hillairet@univ-orleans.fr}).}
\and
Emmanuel Tr\'elat\footnote{Sorbonne Universit\'e, CNRS, Universit\'e de Paris, Inria, Laboratoire Jacques-Louis Lions (LJLL), F-75005 Paris, France (\texttt{emmanuel.trelat@sorbonne-universite.fr}).}
}

\date{}

\begin{document}

\maketitle


\tableofcontents

\newpage

\section{Introduction}
This section is devoted to provide a short overview of our main results. More general and complete statements are given in the subsequent sections. We refer to Appendix \ref{sec_sR_geom} for reminders on sub-Riemannian geometry and details on all notions used hereafter.

\paragraph{Setting.}
Let $(M,D,g)$ be a \emph{sub-Riemannian} (in short, sR) structure, where $M$ is a smooth (i.e., $C^\infty$) connected compact manifold of dimension $n\in\N^*$,  
$D$ is a subsheaf of $TM$ called \emph{horizontal distribution} and $g$ is the sR metric (defined in Appendix). 
In particular, when $D$ is a subbundle of $TM$, $g$ is a Riemannian metric on $D$, but the rank of $D$ may vary on $M$. 
When $D=TM$, we are in the Riemannian case. The situation of interest in this article is when $D\neq TM$. This includes the almost-Riemannian case.
Throughout the article, we assume that $\mathrm{Lie}(D) = TM$ (\emph{H\"ormander condition}). Endowed with the sR distance $\dsR$, $M$ is a complete metric space.

The sR structure $(M,D,g)$ is said to be \emph{equiregular} if the \emph{sR flag} of $D$ is everywhere regular, which means that the subsheafs $D^k$ of $TM$, generated by the Lie brackets of length $k$ of smooth sections of $D$, keep everywhere on $M$ the same dimension. 
Otherwise, the sR structure is said to be \emph{singular} and we denote by $\S$ the \emph{singular set}, that is the closed subset of $M$ where the sR flag of $D$ is not regular. For instance, the Heisenberg (contact) sR structures and the Engel case are equiregular, while the Baouendi-Grushin and Martinet cases are singular.

Let $\mu$ be an arbitrary smooth Borel measure on $M$. Let $\triangle$ be the sR Laplacian associated with the metric $g$ and with the smooth measure $\mu$. The operator $\triangle$ is defined globally (see Appendix \ref{app_sRLaplacian}) and is locally defined as
\begin{equation}\label{deftriangle}
\triangle = - \sum_{i=1}^m X_i^*X_i = \sum_{i=1}^m \left(  X_i^2 + \mathrm{div}_\mu(X_i) X_i \right) 
\end{equation}
where the star is the transpose in $L^2(M,\mu)$ and where $D$ is locally spanned by $m$ vector fields $X_1,\ldots,X_m$.
The H\"ormander condition implies that the operator $\triangle$, defined on $\mathcal{D}(\triangle)=\{f\in L^2(M,\mu)\ \mid\ \triangle f\in L^2(M,\mu)\}$ is subelliptic, nonpositive, selfadjoint, $-\triangle$ has a discrete spectrum $0=\lambda_0<\lambda_1\leq\cdots\leq\lambda_k\leq\cdots$ with $\lambda_k\rightarrow+\infty$ as $k\rightarrow+\infty$, and $\triangle$ generates a strongly continuous contraction semigroup $(e^{t\triangle})_{t\geq 0}$ and a smooth positive symmetric heat kernel, denoted by $e=e_{\triangle,\mu}$.

The set $\Sigma=D^\perp$ (annihilator of $D$) is called the \emph{characteristic manifold} of the sR structure. Outside of $\Sigma$, $\triangle$ is elliptic. 

\medskip

In this article, our main objective is to establish small-time asymptotics of \emph{local and microlocal Weyl laws} in general sR cases, to identify the main terms of the expansions in a geometric, intrinsic way, and in particular to infer the Weyl law by the Karamata tauberian theorem.

\paragraph{Weyl law in sR geometry.}
The \emph{Weyl law} consists of describing the asymptotics of the spectral counting function
\begin{equation}\label{def_spectral_counting_function}
N(\lambda) = \# \{k\in\N \mid \lambda_k \leq \lambda \} \qquad \forall \lambda\in\R
\end{equation}
as $\lambda\rightarrow+\infty$.
In the Riemannian case, the Weyl law stipulates that $\lambda^{-n/2} N(\lambda)\rightarrow \frac{\omega_n\mathrm{Vol}(M)}{(2\pi)^n}$ as $\lambda\rightarrow+\infty$, where $\omega_n$ is the volume of the $n$-dimensional Euclidean ball and $\mathrm{Vol}$ is the Riemannian volume (see, e.g., \cite{Berger_2003, Ho-68} for this very classical result).
In contrast, if the codimension of $D$ in $TM$ is everywhere positive, then 
$\lambda^{-n/2} N(\lambda )\rightarrow+\infty$ as $\lambda\rightarrow+\infty$ (see \cite[Proposition 4.2]{CHT-I}). 
The Weyl law has been investigated in a tremendous number of papers, in the more general setting of hypoelliptic H\"ormander operators with multiple characteristics, in the 70's and 80's. Some of those achievements cover several classes of sR cases. We just cite few of them. 
The Weyl law is established in equiregular sR cases in \cite{Metivier1976}: 
$N(\lambda)\sim\Cst\,\lambda^{\mathcal{Q}^M/2}$ where $\mathcal{Q}^M$ is the Hausdorff dimension of $M$.
The results of \cite{Me-Sj-78, Mohamed_CPDE1983} cover singular sR cases where $\Sigma$ is a smooth conic submanifold of $T^*M\setminus 0$, which is either symplectic, like in the Heisenberg case or in the Baouendi-Grushin case without tangency point, or is not symplectic but the operator $\triangle$ is transversally elliptic along $\Sigma$ (as defined in \cite{BoutetGrigisHelffer_JEDP1976}), meaning that the microlocal quotient $\triangle$ along $\Sigma$ is elliptic, like in the Engel case. The recent paper \cite{ChitourPrandiRizzi_2022} covers some particular classes of almost-Riemannian cases having nice singularities.
These results show that $N(\lambda)\sim\Cst\,\lambda^r\ln\lambda$ for some well identified positive rational number $r$, where the constant is defined as a volume. 
But on the one part, there is no geometric interpretation of the constant, and on the other, for instance the Martinet case where the characteristic manifold is not symplectic is not covered.

Besides, according to the exponential estimates for sR heat kernels (recalled in Appendix \ref{app_sR_kernel}, see \cite{CoulhonSikora_PLMS2008, Je-Sa-86, KusuokaStroock, Saloff-Coste_IMRN1992, Sac-84, Varopoulos}), 
since $M$ is compact, there exist $C_1,C_2>0$ such that, along the diagonal,
$$
\frac{C_1}{\mu( \BsR(q,\sqrt{t}) )} \leq  \, e(t,q,q) \leq \frac{C_2}{\mu( \BsR(q,\sqrt{t}) )} \qquad \forall q\in M\qquad \forall t\in(0,1]
$$
where $\BsR(q,\sqrt{t})$ is the sR ball of center $q$ and radius $\sqrt{t}$.
Integrating over $M$ and applying the Karamata tauberian theorem 
then yields the well known result
\begin{equation}\label{FeffermanPhong}
C_1 \int_M \frac{1}{ \mu( \BsR(q,1/\sqrt{\lambda}) ) } \, d\mu(q) \leq N(\lambda) \leq C_2 \int_M \frac{1}{ \mu( \BsR(q,1/\sqrt{\lambda}) ) } \, d\mu(q) 
\end{equation}
obtained earlier in \cite[Theorem 2]{FeffermanPhong_1981}, which we will refer to as the \emph{Fefferman-Phong estimate}. 
It implies that $N(\lambda)$ is bounded above and below, up to scaling, by $\int_M \lambda^{\mathcal{Q}(q)/2}\, d\mu(q)$, where $\mathcal{Q}^M(q)$, which depends on $q$ in the singular case, is the homogeneous dimension at $q$ (see Appendix \ref{app_sRflag}).
The double inequality \eqref{FeffermanPhong} is general and does not require any assumption on the sR structure $(M,D,g)$. Anyway it does not give an equivalent of $N(\lambda)$ as $\lambda\rightarrow+\infty$.

\medskip

We go much further in the present article. As a consequence of our main results, we compute explicitly the equivalent of $N(\lambda)$ and we also characterize the constants appearing in the equivalent in a geometric way, under the sole assumption that the singular set $\S$ be Whitney stratifiable.
With respect to the above-mentioned existing results, the main novelty is that we deeply exploit the sR context in order to identify the main terms geometrically, using in particular the known concept of \emph{nilpotentization} (recalled in Appendix \ref{app_nilp}) and more generally using the new concept of multiple nilpotentization.

The geometric identification of spectral invariants in sR geometry is a recent subject. We mention however the paper \cite{BealsGreinerStanton_JDG1984} for some early results in this direction, but the paper was written before the real birth of sR geometry (certainly impulsed by the article \cite{Strichartz_JDG1986}). Then, the first results really established in a sR context are due to \cite{Ba-13} and have then been followed by a number of works aiming at exploiting intrinsic concepts in the sR framework, like good notions of curvatures (see \cite{AgrachevBarilariBoscain_book2019} for a recent textbook), in spectral developments.

Our present study is however not restricted to computing the asymptotics of $N(\lambda)$.

Much more generally, we give in this article small-time asymptotic expansions at any order of the local and microlocal Weyl laws.

\paragraph{Spectral asymptotics.}
The \emph{local Weyl law} consists of computing the small-time asymptotics of the function 
$$
t\mapsto\Tr(\mathcal{M}_f \, e^{t\triangle}) = \int_M f(q)\, e(t,q,q) \, d\mu (q) 
= \sum_{j=0}^{+\infty}e^{-\lambda_j t}\int_M f\phi_j^2\, d\mu
$$
for an arbitrary function $f$ on $M$, where $\mathcal{M}_f$ is the operator on $L^2(M,\mu)$ of multiplication by $f$, and the \emph{microlocal Weyl law} consists of computing the function 
$$
t\mapsto\Tr(A \, e^{t\triangle}) = \sum_{j=0}^{+\infty}e^{-\lambda_j t} \langle A\phi_j,\phi_j\rangle_{L^2(M,\mu)}
$$
for an arbitrary pseudo-differential operator $A$ of order $0$ on $M$. 
Here, $(\phi_j)_{j\in\N}$ is an arbitrary orthonormal eigenbasis of $L^2(M,\mu)$ corresponding to the ordered eigenvalues $(\lambda_j)_{j\in\N}$ (i.e., $\triangle\phi_j=-\lambda_j\phi_j$ for every $j\in\N$).

To reach this objective, an instrumental tool is the main result of \cite{CHT_AHL},\footnote{Actually, we have written this paper as a preliminary to the present one.}
recalled for sR heat kernels in Theorem \ref{lemfondamental} in Appendix \ref{app_lemfondam}, establishing a small-time asymptotic expansion for the heat kernel $e$ \emph{in an asymptotic neighborhood of} the diagonal (and not only \emph{along} the diagonal like it was done in \cite{BenArous_AIF1989}). This is the key to treat singular sR cases or to compute the microlocal Weyl law. 

In turn, exploiting Theorem \ref{lemfondamental}, we provide in Theorem \ref{thm_green} in Section \ref{sec_Green} an asymptotic expansion at any order of the Green kernel near the diagonal, thus recovering and improving results of \cite{FeffermanSanchezCalle_AnnMath1986, Sac-84}. As a consequence, we prove that, given any $q_0\in M$ (regular or not), the sR Laplacian $\triangle$ on $C^\infty(M\setminus\{q_0\})$ is essentially selfadjoint if and only if $\mathcal{Q}(q_0)\geq 4$. 

\medskip

The main terms of our small-time asymptotic expansions are identified as geometric objects attached to the sR structure, related to the concept of \emph{nilpotentization}. 
Given any $q\in M$, the nilpotentization $(\widehat{M}^{q},\widehat{D}^{q},\widehat{g}^{q})$ of the sR structure $(M,D,g)$ at $q$ is the Gromov-Hausdorff metric tangent space at $q$ of the complete metric space $(M,\dsR)$. It is a nilpotent homogeneous sR structure. In general, $\widehat{M}^{q}$ is a quotient of a Carnot group. Note that, in contrast to Riemannian geometry where all tangent spaces are isometric, in sR geometry the nilpotentizations of the sR structure $(M,D,g)$ at two different points are not sR isometric in general. This makes the geometric picture much more complex.
In what follows, we denote by $\widehat{\triangle}^{q}$ the selfadjoint sR Laplacian associated with the metric $\widehat{g}^{q}$ and with the nilpotentized measure $\widehat{\mu}^{q}$, and by $\widehat{e}^{q} = e_{\widehat{\triangle}^{q},\widehat{\mu}^{q}}$ the heat kernel associated with $\widehat{\triangle}^{q}$.

\paragraph{Weyl measures.}
Our results put in evidence the role of a new intrinsic measure in sR geometry, that we call the \emph{Weyl measure}, of which there exists a local and a microlocal version. We underline that these measures 
do not depend on the choice of the smooth measure $\mu$. Actually, although the definition \eqref{deftriangle} of the operator $\triangle$ (and thus so does its spectrum) depends on $\mu$, as noticed in \cite{CHT-I}, any self-adjoint second-order differential operator whose principal symbol is the cometric $g^\star$ of the sR structure, whose sub-principal symbol vanishes, and whose first eigenvalue is $\lambda_1=0$, is equal to $\triangle_{g,\mu}$ for some smooth measure $\mu$. 
Hence, in the sequel, all our results depend only on the sR structure (in particular, on the metric $g$) but not on the smooth measure $\mu$, whose choice has thus no importance.

\medskip


Borel measures on $M$ are identified with positive densities on $M$. 
Throughout the paper, given any $f\in L^1(M,\mu)$, we denote by $\int_M f\, d\mu$ the integral of $f$ on $M$ with respect to the smooth measure $\mu$.

The \emph{local Weyl measure} $w_{\triangle}$ is the probability measure on $M$ defined by
$$
\int _M f \, dw_{\triangle} = \lim _{t \rightarrow 0^+} \frac{\int_M f(q)\, e(t,q,q) \, d\mu(q)}{\int_M e(t,q,q) \, d\mu(q)}
$$
for every continuous function on $M$, whenever the limit exists for all such functions. In all sR cases investigated in this article 
(equiregular and stratified singular cases), 
the measure $w_{\triangle}$ exists and is also the weak limit of the sequence of probability measures $\frac{1}{N(\lambda)}\sum _{\lambda_j \leq \lambda } |\phi_j |^2 \, \mu $ (Ces\`aro mean) as $\lambda \rightarrow +\infty$.

The local Weyl measure $w_{\triangle}$ happens to be a \emph{canonical} measure in sR geometry, enjoying the same nice properties as the already known \emph{Popp measure}: like Popp, the local Weyl measure is ``doubly intrinsic" in the equiregular case in the sense that it commutes with nilpotentization, and it is invariant under sR isometries of $M$ (see Section \ref{sec_weyl_intrinsic}).

The \emph{microlocal Weyl law} $W_\triangle$ is the measure defined on the co-sphere bundle $S^\star M$ by
$$
\int _{S^\star M} a \, dW_{\triangle} = \lim_{t\rightarrow 0^+} \frac{\Tr\left(\Op(a) e^{t\triangle}\right)}{\Tr\left(e^{t\triangle}\right)}
$$
for every classical symbol $a$ of order $0$, whenever the limit exists for all such symbols (here, $\Op$ denotes any quantization operator).

General definitions and properties for local and microlocal Weyl measures are provided in Section \ref{sec_Weyl_measures}.

\paragraph{Equiregular sR structures.}
The equiregular case is the subject of Part \ref{part_equiregular} of this article.

\begin{customthm}{I}\label{thmI_equireg} 
{\it 
In the equiregular case,
for every $f\in C^\infty(M)$ there exists $F\in C^\infty(\R)$ such that
$$
\Tr(\mathcal{M}_f \, e^{t\triangle}) = \frac{1}{t^{\mathcal{Q}^M/2}} F(t) 
= \frac{1}{t^{\mathcal{Q}^M/2}} \int_M f(q)\, \widehat{e}^q(1,0,0) \, d\mu(q) + \mathrm{o}\left( \frac{1}{t^{\mathcal{Q}^M/2}}\right)\quad\textrm{as}\ t\rightarrow 0^+
$$
(note that $\widehat{e}^q \, d\mu(q)=e_{\widehat{\triangle}^q,\widehat{\mu}^q} \, d\mu(q)$ does not depend on the smooth measure $\mu$),
where $\mathcal{Q}^M$ is the Hausdorff dimension of the metric space $(M,\dsR)$.
As a consequence, the local Weyl measure $w_\triangle$ exists, is a smooth measure on $M$ and its density with respect to $\mu$ at any point $q\in M$ is 
$$
\frac{dw_{\triangle}}{d\mu}(q) = \frac{\widehat{e}^{q}(1,0,0)}{\int_M \widehat{e}^{q'}(1,0,0)\, d\mu(q')} ,
$$
and the spectral counting function has the asymptotics
$$
N(\lambda) \underset{\ \lambda\rightarrow+\infty}{\sim} \frac{\int _M \widehat{e}^{q}(1,0,0) \, d\mu(q)}{\Gamma(\mathcal{Q}^M/2+1)} \lambda^{\mathcal{Q}^M/2}  .
$$
}
\end{customthm}

The general statement is provided in Theorem \ref{thm_local_weyl_equiregular} in Section \ref{sec_local_weyl_equiregular}, with subsequent remarks.
It can be noticed that, in contrast to the Riemannian case, we always have $\mathcal{Q}^M>n$ as soon as $\mathrm{rank}(D)<n$.
Except the sR interpretation, this theorem was essentially stated in \cite{Metivier1976}.
%

The local Weyl measure coincides with the Popp measure (up to constant scaling) for free nilpotent sR structures and for nilpotent equiregular sR structures of dimension $\leq 5$ except for the bi-Heisenberg case (see Section \ref{sec_compare_weyl_popp}).

In turn, we prove in Section \ref{sec_regdet} that, as a consequence of Theorem \ref{thmI_equireg}, the regularized determinant (defined thanks to the zeta function) exists and is a global spectral invariant.

The general microlocal Weyl law in the equiregular case is provided in Theorem \ref{thm_microlocal_weyl_equiregular} in Section \ref{sec_equiregular_microlocal}, generalizing to a wide extent the result established in \cite{Taylor_CPDE2020} for contact closed manifolds.
We do not give the statement here, but we mention that, in particular, the microlocal Weyl measure $W_\triangle$ exists and is supported on $S\Sigma^{r-1}=S(D^{r-1})^\perp$ where $r$ is the degree of nonholonomy of the sR structure. Its explicit expression is given in Theorem \ref{thm_microlocal_weyl_equiregular}.


\paragraph{Singular sR structures.}
The singular case is the subject of Part \ref{part_singular} of this article. 

The horizontal distribution $D$ is said to be \emph{$\S$-nilpotentizable} if $D$ is locally diffeomorphic to its nilpotentization $\widehat{D}^q$ at every point $q$ of the singular set $\S$.
A smooth submanifold $N$ of $M$ is said to be \emph{equisingular} if the sR flag of $D$ along $N$ and the sR flag of $D$ restricted to $N$ are regular (see Appendix \ref{app_sRflag}).

\begin{customthm}{II.1}\label{thmBII1_sing_nilp} 
{\it 
We assume that the singular set $\S$ (and thus $M$) is Whitney stratified by equisingular smooth submanifolds and that $D$ is $\S$-nilpotentizable. 
Denoting by $\mathcal{Q}^1 < \cdots < \mathcal{Q}^s$ the Hausdorff dimensions of the strata of $M$ (including the equiregular region $M\setminus\S$) and by $m_1, \ldots, m_s$ their respective maximal multiplicities (see Section \ref{sec_thm_multistrates_nilp} for the definition), there exists a Borel measure $\nu$ on $M$ such that, for every $f\in C^\infty(M)$, we have
$$
\Tr(\mathcal{M}_f \, e^{t\triangle}) = \sum_{i=1}^s \sum_{j=0}^{m_i-1} \frac{\vert\ln t\vert^j}{t^{\mathcal{Q}^i/2}}  F_{i,j}(\sqrt{t})
= \left(\int_{\mathcal{N}_s} f \, d\nu\right) \frac{\vert\ln t\vert^{m_s-1}}{t^{\mathcal{Q}^s/2}}  + \mathrm{o}\left( \frac{\vert\ln t\vert^{m_s-1}}{t^{\mathcal{Q}^s/2}}  \right)\quad\textrm{as}\ t\rightarrow 0^+
$$
for some functions $F_{i,j}\in C^\infty(\R)$.
The support $\mathcal{N}_s$ of $\nu$ is the closure of a union of equisingular strata of $M$ of maximal Hausdorff dimension $\mathcal{Q}^s$. The density of $\nu$ is smooth on $\mathcal{N}_s$ and is expressed in terms of multiple nilpotentizations (along the various strata) of the heat kernel $e$. As a consequence, the local Weyl measure exists and is $w_\triangle=\nu/\nu(\mathcal{N}_s)$, and
$$
N(\lambda) \underset{\ \lambda\rightarrow+\infty}{\sim} \frac{\nu(\mathcal{N}_s)}{\Gamma(\mathcal{Q}^s/2+1)}\lambda^{\mathcal{Q}^s}(\ln\lambda)^{m_s-1} .
$$

In the particular case where $\S$ is an equisingular smooth submanifold of $M$ (i.e., $\S$ has a single stratum), denoting by $\mathcal{Q}^\S$ (resp., by $\Qeq$) the Hausdorff dimension of $\S$ (resp., of $M\setminus\S$):
\begin{itemize}
\item If $\mathcal{Q}^\S > \Qeq$ then $\mathcal{N}_s=\S$ and the density of $\nu$ on $\S$ is a ``transverse trace" of the nilpotentized heat kernel along $\S$.
\item If $\mathcal{Q}^\S = \Qeq$ then $\mathcal{N}_s=\S$, the density of $\nu$ on $\S$ is given in terms of a double nilpotentization of the heat kernel (one along $\S$ and the other along $M\setminus\S$), and actually we have an intrinsic two-terms small-time asymptotic expansion of $\Tr(\mathcal{M}_f \, e^{t\triangle})$ (i.e., the two first terms of the asymptotic expansion can be identified geometrically), the dominating term being in $\frac{\vert\ln t\vert}{t^{\mathcal{Q}^\S/2}}$.
\item If $\mathcal{Q}^\S < \Qeq$ then $\mathcal{N}_s=M$: the equiregular part dominates and $\frac{d\nu}{d\mu}(q)=\widehat{e}^q(1,0,0)$ at any $q\in M$ (as in the equiregular case).
\end{itemize}
}
\end{customthm}

The complete statements are given in Section \ref{sec_equisingular_nilp} (see Theorem \ref{thm_onestratum} for the case of one single stratum) and in Section \ref{sec_equisingular_stratified_nilp} (see Theorem \ref{thm_multistrates_nilp} for the case of multiple strata), as well as some examples. 

We also elaborate in Sections \ref{sec_Baouendi-Grushin} and \ref{sec_Martinet} on the Baouendi-Grushin and Martinet cases, giving more details on their two-terms small-time asymptotic expansions.
In turn, we establish in Section \ref{sec_QE} a \emph{Quantum Ergodicity} (QE) result in the Baouendi-Grushin case when $\S$ is connected with at most one tangency point: there exists a density-one subsequence of probability measures $|\phi_{j_k}|^2 \mu$ converging weakly to $w_\triangle = \nu/\nu(\S)$ (see Theorem \ref{thm_QE_Grushin}). This is the first QE result in sR geometry where the limit measure is singular (see \cite{CHT-I} for QE in the 3D contact case).

\medskip

Theorem \ref{thmBII1_sing_nilp} uses the concept of \emph{multiple nilpotentization} (defined in Section \ref{sec_doublenilp}) along \emph{chains of strata} of increasing topological dimensions. Roughly speaking, the double nilpotentization $\widehat{D}^{q_1,q_2}=\widehat{\widehat{D}^{q_1}}^{q_2}$ is the horizontal distribution obtained by first nilpotentizing $D$ at some point $q_1\in\S_1$ and then by nilpotentizing $\widehat{D}^{q_1}$ at some neighbor point $q_2\in\S_2$, where $\S_1$ and $\S_2$ are two equisingular strata of $M$ such that $\dim\S_1<\dim\S_2$ and $\S_1\subset\overline{\S_2}$.

This is thanks to the nilpotentizability assumption, which is defined and commented in \ref{sec_nilpotentizability}, that we can identify geometrically the main terms of the spectral asymptotics.
When nilpotentizability fails, the situation becomes more complex.

\begin{customthm}{II.2}\label{thmII.2_sing_nonnilp} 
{\it 
We assume that the sR structure $(M,D,g)$ is real analytic. 
There exist $k\in\{0,\ldots,n\}$ and a rational number $\gamma\in\Q$, 
only depending on $D$ (not on $g$), satisfying $\gamma\geq\frac{1}{2}\mathcal{Q}^s$ and if $\gamma=\frac{1}{2}\mathcal{Q}^s$ then $k\geq m_s-1$,
and there exist $\ell\in\N^*$
and a Borel measure $\nu$ on $M$ such that, for every $f\in C^\infty(M)$, we have
$$
\mathrm{Tr}(f \, e^{t\triangle}) = \frac{1}{t^\gamma} \sum_{i=0}^k F_i(t^{1/\ell}) \vert\ln t\vert^i  
= \left( \int_{\mathcal{N}} f \, d\nu\right) \frac{\vert\ln t\vert^k}{t^\gamma} + \mathrm{o}\left( \frac{\vert\ln t\vert^k}{t^\gamma} \right)\quad\textrm{as}\ t\rightarrow 0^+
$$
for some $F_0,\ldots,F_k\in C^\infty(\R)$.
The support $\mathcal{N}$ of $\nu$ is an equisingular stratified submanifold of $M$.
As a consequence, the local Weyl measure exists and is $w_\triangle=\nu/\nu(\mathcal{N})$, and 
$$
N(\lambda) \underset{\ \lambda\rightarrow+\infty}{\sim}  \frac{\nu(\mathcal{N})}{\Gamma(\gamma+1)} \lambda^\gamma \ln^k\lambda .
$$
}
\end{customthm}

The dominating term of the small-time asymptotics of the local Weyl law is always greater than or equal to that obtained in Theorem \ref{thmBII1_sing_nilp} in the equisingular stratified nilpotentizable case.

This result corresponds to Theorem \ref{thm_analyticsingularities} in Section \ref{sec_nonnilp_analytic}, where we also give examples showing that $\gamma$ is not an integer in general. In contrast to Theorem \ref{thmBII1_sing_nilp}, here the geometric role of the Hausdorff dimensions of the strata is lost, due to the lack of nilpotentizability. 

\medskip

Interestingly, in Theorems \ref{thmI_equireg}, \ref{thmBII1_sing_nilp} and \ref{thmII.2_sing_nonnilp}, the maximal complexity of the small-time asymptotics is a positive rational power of $\frac{1}{t}$ times an integer power of $\vert\ln t\vert$. There is no $\ln\vert\ln t\vert$ for instance.
To complete the picture, we give in Section \ref{sec_nonnilp_flat} some classes of examples where the sR structure is not real analytic, i.e., the vector fields defining it may involve flat terms. We obtain more ``exotic" Weyl laws, which may 
have an arbitrarily complex transcendence (see Proposition \ref{prop_nonanalytic}).

\medskip

As concerns the microlocal Weyl law, we do not compute it in the singular case, but the method that we develop certainly allows for its estimation. We just mention the general fact that, if $D$ is of codimension $1$ in $TM$, then the microlocal Weyl measure $W_\triangle$ is equal to half of the pullback of $w_\triangle$ by the double covering $S\Sigma\rightarrow M$ which is the restriction to $S\Sigma$ (with $\Sigma=D^\perp$) of the canonical projection of $T^\star M$ onto $M$ (see Section \ref{sec_def_weyl_measures}).

\paragraph{The $(J+K)$-decomposition.}
Let us comment shortly on the main basic technique used for estimating the small-time heat trace asymptotics in singular cases. While the local Weyl law straightforwardly follows from the Lebesgue dominated theorem in the equiregular case, the required domination property fails in general in the singular case (see Section \ref{sec_weyl_integrable}). In order to estimate the trace, we split the ``singular" integral defining the trace as the sum of two integrals:
$$
\Tr(\mathcal{M}_f \, e^{t\triangle}) = \int_M f(q)\, e(t,q,q) \, d\mu (q) = J(t)+K(t)
$$
(called the \emph{$(J+K)$-decomposition}) with
$$
J(t) = \int_{\mathcal{B}(\S,\sqrt{t})} f(q')\, e(t,q',q')\, d\mu(q') \qquad\textrm{and}\qquad
K(t) = \int_{M\setminus \mathcal{B}(\S,\sqrt{t})} f(q')\, e(t,q',q')\, d\mu(q') .
$$
The scaling $\sqrt{t}$ is the right one to use properties of the heat kernel (recall that the heat propagation at small
times is, up to exponentially small terms, located inside balls of radius of the order of $\sqrt{t}$: this follows from the finite speed propagation of singularities for the sR wave equation, see \cite{Me-84}), in particular the fact that the function $t\mapsto(\sqrt{t})^{\mathcal{Q}^M(q)}\, e (t, \delta_{\sqrt{t}}^q(x), \delta_{\sqrt{t}}^q(x') )$ has an asymptotic expansion at any order at $t=0$, whose first term is the nilpotentization $\widehat{e}^q(1,x,x')$ (see Theorem \ref{lemfondamental} in Appendix \ref{app_lemfondam}). This fact immediately yields an asymptotic expansion for $J(t)$. Expanding $K(t)$ is much more difficult and requires to perform multiple nilpotentizations of $e$ along equisingular strata of increasing topological dimensions.
Nilpotentizability ensures that the multiple limits are well defined.
In the analytic non-nilpotentizable case, we use finer stratifications related to subanalytic preparation theorems.

\section{Weyl measures}\label{sec_Weyl_measures}
In this section, we define Weyl measures. They had already been introduced in \cite{CHT-I}, in a different but equivalent way (see Section \ref{sec_karamata}). In Section \ref{sec_def_weyl_measures}, we define local and microlocal Weyl measures in terms of an appropriate limit of the heat kernel. In Section \ref{sec_karamata}, using a classical argument (tauberian theorem), we provide alternative expressions for those measures in terms of an appropriate limit of Ces\`aro means involving an orthonormal basis of eigenfunctions, and we give the asymptotics of the spectral counting function.

Recall that $\triangle$ is defined by \eqref{deftriangle} and that $e=e_{\triangle,\mu}$ is the sR heat kernel, where $\mu$ is an arbitrary smooth measure on $M$.

Given any $f\in C^0(M)$, we denote by $\mathcal{M}_f$ the operator on $L^2(M,\mu)$ of multiplication by $f$. The operator $e^{t\triangle}$ is of trace class and thus the operators $\mathcal{M}_f \, e^{t\triangle}$ and more generally $A\, e^{t\triangle}$, for any bounded operator $A$ on $L^2(M,\mu)$, are of trace class.

\subsection{Preliminary remark on the smooth measure}\label{sec_Weyl_measures_prelim}
%
Let us consider two sub-Riemannian Laplacians $\triangle_\mu$ and $\triangle_\nu$ associated with two different smooth measures $\mu$ and $\nu$ on $M$, but with the same metric $g$.
Assuming that $\nu= h^2\mu$ with $h$ a positive smooth function on $M$, we have $\mathrm{div}_{\nu}(X) = \mathrm{div}_{\mu}(X) + \frac{X(h^2)}{h^2}$, for every vector field $X$, and it follows that 
$$
h\triangle_\nu(\phi) = \triangle_\mu(h\phi) - \phi\triangle_\mu(h) = \triangle_\mu(h\phi) + h^2 \phi\triangle_\nu(h^{-1}) \qquad \forall \phi\in C^\infty(M) .
$$
Defining the isometric bijection $J:L^2(M,\nu)\rightarrow L^2(M,\mu)$ by $J\phi=h\phi$ (gauge transform), we have
\begin{equation}\label{jauge}
J\triangle_\nu J^{-1} = \triangle_\mu - \frac{1}{h} \triangle_\mu(h) \,\mathrm{id} = \triangle_\mu + h \triangle_\nu(h^{-1})
 \,\mathrm{id} = \triangle_\nu +W ,
\end{equation}
i.e., $\triangle_\nu$ is unitarily equivalent to $\triangle_\mu +W$, where $W$ is a bounded operator.

\begin{lemma}\label{lem_prelim_measure}
Given any $f\in C^0(M)$ and any (classical) pseudo-differential operator $A$ of order $0$, we have, as $t\rightarrow 0^+$,
\begin{align}
\Tr\left( e^{t\triangle_\nu} \right) &= \Tr\left( e^{t\triangle_\mu} \right) \left(1+\mathrm{O}(t)\right)  ,  \label{asympttr} \\
\Tr\left( \mathcal{M}_f\, e^{t\triangle_\nu}\right) &= \Tr\left( \mathcal{M}_f\, e^{t\triangle_\mu}\right) +  \mathrm{O}\big( t\, \Tr\big( e^{\frac{t}{2}\triangle_\mu} \big) \big)  ,  \label{traceMf}  \\
\Tr\left( A\, e^{t\triangle_\nu}\right) &= \Tr\left( A\, e^{t\triangle_\mu}\right) + \mathrm{o}\big( \Tr\big( e^{t\triangle_\mu} \big) \big) + \mathrm{O}\big( t\, \Tr\big( e^{\frac{t}{2}\triangle_\mu} \big) \big) .  \label{traceA}
\end{align}
\end{lemma}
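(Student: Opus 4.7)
My approach is to transfer all three traces to the common Hilbert space $L^2(M,\mu)$ via the gauge isometry $J$ of \eqref{jauge}, so that for any bounded operator $X$ on $L^2(M,\nu)$ one has $\Tr_{L^2(M,\nu)}(Xe^{t\triangle_\nu}) = \Tr_{L^2(M,\mu)}((JXJ^{-1})e^{t(\triangle_\mu+W)})$. A direct computation shows that $J\mathcal{M}_f J^{-1} = \mathcal{M}_f$, so multiplications are invariant under the gauge; for a classical pseudo-differential operator $A$ of order $0$, one gets $JAJ^{-1} = A + K$ with $K = [\mathcal{M}_h, A]\mathcal{M}_{h^{-1}}$, and since the commutator of multiplication by a smooth function with a $\Psi$DO of order $0$ is a $\Psi$DO of order $-1$, the operator $K$ is of order $-1$, hence compact on $L^2(M,\mu)$ (as $M$ is compact). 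This reduces the three statements to estimates involving only $\triangle_\mu$ and $\triangle_\mu + W$.

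The heart of the argument is the uniform bound
\begin{equation*}
\bigl|\Tr\bigl(Y(e^{t(\triangle_\mu+W)} - e^{t\triangle_\mu})\bigr)\bigr| \leq \|Y\|_\infty \cdot \mathrm{O}\bigl(t\,\Tr(e^{t\triangle_\mu})\bigr) \qquad (t\to 0^+),
\end{equation*}
valid for any bounded operator $Y$. I would obtain this from the norm-convergent Dyson--Duhamel series $e^{t(\triangle_\mu+W)} = \sum_{k\geq 0} I_k(t)$ with
\begin{equation*}
I_k(t) = \int_{0<s_1<\cdots<s_k<t} e^{(t-s_k)\triangle_\mu} W e^{(s_k-s_{k-1})\triangle_\mu} W \cdots W e^{s_1\triangle_\mu} \, ds_1\cdots ds_k.
\end{equation*}
The crucial observation is that, since $W$ is multiplication by a smooth bounded function $w$, the Schwartz kernel of the integrand of $I_k$ is the iterated convolution $e\cdot w\cdot e\cdots w\cdot e$ with total heat time $t$, and iterated Chapman--Kolmogorov together with $|w|\leq\|w\|_\infty$ yields the pointwise bound $\|w\|_\infty^k\,e(t,q,q')$. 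For $Y=\mathcal{M}_f$ this applies directly on the diagonal to give $|\Tr(\mathcal{M}_f\cdot\text{integrand})|\leq \|f\|_\infty\|w\|_\infty^k\Tr(e^{t\triangle_\mu})$. For general bounded $Y$, I would split the integrand into two factors $P,Q$ of total heat time $t/2$ each; the same kernel bound then yields $\|P\|_2,\|Q\|_2\leq \|w\|_\infty^\bullet\sqrt{\Tr(e^{t\triangle_\mu})}$ (via $\int\!\int e(t,q,q')^2\,d\mu d\mu = \Tr(e^{2t\triangle_\mu})$), whence $|\Tr(YPQ)|\leq\|Y\|_\infty\|P\|_2\|Q\|_2$ gives the same order. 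Integration over the simplex (volume $t^k/k!$) and summation over $k\geq 1$ produce a geometric series of total $\|Y\|_\infty(e^{t\|w\|_\infty}-1)\Tr(e^{t\triangle_\mu})$, proving the claim.

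Everything then fits together. \eqref{asympttr} is the case $Y=I$, giving $\Tr(e^{t\triangle_\nu})-\Tr(e^{t\triangle_\mu})=\mathrm{O}(t\Tr(e^{t\triangle_\mu}))$ and hence the factor $(1+\mathrm{O}(t))$. Estimate \eqref{traceMf} is the case $Y=\mathcal{M}_f$, using $J\mathcal{M}_fJ^{-1}=\mathcal{M}_f$; the $\Tr(e^{(t/2)\triangle_\mu})$ factor stated in the lemma is a weakening of my $\Tr(e^{t\triangle_\mu})$ since the heat trace is decreasing in $t$. For \eqref{traceA}, I would decompose $\Tr(Ae^{t\triangle_\nu}) = \Tr(Ae^{t(\triangle_\mu+W)}) + \Tr(Ke^{t(\triangle_\mu+W)})$, apply the core estimate to the first term, and treat the second by compactness: for any orthonormal eigenbasis $(\phi_j^W)$ of $\triangle_\mu+W$, $K\phi_j^W\to 0$ strongly (compact operators map weakly null sequences to strongly null ones), so $\langle K\phi_j^W,\phi_j^W\rangle\to 0$; a tail splitting then gives $|\Tr(Ke^{t(\triangle_\mu+W)})|\leq N\|K\|_\infty+\varepsilon\Tr(e^{t(\triangle_\mu+W)})$ for any $\varepsilon>0$, which combined with \eqref{asympttr} and $\Tr(e^{t\triangle_\mu})\to\infty$ yields $\mathrm{o}(\Tr(e^{t\triangle_\mu}))$. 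I expect the main difficulty to lie in the Duhamel step: the naive H\"older estimate $|\Tr(Ye^{(t-s)\triangle_\mu}We^{s(\triangle_\mu+W)})|\leq\|Y\|_\infty\|w\|_\infty\Tr(e^{s\triangle_\mu})$ would leave the integral $\int_0^t\Tr(e^{s\triangle_\mu})\,ds$, which diverges as soon as $\mathcal{Q}^M\geq 2$; the iterated Chapman--Kolmogorov bound is precisely what replaces this divergent integral by the uniform factor $\Tr(e^{t\triangle_\mu})$.
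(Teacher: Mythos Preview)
Your proof is correct and takes a genuinely different route from the paper's. The paper proves \eqref{asympttr} first and independently, by the Courant--Fischer min-max theorem ($|\lambda_j(\nu)-\lambda_j(\mu)|\leq\|W\|$, hence $e^{-\lambda_j(\nu)t}=e^{-\lambda_j(\mu)t}(1+\mathrm{O}(t))$ uniformly in $j$), and then handles \eqref{traceMf}--\eqref{traceA} via a \emph{single} Duhamel step: writing $Ae^{t(\triangle_\mu+W)}=Ae^{t\triangle_\mu}+\int_0^t Ae^{(t-s)(\triangle_\mu+W)}We^{s\triangle_\mu}\,ds$, splitting the integration range at $t/2$, and applying the trace-norm H\"older inequality $\|BC\|_1\leq\|B\|\,\|C\|_1$ to whichever semigroup factor has time $\geq t/2$; this yields $\mathrm{O}\big(t\,\Tr(e^{(t/2)\triangle_\mu})+t\,\Tr(e^{(t/2)(\triangle_\mu+W)})\big)$, which is then simplified using \eqref{asympttr}. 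Your approach instead expands the full Dyson series and exploits the positivity of the heat kernel together with the fact that $W$ is a \emph{multiplication} operator to obtain the pointwise Chapman--Kolmogorov domination of each integrand's kernel by $\|w\|_\infty^k e(t,q,q')$; this gives the sharper remainder $\mathrm{O}(t\,\Tr(e^{t\triangle_\mu}))$ and derives \eqref{asympttr} as a special case rather than as a separate input. The trade-off is that the paper's trace-norm argument works for any bounded perturbation $W$, while yours relies on $W$ being a multiplication; but since here $W=-h^{-1}\triangle_\mu h\cdot\mathrm{id}$ is precisely of that form, your argument is legitimate and in fact slightly stronger. The treatment of the compact commutator term is essentially identical in both proofs.
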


\begin{proof}
Denoting by $(\lambda_j(\mu))_{j\in\N}$ (resp., by $(\lambda_j(\nu))_{j\in\N}$) the spectrum of $\triangle_\mu$ (resp., of $\triangle_\nu$), it follows from the Courant-Fischer min-max theorem that
$$
\vert \lambda_j(\nu) - \lambda_j(\mu) \vert \leq \Vert W\Vert_{L(L^2(M,\mu))} \qquad\forall j\in\N ,
$$
and thus $e^{-\lambda_j(\nu)t} = e^{-\lambda_j(\mu)t} (1+\mathrm{O}(t))$ as $t\rightarrow 0^+$, for every $j\in\N$, where the remainder $\mathrm{O}(t)$ is uniform with respect to $j$, and then \eqref{asympttr} follows.

Let us now consider an arbitrary bounded operator $A$ on $L^2(M,\mu)=L^2(M,\nu)$ (because $M$ is compact).
From the relation $e^{-\lambda_j(\nu)t} = e^{-\lambda_j(\mu)t} (1+\mathrm{O}(t))$, we infer \eqref{asympttr}.
Note that $e^{t J\triangle_\nu J^{-1}} = Je^{t\triangle_\nu}J^{-1}$ and, using \eqref{jauge}, we have $\Tr\left( e^{t(\triangle_\mu+W)} \right) = \Tr\left( e^{t\triangle_\mu} \right) \left(1+\mathrm{O}(t)\right)$. 

Now, let $A\in L(L^2(M,\mu))$ be arbitrary. By the Duhamel formula 
$$
Ae^{t(\triangle_\mu+W)} = Ae^{t\triangle_\mu} + \int_0^t Ae^{(t-s)(\triangle_\mu+W)} W e^{s\triangle_\mu}\, ds ,
$$
splitting the latter integral at $t/2$, using the inequality $\Vert BC\Vert_1 \leq \Vert B\Vert_{L(L^2(M,\mu))}\Vert C\Vert_1$ for all operators $B,C\in L(L^2(M,\mu))$ with $C$ of trace class, where $\Vert\ \Vert_1$ is the trace class norm, we obtain
$$
\Tr\left( A \, e^{t (\triangle_\mu+W)} \right) = \Tr\left( A \, e^{t\triangle_\mu} \right) + \mathrm{O}\Big( t\, \Tr\big( e^{\frac{t}{2}\triangle_\mu} \big) + t\, \Tr\big( e^{\frac{t}{2}(\triangle_\mu+W)} \big) \Big)
$$
as $t\rightarrow 0^+$, and thus, using \eqref{asympttr}, $\Tr\left( A e^{t (\triangle_\mu+W)} \right) = \Tr\left( A e^{t\triangle_\mu} \right) + \mathrm{O}\big( t\, \Tr\big( e^{\frac{t}{2}\triangle_\mu} \big) \big)$ as $t\rightarrow 0^+$.
Besides, by \eqref{jauge}, we have $\Tr\left( A e^{t (\triangle_\mu+W)} \right) = \Tr\left( J^{-1}AJ\, e^{t\triangle_\nu}\right)$.
Taking $A=\mathcal{M}_f$, the operator of multiplication by $f$, we have $\mathcal{M}_fJ=J\mathcal{M}_f$, and then \eqref{traceMf}.
Now, taking $A$ an arbitrary pseudo-differential operator of order $0$, $A$ does not a priori commute with $J$, but we have $J^{-1}AJ=A+J^{-1}C$, with $C=[A,J]$ that is a pseudo-differential operator of order $-1$ and thus compact. Let us prove that
\begin{equation}\label{traceC}
\Tr\left( C \, e^{t\triangle_\nu}\right) = \mathrm{o}\big( \Tr\big( e^{t\triangle_\nu} \big) \big)
\end{equation}
as $t\rightarrow 0^+$ (which gives \eqref{traceA}).
Considering an orthonormal eigenbasis $(\phi_j^\nu)_{j\in\N}$ of $L^2(M,\nu)$ corresponding to the ordered eigenvalues $(\lambda_j(\nu))_{j\in\N}$ of $\triangle_\nu$ (i.e., $\triangle_\nu\phi_j^\nu=-\lambda_j(\nu)\phi_j\nu$ for every $j\in\N$, with $\lambda_j(\nu)\leq\lambda_{j+1}(\nu)$), we have
$$
\frac{\Tr\left(C\, e^{t\triangle_\nu}\right)}{\Tr\left(e^{t\triangle_\nu}\right)}
= \frac{ \sum_{j=0}^{+\infty} e^{-\lambda_j(\nu) t} \langle C\phi_j^\nu,\phi_j^\nu\rangle_{L^2(M,\nu)} } { \sum_{j=0}^{+\infty} e^{-\lambda_j(\nu) t} }
$$
Since $C$ is compact, given any $\varepsilon>0$ there exists $N\in\N^*$ such that $\vert \langle C\phi_j^\nu,\phi_j^\nu\rangle_{L^2(M,\nu)} \vert \leq \varepsilon$ for every $j>N$, hence
$$
\frac{\Tr\left(C \, e^{t\triangle_\nu}\right)}{\Tr\left(e^{t\triangle_\nu}\right)} \leq \frac{ \sum_{j=0}^{N} e^{-\lambda_j(\nu) t} \langle C\phi_j^\nu,\phi_j^\nu\rangle_{L^2(M,\nu)} } { \sum_{j=0}^{+\infty} e^{-\lambda_j(\nu) t} } + \varepsilon
$$
and thus $\limsup \frac{\Tr\left(C \, e^{t\triangle_\nu}\right)}{\Tr\left(e^{t\triangle_\nu}\right)} \leq \varepsilon$ as $t\rightarrow 0$. Since $\varepsilon>0$ is arbitrary, \eqref{traceC} follows.
\end{proof}

\subsection{Local and microlocal Weyl measures}\label{sec_def_weyl_measures}


\begin{definition}\label{def_weyl}
The \emph{local Weyl measure} $w_{\triangle}$ is the probability measure on $M$ defined by
\begin{equation}\label{def_local_weyl_measure}
\int_M f\, dw_{\triangle} = \lim_{t\rightarrow 0^+} \frac{\Tr\left(\mathcal{M}_f\, e^{t\triangle}\right)}{\Tr\left(e^{t\triangle}\right)} = \lim_{t\rightarrow 0^+} \frac{\int_M f(q)\, e(t,q,q) \, d\mu(q)}{\int_M e(t,q,q) \, d\mu(q)}
\end{equation}
for every continuous function $f$ on $M$, whenever the limit exists for all such functions.

The \emph{microlocal Weyl measure} $W_{\triangle}$ is the probability measure on $S^\star M$ defined by
\begin{equation}\label{def_microlocal_weyl_measure}
\int _{S^\star M} a \, dW_{\triangle} = \lim_{t\rightarrow 0^+} \frac{\Tr\left(\Op(a)\, e^{t\triangle}\right)}{\Tr\left(e^{t\triangle}\right)}
\end{equation}
for every classical symbol $a$ of order $0$, whenever the limit exists
for all such symbols. Here, $\Op$ denotes any quantization operator (see Lemma \ref{lem_weyl_properties} hereafter).
\end{definition}

The fact that the limits defining the microlocal Weyl measures are indeed probabilities measures follows from the use of the Weyl quantization (see the next lemma).

\begin{lemma}\label{lem_weyl_properties}
When they exist, the Weyl measures have the following properties:
\begin{enumerate}
\item The local Weyl measure $w_{\triangle}$ is the pushforward of the microlocal Weyl measure $W_{\triangle}$ under the canonical projection $\pi:S^*M\rightarrow M$, i.e., $\pi_* W_{\triangle} = w_{\triangle} $.
\item The microlocal Weyl measure $W_{\triangle}$ does not depend on the quantization, is even with respect to the canonical involution of $S^*M$, and $\mathrm{supp}(W_\triangle)\subset S \Sigma $ where $\Sigma=D^\perp$. 

\item The measures $w_{\triangle}$ and $W_{\triangle}$ only depend on the sR structure (in particular, on the metric $g$), but they do not depend on the measure $\mu$. Moreover, they are invariant under sR isometries of $M$. 
\end{enumerate}
\end{lemma}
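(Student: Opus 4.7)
The plan is to prove the three assertions in turn, leveraging the preliminary Lemma \ref{lem_prelim_measure} (in particular the argument behind \eqref{traceC}, that $\mathrm{Tr}(K\,e^{t\triangle}) = \mathrm{o}(\mathrm{Tr}(e^{t\triangle}))$ whenever $K$ is compact) and standard microlocal calculus.

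For (1), I observe that the multiplication operator $\mathcal{M}_f$ by $f\in C^0(M)$ (approximated by $C^\infty$ functions if needed) is a classical pseudo-differential operator of order $0$ whose principal symbol is $\pi^\star f$, so any quantization satisfies $\mathrm{Op}(\pi^\star f) = \mathcal{M}_f + R$ with $R$ of order $\leq -1$, hence compact. The argument behind \eqref{traceC} then yields $\mathrm{Tr}(R\,e^{t\triangle}) = \mathrm{o}(\mathrm{Tr}(e^{t\triangle}))$, so dividing by $\mathrm{Tr}(e^{t\triangle})$ and sending $t\to 0^+$ in \eqref{def_microlocal_weyl_measure} gives $\int_{S^\star M}\pi^\star f\,dW_\triangle = \int_M f\,dw_\triangle$, i.e.\ $\pi_\star W_\triangle = w_\triangle$. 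The same compact-remainder argument simultaneously proves the quantization-independence in (2), since two quantizations of the same classical symbol of order $0$ differ by an operator of order $\leq -1$.

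For the evenness property in (2), I would fix the Weyl quantization $\OpW$: a direct computation on its oscillatory integral kernel gives $\OpW(a)^T = \OpW(a\circ\iota)$ with $\iota(q,p)=(q,-p)$. Hence for a real classical symbol $a$ of order $0$ satisfying $a\circ\iota=-a$, the operator $\OpW(a)$ is self-adjoint (since $a$ is real) and anti-symmetric; combined with the fact that $e^{t\triangle}$ has a real self-adjoint, hence symmetric, integral kernel, the identity $\mathrm{Tr}(AB)=\mathrm{Tr}(B^T A^T)$ immediately forces $\mathrm{Tr}(\OpW(a)\,e^{t\triangle})=0$, and splitting a general complex odd symbol as $a_1+ia_2$ with $a_1,a_2$ real odd extends the vanishing. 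The support property $\mathrm{supp}(W_\triangle)\subset S\Sigma$ is the main obstacle: it reduces to showing that if $a$ is a classical symbol of order $0$ supported in a conic region where $g^\star\geq c|p|^2$, then $\mathrm{Tr}(\mathrm{Op}(a)\,e^{t\triangle})=\mathrm{O}(t^{-n/2})$ as $t\to 0^+$. Via an elliptic parametrix in that conic region, I would write $\mathrm{Op}(a)=B(-\triangle)+R_1$ with $B=\OpW(a/g^\star)$ of order $-2$ and $R_1$ of order $\leq -1$, so that $\mathrm{Tr}(\mathrm{Op}(a)\,e^{t\triangle})=-\tfrac{d}{dt}\mathrm{Tr}(B\,e^{t\triangle})+\mathrm{Tr}(R_1\,e^{t\triangle})$; a local (Riemannian-type) elliptic heat trace expansion then gives $\mathrm{Tr}(B\,e^{t\triangle})=\mathrm{O}(t^{1-n/2})$ (or $\mathrm{O}(|\ln t|)$ when $n=2$), whose derivative is $\mathrm{O}(t^{-n/2})$, while $\mathrm{Tr}(R_1\,e^{t\triangle})$ is absorbed in \eqref{traceC}. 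On the other side, since $D\neq TM$ there is an open set of points $q$ with $\mathrm{rank}(D_q)<n$, giving $\mathcal{Q}(q)>n$ there, and the Fefferman-Phong estimate \eqref{FeffermanPhong} yields $\mathrm{Tr}(e^{t\triangle})\gtrsim t^{-\gamma}$ with $\gamma>n/2$, so the ratio vanishes.

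Assertion (3) is then essentially bookkeeping using Lemma \ref{lem_prelim_measure}. Replacing $\mu$ by $\nu=h^2\mu$ changes numerator and denominator in \eqref{def_local_weyl_measure}, \eqref{def_microlocal_weyl_measure} by errors of the form $\mathrm{O}(t\,\mathrm{Tr}(e^{(t/2)\triangle_\mu}))+\mathrm{o}(\mathrm{Tr}(e^{t\triangle_\mu}))$; since $\mathrm{Tr}(e^{(t/2)\triangle_\mu})$ and $\mathrm{Tr}(e^{t\triangle_\mu})$ share the same leading order (both $\sim t^{-\gamma}$ up to a logarithmic factor), these corrections are $\mathrm{o}(\mathrm{Tr}(e^{t\triangle_\mu}))$, and the Weyl measures of $\triangle_\mu$ and $\triangle_\nu$ coincide. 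Finally, for an sR isometry $\phi:M\to M$, pullback of functions unitarily intertwines $\triangle_{g,\phi_\star\mu}$ with $\triangle_{g,\mu}$, so the Weyl measure of $\triangle_{g,\phi_\star\mu}$ is the pushforward $\phi_\star w_{\triangle_{g,\mu}}$; combined with the just-proved independence on the reference smooth measure, both equal $w_\triangle$, giving $\phi_\star w_\triangle=w_\triangle$, and the analogous argument via the symplectic lift of $\phi$ to $T^\star M$ handles $W_\triangle$.
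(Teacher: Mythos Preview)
Your argument for (1), for quantization-independence and evenness in (2), and for (3) is correct and essentially the same as the paper's. Your evenness proof via $\OpW(a)^T=\OpW(a\circ\iota)$ is just a repackaging of the paper's complex-conjugation argument on the oscillatory integral, and your treatment of (3) is slightly more explicit than the paper, which simply says ``follows from Lemma~\ref{lem_prelim_measure}''; note that your appeal to ``same leading order $t^{-\gamma}$'' is forward-referencing, but the needed fact $t\,\Tr(e^{(t/2)\triangle})/\Tr(e^{t\triangle})\to 0$ already follows from the exponential estimates \eqref{exp_estimates_diago} and the volume-doubling property.

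The substantive difference is the support claim $\supp(W_\triangle)\subset S\Sigma$. The paper simply cites \cite[Proposition~4.3]{CHT-I}, whereas you attempt a direct elliptic-parametrix argument. Your sketch has two gaps. First, the assertion ``a local (Riemannian-type) elliptic heat trace expansion gives $\Tr(B\,e^{t\triangle})=\mathrm{O}(t^{1-n/2})$'' is not justified: $B$ is microlocalized in the elliptic region, but $e^{t\triangle}$ is the \emph{subelliptic} heat semigroup, and the one-step parametrix $\Op(a)=B(-\triangle)+R_1$ does not by itself transfer elliptic heat-trace bounds to $\Tr(B\,e^{t\triangle})$; iterating the parametrix produces $\Op(a)=B_N(-\triangle)^N+R_N$ with $R_N$ smoothing, but then $B_N(-\triangle)^N$ is again of order $0$ and you are back where you started. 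A correct argument needs either a microlocal heat parametrix in the elliptic cone or the computation of Theorem~\ref{thm_microlocal_weyl_equiregular}. Second, ``since $D\neq TM$ there is an open set of points $q$ with $\mathrm{rank}(D_q)<n$'' is false in the almost-Riemannian case (e.g.\ Baouendi--Grushin), where $\mathrm{rank}(D)=n$ off a lower-dimensional set; the conclusion $\Tr(e^{t\triangle})\gg t^{-n/2}$ is nonetheless true, but requires the result cited in the introduction (\cite[Proposition~4.2]{CHT-I}) rather than your open-set argument.
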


\begin{proof}
\begin{enumerate}
\item This is obvious.
\item Two quantizations of a bounded operator $B$ on $L^2(M,\mu)$ differ by a compact operator $C$ (more precisely, a pseudo-differential operator of order $-1$) and we know by \eqref{traceC} that $\frac{\Tr\left(C e^{t\triangle}\right)}{\Tr\left(e^{t\triangle}\right)} \rightarrow 0$ as $t\rightarrow 0^+$. 

To prove evenness with respect to involution, we use the Weyl quantization $\OpW$.
Considering a real-valued classical symbol $a=a(x,p)$ of order $0$, compactly supported with respect to $x$ in some chart $V\subset S^*M$,  
the density of the Schwartz kernel of $\OpW(a)$ with respect to the Lebesgue measure $m$ (see Appendix \ref{appendix_Schwartz}) is given in this chart by
$$
[ \OpW(a) ]_m(x,y) = \frac{1}{(2\pi)^n}\int_{\R^n\times\R^n} e^{i(x-y).p} a\left(\frac{x+y}{2},p\right)\, dy\, dp ,
$$
and thus
$$
\Tr\left(\OpW(a) \, e^{t\triangle}\right) = \frac{1}{(2\pi)^n}\int_{\R^n\times\R^n\times\R^n} e^{i(x-y).p} a\left(\frac{x+y}{2},p\right) e_{\triangle,m}(t,x,y)\, dx\, dy\, dp
$$
Since $a$ is real-valued, $\OpW(a)$ is selfadjoint and thus $\Tr\left(\OpW(a) \, e^{t\triangle}\right)$ is a real number and is therefore equal to its complex conjugate. Hence the above integral is also equal to 
$$
\int_{\R^n\times\R^n\times\R^n} e^{-i(x-y).p} a\big(\frac{x+y}{2},p\big) e_{\triangle,m}(t,x,y)\, dx\, dy\, dp
$$
because $a$ and $e_{\triangle,m}$ are real-valued. Making the change of variable $p\mapsto -p$, we conclude that $\Tr\left(\OpW(a) \, e^{t\triangle}\right)=\Tr\left(\OpW(\tilde a) \, e^{t\triangle}\right)$ where $\tilde a(x,p)=a(x,-p)$. This gives the second claim.

The third claim has been proved in \cite[Proposition 4.3]{CHT-I}.

\item 
The first fact follows from Lemma \ref{lem_prelim_measure}.

An sR isometry of $M$ preserves the sR metric, but not the volume $\mu$ in general; however, $w_{\triangle}$ and $w_{\triangle}$ do not depend on $\mu$. The second claim follows.
\end{enumerate}
\end{proof}

\begin{remark}\label{rem_double_covering}
In particular, as observed in \cite[Corollary 4.2]{CHT-I}, if the horizontal distribution $D$ is of codimension $1$ in $TM$, and if the local Weyl measure $w_\triangle$ exists, then the microlocal Weyl measure $W_\triangle$ exists and is equal to half of the pullback of $w_\triangle$ by the double covering $S\Sigma \rightarrow M$ which is the restriction of the canonical projection of $T^\star M $ onto $M$.
\end{remark}

\paragraph{Generalization: non compact manifold, manifold with boundary.}
We can generalize the definition of Weyl measures to the case where $M$ is not compact and/or $M$ has a boundary.
In all cases, we choose a nonpositive selfadjoint extension $(\triangle,\mathcal{D}(\triangle))$ of the second-order operator $\triangle$ defined by \eqref{deftriangle} (see \cite[Section 3.1]{CHT_AHL} for the geometric treatment of Dirichlet and Neumann boundary conditions).


The local and microlocal Weyl measures can always be defined locally. 
Indeed, by the hypoelliptic Kac's principle (see \cite[Theorem 3.1]{CHT_AHL}), the small-time asymptotics of hypoelliptic heat kernels is purely local, i.e., ``does not feel the boundary". This gives a way to define the Weyl measures (far from the boundary when $M$ has a boundary): take any relatively compact open subset $\Omega$ of $M$ with a smooth boundary. We consider the heat kernel $e_\Omega$ corresponding to the Dirichlet operator $\triangle_\Omega$ defined as the restriction of $\triangle$ to $\Omega$ with Dirichlet boundary conditions. Then, according to \cite[Theorem 3.1]{CHT_AHL}, we have $e_\Omega(t,q,q') = e(t,q,q')+\mathrm{O}(t^\infty)$ as $t\rightarrow 0^+$, uniformly with respect to $(q,q')\in\Omega^2$. Now, on $\Omega$ we can define the Weyl measure.

Making it global is a matter of choosing a normalization. There exists a canonical normalization when $w_\triangle(M)<+\infty$ (equivalently, $W_\triangle(M)<+\infty$), and the above definition of probability Weyl measures still makes sense.
When the total Weyl volume is not finite, a canonical normalization may not exist, however since any positive distribution is a Radon measure, the following construction always makes sense: assuming that 
$$
\Tr\left(\mathcal{M}_f e^{t\triangle}\right) =  \int_M f(q)\,  e(t,q,q) \, d\mu(q) = \varphi(t) Lf
$$
as $t\rightarrow 0^+$, where $L$ is a linear bounded form on the set of compactly supported continuous functions on $M$, we define $w_\triangle(f)=Lf$ (see also Theorem \ref{thm_weyl_measures_equiv} hereafter). The microlocal Weyl measure is defined similarly. Of course, in this more general case, they are not probability measures. Also, the main difficulty is in establishing such asymptotics.

\subsection{Equivalent expression and asymptotics of the spectral counting function}\label{sec_karamata}
Let $(\phi_k)_{k\in\N}$ be an orthonormal eigenbasis of $L^2(M,\mu)$ corresponding to the ordered eigenvalues $(\lambda_k)_{k\in\N}$, i.e., $\triangle\phi_k=-\lambda_k\phi_k$ for every $k\in\N$.

By a well known argument using the Karamata tauberian theorem, we have the following alternative expressions for the Weyl measures and the following asymptotics for the spectral counting function $N(\lambda)$ defined by \eqref{def_spectral_counting_function}.
Recall that a function $\chi:[0,+\infty)\rightarrow(0,+\infty)$ is said to be \emph{slowly varying} at $+\infty$ if for every fixed $s>0$ we have $\frac{\chi(s\lambda)}{\chi(\lambda)}\rightarrow 1$ as $\lambda\rightarrow+\infty$.
It is said to be slowly varying at $0$ if $\chi(1/\lambda)$ is slowly varying at $+\infty$.

\begin{theorem}\label{thm_weyl_measures_equiv}
Assume that there exist $\rho\in[0,+\infty)$ and a function $\chi:[0,+\infty)\rightarrow(0,+\infty)$ that is slowly varying at infinity and a linear bounded form $L$ on $C^0(M)$ (resp., on $\mathcal{S}^0(M)$) such that
\begin{equation}\label{assumption_asympt_trace}
\Tr\left(\mathcal{M}_f \, e^{t\triangle}\right) \underset{t\rightarrow 0^+}{\sim} \frac{1}{t^\rho} \chi(1/t) Lf
\qquad \left(\textrm{resp.,}\quad \Tr\left(\Op(a) e^{t\triangle}\right) \underset{t\rightarrow 0^+}{\sim} \frac{1}{t^\rho} \chi(1/t) La \right)
\end{equation}
for every nontrivial nonnegative continuous function $f$ on $M$ (resp., for every nontrivial nonnegative symbol $a$ of order $0$). 
Then the local (resp., microlocal) Weyl measure exists, and $w_\triangle = \frac{L}{L1}$ (resp., $W_\triangle = \frac{L}{L1}$) and we also have
\begin{equation}\label{def_local_cesaro-weyl_measure}
\int _M f \, dw_{\triangle} = \lim _{\lambda \rightarrow +\infty }\frac{1}{N(\lambda)}\sum _{\lambda_k \leq \lambda } \int_M f |\phi_k |^2 \, d\mu = \lim _{\lambda \rightarrow +\infty }\frac{1}{N(\lambda)}\sum _{\lambda_k \leq \lambda } \langle \mathcal{M}_f\phi_k,\phi_k\rangle_{L^2(M,\mu)} 
\end{equation}
for every continuous function $f$ on $M$; in other words, $w_{\triangle}$ is the weak limit of the sequence of probability measures $\frac{1}{N(\lambda)}\sum _{\lambda_k \leq \lambda } |\phi_k |^2 \, \mu$ (Ces\`aro mean) as $\lambda \rightarrow +\infty$.
Respectively, the microlocal Weyl measure $W_{\triangle}$ is given by
\begin{equation}\label{def_microlocal_cesaro-weyl_measure}
\int _{S^\star M} a \, dW_{\triangle} = \lim _{\lambda \rightarrow +\infty }\frac{1}{N(\lambda)}\sum _{\lambda_k \leq \lambda } \langle \Op (a)\phi_k , \phi_k \rangle_{L^2(M,\mu)} 
\end{equation}
for every classical symbol $a$ of order $0$, where $\Op$ denotes any quantization operator.
Moreover, 
$$
N(\lambda) \underset{\lambda\rightarrow +\infty}{\sim} \frac{L1}{\Gamma(\rho+1)} \lambda^\rho\chi(\lambda) .
$$
\end{theorem}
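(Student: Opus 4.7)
The statement follows from the Karamata tauberian theorem combined with a linearity/shift argument that extends the hypothesis \eqref{assumption_asympt_trace} from nonnegative test functions to arbitrary ones. I recall the form of Karamata I use: if $\sigma$ is a positive Borel measure on $[0,+\infty)$ whose Laplace transform satisfies $\int_0^{+\infty} e^{-t\lambda}\,d\sigma(\lambda)\sim t^{-\rho}\chi(1/t)$ as $t\to 0^+$, with $\rho\geq 0$ and $\chi$ slowly varying at infinity, then $\sigma([0,\lambda])\sim\chi(\lambda)\lambda^\rho/\Gamma(\rho+1)$ as $\lambda\to +\infty$; this is precisely where the slow-variation hypothesis on $\chi$ is used.

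The first step is to extend \eqref{assumption_asympt_trace} by linearity. Given any $f\in C^0(M)$, splitting into real and imaginary parts and adding a real constant $c$ large enough that $\mathrm{Re}(f)+c$ and $\mathrm{Im}(f)+c$ are both $\geq 0$, the identity $\Tr(\mathcal{M}_{f+c}e^{t\triangle}) = \Tr(\mathcal{M}_{f}e^{t\triangle}) + c\,\Tr(e^{t\triangle})$ together with $L(f+c) = Lf + cL1$ transfers the asymptotic to $f$; the microlocal case is identical via $\mathrm{Op}(a+c) = \mathrm{Op}(a) + c\,\mathrm{Id}$. Taking $f\equiv 1$ (resp., $a\equiv 1$) in \eqref{assumption_asympt_trace} and dividing the two asymptotics immediately shows that the ratios in \eqref{def_local_weyl_measure} and \eqref{def_microlocal_weyl_measure} converge to $Lf/L1$ and $La/L1$ respectively, proving existence of the Weyl measures and the identifications $w_\triangle=L/L1$ and $W_\triangle=L/L1$.

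I then apply Karamata. Taking $\sigma=\sum_k\delta_{\lambda_k}$, whose Laplace transform is $\Tr(e^{t\triangle})\sim t^{-\rho}\chi(1/t)\,L1$, immediately yields $N(\lambda)\sim (L1/\Gamma(\rho+1))\lambda^\rho\chi(\lambda)$. For \eqref{def_local_cesaro-weyl_measure} with a nonnegative $f\in C^0(M)$, the spectral measure $\sigma_f=\sum_k\langle\mathcal{M}_f\phi_k,\phi_k\rangle_{L^2(M,\mu)}\delta_{\lambda_k}$ is positive (since $\mathcal{M}_f$ is a positive operator) and has Laplace transform $\Tr(\mathcal{M}_f e^{t\triangle})$, so Karamata gives $\sigma_f([0,\lambda])\sim (Lf/\Gamma(\rho+1))\lambda^\rho\chi(\lambda)$; dividing by the asymptotic of $N(\lambda)$ yields \eqref{def_local_cesaro-weyl_measure} for $f\geq 0$, and the first-step shift argument extends it to all continuous $f$.

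The main obstacle is the microlocal Cesàro identity \eqref{def_microlocal_cesaro-weyl_measure}: the measure $\sigma_a=\sum_k\langle\mathrm{Op}(a)\phi_k,\phi_k\rangle_{L^2(M,\mu)}\delta_{\lambda_k}$ need not be positive, because under a generic quantization the operator $\mathrm{Op}(a)$ need not be positive even when $a\geq 0$, so Karamata does not apply directly. I circumvent this using Lemma \ref{lem_weyl_properties}(2) together with \eqref{traceC}: the asymptotic of $\Tr(\mathrm{Op}(a)e^{t\triangle})$ modulo $\mathrm{o}(\Tr(e^{t\triangle}))$, and hence the values of the limits in \eqref{def_microlocal_weyl_measure} and \eqref{def_microlocal_cesaro-weyl_measure}, are independent of the chosen quantization. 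I therefore fix a positivity-preserving quantization (for instance the anti-Wick / Toeplitz quantization), so that $\mathrm{Op}(a)\geq 0$ whenever $a\geq 0$, making $\sigma_a\geq 0$ for nonnegative real $a$; Karamata then applies exactly as in the previous paragraph, and the shift-and-linearity argument extends \eqref{def_microlocal_cesaro-weyl_measure} to arbitrary complex classical symbols of order $0$. Quantization-independence finally shows the identity holds for any quantization, concluding the proof.
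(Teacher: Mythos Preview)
Your proof is correct and follows essentially the same route as the paper: reduce to Karamata's tauberian theorem applied to the Laplace--Stieltjes transforms $\mathcal{L}(F_f)(t)=\Tr(\mathcal{M}_f e^{t\triangle})$ of the nondecreasing functions $F_f(\lambda)=\sum_{\lambda_k\leq\lambda}\langle\mathcal{M}_f\phi_k,\phi_k\rangle$, and divide by the case $f\equiv 1$.

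The one substantive addition you make, which the paper omits, is in the microlocal case. The paper simply writes ``the proof is similar for the microlocal Weyl measure'' without addressing the fact that $\langle\Op(a)\phi_k,\phi_k\rangle$ need not be nonnegative for $a\geq 0$ under a generic quantization, so Karamata does not apply directly. Your fix---passing to a positivity-preserving quantization such as anti-Wick, applying Karamata there, and then invoking quantization-independence (Lemma~\ref{lem_weyl_properties}(2) and \eqref{traceC})---is the standard and correct way to close this gap. The shift argument you use to extend from nonnegative to arbitrary test functions is likewise a detail the paper leaves implicit.
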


\begin{proof}
The argument is standard.
Formally, passing from the definition in terms of small-time asymptotics of the heat kernel as in \eqref{def_local_weyl_measure} and \eqref{def_microlocal_weyl_measure} to the definition in terms of asymptotic eigenvalues as in \eqref{def_local_cesaro-weyl_measure} and \eqref{def_microlocal_cesaro-weyl_measure} can be done by a Laplace transform. The precise relationship is well known in the existing literature and can be achieved thanks to the Karamata tauberian theorem (see \cite{Ka-30}). 
We recall hereafter the more general version of \cite[Chapter XIII, Section 5, Theorem 2]{Feller2}.

\begin{lemma}[\cite{Feller2}]\label{thm_karamata_feller}
Let  $F:[0,+\infty)\rightarrow\R$ be a nondecreasing function which is of locally bounded variation (i.e., the Radon measure $dF$ is nonnegative and locally finite) and whose Laplace-Stieltjes transform $\mathcal{L}(F)(t) = \int_{0}^{+\infty} e^{-t\lambda} \, dF(\lambda) $ is well defined for every $t>0$. 
Let $\chi:(0,+\infty)\rightarrow\R$ be a function that is slowly varying at $+\infty$. Let $\rho\in [0,+\infty)$. 
\begin{itemize}[leftmargin=*]
\item We have
$$
\mathcal{L}(F)(t)  \underset{t\rightarrow 0^+}{\sim} \frac{1}{t^{\rho}} \chi(1/t)
\quad\Longleftrightarrow\quad
F(\lambda) \underset{\lambda\rightarrow +\infty}{\sim}  \frac{1}{\Gamma(\rho+1)} \lambda^\rho \chi(\lambda) .
$$
\item Note that $\frac{F(\lambda)}{\mathcal{L}(F)(1/\lambda)} \rightarrow \frac{1}{\Gamma(\rho+1)}$ as $\lambda\rightarrow +\infty$ or $0$. For $\rho=+\infty$, the result still makes sense: if there exists $s>1$ such that either $\frac{\mathcal{L}(F)(st)}{\mathcal{L}(F)(t)}\rightarrow 0$ as $t\rightarrow 0$, or $\frac{F(s\lambda)}{F(\lambda)}\rightarrow +\infty$ as $\lambda\rightarrow +\infty$, then $\frac{F(\lambda)}{\mathcal{L}(F)(1/\lambda)} \rightarrow 0$ as $\lambda\rightarrow +\infty$.
\item These results are still valid when exchanging $0$ and $+\infty$ in the above limits, i.e., $\lambda\rightarrow 0$ and $t\rightarrow +\infty$.
\end{itemize}
\end{lemma}


Let us establish \eqref{def_local_cesaro-weyl_measure}.
Let $f$ be a nontrivial nonnegative continuous function on $M$. The operator $\mathcal{M}_fe^{t\triangle}$ is of trace class. We define $F_f(\lambda) = \sum_{\lambda_k\leq\lambda} \int_M f\vert\phi_k\vert^2\, d\mu$, for every $\lambda\in\R$. The function $F_f$ is nondecreasing, is of local bounded variation, and its Laplace-Stieltjes transform is
$$
\mathcal{L}(F_f)(t) = \int_0^{+\infty} e^{-t\lambda}\, dF_f(\lambda)
= \sum_{k=0}^{+\infty} e^{-\lambda_k t}\int_M f\vert\phi_k\vert^2\, d\mu  
= \int_M f(q) e(t,q,q)\, d\mu(q) 
= \Tr(\mathcal{M}_f \, e^{t\triangle}) 
$$
because $e(t,q,q')=\sum_{k=0}^{+\infty} e^{-\lambda_k t} \phi_k(q) \overline{\phi_k(q')}$.
Note that $N(\lambda)=F_1(\lambda)$.
By the assumption \eqref{assumption_asympt_trace}, we have $\mathcal{L}(N)(t)\sim \frac{1}{t^\rho} \chi(1/t) L1$ as $t\rightarrow 0^+$. The asymptotics of $N(\lambda)$ then follows from Lemma \ref{thm_karamata_feller}.

Besides, by the assumption \eqref{assumption_asympt_trace}, the limit \eqref{def_local_weyl_measure} exists and is equal to $\frac{Lf}{L1}$, hence 
$$
\int_M f\, dw_{\triangle} = \lim_{t\rightarrow 0^+} \frac{\mathcal{L}(F_f)(t)}{\mathcal{L}(F_1)(t)} = \frac{Lf}{L1} .
$$
Applying Lemma \ref{thm_karamata_feller}, the limit \eqref{def_local_cesaro-weyl_measure} exists and is equal to $\frac{Lf}{L1}$, hence
$$
\lim_{\lambda\rightarrow +\infty} \frac{F_f(\lambda)}{F_1(\lambda)} = \frac{Lf}{L1} .
$$
The conclusion follows. The proof is similar for the microlocal Weyl measure.
\end{proof}

\begin{remark}
In \cite{CHT-I}, the local and microlocal Weyl measures have been defined  by \eqref{def_local_cesaro-weyl_measure} and \eqref{def_microlocal_cesaro-weyl_measure}.
Note that these expressions do not depend on the choice of the orthonormal basis of eigenfunctions.
\end{remark}

\begin{remark}
The assumption \eqref{assumption_asympt_trace} is satisfied in all cases treated in this paper (equiregular and stratified singular cases).
We are not aware of any example where it would not be satisfied. 
Such an example would have to be a singular case where the singular set is not stratifiable.
\end{remark}

\subsection{An additional comment}\label{sec_additional}


Assuming that $\mu=h\nu$ with $h$ a positive smooth function on $M$, using \eqref{density_nilp} (definition of the nilpotentization of a measure), 
we have
\begin{equation}\label{add_densite_h}
h(q) = \frac{d\mu}{d\nu}(q) = \frac{\widehat{\mu}^{q}}{\widehat{\nu}^{q}} = \frac{\widehat{\mu}^{q}\big(\hatBsR^q(0,1)\big)}{\widehat{\nu}^{q}\big(\hatBsR^q(0,1)\big)}
\end{equation}
and $\widehat{\mu}^{q} = h(q) \widehat{\nu}^{q}$, and hence $h(q) e_{\widehat{\triangle}^{q},\widehat{\mu}^{q}} = e_{\widehat{\triangle}^{q},\widehat{\nu}^{q}}$ (see Appendix \ref{appendix_Schwartz}), yielding in particular that 
\begin{equation}\label{add_kernelnilp_mu_nu}
e_{\widehat{\triangle}^{q},\widehat{\mu}^{q}}(t,x,x')\, d\mu(q) = e_{\widehat{\triangle}^{q},\widehat{\nu}^{q}}(t,x,x')\, d\nu(q) \qquad \forall t>0\qquad \forall x,x'\in\widehat{M}^q\qquad \forall q\in M .
\end{equation}
In particular, $e_{\widehat{\triangle}^{q},\widehat{\mu}^{q}}(1,0,0)\, d\mu(q)$ does not depend on the smooth measure $\mu$.

\begin{definition}\label{def_rho}
Given an arbitrary smooth measure $\mu$ on $M$, we define the positive measure $\rho$ on $M$ as the measure whose density with respect to $\mu$ is the function $q\mapsto e_{\widehat{\triangle}^{q},\widehat{\mu}^{q}}(1,0,0)$. The measure $\rho$ does not depend on $\mu$.
\end{definition}

The function $q\mapsto e_{\widehat{\triangle}^{q},\widehat{\mu}^{q}}(1,0,0)$ may fail to be continuous or even locally integrable at singular points (for instance, it is not locally integrable near $\S$ in the Baouendi-Grushin and Martinet cases). 

We are going to see in Part \ref{part_equiregular} that, in the equiregular case, the function $q\mapsto e_{\widehat{\triangle}^{q},\widehat{\mu}^{q}}(1,0,0)$ is smooth and that the measure $\rho$ coincides, up to constant scaling, with the local Weyl measure.

In general, if the function $q\mapsto e_{\widehat{\triangle}^{q},\widehat{\mu}^{q}}(1,0,0)$ is locally integrable at $q$, then, extending the process of nilpotentization of measures to measures having a locally integrable density, we obtain that $\widehat{\rho}^q=e_{\widehat{\triangle}^{q},\widehat{\mu}^{q}}(1,0,0)\widehat{\mu}^q$ for every smooth measure $\mu$ and thus, in particular, $e_{\widehat{\triangle}^{q},\widehat{\rho}^{q}}(1,0,0)=1$.

\section{Green kernel estimates and essential selfadjointness}\label{sec_Green}
Given any $\lambda\in\C\setminus\mathrm{Spec}(\triangle)$, the resolvent kernel $G_\lambda$ is defined as the Schwartz kernel of the resolvent $(\lambda\,\mathrm{id}-\triangle)^{-1}$. 
Given any $q_0\in M$, the Green kernel $\widehat{G}^{q_0}$ of $\widehat{\triangle}^{q_0}$ (proved to exist hereafter) is defined as the Schwartz kernel of $(\widehat{\triangle}^{q_0})^{-1}$ (see \cite[Chapter 13]{Grigor'yan}).

\begin{theorem}\label{thm_green}
For every $q_0\in M$ (regular or not), the Green kernel $\widehat{G}^{q_0}$ exists and we have the asymptotic expansion at any order $N\in\N^*$ in $C^\infty(\R^n\setminus\{0\})$
$$
G_\lambda(q_0,\delta^{q_0}_\varepsilon(x)) = \varepsilon^{2-\mathcal{Q}(q_0)} \left( \widehat{G}^{q_0}(0,x)
+ \sum_{i=1}^N \varepsilon^i b_i^{q_0}(x) + \mathrm{o}(\vert\varepsilon\vert^N) \right)
$$
as $\varepsilon\rightarrow 0$, $\varepsilon\neq 0$, where the functions $b_i^{q_0}$ are smooth. 
\end{theorem}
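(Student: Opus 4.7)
The natural plan is to exploit the spectral identity $G_\lambda(q,q') = \int_0^{+\infty} e^{-\lambda t}\, e(t,q,q')\, dt$, valid for $\mathrm{Re}(\lambda)$ large enough (and extendable to any $\lambda \notin \mathrm{Spec}(\triangle)$ after subtracting the smooth contribution of the projection onto the kernel of $\triangle$), which reduces the off-diagonal asymptotics of $G_\lambda$ to a question about the small-time behavior of $e(t,q,q')$ in a sub-Riemannian neighborhood of the diagonal---precisely what Theorem \ref{lemfondamental} controls. Performing the parabolic rescaling $t = \varepsilon^2 s$ yields
$$G_\lambda(q_0, \delta^{q_0}_\varepsilon(x)) = \varepsilon^2 \int_0^{+\infty} e^{-\lambda \varepsilon^2 s}\, e(\varepsilon^2 s, q_0, \delta^{q_0}_\varepsilon(x))\, ds,$$
and Theorem \ref{lemfondamental}, applied with scaling parameter $\varepsilon$, provides an expansion
$$\varepsilon^{\mathcal{Q}(q_0)} e(\varepsilon^2 s, q_0, \delta^{q_0}_\varepsilon(x)) = \widehat{e}^{q_0}(s, 0, x) + \sum_{i=1}^N \varepsilon^i a_i^{q_0}(s,x) + \mathrm{o}(|\varepsilon|^N)$$
in the $C^\infty$ topology on compact subsets of $(0,+\infty) \times \mathbb{R}^n$.

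\textbf{Derivation of the expansion and existence of $\widehat{G}^{q_0}$.} Inserting this expansion into the integral and Taylor expanding $e^{-\lambda \varepsilon^2 s}$ in powers of $\varepsilon$, one collects the coefficient of each $\varepsilon^{2-\mathcal{Q}(q_0)+i}$ to obtain the claimed structure. The leading term is $\varepsilon^{2-\mathcal{Q}(q_0)}\,\widehat{G}^{q_0}(0,x)$ with
$$\widehat{G}^{q_0}(0,x) := \int_0^{+\infty} \widehat{e}^{q_0}(s, 0, x)\, ds,$$
while the $b_i^{q_0}(x)$ are $s$-integrals of the $a_i^{q_0}$ together with contributions from Taylor coefficients of the exponential at even orders. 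Existence of $\widehat{G}^{q_0}(0,x)$ for $x \neq 0$ rests on two complementary bounds on the nilpotentized heat kernel: near $s = 0^+$, the Gaussian upper bound $\widehat{e}^{q_0}(s,0,x) \leq C\, s^{-\mathcal{Q}(q_0)/2}\exp(-c\,\hatdsR^{q_0}(0,x)^2/s)$ gives super-exponential decay, while for $s \to +\infty$ the Carnot homogeneity yields $\widehat{e}^{q_0}(s,0,x) = s^{-\mathcal{Q}(q_0)/2}\widehat{e}^{q_0}(1, 0, \delta^{q_0}_{1/\sqrt{s}}(x))$, giving a $s^{-\mathcal{Q}(q_0)/2}$ tail that is integrable at infinity provided $\mathcal{Q}(q_0) \geq 3$ (the low-dimensional Riemannian cases being classical). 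Smoothness of $\widehat{G}^{q_0}(0,\cdot)$ off the origin follows from the hypoellipticity of $\widehat{\triangle}^{q_0}$.

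\textbf{Main obstacle: uniform control in time and $C^\infty$ convergence.} The technical heart of the argument is to justify rigorously the interchange of $\varepsilon \to 0$ with the $s$-integral in the $C^\infty(\mathbb{R}^n\setminus\{0\})$ topology, and not just pointwise. The plan is to split the $s$-integral into three regions: (i) small $s$, where for $x \neq 0$ the sub-Riemannian Gaussian upper bound on $e$ recalled in Appendix \ref{app_sR_kernel} gives super-exponential decay of the integrand uniformly in $\varepsilon$; (ii) a bounded middle region $s \in [\delta, s_0]$, where Theorem \ref{lemfondamental} furnishes a uniform $C^\infty$ expansion that integrates termwise; (iii) large $s$, where exponential decay in $t = \varepsilon^2 s$ follows from $\mathrm{Re}(\lambda) > 0$ (or from the spectral gap of $-\triangle$ after subtracting the zero mode), again uniformly in $\varepsilon$. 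Matching the three regimes and propagating uniform-in-$\varepsilon$ bounds to all orders of differentiation in $x$ is the delicate step, to be handled by combining hypoelliptic regularity estimates on $e$ with the nilpotent approximation underlying Theorem \ref{lemfondamental}; once this is done, the $C^\infty$ expansion follows by differentiating the integral representation and rerunning the same splitting argument.
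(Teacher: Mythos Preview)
Your overall strategy—Laplace transform, parabolic rescaling $t = \varepsilon^2 s$, invoking Theorem~\ref{lemfondamental}, and Gaussian bounds—is exactly the paper's approach, and you correctly identify the main obstacle (uniformity in $s$ of the expansion from Theorem~\ref{lemfondamental}).

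However, your treatment of region (iii) contains a genuine gap. You claim that ``exponential decay in $t = \varepsilon^2 s$ follows from $\mathrm{Re}(\lambda) > 0$ (or from the spectral gap) \ldots\ uniformly in $\varepsilon$''. This fails: for any \emph{fixed} $s_0$, one has $t = \varepsilon^2 s_0 \to 0$ as $\varepsilon \to 0$, so neither $e^{-\lambda t}$ nor the spectral gap of $-\triangle$ provides any decay on $[s_0,\infty)$ uniformly in small $\varepsilon$. If instead you take $s_0$ to be $\varepsilon$-dependent (say $s_0 = c/\varepsilon^2$, so that $t \geq c$ there), then region (ii) becomes the non-compact interval $[\delta, c/\varepsilon^2]$, and Theorem~\ref{lemfondamental} no longer applies uniformly on it. Either way the three-region split, as stated, does not close.

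The paper's remedy is to split in the \emph{original} time variable at a fixed level $t = c$, i.e.\ at $s = c/\varepsilon^2$. For $t \geq c$ the heat kernel is smooth and bounded in $q'$, so $\int_c^\infty e^{-\lambda t}\, e(t, q_0, \delta_\varepsilon^{q_0}(x))\,dt$ is a smooth function of $\varepsilon$ and is absorbed into the tail of the expansion. On the remaining range $s \in (0, c/\varepsilon^2]$ (where $\varepsilon^2 s \leq c$ is genuinely small), the Gaussian upper bound yields a \emph{single} dominating function $C\, s^{-\mathcal{Q}(q_0)/2} \exp\bigl(-\hatdsR^{q_0}(0,x)^2/Cs\bigr)$, integrable on all of $(0,\infty)$ for $x \neq 0$, and dominated convergence then justifies the termwise expansion (analogous exponential estimates on the $f_i^{q_0}$ handle higher orders). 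In other words, your region-(i) bound is the correct tool—it just has to cover the whole interval $(0, c/\varepsilon^2]$ rather than a fixed neighborhood of $0$; there is no separate large-$s$ mechanism.
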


\begin{proof}
Let us first assume that $\mathrm{Re}(\lambda)>0$.
Since $(\lambda\,\mathrm{id}-\triangle)^{-1} = \int_0^{+\infty} e^{-\lambda t} e^{t\triangle}\, dt$, we have $G_\lambda(q_0,q) = \int_0^{+\infty} e^{-\lambda t} e(t,q_0,q)\, dt$ and thus, taking some $c>0$ arbitrary,
\begin{equation}\label{green2int}
G_\lambda(q_0,\delta^{q_0}_\varepsilon(x)) = 
\int_c^{+\infty} e^{-\lambda t} e(t,q_0,\delta^{q_0}_\varepsilon(x))\, dt +
\varepsilon^{2-\mathcal{Q}(q_0)} \int_0^{c/\varepsilon^2} e^{-\lambda\varepsilon^2 t} \varepsilon^{\mathcal{Q}(q_0)}e(\varepsilon^2t,q_0,\delta^{q_0}_\varepsilon(x))\, dt
\end{equation}
By boundedness of the heat kernel for $t\geq c$, the first integral at the right-hand side of \eqref{green2int} is a smooth function of $\varepsilon$. Let us treat the second integral.
Theorem \ref{lemfondamental} in Appendix \ref{app_lemfondam} gives an expansion of $\varepsilon^{\mathcal{Q}(q_0)}e(\varepsilon^2t,q_0,\delta^{q_0}_\varepsilon(x))$ with respect to $\varepsilon$, at any order, whose first term is $\widehat{e}^{q_0}(t,0,x)$, but the difficulty here is that uniformity with respect to $t$ is ensured only on any compact interval of $(0,+\infty)$. To overcome this difficulty and apply the dominated convergence theorem, we use the exponential estimates \eqref{exp_estimates} of the heat kernel (recalled in Appendix \ref{app_sR_kernel}), which imply, since $\dsR(0,\delta_\varepsilon(x)) = \varepsilon\, \dsR(0,x)$ in the local chart, taking $c>0$ small enough and $\varepsilon^2 t\leq c$, that
$$
\varepsilon^{\mathcal{Q}(q_0)}e(\varepsilon^2t,q_0,\delta^{q_0}_\varepsilon(x)) \leq \frac{C}{t^{\mathcal{Q}(q_0)}} e^{-\dsR(0,x)^2/Ct}
$$
for some constant $C>0$, for every $t\in(0,c/\varepsilon^2]$ and for every $x\in K$, with $K$ compact subset of $\R^n$. Now, when $K$ does not contain $0$, we obtain a uniform domination term. 
As concerns the subsequent terms in the expansion, it follows from the analysis performed in \cite[Section 6.2]{CHT_AHL} that the functions $f_i^{q_0}$ in the expansion \eqref{complete_expansion} of Theorem \ref{lemfondamental} enjoy similar exponential estimates. 
The result follows, for $\mathrm{Re}(\lambda)>0$.

Now, let us take any $\lambda\in\C\setminus\mathrm{Spec}(\triangle)$. Since $e(t,q_0,q)=\sum_{j=0}^{+\infty} e^{-\lambda_j t}\phi_j(q_0)\phi_j(q)$, we split the sum in two parts, with a first finite sum $\sum_{j=0}^k$ for some $k\in\N$ large enough, and an infinite sum $\sum_{j=k+1}^{+\infty}$. The heat kernel $e_{k+1}$ defined from the latter sum is treated as above, while the first finite sum, multiplied by $e^{-\lambda t}$ and integrated with respect to $t$, gives a smooth function because all eigenfunctions $\phi_j$ have a polynomial growth (this fact follows by rough estimates of the Weyl law and by Sobolev embedding estimates as developed in \cite{CHT_AHL}).
\end{proof}

In Theorem \ref{thm_green}, not only we recover the main results of \cite{FeffermanSanchezCalle_AnnMath1986, Sac-84}, according to which the Green kernel $G(q_0,q)$ is bounded above and below, up to scaling, by $r^2/\mu(\BsR(q_0,r))$ with $r=\dsR(q_0,q)$, but we also improve them since we obtain an equivalent (and even, a full expansion) in terms of the nilpotentization. Moreover, thanks to Theorem \ref{lemfondamental}, the above argument is short. 

\paragraph{Application: essential selfadjointness of sR Laplacians.}
Given any $q_0\in M$, let us study the possibility of having different self-adjoint extensions of the Laplacian on $M\setminus\{q_0\}$. This problem has been studied in \cite{yCdV-82} in the Riemannian case and in \cite{AdamiBoscainFranceschiPrandi} in the 3D sR case. Hereafter we extend their result to the general sR case, obtaining the following new and simple result.

\begin{corollary}
Given any $q_0\in M$ (regular or not), the operator $\triangle$ on $C^\infty(M\setminus\{q_0\})$ is essentially selfadjoint if and only if $\mathcal{Q}(q_0)\geq 4$. 
\end{corollary}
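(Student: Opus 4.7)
The plan is to invoke the von Neumann criterion: denoting by $\triangle_0$ the restriction of $\triangle$ to $C_c^\infty(M\setminus\{q_0\})$, essential selfadjointness is equivalent to $\ker(\triangle_0^*-\lambda)=\{0\}$ for some (equivalently, any) $\lambda\in\C$ with $\mathrm{Im}\,\lambda>0$. Any $u$ in this defect subspace lies in $L^2(M)$, is smooth on $M\setminus\{q_0\}$ by hypoellipticity of $\triangle-\lambda$, and satisfies $(\triangle-\lambda)u=0$ there; consequently $T:=(\triangle-\lambda)u$, viewed as a distribution on $M$, is supported at $\{q_0\}$, and so a finite sum $T=\sum_{|\alpha|\leq N}c_\alpha\partial^\alpha\delta_{q_0}$ in a local chart about $q_0$. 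The core quantitative input is an $L^2$ estimate near $q_0$: from Theorem \ref{thm_green} and its analogue in the first variable, differentiating in ambient coordinates of weight $w(\alpha)$ under the dilation $\delta^{q_0}_\varepsilon$ gives $(\partial^\alpha G_\lambda(\cdot,q_0))(\delta^{q_0}_\varepsilon(x))\sim\varepsilon^{2-\mathcal{Q}(q_0)-w(\alpha)}(\partial^\alpha\widehat{G}^{q_0}(\cdot,0))(x)$. Combined with the ball-box estimate $\mu(\BsR(q_0,r))\asymp r^{\mathcal{Q}(q_0)}$, a dyadic decomposition over annuli $A_k:=\{2^{-k-1}\leq\dsR(q_0,q)\leq 2^{-k}\}$ produces $\int_{A_k}|\partial^\alpha G_\lambda(q,q_0)|^2\,d\mu(q)\asymp 2^{k(\mathcal{Q}(q_0)+2w(\alpha)-4)}$, summable as $k\to\infty$ iff $\mathcal{Q}(q_0)+2w(\alpha)<4$.

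If $\mathcal{Q}(q_0)<4$, the case $\alpha=0$ gives $G_i(\cdot,q_0)\in L^2(M)$; testing against $\phi\in C_c^\infty(M\setminus\{q_0\})$ (so $\phi(q_0)=0$) in the distributional identity $(\triangle-i)G_i(\cdot,q_0)=-\delta_{q_0}$ yields $\langle G_i(\cdot,q_0),\triangle\phi\rangle=i\langle G_i(\cdot,q_0),\phi\rangle$, so $G_i(\cdot,q_0)$ is a nonzero element of $\ker(\triangle_0^*-i)$ and $\triangle_0$ fails to be essentially selfadjoint. Conversely, assume $\mathcal{Q}(q_0)\geq 4$ and let $u$ be a defect element with source $T$. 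Define $\widetilde u$ as the linear combination of derivatives of $G_\lambda(\cdot,q_0)$ dictated by $T$ so that $(\triangle-\lambda)\widetilde u=T$ distributionally on $M$ (convolution of $T$ with the Green kernel). The difference $u-\widetilde u$ satisfies $(\triangle-\lambda)(\cdot)=0$ distributionally on $M$, is smooth by hypoellipticity, lies in $\mathrm{Dom}(\triangle)$, and vanishes because $\lambda\notin\mathrm{Spec}(\triangle)$; hence $u=\widetilde u$. Grouping terms by weight, write $\widetilde u(\delta^{q_0}_\varepsilon(x))=\sum_w\varepsilon^{2-\mathcal{Q}(q_0)-w}(P_w(x)+O(\varepsilon))$ uniformly on compacta of $\widehat{M}^{q_0}\setminus\{0\}$, where $P_w$ is a homogeneous function of degree $2-\mathcal{Q}(q_0)-w$ on $\widehat{M}^{q_0}$, namely a linear combination of weight-$w$ derivatives of $\widehat{G}^{q_0}(\cdot,0)$ coming from $T$.

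Assuming $T\neq 0$, let $w^*:=\max\{w:P_w\not\equiv 0\}$, whose existence is justified below. The dominant singular behavior of $\widetilde u$ near $q_0$ is then governed by $P_{w^*}$, which, being continuous and homogeneous, is bounded below on an open conic subset of $\widehat{M}^{q_0}\setminus\{0\}$. The dyadic estimate of the first paragraph applied to $\widetilde u$ yields $\int_{\BsR(q_0,r_0)}|\widetilde u|^2\,d\mu\gtrsim\sum_k 2^{k(\mathcal{Q}(q_0)+2w^*-4)}=+\infty$, since $\mathcal{Q}(q_0)\geq 4$ and $w^*\geq 0$ force the exponent to be nonnegative. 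This contradicts $u=\widetilde u\in L^2$, so $T=0$ and $u=0$. The principal obstacle is the existence of $w^*$, i.e.\ that not all $P_w$ vanish identically when $T\neq 0$: this follows from the fact that $\widetilde v:=\sum_w P_w$ is the distribution on $\widehat{M}^{q_0}$ obtained by convolving $T$ (transported to the origin of $\widehat{M}^{q_0}$) with $\widehat{G}^{q_0}$, and therefore satisfies $\widehat{\triangle}^{q_0}\widetilde v=-T$ distributionally; if every $P_w$ vanished, then $\widetilde v\equiv 0$ on $\widehat{M}^{q_0}\setminus\{0\}$, and by local integrability of each term (since $\mathcal{Q}(q_0)+w(\alpha)>0$) also as a distribution on $\widehat{M}^{q_0}$, forcing $T=0$. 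The remaining verifications---termwise differentiability of the expansion in Theorem \ref{thm_green}, the ball-box volume estimate, and hypoelliptic regularity---are standard.
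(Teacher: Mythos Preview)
Your strategy is reasonable and the direction $\mathcal Q(q_0)<4\Rightarrow$ not essentially selfadjoint is fine. But the converse has a genuine gap at the step where you claim $w^*:=\max\{w:P_w\not\equiv 0\}$ exists whenever $T\neq 0$. Your justification rests on two assertions that both fail: that each $P_w$ equals the weight-$w$ combination $\sum_{w(\alpha)=w}c'_\alpha\,\partial^\alpha\widehat G^{q_0}(\cdot,0)$ dictated by $T$, and that ``local integrability (since $\mathcal Q(q_0)+w(\alpha)>0$)'' forces $\widetilde v=0$ as a distribution once it vanishes on $\widehat M^{q_0}\setminus\{0\}$. The first is only true for the top weight, since lower $P_w$ receive contributions from the subleading terms $b_i^{q_0}$ of the Green expansion; the second is simply wrong, because $\partial^\alpha\widehat G^{q_0}(\cdot,0)$ is homogeneous of degree $2-\mathcal Q(q_0)-w(\alpha)$ and hence $L^1_{\mathrm{loc}}$ at the origin only when $w(\alpha)<2$. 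An explicit counterexample: take $T=(\lambda-\triangle)\delta_{q_0}$. Then $\widetilde u=(\lambda-\triangle)^{-1}T=\delta_{q_0}$, which vanishes identically on $M\setminus\{q_0\}$, so every $P_w\equiv 0$ and no $w^*$ exists, although $T\neq 0$. There is still no $L^2$ defect element for this $T$ (since $u=\widetilde u=\delta_{q_0}\notin L^2$), but that contradiction is invisible to your pointwise asymptotic machinery.

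The paper bypasses all of this with a single observation: since $u\in L^2$ and $\triangle$ is a second-order differential operator with smooth coefficients, $T=(\lambda-\triangle)u\in H^{-2}(M)$; but for $\dim M\geq 2$ no derivative of $\delta_{q_0}$ lies in $H^{-2}$, so necessarily $T=\alpha\,\delta_{q_0}$ and $u=\alpha\,G_\lambda(q_0,\cdot)$. Theorem~\ref{thm_green} then reduces the question to the convergence of $\int_0^1 r^{3-\mathcal Q(q_0)}\,dr$. Your route can be repaired by treating separately the case where $\widetilde u$ is a distribution supported at $q_0$, but the $H^{-2}$ shortcut is far simpler and avoids any need for a two-variable Green expansion or analysis of derivatives of $G_\lambda$.
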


Hence, the operator $\triangle$ on $C^\infty(M\setminus\{q_0\})$ is not essentially selfadjoint only in the Riemannian case in dimension $\leq 3$ and, in the non-Riemannian sR case, only in the Baouendi-Grushin case without tangency point if $q_0$ belongs to the singular set. 

\begin{proof}
We first note that $\triangle$ is not essentially selfadjoint if and only if for every $\lambda\in\C$ there exists $u\in L^2(M,\mu)$ such that $(\lambda\,\mathrm{id}-\triangle)u=0$ on $M\setminus\{q_0\}$ in the sense of distributions. In that case, $(\lambda\,\mathrm{id}-\triangle)u$ must be a finite linear combination of the Dirac $\delta_{q_0}$ and its derivatives, but there cannot be any derivatives because they are not in $H^{-2}(M,\mu)$, 
hence $(\lambda\,\mathrm{id}-\triangle)u=\alpha\delta_{q_0}$ for some $\alpha\in\R$ and thus $u=\alpha G_\lambda(q_0,\cdot)$. Now, by Theorem \ref{thm_green}, we have $G_\lambda(q_0,\cdot)\in L^2(M,\mu)$ if and only if $\mathcal{Q}(q_0)\leq 3$. Indeed, taking sR polar coordinates, this condition is equivalent to the convergence of the integral $\int_0^1 r^{3-\mathcal{Q}(q_0)}\, dr$.
The result thus follows if $\dim M\geq 3$.
\end{proof}

\begin{remark}
Another argument consists of writing
$$
\Vert G_\lambda(q_0,\cdot)\Vert_{L^2(M,\mu)}^2 = \int_{[0,+\infty)^2} {\mkern-20mu} e^{-\lambda(t+s)}e(t,q_0,q) \, e(s,q,q_0)\, dt\, ds
= \int_{[0,+\infty)^2} {\mkern-20mu}  e^{-\lambda(t+s)}e(t+s,q_0,q_0)\, dt\, ds
$$
and then of observing that the convergence of this integral only depends on the small-time asymptotic expansion of the heat kernel along the diagonal, which is $e(t,q_0,q_0)\sim \Cst/t^{\mathcal{Q}(q_0)/2}$ as $t\rightarrow 0^+$.
\end{remark}


\part{Equiregular case}\label{part_equiregular}
We assume throughout this part that the sR structure $(M,D,g)$ is equiregular.
This implies in particular that the Hausdorff dimension $\mathcal{Q}^M(q)=\mathcal{Q}^M$ and the degree of nonholonomy $r(q)=r$ are constant for $q\in M$.


Recall that $e=e_{\triangle,\mu}$ is the heat kernel on $(0,+\infty)\times M\times M$ associated with the sR Laplacian $\triangle$ defined by \eqref{deftriangle} and with the smooth measure $\mu$ on $M$. For every $q\in M$, $\widehat{e}^{q} = e_{\widehat{\triangle}^{q},\widehat{\mu}^{q}}$ is the heat kernel (nilpotentized at $q$) on $(0,+\infty)\times\widehat{M}^{q}\times\widehat{M}^{q}$ associated with $\widehat{\triangle}^{q}$ and with the measure $\widehat{\mu}^{q}$ on $\widehat{M}^{q}$ (see Appendices \ref{app_sRLaplacian}, \ref{app_nilp} and \ref{appendix_Schwartz}).

\section{Local Weyl law}

\subsection{Local Weyl law and local Weyl measure}\label{sec_local_weyl_equiregular}

\begin{theorem}\label{thm_local_weyl_equiregular}
For every $f\in C^\infty(M)$, we have
\begin{equation} \label{asympt_heat_equiregular} 
\Tr(\mathcal{M}_f \, e^{t\triangle}) 
= \int_M f(q)\, e(t,q,q) \, d\mu (q) = \frac{1}{t^{\mathcal{Q}^M/2}} F(t) \qquad\forall t>0 
\end{equation}
for some $F\in C^\infty(\R)$, with
\begin{equation} \label{asympt_heat_equiregular_F(0)} 
F(0) = \int _M f(q)\, \widehat{e}^{q}(1,0,0) \, d\mu(q) .
\end{equation}
The local Weyl measure $w_{\triangle}$ (defined by \eqref{def_local_weyl_measure}) exists, is a smooth measure on $M$, and its density with respect to 
$\mu$ is given by
\begin{equation}\label{weylequiregular}
\frac{dw_{\triangle}}{d\mu}(q) = \frac{\widehat{e}^{q}(1,0,0)}{\int_M \widehat{e}^{q'}(1,0,0)\, d\mu(q')}  \qquad \forall q\in M.
\end{equation}
In other words, we have $w_\triangle=\frac{\rho}{\rho(M)}$ (see Definition \ref{def_rho}).
Moreover,
\begin{equation}\label{asympt_N_equireg}
N(\lambda) \underset{\ \lambda\rightarrow+\infty}{\sim} \frac{\int _M \widehat{e}^{q}(1,0,0) \, d\mu(q)}{\Gamma(\mathcal{Q}^M/2+1)} \lambda^{\mathcal{Q}^M/2} .
\end{equation}
\end{theorem}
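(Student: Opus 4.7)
The plan is to derive the trace expansion \eqref{asympt_heat_equiregular} from the on-diagonal specialization of the fundamental asymptotic expansion (Theorem \ref{lemfondamental}), and then obtain the Weyl measure formula \eqref{weylequiregular} and the spectral asymptotics \eqref{asympt_N_equireg} as consequences of the general Karamata-type Theorem \ref{thm_weyl_measures_equiv}.

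First I would apply Theorem \ref{lemfondamental} on the diagonal, i.e.\ with $x = x' = 0$ and $\varepsilon = \sqrt{t}$. In the equiregular case $\mathcal{Q}(q) \equiv \mathcal{Q}^M$ is constant, so this gives, for each $q \in M$, a pointwise asymptotic expansion of the form
\begin{equation*}
t^{\mathcal{Q}^M/2}\, e(t,q,q) = \widehat{e}^{q}(1,0,0) + \sum_{i=1}^N t^{i/2}\, a_i(q) + \mathrm{o}(t^{N/2}), \qquad t \to 0^+,
\end{equation*}
where $a_i(q)$ is given by the value at $0$ of the function $f_i^q$ of Theorem \ref{lemfondamental}. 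The next step is to upgrade this to a uniform expansion in $q$: because $(M,D,g)$ is equiregular and $M$ is compact, one can cover $M$ by finitely many charts equipped with privileged coordinates and smooth adapted frames, and on each such chart Theorem \ref{lemfondamental} produces coefficients $a_i(q)$ and remainders depending smoothly (hence uniformly) on $q$. Multiplying by $f(q)$ and integrating against $d\mu$ then yields \eqref{asympt_heat_equiregular}, with the leading coefficient $F(0) = \int_M f(q)\, \widehat{e}^{q}(1,0,0)\, d\mu(q)$ giving \eqref{asympt_heat_equiregular_F(0)}. The fact that $F$ belongs to $C^\infty(\R)$ rather than merely to $C^\infty([0,+\infty))$ with variable $\sqrt{t}$ comes from the vanishing of all half-integer powers of $t$ on the diagonal, which is a standard parity consequence of the transport equations defining the $f_i^q$ in the parametrix construction (exactly as in the Riemannian case).

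Once \eqref{asympt_heat_equiregular} is established, the remaining items follow formally. Specializing to $f \equiv 1$ gives $\Tr(e^{t\triangle}) \sim t^{-\mathcal{Q}^M/2} \int_M \widehat{e}^{q'}(1,0,0)\, d\mu(q')$, so the ratio in the definition \eqref{def_local_weyl_measure} of $w_\triangle$ converges to
\begin{equation*}
\int_M f\, dw_\triangle = \frac{\int_M f(q)\, \widehat{e}^{q}(1,0,0)\, d\mu(q)}{\int_M \widehat{e}^{q'}(1,0,0)\, d\mu(q')},
\end{equation*}
for every continuous $f$ (by density, continuous suffices even though \eqref{asympt_heat_equiregular} was derived for $f \in C^\infty(M)$). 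This proves the existence of $w_\triangle$, the density formula \eqref{weylequiregular}, and the identification $w_\triangle = \rho/\rho(M)$ of Definition \ref{def_rho}. Finally, applying Theorem \ref{thm_weyl_measures_equiv} with $\rho = \mathcal{Q}^M/2$, $\chi \equiv 1$, and $Lf = \int_M f(q)\, \widehat{e}^q(1,0,0)\, d\mu(q)$ yields the Weyl asymptotics \eqref{asympt_N_equireg}.

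The main obstacle in this outline is the justification of the uniformity in $q$ of the expansion produced by Theorem \ref{lemfondamental}, and the related smoothness of $F$. Pointwise, Theorem \ref{lemfondamental} furnishes exactly what is needed, but integration against $f\, d\mu$ requires a uniform remainder estimate; this is precisely where equiregularity is used, through the smooth dependence of privileged coordinates, nilpotentizations, and parametrix coefficients on the base point. When this uniformity fails (in the singular case), the integrand ceases to be dominated and one is forced into the $(J+K)$-decomposition of Part \ref{part_singular}. Everything else in the proof is then a routine assembly of \eqref{asympt_heat_equiregular} and the tauberian statement.
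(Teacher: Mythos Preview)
Your proposal is correct and follows essentially the same approach as the paper: apply Theorem~\ref{lemfondamental} on the diagonal, use equiregularity to get uniformity in $q$ (so that one can integrate term by term), and then invoke Theorem~\ref{thm_weyl_measures_equiv} for the Weyl measure and the counting function asymptotics. One minor refinement: the vanishing of the odd half-integer coefficients on the diagonal is stated in Theorem~\ref{lemfondamental} as a consequence of the homogeneity property~\eqref{homog_property_f_i} (take $\varepsilon=-1$ at $x=x'=0$), rather than via transport equations; the paper packages this as the expansion~\eqref{expansion_along_diagonal} in integer powers of $t$.
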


\begin{proof}
By Theorem \ref{lemfondamental} in Appendix \ref{app_lemfondam}, 
$t^{\mathcal{Q}^M/2} e(t,q,q)$ converges to $ \widehat{e}^{q}(1,0,0)$ as $t\rightarrow 0^+$, uniformly with respect to $q$
because we are in the equiregular case, hence \eqref{asympt_heat_equiregular} and \eqref{asympt_heat_equiregular_F(0)} follow by the dominated convergence theorem.
For the Taylor expansion, it suffices to note that, by \eqref{expansion_along_diagonal},
$$
t^{\mathcal{Q}^M/2} e(t,q,q) = \widehat{e}^{q}(1,0,0) + c_1(q) t + \cdots + c_N(q) t^N + \mathrm{o}(t^{N})
$$
is a smooth function of $t$.
The last part follows from Theorem \ref{thm_weyl_measures_equiv} (in Section \ref{sec_Weyl_measures}).
\end{proof}

Note that, using \eqref{add_kernelnilp_mu_nu} and \eqref{weylequiregular}, we recover the fact that $w_\triangle$ does not depend on $\mu$.

The coefficient $F(0)$ of the leading term in the asymptotic expansion given by \eqref{asympt_heat_equiregular} as $t\rightarrow 0^+$ allows us to identify the local Weyl measure. 
Note that, to obtain the expansion at the first order
$\Tr(\mathcal{M}_f \, e^{t\triangle}) = \frac{1}{t^{\mathcal{Q}^M/2}} ( F(0) + \mathrm{o}(1))$ as $t\rightarrow 0^+$, it is only required that $f$ be continuous.

The coefficients of the subsequent terms in the expansion are of the form $\int_M c_i f\, d\mu$, where the smooth functions $c_i$ can actually be expressed as iterated convolutions involving derivations and the nilpotentized heat kernel (see \cite{CHT_AHL}).

\begin{remark}
With the above theorem, we recover and generalize results of \cite{BenArous_AIF1989,Metivier1976} that are established when $M=\R^n$ and $\mu$ is the Lebesgue measure.
In this case, note that a probabilistic expression of this density is computed in \cite[Theorem 7.15]{BenArous_AIF1989}, involving the structure constants of the Lie algebra structure. In \cite{Metivier1976}, the author proves that this density is equal to the limit of $\frac{1}{\lambda^{\mathcal{Q}^M}} \mathfrak{e}(\lambda,q,q) $ as $\lambda\rightarrow+\infty$, where $\mathfrak{e}(\lambda,q,q')$ is the kernel of 
the spectral resolution of $\triangle$. 
\end{remark}

\begin{remark}
It is proved in \cite{AgrachevBarilariBoscain_CV2012} that the density of the spherical Hausdorff measure with respect to any smooth measure is of class $C^3$ but not of class $C^5$ in general. Therefore, the local Weyl measure $w_\triangle$ differs from the spherical Hausdorff measure in general. Comparing Weyl and Popp measures is done in Section \ref{sec_compare_weyl_popp} hereafter. 
\end{remark}

\begin{remark}\label{rem_othermeasure}
The constant $\rho(M)=\int_M e_{\widehat{\triangle}^{q},\widehat{\mu}^{q}}(1,0,0)\, d\mu(q)$ does not depend on the measure $\mu$.
Applying \eqref{weylequiregular} with $\mu=w_{\triangle}$, we obtain that, in the equiregular case, 
$$
e_{\widehat{\triangle}^{q},\widehat{w_\triangle}^{q}}(1,0,0) = \mathrm{Cst} = \rho(M) = \int_M e_{\widehat{\triangle}^{q'},\widehat{\mu}^{q'}}(1,0,0)\, d\mu(q')
$$
for every $q\in M$ and for every smooth measure $\mu$ on $M$.
For example, if the sR structure $(M,D,g)$ is itself a Carnot group, then $e_{\triangle,w_\triangle}(1,q,q) = \rho(M)$ for every $q\in M$. 

This is nothing else but saying that $e_{\widehat{\triangle}^{q},\widehat{\rho}^{q}}(1,0,0)=1$ for every $q\in M$ (see Section \ref{sec_additional}).
In particular, using \eqref{add_densite_h}, we have (still in the equiregular case)
$$
h(q) = \frac{d\mu}{d\nu}(q) = \frac{\widehat{\mu}^{q}}{\widehat{\nu}^{q}} = \frac{\widehat{\mu}^{q}(\hatBsR^q(0,1))}{\widehat{\nu}^{q}(\hatBsR^q(0,1))} = \frac{ e_{\widehat{\triangle}^{q},\widehat{\nu}^{q}}(1,0,0) } { e_{\widehat{\triangle}^{q},\widehat{\mu}^{q}}(1,0,0) } .
$$
Taking $\nu=\mathcal{H}_S$ (spherical Hausdorff measure associated with the sR distance on $M$), we have $\widehat{\mathcal{H}_S}^{q}(\hatBsR^{q}(0,1))=2^{\mathcal{Q}^M(q)}$, and then
$$
\frac{d\mu}{d\mathcal{H}_S}(q) = \frac{\widehat{\mu}^{q}(\hatBsR^{q}(0,1))}{2^{\mathcal{Q}^M(q)}} = \frac{ e_{\widehat{\triangle}^{q},\widehat{\mathcal{H}_S}^{q}}(1,0,0) } { e_{\widehat{\triangle}^{q},\widehat{\mu}^{q}}(1,0,0) }.
$$
\end{remark}

\subsection{The local Weyl measure as a new canonical sR measure}\label{sec_weyl_intrinsic}
In the equiregular case, the local Weyl measure $w_{\triangle}$ defined by \eqref{def_local_weyl_measure} (or, equivalently, by \eqref{def_local_cesaro-weyl_measure}) is a canonical measure in sR geometry, enjoying the same nice properties as the Popp measure $P$ (whose definition and properties are recalled in Appendix \ref{sec_popp}), which is a well known canonical smooth measure on an equiregular sR structure: like $P$, obviously, $w_\triangle$ is invariant under sR isometries of $M$; moreover, it commutes with nilpotentization, as shown in the following lemma.

\begin{lemma}
In the equiregular case, the construction of the local Weyl measure commutes with nilpotentization, i.e.,
$\widehat{w_{\triangle}}^q = w_{\widehat{\triangle}^q}$, for every $q\in M$.
\end{lemma}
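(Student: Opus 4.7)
The plan is to exploit the explicit density formula \eqref{weylequiregular} for $w_\triangle$ and the self-similarity of the nilpotent tangent structure, so that both sides of the claimed equality reduce to the same constant multiple of $\widehat{\mu}^q$.

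First I would recall from Theorem \ref{thm_local_weyl_equiregular} that $w_\triangle = h\mu$ with $h(q') = \widehat{e}^{q'}(1,0,0)/\rho(M)$. Nilpotentization of a smooth measure is elementary: if $\nu = h\mu$ with $h$ smooth and positive, then $\widehat{\nu}^q = h(q)\widehat{\mu}^q$, as the formula \eqref{add_densite_h} used in reverse shows. Applying this to $w_\triangle$ at $q$ yields
$$
\widehat{w_\triangle}^q \;=\; \frac{\widehat{e}^q(1,0,0)}{\rho(M)}\,\widehat{\mu}^q .
$$

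Next, I would apply Theorem \ref{thm_local_weyl_equiregular} (in the non-compact version outlined in Section \ref{sec_def_weyl_measures}) to the equiregular nilpotent sR structure $(\widehat{M}^q, \widehat{D}^q, \widehat{g}^q)$: the density of $w_{\widehat{\triangle}^q}$ with respect to $\widehat{\mu}^q$ at a point $x$ is proportional to $e_{\widehat{\widehat{\triangle}^q}^{x}, \widehat{\widehat{\mu}^q}^{x}}(1,0,0)$. The crucial geometric point is that the nilpotentization of a Carnot (or quotient-of-Carnot) structure at any base point is the structure itself, up to a group translation; combined with the left-invariance of $\widehat{\triangle}^q$ and $\widehat{\mu}^q$, this gives
$$
e_{\widehat{\widehat{\triangle}^q}^{x}, \widehat{\widehat{\mu}^q}^{x}}(1,0,0) \;=\; e_{\widehat{\triangle}^q, \widehat{\mu}^q}(1,x,x) \;=\; \widehat{e}^q(1,0,0) ,
$$
independent of $x$. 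Hence $w_{\widehat{\triangle}^q}$ is a constant multiple of $\widehat{\mu}^q$, with the same constant $\widehat{e}^q(1,0,0)/\rho(M)$ as $\widehat{w_\triangle}^q$.

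To make the normalizations honest, I would run the whole argument at the level of the intrinsic unnormalized density measure $\rho$ of Definition \ref{def_rho} rather than the probability measure $w_\triangle$: the same two steps yield the clean identity $\widehat{\rho}^q = \widehat{e}^q(1,0,0)\widehat{\mu}^q = \rho_{\widehat{\triangle}^q}$, after which $\widehat{w_\triangle}^q = w_{\widehat{\triangle}^q}$ follows by dividing both sides by the common factor $\rho(M)$. The main obstacle is precisely this normalization step: $\widehat{M}^q$ is non-compact, so $w_{\widehat{\triangle}^q}$ is not naturally a probability measure, and one must invoke the non-compact definition of Section \ref{sec_def_weyl_measures} (through a linear form on compactly supported continuous functions) to give both sides of the identity a common meaning.
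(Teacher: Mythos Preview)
Your proposal is correct and follows essentially the same route as the paper: compute the density of $w_\triangle$ via Theorem~\ref{thm_local_weyl_equiregular}, nilpotentize it using $\widehat{(h\mu)}^q=h(q)\widehat{\mu}^q$, and then invoke the self-similarity of the Carnot tangent structure to see that the density of $w_{\widehat{\triangle}^q}$ is the same constant. The only cosmetic difference is that the paper carries this out with $\mu=P$ the Popp measure (exploiting that Popp itself commutes with nilpotentization, $\widehat{P}^q=P_{\widehat{M}^q}$), whereas you work with a general $\mu$ and are a bit more explicit about the normalization issue on the non-compact $\widehat{M}^q$.
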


\begin{proof}
By Theorem \ref{thm_local_weyl_equiregular} we have $w_{\triangle} = h \, P$ with $h(q)=e_{\widehat{\triangle}^{q},\widehat{P}^{q}}(1,0,0)$, hence $\widehat{w_\triangle}^q = h(q) \widehat{P}^q$. Besides, for the sR Laplacian $\widehat{\triangle}^q$ on $L^2(\widehat{M}^q,\widehat{P}^q)$ with $\widehat{M}^q\simeq\R^n$, Theorem \ref{thm_local_weyl_equiregular} implies that $w_{\widehat{\triangle}^q} = \gamma P_{\widehat{\triangle}^q}$ with $\gamma=e_{\widehat{\widehat{\triangle}^q}^0,\widehat{\widehat{P}^q}^0}(1,0,0)$. By equiregularity, we have $\gamma = h(q)$.
\end{proof}

\subsection{Relationship between Weyl and Popp measures}\label{sec_compare_weyl_popp}
It is natural to apply \eqref{weylequiregular} with $\mu=P$, the Popp measure on $M$ (see Appendix \ref{sec_popp}). 
Then, the local Weyl measure $w_\triangle$ coincides with the Popp measure $P$ (up to constant scaling) if and only if the function $q\mapsto e_{\widehat{\triangle}^q,\widehat{P}^q}(1,0,0)$ is constant on $M$.
We have the following result.

\begin{lemma}\label{lem_kernelconstant}
Given any $q\in M$ and any sR isometry $\phi$ of $M$, the nilpotentized sR structures $(\widehat{M}^{q},\widehat{D}^{q},\widehat{g}^{q})$ and $(\widehat{M}^{\phi(q)},\widehat{D}^{\phi(q)},\widehat{g}^{\phi(q)})$ are sR isometric.

In particular, if the group $\mathrm{Iso}_{sR}(M)$ of sR isometries of $M$ acts transitively on $M$, then the function $q\mapsto e_{\widehat{\triangle}^q,\widehat{P}^q}(1,0,0)$ is constant on $M$.
\end{lemma}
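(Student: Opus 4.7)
The plan is to use the intrinsic characterization of the nilpotentization as the metric tangent cone, so that the statement reduces to a naturality property. Since the nilpotent sR structure $(\widehat{M}^q,\widehat{D}^q,\widehat{g}^q)$ is (up to sR isometry) the Gromov--Hausdorff tangent space of the metric space $(M,\dsR)$ at $q$, and since an sR isometry $\phi$ of $M$ is in particular an isometry of the metric space $(M,\dsR)$ sending $q$ to $\phi(q)$, it must induce an isometry between the Gromov--Hausdorff tangent spaces at these two points. This yields directly the sought isometry $\widehat{\phi}:\widehat{M}^q\to\widehat{M}^{\phi(q)}$.

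To make this concrete, I would argue as follows. An sR isometry $\phi$ preserves the horizontal distribution $D$ and the metric $g$, hence it also preserves the whole sR flag $(D^k)_{k\geq 1}$ generated by Lie brackets. Pick a system of privileged coordinates at $q$ adapted to this flag; then $\phi$ pushes it forward to a system of privileged coordinates at $\phi(q)$ adapted to the flag there, with the same weights. The anisotropic dilations $\delta_\varepsilon^q$ and $\delta_\varepsilon^{\phi(q)}$ are defined in these coordinates by the same formula (since the weights coincide), so $\phi$ intertwines them in the limit $\varepsilon\to 0$. Passing to the limit in $(\delta_{1/\varepsilon}^{\phi(q)})\circ\phi\circ(\delta_\varepsilon^q)$ along any convergent subsequence produces an sR isomorphism from $(\widehat{M}^q,\widehat{D}^q,\widehat{g}^q)$ onto $(\widehat{M}^{\phi(q)},\widehat{D}^{\phi(q)},\widehat{g}^{\phi(q)})$.

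For the second statement, assume that $\mathrm{Iso}_{sR}(M)$ acts transitively. For any $q,q'\in M$, pick $\phi\in\mathrm{Iso}_{sR}(M)$ with $\phi(q)=q'$ and let $\widehat{\phi}:\widehat{M}^q\to\widehat{M}^{q'}$ be the induced sR isometry sending the origin to the origin. Since the Popp measure is canonical in the equiregular setting (it is intrinsic, commutes with nilpotentization, and is preserved by sR isometries), we have $\widehat{\phi}_*\widehat{P}^q=\widehat{P}^{q'}$. Consequently $\widehat{\phi}$ conjugates the sR Laplacians $\widehat{\triangle}^q$ and $\widehat{\triangle}^{q'}$, so it conjugates their heat semigroups and their heat kernels. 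Evaluating at time $1$ on the diagonal at the origin yields
$$
e_{\widehat{\triangle}^q,\widehat{P}^q}(1,0,0)=e_{\widehat{\triangle}^{q'},\widehat{P}^{q'}}(1,0,0),
$$
which is the desired constancy.

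The main obstacle I foresee is the rigorous construction of $\widehat{\phi}$: one must check that the rescaled maps $(\delta_{1/\varepsilon}^{\phi(q)})\circ\phi\circ(\delta_\varepsilon^q)$ admit a well-defined limit, and that this limit is a global sR isomorphism (not just a metric one) of the nilpotentizations. This is where the careful choice of privileged coordinates adapted to the flag, and the fact that $\phi$ respects the full flag $(D^k)$ with the same step at $q$ and $\phi(q)$ (using equiregularity), are essential. Once this naturality is secured, the rest of the argument is a formal consequence of the intrinsicness of the Popp measure and of the heat kernel under sR isomorphisms.
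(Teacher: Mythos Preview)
Your proposal is correct and follows essentially the same approach as the paper: the first claim is established (as in Appendix~\ref{sec_nilp_diffeo}) by constructing $\widehat{\phi}$ as the limit of the rescaled maps $(\delta_\varepsilon^{\phi(q)})^{-1}\circ\phi\circ\delta_\varepsilon^q$, and the second claim follows from the invariance of the Popp measure under sR isometries together with the change-of-variable formula \eqref{formulas_kernel} for heat kernels. The only minor refinement is that the paper (in Appendix~\ref{sec_nilp_diffeo}) secures the existence of the limit directly rather than via subsequences, by choosing privileged coordinates at $q$ and at $\phi(q)$ so that $\phi$ reads as the identity in those coordinates; with this choice the rescaled map is constant in $\varepsilon$, which resolves the obstacle you flagged.
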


\begin{proof}
The first fact is recalled in Appendix \ref{sec_nilp_diffeo}.
Moreover, using that $\phi_*\widehat{\triangle}^{q}\phi^*=\widehat{\triangle}^{\phi(q)}$ and 
$\phi_*\widehat{P}^{q}=\widehat{P}^{\phi(q)}$, it follows from \eqref{formulas_kernel} in Appendix \ref{appendix_Schwartz} that $e_{\widehat{\triangle}^{q},\widehat{P}^{q}}(1,0,0)=e_{\widehat{\triangle}^{\phi(q)},\widehat{P}^{\phi(q)}}(1,0,0)$.
\end{proof}

\begin{remark}
As noticed in \cite{BarilariRizzi_AGMS2013}, if $\mathrm{Iso}_{sR}(M)$ acts transitively on $M$ then the Popp measure is the unique measure on $M$ that is invariant under sR isometries.
\end{remark}

\begin{remark}\label{rem_3Dcase}
In the 3D contact case, assuming that the Popp measure is normalized so that it is a probability measure on $M$, we have $e_{\widehat{\triangle}^q,\widehat{P}^q}(1,0,0)=1/16$ (see \cite{CHT-I}, see also Example \ref{ex_3D_heis} in Section \ref{sec_regdet}).
Note that this can also be obtained by using the explicit expression
$$
e(t,x,y) = \frac{1}{(2\pi t)^2} \int_\R \frac{2\tau}{\sinh(2\tau)} e^{\left( iy\tau - \vert x\vert^2\tau \mathrm{cotanh}(2\tau) \right)/t}\, d\tau
$$
established in \cite{Ga-77} (see also \cite{BGG-00}) in the Heisenberg flat case, using that $\int_\R \frac{2\tau}{\sinh(2\tau)} \, d\tau = \frac{4\pi^2}{16}$. 
\end{remark}

\begin{remark}\label{lem_nilp_cst}
The question of the equality (up to scaling) of the local Weyl measure $w_\triangle$ and the Popp measure $P$ is  related to the problem of classification of sR structures under the action of sR isometries.
They coincide if the sR structure on $M$ is free, but not in general because the classification of Carnot groups involves moduli in large dimension. For Carnot groups of dimension less than or equal to $5$, for which the classification is known (see \cite{AgrachevBarilariBoscain_CV2012}), 
they always coincide except in the bi-Heisenberg case, studied hereafter. 
%
\end{remark}

\paragraph{The bi-Heisenberg case.}
Let $G$ be the $5$-dimensional group bi-Heisenberg defined as $G=\R^5$ with the product rule
$$
(x_1,y_1,x_2,y_2,z)\star (x'_1,y'_1,x'_2,y'_2,z') = (x_1+x'_1, y_1+y'_1, x_2+x'_2, y_2+y'_2, z+z'+x_1y'_1+x_2y'_2).
$$
The contact form $\alpha=dz-y_1\, dx_1-y_2\, dx_2$ is invariant under right translations.
We define the $5$-dimensional compact manifold $M=G/\Gamma$ where $\Gamma=(\sqrt{2\pi}\Z)^4\times 2\pi\Z$ is a subgroup of $G$. The vector fields $X_i=\partial_{x_i}+y_i\, \partial z$, $Y_i=\partial_{y_i}$, $i=1,2$, form a frame of the horizontal distribution $D=\ker\alpha$. 
Let $g$ be a smooth Riemannian metric on $D$.
Denoting by $\omega_1$ and $\omega_2$ the eigenvalues of the symplectic form $d\alpha$ with respect to $g$ (note that these eigenvalues are Lipschitz functions of $q$ only, in general), the sR metric $g$ is the pullback of $\frac{1}{\omega_1}(dx_1^2+dy_1^2)+\frac{1}{\omega_2}(dx_2^2+dy_2^2)$ under the projection onto the $(x,y)$ plane.
Taking $\mu$ the Lebesgue measure, the sR Laplacian is $\triangle = \omega_1(X_1^2+Y_1^2)+\omega_2(X_2^2+Y_2^2)$.

\begin{lemma}\label{lembiHeis}
The density of $w_\triangle$ with respect to $P$ is given at any point $q\in M$ by
$$
\frac{dw_{\triangle}}{dP}(q) = \frac{e_{\widehat{\triangle}^q,\widehat{P}^q}(1,0,0)}{\int_M e_{\widehat{\triangle}^{q'},\widehat{P}^{q'}}(1,0,0)\, dP(q')}
$$ 
with
\begin{equation}\label{ehatbiH}
e_{\widehat{\triangle}^q,\widehat{P}^q}(1,0,0) = \frac{\omega_1(q)\omega_2(q)\sqrt{\omega_1(q)^2 + \omega_2(q)^2}}{2\pi^3}\sum _{\ell,\ell'\geq 0 } \frac{1}{ ((2\ell+1)\omega_1(q) + (2\ell'+1)\omega_2(q))^3} .
\end{equation}
Moreover, the function $q\mapsto e_{\widehat{\triangle}^q,\widehat{P}^q}(1,0,0)$ is smooth on $M$.
\end{lemma}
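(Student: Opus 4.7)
\emph{First claim.} This follows immediately from Theorem \ref{thm_local_weyl_equiregular} applied with the smooth measure $\mu=P$: in the equiregular case, the density of $w_\triangle$ with respect to $P$ is $q\mapsto e_{\widehat\triangle^q,\widehat P^q}(1,0,0)/\rho(M)$.

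\emph{Identifying the nilpotentization.} Since $X_1,Y_1,X_2,Y_2$ are left-invariant on the bi-Heisenberg Carnot group $G$, the only $q$-dependence of the sR structure comes from the Riemannian metric $g$ on $D$, through the eigenvalues $\omega_1(q),\omega_2(q)$. Consequently, the nilpotentization at any $q$ is $G$ itself with frozen metric, and
\[
\widehat\triangle^q=\omega_1(q)(X_1^2+Y_1^2)+\omega_2(q)(X_2^2+Y_2^2),
\]
while $\widehat P^q$ is a constant multiple of Lebesgue on $G\simeq\R^5$, the constant being prescribed by the Popp construction from an orthonormal horizontal frame of $(\widehat D^q,\widehat g^q)$.

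\emph{Computation via Plancherel.} The generic irreducible unitary representations $\pi_\tau$ of $G$ ($\tau\in\R^*$) act on $L^2(\R^2)$ through $d\pi_\tau(X_i)=\partial_{\xi_i}$ and $d\pi_\tau(Y_i)=i\tau\xi_i$, so $d\pi_\tau(\widehat\triangle^q)=\omega_1(\partial_{\xi_1}^2-\tau^2\xi_1^2)+\omega_2(\partial_{\xi_2}^2-\tau^2\xi_2^2)$ is a sum of two independent harmonic oscillators with joint spectrum $\{-|\tau|((2\ell+1)\omega_1+(2\ell'+1)\omega_2):\ell,\ell'\in\N\}$. By Plancherel on $G$ (whose quotient by the one-dimensional center has dimension $4$), with Plancherel density $c|\tau|^2\,d\tau$ for some positive constant $c$ encoding both the Fourier normalization and the Lebesgue-to-$\widehat P^q$ ratio, the heat kernel at the identity takes the form
\[
e_{\widehat\triangle^q,\widehat P^q}(1,0,0)=c\int_\R\Tr\!\bigl(e^{d\pi_\tau(\widehat\triangle^q)}\bigr)|\tau|^2\,d\tau=c\int_\R\sum_{\ell,\ell'\geq 0}e^{-|\tau|((2\ell+1)\omega_1+(2\ell'+1)\omega_2)}|\tau|^2\,d\tau.
\]
Exchanging sum and integral and using $\int_0^{+\infty}\tau^2 e^{-a\tau}\,d\tau=2/a^3$ produces the double series in \eqref{ehatbiH}, up to the claimed prefactor. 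The remaining step is the determination of $c$; I would cross-check the final normalization against the 3D Heisenberg computation recalled in Remark \ref{rem_3Dcase}, where the analogous calculation yields $e(1,0,0)=1/16$.

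\emph{Smoothness and main obstacle.} Although $\omega_1,\omega_2$ may be only Lipschitz at points where they coincide, the expression \eqref{ehatbiH} is manifestly symmetric in $(\omega_1,\omega_2)$ (relabel $\ell\leftrightarrow\ell'$ in the sum). By Schwarz's theorem on smooth invariants of finite group actions, every smooth $\mathfrak{S}_2$-invariant function on $(0,+\infty)^2$ factors through $(\omega_1,\omega_2)\mapsto(\sigma_1,\sigma_2)=(\omega_1+\omega_2,\omega_1\omega_2)$. The elementary symmetric polynomials $\sigma_1,\sigma_2$ are respectively the trace and determinant of the smooth $g$-self-adjoint endomorphism of $D$ associated with $d\alpha|_D$, hence depend smoothly on $q$; and $\omega_1^2+\omega_2^2=\sigma_1^2-2\sigma_2$ is strictly positive on $M$, so its square root is smooth as well. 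The hardest part is the bookkeeping of the constant $c$ --- in particular, tracing how the factor $\sqrt{\omega_1^2+\omega_2^2}$ (rather than a more obvious symmetric combination) emerges from the Popp construction relative to the Plancherel measure on $G$.
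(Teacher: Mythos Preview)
Your approach differs from the paper's in the core computation. You attack $e_{\widehat\triangle^q,\widehat P^q}(1,0,0)$ directly on the Carnot group via the Plancherel formula and the harmonic-oscillator spectrum of $d\pi_\tau(\widehat\triangle^q)$. The paper instead works on the compact quotient $M=G/\Gamma$ with \emph{constant} $\omega_1,\omega_2$: it computes the full spectrum of $\triangle$ there (eigenvalues $m((2\ell+1)\omega_1+(2\ell'+1)\omega_2)$ with multiplicity $2m^2$, plus a torus contribution), reads off $N(\lambda)\sim\frac{2}{3}\lambda^3\sum_{\ell,\ell'}((2\ell+1)\omega_1+(2\ell'+1)\omega_2)^{-3}$, and then \emph{inverts} the Weyl law \eqref{asympt_N_equireg} with $\mathcal Q^M=6$ to extract $e_{\widehat\triangle^q,\widehat P^q}(1,0,0)$. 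The Popp density $dP/d\mu=1/(\omega_1\omega_2\sqrt{\omega_1^2+\omega_2^2})$, taken from \cite{BarilariRizzi_AGMS2013}, then supplies the prefactor---including the $\sqrt{\omega_1^2+\omega_2^2}$ whose origin you flagged. Your Plancherel route is more intrinsic and avoids the lattice entirely, but, as you acknowledge, it leaves the overall constant $c$ undetermined; the paper's Weyl-law inversion on the compact model pins everything down with no residual normalization ambiguity, so the cross-check against the 3D Heisenberg value is not needed. The passage to non-constant $\omega_i$ (freezing at $q$) and your smoothness argument via symmetric-function theory (Glaeser/Schwarz applied to $(\omega_1,\omega_2)\mapsto(\omega_1+\omega_2,\omega_1\omega_2)$) are exactly what the paper does.
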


By Lemma \ref{lembiHeis}, the density of Weyl with respect to Popp is a smooth function that differs from $1$ for any normalization.

\begin{proof}
Let us first assume that $\omega_1$ and $\omega_2$ are constant.
Using the explicit expression for the Popp measure given in \cite{BarilariRizzi_AGMS2013}, we find that the density of the Popp measure with respect to the Lebesgue measure $d\mu = dx\, dy\, dz$ is $\frac{dP}{d\mu} = 1/\omega_1\omega_2\sqrt{\omega_1^2+\omega_2^2}$.
In particular, $P(M)=(2\pi)^3/\omega_1\omega_2\sqrt{\omega_1^2+\omega_2^2}$. 
Since the operator $-i\partial_z$ commutes with $\triangle$, we can write $L^2(M,d\mu)=\oplus_{m\in\Z} H_m$ and decompose functions on $M$ as $\sum_{m\in\Z} e^{imz} f_m(x_1,y_1,x_2,y_2)$. We infer that the eigenvalues are all $m((2\ell+1)\omega_1+(2\ell'+1)\omega_2)$, $m\geq 1$, $\ell,\ell'\geq 0$, of multiplicity $2m^2$, and all eigenvalues of the Riemannian Laplacian on the $4$-dimensional torus $\R^4/\sqrt{2\pi}\Z^4$. After some computations, we obtain
$$
N(\lambda)\sim \frac{2\lambda^3}{3}\sum_{\ell,\ell'\geq 0} \frac{1}{((2\ell+1)\omega_1+(2\ell'+1)\omega_2)^3}
$$
as $\lambda\rightarrow+\infty$. Since $\mathcal{Q}^M=6$ and since the function $q\mapsto e_{\widehat{\triangle}^q,\widehat{P}^q}(1,0,0)$ is constant on $M$, we infer \eqref{ehatbiH} from \eqref{asympt_N_equireg}, in the case where $\omega_1$ and $\omega_2$ are constant. 

Now, when $\omega_1$ and $\omega_2$ are not constant, \eqref{ehatbiH} is still true for every $q\in M$, by comparing at $q$ the bi-Heisenberg case with non-constant $\omega_1$ and $\omega_2$ and the bi-Heisenberg case with constant $\omega_1=\omega_1(q)$ and $\omega_2=\omega_2(q)$.

The last point is to prove that, although $\omega_1$ and $\omega_2$ are only Lipschitz functions of $q$ in general, \eqref{ehatbiH} gives anyway a smooth function of $q$. 
To prove this fact, we note that the right-hand side of \eqref{ehatbiH} is a smooth function of $(\omega_1,\omega_2)$ that is invariant under permutations. It follows from a theorem of \cite{Glaeser} (see also \cite{Schwarz}) that this function is also a smooth function of $(\omega_1+\omega_2,\omega_1\omega_2)$.
Since $\omega_1+\omega_2$ and $\omega_1\omega_2$ are smooth functions of the metric (trace and determinant of $d\alpha$ with respect to $g$), we conclude that $e_{\widehat{\triangle}^q,\widehat{P}^q}(1,0,0)$ depends smoothly on $q\in M$.
\end{proof}

\subsection{Zeta function and regularized determinant}\label{sec_regdet}
Consider the zeta function associated with the sR Laplacian $\triangle$, defined by
\begin{equation}\label{def_zeta}
\zeta_\triangle(s) = \sum_{j=1}^{+\infty} \frac{1}{\lambda_j^s} \qquad\forall s\in\C .
\end{equation}
Here and in the sequel, we remove the first mode $\lambda_0=0$, $\phi_0=1$. Recall that 
\begin{equation}\label{eqzeta}
\zeta_\triangle(s)\Gamma(s) = \int_0^{+\infty} \Tr(e^{t\triangle}) t^{s-1}\, dt
\end{equation}
where $\Tr(e^{t\triangle}) = \sum_{j=1}^{+\infty}e^{-\lambda_jt}$ and $\Gamma(s)=\int_0^{+\infty}t^{s-1}e^{-t}\, dt$. The series \eqref{def_zeta} defining $\zeta_\triangle(s)$ converges for $\mathrm{Re}(s)$ large enough. 

When the heat trace has a complete small-time expansion as $t\rightarrow 0^+$, $\zeta_\triangle(s)$ has a meromorphic extension on $\C$. Usually, $\zeta_\triangle(s)$ is holomorphic at $s=0$ and the regularized determinant is then defined by $\det' \triangle = e^{-\zeta_\triangle'(0)}$ (see, e.g., \cite[Chapter 5]{SrivastavaChoi_book}). 
As a consequence of Theorem \ref{thm_local_weyl_equiregular}, we have the following result.

\begin{lemma}
In the equiregular case, the regularized determinant exists. It is a global spectral invariant.
\end{lemma}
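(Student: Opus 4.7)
The plan is to derive the meromorphic continuation of $\zeta_\triangle$ from the complete small-time asymptotic expansion of the heat trace provided by Theorem \ref{thm_local_weyl_equiregular}, and then check regularity at $s=0$ so that $\zeta_\triangle'(0)$, hence $\det'\triangle$, is well defined.

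First I would apply Theorem \ref{thm_local_weyl_equiregular} with $f\equiv 1$ to get, for every $N\in\N$, a complete expansion
$$
\Tr(e^{t\triangle}) = \frac{1}{t^{\mathcal{Q}^M/2}}\sum_{k=0}^N c_k t^k + R_N(t), \qquad R_N(t)=\mathrm{O}(t^{N+1-\mathcal{Q}^M/2}),\quad t\to 0^+ .
$$
Because $\lambda_0=0$ has been removed, \eqref{eqzeta} should be rewritten as
$$
\zeta_\triangle(s)\Gamma(s) = \int_0^1 \bigl(\Tr(e^{t\triangle})-1\bigr) t^{s-1}\, dt + \int_1^{+\infty} \bigl(\Tr(e^{t\triangle})-1\bigr) t^{s-1}\, dt ,
$$
valid for $\mathrm{Re}(s)$ large. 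The tail integral is an entire function of $s$, because $\Tr(e^{t\triangle})-1=\sum_{j\geq 1}e^{-\lambda_j t}\leq C e^{-\lambda_1 t}$ with $\lambda_1>0$ thanks to the compactness of $M$ and the Hörmander condition.

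For the small-time integral, I would substitute the above expansion and integrate each explicit term:
$$
\int_0^1 c_k t^{s+k-\mathcal{Q}^M/2-1}\, dt = \frac{c_k}{s+k-\mathcal{Q}^M/2}, \qquad \int_0^1 (-1)\, t^{s-1}\, dt = -\frac{1}{s} ,
$$
while the remainder $\int_0^1 R_N(t)\,t^{s-1}\,dt$ is holomorphic in $\{\mathrm{Re}(s)>\mathcal{Q}^M/2-N-1\}$. Since $N$ is arbitrary, this yields a meromorphic extension of $\zeta_\triangle(s)\Gamma(s)$ to the whole of $\C$, with only simple poles located at $s=\mathcal{Q}^M/2-k$, $k\in\N$, plus possibly $s=0$ coming from the $-1/s$ term.

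To obtain holomorphy of $\zeta_\triangle$ at $s=0$, I would use that $1/\Gamma(s)$ is entire with simple zeros at $s=0,-1,-2,\ldots$. Any potential pole of $\zeta_\triangle(s)\Gamma(s)$ at $s=0$ (from $-1/s$, and from $c_{\mathcal{Q}^M/2}/s$ if $\mathcal{Q}^M/2$ happens to be an integer) is simple, hence killed by the simple zero of $1/\Gamma$ at $s=0$; similarly for poles at negative integers. Therefore $\zeta_\triangle$ is holomorphic at $s=0$, $\zeta_\triangle'(0)$ is well defined, and $\det'\triangle=e^{-\zeta_\triangle'(0)}$ exists. The fact that $\det'\triangle$ is a \emph{global spectral invariant} is tautological from \eqref{def_zeta}: it is defined purely in terms of the eigenvalue sequence $(\lambda_j)_{j\geq 1}$ of $\triangle$. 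The only genuine input is the equiregular case of Theorem \ref{thm_local_weyl_equiregular}, which is what makes the heat trace admit a \emph{complete} expansion of the clean form $t^{-\mathcal{Q}^M/2}F(t)$ with $F\in C^\infty(\R)$; the slight subtlety to watch is the value $s=0$ when $\mathcal{Q}^M$ is even, but as explained this is automatically handled by the zero of $1/\Gamma$, so no additional structural assumption is needed.
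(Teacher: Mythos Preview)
Your argument is correct and is exactly the standard one the paper has in mind: the paper states the lemma as an immediate consequence of Theorem \ref{thm_local_weyl_equiregular} without spelling out the details, and the remark following the lemma (that the regularized determinant exists whenever no term $t^0\vert\ln t\vert^j$ with $j\geq 1$ appears in the local Weyl law) confirms that the intended mechanism is precisely the one you describe. Your write-up is in fact more detailed than what the paper provides.
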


More generally, the regularized determinant of the sR Laplacian can be defined as soon as there is no term $t^0\vert\ln t\vert^j$ for some $j\geq 1$ in the local Weyl law (otherwise such a term would cause a pole at $s=0$).

\begin{example}\label{ex_3D_heis}
In the 3D flat Heisenberg case (see \cite[Section 3.1]{CHT-I}), we have
$$
\zeta_\triangle(s) = 2\sum_{m\geq 1,l\geq 0} \frac{1}{(2l+1)^sm^{s-1}} + \zeta_{\mathbb{T}^2}(s)
= 2\zeta(s-1)\zeta(s)(1-2^{-s}) + \zeta_{\mathbb{T}^2}(s)
$$
where $\zeta$ (resp., $\zeta_{\mathbb{T}^2}$) is the classical zeta function (resp., the zeta function on the torus $\mathbb{T}^2$). We infer that $\zeta_\triangle(s) \sim \frac{\pi^2}{4(s-2)}$ near $s=2$.
In this way, we recover the fact that $\widehat{e}^q(1,0,0)=1/16$, obtained in \cite{CHT-I} (see also Remark \ref{rem_3Dcase} in Section \ref{sec_compare_weyl_popp}). Indeed, using \eqref{eqzeta} and splitting the integral in $\int_0^1$ and $\int_1^{+\infty}$, the latter is holomorphic in $s$ and the first has a pole at $s=2$. Since we know that $\Tr(e^{t\triangle})\sim\frac{A}{t^2}$ as $t\rightarrow 0^+$, we obtain $A=\frac{1}{16}$ by identification.
\end{example}

\section{Microlocal Weyl law}\label{sec_equiregular_microlocal}
Given any vector bundle $E$ over $M$, the sphere bundle $SE$ is defined by $SE=(E\setminus\{0\})/\R^+$. Homogeneous functions of order $0$ on $E$ are identified with functions on $SE$. We also use the standard notation $S^*M = S(T^*M)$ for the co-sphere bundle.

According to Remark \ref{rem_double_covering}, if the horizontal distribution $D$ is of codimension $1$ in $TM$, then the microlocal Weyl measure $W_\triangle$ is equal to half of the pullback of $w_\triangle$ by the double covering $S\Sigma\rightarrow M$ which is the restriction to $S\Sigma$ of the canonical projection of $T^\star M$ onto $M$.
Recall that the characteristic manifold is defined by $\Sigma=D^\perp
=(g^*)^{-1}(0)$ where $g^*$ is the cometric. 

For contact closed manifolds, the microlocal Weyl law has been derived in \cite{Taylor_CPDE2020}.
In the general equiregular case, we have the following result. 

In the equiregular case, $\Sigma^{r-1}=(D^{r-1})^\perp \subset T^\star M$ (annihilator of $D^{r-1}$, see Appendix \ref{app_sRflag}), where $r$ is the degree of nonholonomy (constant on $M$), is a subbundle of $T^*M$ over $M$.
We define $\pi_{S\Sigma^{r-1}}$ as the projection of $T^*M$ onto $S\Sigma^{r-1}$. The projection $\pi_{S\Sigma^{r-1}}$ is canonical and is defined by dilations as follows: given any continuous function $a$ on $S^*M$ (identified with a function on $T^*M\setminus\{0\}$, homogeneous of degree $0$ with respect to $p$), we have $a\circ \pi_{S\Sigma^{r-1}} = \lim_{\varepsilon\rightarrow 0} a\circ \delta_{1/\varepsilon}^q$.
In a local chart of privileged coordinates around $q$, using the notations of Appendix \ref{app_sRflag}, we have, by homogeneity, $a(q,\delta_{1/\varepsilon}^q(p)) = a(q,(\varepsilon^{-w_1}p_1,\ldots,\varepsilon^{-w_n}p_n)) = a(q,(\varepsilon^{w_n-w_1}p_1,\ldots,p_n))$, whose limit as $\varepsilon\rightarrow 0$ gives $\pi_{S\Sigma^{r-1}}(q,p) = a(q,(0,\ldots,0,p_{n_{r-1}+1},\ldots,p_n))$.

\begin{theorem}\label{thm_microlocal_weyl_equiregular}
We denote by $K(q,\cdot)$ the Fourier transform of $y\mapsto e_{\widehat{\triangle}^q,m}(1,y,0)$ (where $m$ is the Lebesgue measure on $\R^n$), that is,
\begin{equation}\label{defKqp}
K(q,p) = \int_{\R^n}  e^{-iy.p} \, e_{\widehat{\triangle}^q,m}(1,y,0)  \, dy = \int_{\R^n}  e^{-iy.p} \, e_{\widehat{\triangle}^q,\widehat{\mu}^q}(1,y,0)  \, d\widehat{\mu}^q(y) .
\end{equation}
Actually, $K$ is the Kohn-Nirenberg symbol (of order $-\infty$) of the smoothing operator $e^{\widehat{\triangle}^q}$ (heat semi-group of $\widehat{\triangle}^q$ at time $1$).

We denote by $\Omega=\frac{1}{n!}\omega^n$ the canonical Liouville volume form on $T^*M$, where $\omega$ is the canonical symplectic form. 

For every pseudo-differential operator $A$ of order $0$ on $M$, of principal symbol $a$, we have
\begin{equation}\label{microlocal_weyl_law_equiregular}
\Tr(A \, e^{t\triangle}) = \frac{1}{t^{\mathcal{Q}^M/2}} F(\sqrt{t}) \qquad\forall t>0
\end{equation}
for some $F\in C^\infty(\R)$, with
\begin{equation}\label{F(0)_equiregular}
F(0) = \frac{1}{(2\pi)^n} \int_{T^*M} a\circ\pi_{S\Sigma^{r-1}}\, K \, d\Omega 
\end{equation}
that depends only on the restriction of $a$ to $S\Sigma^{r-1}$.
%
As a consequence, the microlocal Weyl measure $W_{\triangle}$ (defined by \eqref{def_microlocal_weyl_measure}) exists, is a constant times the image under $\pi_{S\Sigma^{r-1}}$ of the measure $K\,\Omega$, i.e., 
$$
W_\triangle = \frac{1}{\rho(M)} (\pi_{S\Sigma^{r-1}})_* (K\, \Omega) 
$$
where $\rho(M) = \int_M\widehat{e}^q(1,0,0)\, d\mu(q)$ (see Definition \ref{def_rho}).
Moreover, we have
$$
\mathrm{supp}(W_{\triangle}) = S\Sigma^{r-1} .
$$
\end{theorem}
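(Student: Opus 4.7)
The plan is to compute $\Tr(A\,e^{t\triangle})$ as an oscillatory integral on $T^\star M$ using a pseudo-differential quantization, then perform a parabolic rescaling adapted to the sR dilations $\delta^q_{\sqrt{t}}$, and extract the leading small-time contribution from Theorem \ref{lemfondamental}. Independence from the choice of quantization follows from Lemma \ref{lem_prelim_measure}: any two quantizations of the same principal symbol $a$ differ by an operator of order $-1$, and \eqref{traceC} shows that such operators contribute only $\mathrm{o}(\Tr(e^{t\triangle}))=\mathrm{o}(t^{-\mathcal{Q}^M/2})$, so it suffices to work with the Kohn--Nirenberg quantization.

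Using a partition of unity, work in a chart of privileged coordinates centered at a point of $M$. The Schwartz kernel of $\mathrm{Op}^{\mathrm{KN}}(a)$ has the oscillatory integral representation $(2\pi)^{-n}\int e^{i(x-y)\cdot p}\,a(x,p)\,dp$, and the trace becomes an integral in $(x,y,p)$ involving $a(x,p)$ and $e(t,y,x)$. Perform the spatial change of variable $y = \delta^x_{\sqrt{t}}(z)$ (Jacobian $t^{\mathcal{Q}^M/2}$) together with the dual rescaling $p_i = t^{-w_i/2}p'_i$ (Jacobian $t^{-\mathcal{Q}^M/2}$), so that the phase $(x-y)\cdot p$ becomes $-z\cdot p'$. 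Theorem \ref{lemfondamental} then provides the expansion
\begin{equation*}
t^{\mathcal{Q}^M/2}\,e\bigl(t,\,\delta^x_{\sqrt{t}}(z),\,x\bigr) = \widehat{e}^{x}(1,z,0) + \sum_{k\geq 1}t^{k/2}f^x_k(z) ,
\end{equation*}
together with uniform Gaussian-type estimates (as in the proof of Theorem \ref{thm_green}) supplying a dominating function away from $z=0$. The three Jacobians combine into the overall prefactor $t^{-\mathcal{Q}^M/2}$.

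The key point is the asymptotic behaviour of the rescaled symbol: by homogeneity of degree $0$ of $a$ in $p$ for $|p|$ large,
\begin{equation*}
a\bigl(x,\,t^{-w_1/2}p'_1,\ldots,t^{-w_n/2}p'_n\bigr) = a\bigl(x,\,t^{(r-w_1)/2}p'_1,\ldots,t^{(r-w_n)/2}p'_n\bigr),
\end{equation*}
which converges as $t\to 0^+$ to $a\bigl(x,0,\ldots,0,p'_{n_{r-1}+1},\ldots,p'_n\bigr)=(a\circ\pi_{S\Sigma^{r-1}})(x,p')$, since $w_i=r$ exactly for $i>n_{r-1}$. The dominated convergence theorem, together with the identification $\int e^{-iz\cdot p'}\widehat{e}^x(1,z,0)\,dz = K(x,p')$, then yields the formula \eqref{F(0)_equiregular} for $F(0)$. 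The full expansion $F\in C^\infty(\R)$ in the variable $\sqrt{t}$ is obtained by iterating the same argument on the successive terms $f^x_k$ and simultaneously Taylor expanding the symbol $a$ in the damped components of $p$.

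Given $F(0)$, the formula $W_\triangle=\rho(M)^{-1}(\pi_{S\Sigma^{r-1}})_*(K\Omega)$ and the inclusion $\mathrm{supp}(W_\triangle)\subset S\Sigma^{r-1}$ follow from Theorem \ref{thm_weyl_measures_equiv} combined with $\Tr(e^{t\triangle})\sim\rho(M)\,t^{-\mathcal{Q}^M/2}$ (Theorem \ref{thm_local_weyl_equiregular}). The hardest part is the reverse inclusion $\mathrm{supp}(W_\triangle)=S\Sigma^{r-1}$: $K$ is only the Fourier transform of a positive function and need not be pointwise nonnegative, so full support of the pushforward is a non-cancellation statement. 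I would argue that, for fixed $q_0\in M$ and fixed nonzero top-weight covector $p^{(r)}\in\Sigma^{r-1}_{q_0}$, partial Fourier inversion in the low-weight dual variables recovers the restriction of the strictly positive kernel $\widehat{e}^{q_0}(1,\cdot,0)$ to the slice where the low-weight coordinates vanish; strict positivity of this restriction, a consequence of the strong maximum principle for the hypoelliptic semigroup generated by $\widehat{\triangle}^{q_0}$, rules out cancellation in the pushforward.
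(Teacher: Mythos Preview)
Your computation of $F(0)$ follows the same route as the paper's: localize in privileged coordinates, write the trace as an oscillatory integral in $(y,p)$ involving the heat kernel, perform the anisotropic change of variables $y=\delta^q_{\sqrt t}(z)$ together with the dual dilation $p\mapsto\delta_{1/\sqrt t}(p)$, apply Theorem~\ref{lemfondamental}, and use dominated convergence justified by the exponential estimates \eqref{exp_estimates}. The homogeneity argument showing that $a(q,\delta_{1/\sqrt t}(p))\to (a\circ\pi_{S\Sigma^{r-1}})(q,p)$ is identical.

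The gap is in your argument for the reverse inclusion $\mathrm{supp}(W_\triangle)\supset S\Sigma^{r-1}$. You correctly observe that integrating $K(q,\cdot)$ over the low-weight momenta $P_1$ produces, up to a constant, the partial Fourier transform of the strictly positive function $Y_2\mapsto\widehat e^{\,q}(1,(0,Y_2),0)$. But strict positivity of a function does \emph{not} prevent its Fourier transform from changing sign, so it does not by itself rule out that the radial integral $\int_0^\infty K^1(q,ru)\,r^{\,n-n_{r-1}-1}\,dr$ vanishes for some direction $u$. Your final sentence asserts the needed non-cancellation without supplying a mechanism.

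The paper closes this gap differently. The density \eqref{microlocal_weyl_measure_equiregular} is nonnegative (it is the density of the probability measure $W_\triangle$; the paper also offers a second route via the semigroup identity $\widehat e^{\,q}(1,\cdot,0)=\widehat e^{\,q}(1/2,\cdot,0)\star\widehat e^{\,q}(1/2,\cdot,0)$ on the Carnot group $\widehat M^{q}$). The decisive additional ingredient is that the density is \emph{real-analytic} in $u$: since $\widehat e^{\,q}(1,\cdot,0)$ decays exponentially (Appendix~\ref{app_sR_kernel}), its Euclidean Fourier transform $K(q,\cdot)$ extends analytically in $p$, and the radial integral inherits analyticity in $u$. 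A nonnegative real-analytic function on a connected manifold that is not identically zero vanishes only on a nowhere-dense analytic subvariety; since the total mass is $\rho(M)>0$, the density is not identically zero and the support is all of $S\Sigma^{r-1}$. Analyticity is the ingredient your argument is missing.
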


\begin{remark}
In Carnot groups of small dimension (see Remark \ref{lem_nilp_cst}), the function $K$ defined by \eqref{defKqp} does not depend on $q$.
\end{remark}

\begin{remark}\label{rem_weyl_microlocal}
Compared with the asymptotic expansion given by \eqref{asympt_heat_equiregular} as $t\rightarrow 0^+$ for the local Weyl law, the function $F$ in \eqref{microlocal_weyl_law_equiregular} is a smooth function of $\sqrt{t}$. The coefficient of $t^{i-\mathcal{Q}^M/2}$ in this expansion is the sum of several coefficients (which do not have a nice expression); one of which has the same expression as the leading coefficient in \eqref{asympt_heat_equiregular}, replacing $K$ with the Fourier transform of the function $y\mapsto a_i^q(y,0)$, where $a_i^q$ is defined in Theorem \ref{lemfondamental} (see Appendix \ref{app_lemfondam}); the other coefficients involve restrictions of $a$ to $\Sigma^{j}$ for various integers $j\leq r-1$.

We underline that the leading term, which provides the microlocal Weyl measure $W_\triangle$, only involves the restriction of $a$ to $\Sigma^{r-1}$. The property $\mathrm{supp}(W_{\triangle}) = S\Sigma^{r-1}$ means that spectral concentration is exactly on $S\Sigma^{r-1}$: this reflects the fact that spectral complexity is mainly due to the highest possible Lie brackets appearing in the generating Lie algebra condition $\mathrm{Lie}(D)=TM$ (H\"ormander condition).
\end{remark}

\begin{remark}
Let us make precise the density of the microlocal Weyl measure in $S\Sigma^{r-1}$.
Consider the fibration $\pi_{S\Sigma^{r-1}}:T^*M\rightarrow S\Sigma^{r-1}$. Each fiber is given by $T^*_qM/\Sigma^{r-1}_q$, for $q\in M$.
In a local chart of coordinates $(q,p)$, we use the notation $P_1=(p_1,\ldots,p_{n_{r-1}})$ and $P_2=(p_{n_{r-1}+1},\ldots,p_n)$, so that $p=(P_1,P_2)$.
We have $\pi_{S\Sigma^{r-1}}(q,p) = (q,(0,P_2))$. In these local coordinates, we endow $\Sigma^{r-1} \simeq M\times \R^{n-n_{r-1}}$ with the measure that is the pullback under the chart of the standard Lebesgue measure $dq\, dP_2$ on $\R^n\times\R^{n-n_{r-1}}$. By disintegration of the Liouville volume form $d\Omega=dq\, dp$ of $T^*M$ with respect to $dq\, dP_2$, we write, locally, $d\Omega = dq\, dP_2\, dP_1$, where $dP_1$ is the Lebesgue measure on the vertical fiber $T^*_qM/\Sigma^{r-1}_q \simeq \R^{n_{r-1}}$. These volumes are intrinsic and correspond to Hausdorff measures.
Then, taking polar coordinates $P_2=ru$ with $r>0$ and $u\in \mathcal{S}^{n-n_{r-1}-1}$, and using that $a$ is homogeneous of degree $0$, we obtain
\begin{equation}\label{microlocal_weyl_measure_equiregular}
\frac{dW_{\triangle}(q,u)}{d(q,u)} = \frac{1}{(2\pi)^n \rho(M)}  \int_0^{+\infty} K^1(q,ru)\, r^{n-n_{r-1}-1}\, dr 
\end{equation}
where $K^1(q,P_2) = \int_{\R^{n_{r-1}}} K(q,P_1,P_2)\, dP_1$.
\end{remark}

\begin{proof}
Hereafter, we use the notations of Appendix \ref{appendix_Schwartz} for Schwartz kernels and their densities.
Let $A$ be an arbitrary pseudo-differential operator on $M$ of order $0$.
%
%
%
We have 
$$
\Tr(A \, e^{t\triangle}) = \int_M [A \, e^{t\triangle}]_\nu(t,q,q)\, d\nu(q) = \int_M G(t,q)\, d\nu(q)
$$
with
$$
G(t,q) = [A \, e^{t\triangle}]_\nu(t,q,q)= \int_M [A]_\nu(q,q')\, e_{\triangle,\nu}(t,q',q)\, d\nu(q') 
$$
for any smooth measure $\nu$ on $M$.
%
Let $q\in M$ be arbitrary and let $U$ be a neighborhood of $q$ in $M$. 
Let us compute the asymptotics of $G(t,q)$ when $t\rightarrow 0^+$. 
Since the density $[A]_\nu$ with respect to $\nu$ of the Schwartz kernel of the pseudo-differential operator $A$ is smooth outside of the diagonal, we have
$$
[A]_\nu(q,q')=\rho_q(q')[A]_\nu(q,q')+R(q,q')
$$
where $\rho_q$ is a smooth cut-off function on $M$, equal to $1$ in a neighborhood $U_1\subset U$ of $q$, to $0$ outside of $U$, and $R$ is a smooth function.
It follows from the exponential estimates \eqref{exp_estimates} for the sR heat kernel (see Appendix \ref{app_sR_kernel}) that $\int_M R(q,q') e_{\triangle,\mu}(t,q',q)\, d\mu(q') = \mathrm{O}(t^\infty)$ as $t\rightarrow 0^+$, uniformly with respect to $q$, and thus this term yields no contribution to the Taylor expansion of $G(t,q)$.
Hence, in what follows, without loss of generality we assume that $R=0$. 

Using to a partition of unity, without loss of generality we also assume that $A=\Op(a)$, where $\Op$ is the usual (left) quantization and $a$ is a classical symbol of order $0$, compactly supported with respect to its first variable in a sufficiently small neighborhood of $q$. Hereafter, we take local privileged coordinates at $q$ (in which $q=0$), and we take $\nu$ as the Lebesgue measure in this chart. The heat kernel $e_{\triangle,\nu}$ is denoted by $e$.


In these local coordinates, we have $[A]_\nu(0,x) = \frac{\rho_0(x)}{(2\pi)^n} \int_{\R^n} e^{-ix.p} a(0,p)\, dp$, 
i.e., the Schwartz kernel of the pseudo-differential operator $A=\Op(a)$ is a tempered distribution which is the Fourier transform (as an operator from $\S'(\R^n)$ to $\S'(\R^n)$) with respect to $p$ of the symbol $a$ which is a tempered distribution, and thus
$$
G(t,0) = \frac{1}{(2\pi)^n} \int_{\R^n\times\R^n} e^{-ix.p} a(0,p) \rho_0(x)\, e(t,x,0)\, dx\, dp .
$$
Making successively the change of variables $x=\delta_{\sqrt{t}}(y)$, of determinant $t^{\mathcal{Q}^M/2}$, 
using that $\delta_{\sqrt{t}}(y).p=y.\delta_{\sqrt{t}}(p)$ and then making the change of variables $p\mapsto\delta_{1/\sqrt{t}}(p)$, we infer that
\begin{equation*}
G(t,0) = \frac{1}{(2\pi)^n t^{\mathcal{Q}^M/2}} \int_{\R^n\times\R^n} {\mkern-20mu} e^{-iy.p}a\big(0,\delta_{1/\sqrt{t}}(p)\big) \rho_0(\delta_{\sqrt{t}}(y)) t^{\mathcal{Q}^M/2} \, e\left(t,\delta_{\sqrt{t}}(y),0\right)  \, dy\, dp  .
\end{equation*}
Assuming that $a$ is homogeneous of degree $0$ with respect to $p$, using the sR weights at $q$ (see Appendix \ref{app_sRflag}), we have
\begin{equation}\label{a_homog}
a\big(q,\delta_{1/\sqrt{t}}(p)\big) = a\big(q,\big(t^{-w_1/2}p_1,\ldots,t^{-w_n/2}p_n\big)\big) = a\big(0,\big(t^{(w_n-w_1)/2}p_1,\ldots,p_n\big)\big) .
\end{equation}
Similar arguments are developed for all other homogeneous components of the symbol $a$, but actually to compute the main term of the asymptotics only the above $0$-homogeneous case will be of interest.
Anyway, given any symbol of order $0$, the exponential estimates \eqref{exp_estimates} and \eqref{upper_estim_kernel}  of the heat kernel and of its derivatives (see Appendix \ref{app_sR_kernel}) imply that the function $t^{\mathcal{Q}^M/2} \, e\left(t,\delta_{\sqrt{t}}(y),0\right)$ and all its derivatives satisfy the domination property, and the Lebesgue dominated convergence theorem implies that $t^{\mathcal{Q}^M/2} \,G(t,q)$ is a smooth function of $\sqrt{t}$ and $q$, which gives \eqref{microlocal_weyl_law_equiregular}. To compute the equivalent as $t\rightarrow 0^+$, we start by noting that, by Theorem \ref{lemfondamental} in Appendix \ref{app_lemfondam}, 
\begin{equation*}
G(t,q) \sim \frac{1}{(2\pi)^n t^{\mathcal{Q}^M/2}} \int_{\R^n\times\R^n} {\mkern-20mu} e^{-iy.p}a\big(q,\delta_{1/\sqrt{t}}(p)\big)\, \widehat{e}^q(1,y,0)  \, dy\, dp  
= \frac{\int_{\R^n} a\big(q,\delta_{1/\sqrt{t}}(p)\big) K(q,p) \, dp}{(2\pi)^n t^{\mathcal{Q}^M/2}}  
\end{equation*}
as $t\rightarrow 0^+$, where $K(q,\cdot)$, defined by \eqref{defKqp}, is the Fourier transform taken at $p$ of $y\mapsto \widehat{e}^q(1,y,0)$.
Since the latter function belongs to the Schwartz class $\S(\R^n)$ (this follows, again, from the exponential estimates, as discussed above), it follows that $K(q,\cdot)\in \S(\R^n)$.
Without loss of generality, we assume that $a$ is the principal symbol of $A$ (thus, is homogeneous of degree $0$). Indeed, we have $A=\Op(a)+C$ where $C$ is a compact operator and thus $\Tr(Ce^{t\triangle})\rightarrow 0$ as $t\rightarrow 0^+$ (as in the proof of Lemma \ref{lem_prelim_measure}).
Since $a$ is homogeneous of degree $0$ with respect to $p$, letting $t$ tend to $0$ in \eqref{a_homog}, all terms $t^{(w_j-w_1)/2}$ such that $w_j>w_1$ vanish. Since $w_{n_{r-1}+1}=\cdots=w_n(=r)$, we get $a\big(q,\delta_{1/\sqrt{t}}(p)\big) = a(q,(0,\ldots,0,p_{n_{r-1}+1},\ldots,p_n)) + \mathrm{O}(\sqrt{t})$
as $t\rightarrow 0^+$, and thus
$$
G(t,q) \sim \frac{1}{(2\pi)^n t^{\mathcal{Q}^M/2}} \int_{\R^n} a(q,(0,\ldots,0,p_{n_{r-1}+1},\ldots,p_n)) K(0,p)\, dp  .
$$
By uniformity with respect to $q$ (see Theorem \ref{lemfondamental} in Appendix \ref{app_lemfondam}) and by dominated convergence, we obtain \eqref{F(0)_equiregular}.
%
%
%
It follows 
that the microlocal Weyl measure $W_{\triangle}$ exists and is the image under $\pi_{S\Sigma^{r-1}}$ of the measure $K\,d\Omega$. In particular, we have $\mathrm{supp}(W_{\triangle}) \subset S\Sigma^{r-1}$.

Let us prove that we have exactly $\mathrm{supp}(W_{\triangle}) = S\Sigma^{r-1}$.
Of course, the projection of $\mathrm{supp}(W_{\triangle})$ onto $M$ is $\mathrm{supp}(w_{\triangle})=M$. Hence, it suffices to prove that, for $q\in M$ fixed, the support of the density \eqref{microlocal_weyl_measure_equiregular} as a function of $u$ is equal to the whole space $\R^{n-n_{r-1}}$.
To prove this fact, it suffices to prove that this density is a nonnegative analytic function of $u$: indeed, then, either it is zero everywhere (which is not the case) or it is positive, which gives the result.
The fact that the density \eqref{microlocal_weyl_measure_equiregular} is analytic in $u$ is because $K(q,\cdot)$ is analytic (and this, even if the manifold $M$ is not analytic, since the argument is applied in $\R^{n-n_{r-1}}$), as the Fourier transform of a sR heat kernel taken at time $1$, which is exponentially decreasing at infinity. 
Moreover, it is a real-valued even function, because the kernel is so.

The fact that the density \eqref{microlocal_weyl_measure_equiregular} is a nonnegative function of $u$ follows from the fact that it can be written as the square of a function. Indeed, in the equiregular case $\widehat{M}^q$ is a Carnot group, isometric to $\R^n$, with the identity of the group given by $0\in\R^n$, the inverse of an element $x$ being given by $-x$, and the Haar measure being identified with the Lebesgue measure $m$ on $\R^n$. The (symmetric positive) heat kernel has then the property $\widehat{e}^q(1,x,y) = \widehat{e}^q(1,x-y,0)$ and we can write $\widehat{e}^q(1,y,0) = \int_{\R^n} \widehat{e}^q(1/2,y,z) \widehat{e}^q(1/2,z,0)\, dz = \int_{\R^n} \widehat{e}^q(1/2,y-z,0)\, \widehat{e}^q(1/2,z,0)\, dz = \big( \widehat{e}^q(1/2,\cdot,0) \star \widehat{e}^q(1/2,\cdot,0) \big) (y)$, with the usual convolution on a Lie group.
The claim follows.


Finally, let us check that $K$ is the Kohn-Nirenberg symbol of the smoothing operator $e^{\widehat{\triangle}^q}$. 
The Schwartz kernel of $e^{\widehat{\triangle}^q}$ satisfies $[e^{\widehat{\triangle}^q}](x,y)=\widehat{e}^q(1,x,y)=\widehat{e}^q(1,x-y,0)$.
Using that the Kohn-Nirenberg symbol of an operator $A$ on $\R^n$ of Schwartz kernel $[A]$ is given by $a(x,\xi)=e^{-ix.\xi}\int_{\R^n}[A](x,y)e^{iy.\xi}\, dy$, the conclusion follows.
\end{proof}


\part{Singular case}\label{part_singular}
In this part, we assume that the sR structure $(M,D,g)$ is singular, i.e., not equiregular. In contrast to Part \ref{part_equiregular}, the integer-valued functions $q\mapsto\mathcal{Q}^M(q)$ and $q\mapsto r(q)$, which are upper semi-continuous, now have discontinuities. 
The \emph{singular set} is the closed subset of $M$ consisting of all possible singular points (see Appendix \ref{app_sRflag}), i.e., the set where the sR flag is not regular, or, equivalently,
$$
\S = \{q\in M\ \mid\ \mathcal{Q}^M(q) > \inf_{q'\in M}\mathcal{Q}^M(q') \} = \{q\in M\ \mid\ \mathcal{Q}^M(q) > \Qeq \} .
$$
The latter equality is because $\mathcal{Q}^M(q)=\Qeq$ for every $q\in M\setminus\S$, where $\Qeq$ is the Hausdorff dimension of the open regular region $M\setminus\S$.

In Section \ref{sec_singular_prelim}, we first explain that, because of such discontinuities, the easy argument of dominated convergence used in the equiregular case is bound to fail in general singular cases. 
We introduce the geometric context that will be used in the subsequent sections. We explain the ``$(J+K)$-decomposition", which is instrumental in computing the local Weyl law in the singular case, by adequately splitting the integrals to be estimated. 
This preliminary analysis leads us to introduce the nilpotentizability property: we say that the horizontal distribution $D$ is $\S$-nilpotentizable if $D$ is locally diffeomorphic to its nilpotentization at any point of $\S$.
We also define the concept of double (and more generally, multiple) nilpotentization procedure.

Then, in the subsequent sections, under the assumption that the singular set $\S$ is Whitney stratified by equisingular smooth submanifolds of $M$, we establish the local Weyl law:
\begin{itemize}[parsep=0.5mm,itemsep=0.2mm,topsep=0.5mm]
\item In Section \ref{sec_equisingular_nilp}, when $\S$ has a single stratum (i.e., $\S$ is itself an equisingular smooth submanifold of $M$) and the horizontal distribution $D$ is $\S$-nilpotentizable. We give more precise results in the Baouendi-Grushin and Martinet cases, as well as consequences in terms of Quantum Ergodicity properties.
\item In Section \ref{sec_equisingular_stratified_nilp}, when $\S$ has multiple strata and $D$ is $\S$-nilpotentizable.
\item In Section \ref{sec_nonnilp}, when $D$ is not $\S$-nilpotentizable: we first provide a general result in the real analytic case and then give several examples for non-analytic sR structures.
\end{itemize}

\section{Preliminaries}\label{sec_singular_prelim}
Our objective is to compute the small-time asymptotics of the local Weyl law, i.e., given any smooth real-valued function $f$ on $M$, estimate
\begin{equation}\label{def_int_I}
I(t) = \int_M f(q)\, e(t,q,q)\, d\mu(q) 
\end{equation}
as $t\rightarrow 0^+$.
%
%
Using a partition of unity, this can be done locally. In the neighborhood of a point $q_0\in M\setminus\S$ (regular point), we have seen in Theorem \ref{thm_local_weyl_equiregular} that the argument is very easy: we use the fact that $t^{\mathcal{Q}^M(q)/2}e(t,q,q)\rightarrow\widehat{e}^q(1,0,0)$ as $t\rightarrow 0^+$, uniformly with respect to $q$ in a neighborhood of $q_0$ where all points $q$ are regular. Hence, if $f$ is supported far from $\S$ then Theorem \ref{thm_local_weyl_equiregular} can be applied and in particular the asymptotics of $I(t)$ is in $1/t^{\mathcal{Q}^M(q)/2}$ as $t\rightarrow 0^+$.
Difficulties appear when one wants to compute the asymptotics near a singular point $q_0\in\S$.

\subsection{On the domination property}\label{sec_weyl_integrable}
In the above argument, what is instrumental is to apply the dominated convergence theorem to the family of functions $h_t(q)=t^{\mathcal{Q}^M(q)/2}e(t,q,q)$, indexed by $t>0$, provided this family is dominated by an integrable function of $q$: this domination property is satisfied on every compact subset of the open regular region $M\setminus\S$, but may fail near $\S$. 

Anyway, in case the domination property is satisfied near $\S$, i.e., if the function $(t,q)\mapsto t^{\Qeq}e(t,q,q)$ is bounded above, uniformly on $(0,1]\times (M\setminus\S)$, by a locally integrable function, then the conclusion of Theorem \ref{thm_local_weyl_equiregular} remains valid.
Let us comment on this domination property.

First, it implies that the function $q'\mapsto \widehat{e}^{q'}(1,0,0)$ is locally integrable at $q$ (see Section \ref{sec_additional}). 

Second, by the exponential estimates \eqref{exp_estimates_diago} for the heat kernel (see Appendix \ref{app_sR_kernel}), given any compact subset $K$ of $M$, there exists $C_1,C_2>0$ such that 
$\mu(\BsR(q,\sqrt{t})) \, e(t,q,q)$ is bounded above by $C_2$ and below by $C_1$, uniformly with respect to $q\in K$ and to $t\in(0,1]$.
Since $\mu(\BsR(q,\sqrt{t}))\sim t^{\mathcal{Q}^M(q)/2} \widehat{\mu}^q(\hatBsR^q(0,1))$ as $t\rightarrow 0^+$ (see \eqref{equiv_mu_ball} in Appendix \ref{sec_nilp_mesures}), using \eqref{mu_ball_eps} and \eqref{equiv_muhat} in Appendix \ref{sec_uniformballbox}, 
$\mu(\BsR(q,\sqrt{t})) / \widehat{\mu}^q\big(\hatBsR^q(0,1)\big) t^{\mathcal{Q}^M(q)/2}$ is bounded above by $C_2$ and below by $C_1$, uniformly with respect to $q\in K$ and to $t\in(0,1]$,
and thus
$$
\frac{C_1}{\widehat{\mu}^q\big(\hatBsR^q(0,1)\big)} \leq t^{\mathcal{Q}^M(q)/2} e(t,q,q) \leq \frac{C_2}{\widehat{\mu}^q\big(\hatBsR^q(0,1)\big)}\qquad \forall q\in K\qquad \forall t\in(0,1] .
$$
We can replace $\widehat{\mu}^q\big(\hatBsR^q(0,1)\big)$ by $\mathtt{w}_\mu^q(X)$ (defined in Appendix \ref{sec_uniformballbox}) in the above inequality.

As noted in Appendix \ref{sec_uniformballbox}, the functions $q\mapsto  \widehat{\mu}^q\big(\hatBsR^q(0,1)\big)$ and $q\mapsto \mathtt{w}_\mu^q(X)$ are positive on $M$, smooth near regular points, but discontinuous at singular points (both of them converge to $0$ when evaluated along a sequence of regular points converging to a singular point).

We conclude that the domination property for the family of functions $h_t$ is satisfied if and only if the function $q\mapsto 1/\widehat{\mu}^q\big(\hatBsR^q(0,1)\big)$ (equivalently, the function $q\mapsto 1/\mathtt{v}_\mu^q(X)$) is locally integrable.
Such a property fails in general at singular points (see \cite{GhezziJean_NA2015, GhezziJean_TSG2015}). For instance, it fails in the Baouendi-Grushin and Martinet cases (see Sections \ref{sec_Baouendi-Grushin} and \ref{sec_Martinet}). To obtain the Weyl law, then, we will use an adequate decomposition of the integrals (which we call the ``$(J+K)$-decomposition").

\subsection{Geometric context}\label{sec_geom_context}
Recall that $\S$ is the set of all singular points of the sR flag of $D$. The region $M\setminus\S$ is the \emph{regular region}, and $\mathcal{Q}^M(q)=\mathcal{Q}^M(M\setminus\S)=\Qeq$ for every $q\in M\setminus\S$ (recall that $\Qeq$ is the Hausdorff dimension of $M$ at such a regular point $q$).

\paragraph{Equisingularity.}
We assume that $\S$ is an \emph{equisingular} (see Appendix \ref{app_sRflag}) smooth submanifold of $M$, of topological dimension $k\in\{0,\ldots,n-1\}$: this means that all integers $n_i(q) = \dim D^i_q$ and $n_i^\S(q) = \dim \left( D^i_q\cap T_q\S \right)$ remain constant as $q\in\S$. In particular, we have
$$
\mathcal{Q}^M(q)=\Cst=\mathcal{Q}^M(\S) \qquad\textrm{and}\qquad \mathcal{Q}^\S(q)=\Cst=\mathcal{Q}^\S \qquad \forall q\in\S
$$
where $\mathcal{Q}^M(q)$ is defined by \eqref{def_Q} and $\mathcal{Q}^\S(q)$ is defined by \eqref{def_QN} in Appendix \ref{app_sRflag}.
%
Note that $\mathcal{Q}^M(\S)> \max(\mathcal{Q}^\S,\Qeq)$ and that $\mathcal{Q}^\S$ is the Hausdorff dimension of $\S$.

Three dimensions are attached to $\S$: its topological dimension $k$, its Hausdorff dimension $\mathcal{Q}^\S$, and the integer $\mathcal{Q}^M(\S)$. Although the three of them are useful in the forthcoming analysis, only the Hausdorff dimension $\mathcal{Q}^\S$ will play a role in the small-time asymptotics local Weyl law (see Theorem \ref{thm_onestratum} in Section \ref{sec_equisingular_nilp} where $D$ is moreover assumed to be $\S$-nilpotentizable), with different cases depending on whether $\mathcal{Q}^\S$ is lower or greater than the Hausdorff dimension $\Qeq$ of the regular region $M\setminus\S$.

We will treat in Section \ref{sec_equisingular_stratified_nilp} the more general case where $\S$ is stratified by equisingular smooth submanifolds $\S_i$ (of topological dimension $k_i$ and of Hausdorff dimension $\mathcal{Q}^{\S_i}$). Within this more general framework, for the moment we assume that $\S=\S_1$.

\paragraph{Privileged coordinates straightening $\S$.}
Since $\S$ is equisingular, according to Appendix \ref{app_privileged}, at each point $q\in\S$, we take 
local privileged coordinates $x=(x_1,\ldots,x_n)$, 
depending smoothly on $q\in\S$, 
in which $\S = \{ x_{k+1}=\cdots=x_n = 0\}$. 
If $\S$ is a single point then $k=0$ 
and $\mathcal{Q}^\S=0$.
Recall that $\mathcal{Q}^\S=\sum_{j=1}^k w_i^\S(D)$ is the Hausdorff dimension of $\S$ 
and that $\mathcal{Q}^M(\S)=\sum_{j=1}^n w_i^\S(D)$,
where the sR weights along $\S$ are labeled according to the coordinates $x$ (see Appendix \ref{app_privileged}).
Note that $0\leq \mathcal{Q}^\S \leq \mathcal{Q}^M(\S)-1$.

\paragraph{Transverse dilations and normal bundle.}
%
The ``topological" normal bundle is the vector bundle $N\S$ over $\S$ whose fibers $N_q\S$ are defined by $N_q\S =T_q M/T_q\S$ for every $q\in\S$.
We denote by $\pi_\S:N\S\rightarrow\S$ the canonical projection.
Using privileged coordinates straightening $\S$, the normal bundle $N\S$ is identified, in a non-canonical way, to $\S\times\R^{n-k}$ with coordinates $(q,x')$ for $q\in\S$ and $x'=(x_{k+1},\ldots,x_n)\in\R^{n-k}$; the fiber $N_q\S$ is identified with $\R^{n-k}$ endowed with the dilations along these coordinates (see Appendix \ref{sec_nilp_smoothsection}). Hence, $N\S\simeq\S\times\R^{n-k}$ is endowed with the family of (so-called) transverse dilations $\delta_\varepsilon^\S (q,x') = \delta_\varepsilon^q(0,x') = (q,\delta_\varepsilon(x'))$.

Following the nilpotentization procedure (see Appendix \ref{sec_nilp_mesures}), considering the smooth measure $\mu$ on $M$, we define on $N\S$ the smooth measure $\widehat{\mu}^\S$, 
homogeneous of degree $\mathcal{Q}^M(\S)-\mathcal{Q}^\S$ with respect to transverse dilations, by
\begin{equation}\label{muepsS}
\widehat{\mu}^\S = \lim_{\varepsilon\rightarrow 0 \atop \varepsilon\neq 0} \mu_\varepsilon^\S 
\qquad\textrm{where}\qquad
\mu_\varepsilon^\S = \vert\varepsilon\vert^{-\mathcal{Q}^M(\S)+\mathcal{Q}^\S} \big(\delta_\varepsilon^\S\big)^*\mu 
\end{equation}
with convergence in the vague topology.
It is called the \emph{transverse nilpotentization of $\mu$}.
Locally near $\S$, the manifold $M$ is identified with $N\S\simeq\S\times\R^{n-k}$ and
\begin{equation}\label{muS}
\widehat{\mu}^\S=\mu_\S\otimes dx'
\end{equation}
where $\mu_\S$ is a smooth measure on $\S$ and $dx'$ is the Lebesgue measure on $\R^{n-k}$.

Finally, denoting by $\mathcal{B}^{n-k}$ the unit Euclidean ball and by $\mathcal{S}^{n-k-1}=\partial\mathcal{B}^{n-k}$ the unit Euclidean sphere 
in $\R^{n-k}$, we define \emph{transverse polar coordinates} $(q,\tau,\sigma)$ in $\S\times[0,+\infty)\times\mathcal{S}^{n-k-1}$ on $N\S$.
The set $\S\times\mathcal{S}^{n-k-1}$ is endowed with the smooth measure $(\iota_{W}\widehat{\mu}^\S)_{\vert \S\times\mathcal{S}^{n-k-1}}$, the restriction to $\S\times\mathcal{S}^{n-k-1}$ of the contraction of $\widehat{\mu}^\S$ with the infinitesimal transverse dilation vector $W = \frac{d}{d\varepsilon}\big\vert_{\varepsilon=1}\delta_\varepsilon^\S$.
In local coordinates,  we have 
$$
(\iota_{W}\widehat{\mu}^\S)_{\vert \S\times\mathcal{S}^{n-k-1}} = \mu_\S\otimes d\sigma 
\quad\textrm{with}\quad 
d\sigma = \sum_{i=k+1}^n (-1)^{i-k-1}\ord(x_i) \, dx_{k+1}\cdots dx_{i-1}\, dx_{i+1}\cdots dx_n .
$$
Given any continuous function $g$ on $M$, compactly supported near $\S$, denoting $y=(q,\sigma)\in\S\times\mathcal{S}^{n-k-1}$, we have 
\begin{equation}\label{transverse_polar}
\begin{split}
\int_M g\, d\widehat{\mu}^\S &= \int _0^{+\infty} \tau ^{\mathcal{Q}^M(\S)-\mathcal{Q}^\S-1}  \int _{\S\times\mathcal{S}^{n-k-1}}g\big(\delta^\S_\tau (y)\big) \, d(\iota_{W}\widehat{\mu}^\S)(y) \, d\tau   \\
&= \int _0^{+\infty} \tau ^{\mathcal{Q}^M(\S)-\mathcal{Q}^\S-1}  \int _{\S} \int_{\mathcal{S}^{n-k-1}} g(\delta^q_\tau (\sigma)) \, d\sigma \, d\mu_\S(q) \, d\tau 
\end{split}
\end{equation}
where, with a slight abuse of notation, we write $\delta^q_\tau(\sigma)$ instead of $\delta^q_\tau(0,\sigma)$.
Recall that the $d$ in the integrals is not the exterior derivative but a notation meaning that the integral is performed with such or such measure.

\begin{remark}\label{rem_sphere_transverse}
The formula \eqref{transverse_polar} remains valid if the unit Euclidean sphere $\mathcal{S}^{n-k_1-1}$ is replaced by any piecewise smooth sphere transverse to the fibers (possibly, depending smoothly on $\tau$). Piecewise smoothness is required to perform integrations. SR spheres cannot be used in general because they may fail to be stratifiable (see \cite{BonnardTrelat_AFST2001}).
\end{remark}

\subsection{$(J+K)$-decomposition}\label{sec_J+K}
Let us assume that $\S$ is an equisingular smooth submanifold of $M$.
The $(J+K)$-decomposition consists of splitting the integral $I(t)$ defined by \eqref{def_int_I} as the sum of two integrals: 
$$
I(t)=J(t)+K(t)
$$
with
\begin{equation}\label{I=J+K}
J(t) = \int_{\mathcal{B}(\S,\sqrt{t})} f(q')\, e(t,q',q')\, d\mu(q') \qquad\textrm{and}\qquad
K(t) = \int_{M\setminus \mathcal{B}(\S,\sqrt{t})} f(q')\, e(t,q',q')\, d\mu(q') 
\end{equation}
where, with a slight abuse of notation, $\mathcal{B}(\S,\varepsilon) = \delta^\S_\varepsilon(\S\times\mathcal{B}^{n-k})$ is an $\varepsilon$-tubular neighborhood of $\S$ in $M$. Actually, $\sqrt{t}$ is exactly the right scale to use homogeneity properties and nilpotentizations.

As we are going to see, estimating $J(t)$ does not raise any difficulty and can be done without any specific assumption. The dominating term in $J(t)$ is related to the nilpotentization of the kernel along $\S$. In contrast, computations for $K(t)$ are much more difficult. We are going to perform a kind of blow-up along $\S$ using dilations. This will lead us to the necessity to consider iterated nilpotentizations, which complicate significantly the picture.

\subsubsection{Estimating $J(t)$}\label{sec_estimating_J}
%
Making the change of variable $q'=\delta_{\sqrt{t}}^\S(y)$ and using that 
$\big(\delta_{\sqrt{t}}^\S\big)^*\mu = (\sqrt{t})^{\mathcal{Q}^M(\S)-\mathcal{Q}^\S} \mu_{\sqrt{t}}^\S$ (see \eqref{muepsS}), we have
$$
J(t) = \frac{1}{t^{\mathcal{Q}^\S/2}} 
\int_{\S\times\mathcal{B}^{n-k}} f\big(\delta_{\sqrt{t}}^\S(y)\big)\, (\sqrt{t})^{\mathcal{Q}^M(\S)}\, e\big(t, \delta_{\sqrt{t}}^\S(y), \delta_{\sqrt{t}}^\S(y) \big)\, d\mu_{\sqrt{t}}^\S(y) 
$$
and it follows from Theorem \ref{lemfondamental} in Appendix \ref{app_lemfondam} that 
\begin{equation}\label{asympt_J}
J(t) = \frac{F_J(\sqrt{t})}{t^{\mathcal{Q}^\S/2}} \qquad\forall t>0
\end{equation}
for some $F_J\in C^\infty(\R)$ such that
\begin{equation*}
F_J(0) = 
\int_{\atop\!\!\S\times\mathcal{B}^{n-k}} {\mkern-30mu} f\circ\pi_\S(y) \, \widehat{e}^\S(1,y,y)\, d\widehat{\mu}^\S(y)
= \int_\S f(q) \int_{\mathcal{B}^{n-k}} \widehat{e}^q(1,(0,x),(0,x))\, dx\, d\mu_\S(q) 
\end{equation*}
where $\widehat{e}^\S$ is the mapping on $\S$ which to any $q\in\S$ assigns the nilpotentized heat kernel $\widehat{e}^q$ generated by the nilpotentized sR Laplacian $\widehat{\triangle}^q$.
%
We have concentration on $\S$ in this integral, which depends on $f$ restricted to $\S$. 
%
Note that \eqref{asympt_J} is completely general and does not require any specific assumption.

\begin{remark}\label{rem_J_nosqrt}
When $f=1$ near $\S$, there is no odd power of $\sqrt{t}$ in \eqref{asympt_J}, i.e., $F_J(\sqrt{t})$ can be replaced with $F_J(t)$.
Indeed, using the homogeneity property \eqref{homog_property_f_i} of Theorem \ref{lemfondamental} (in Appendix \ref{app_lemfondam}) with $\varepsilon=-1$, we find that $a_i^q(x,x)=f_i^q(1,x,x)$ satisfies $a_i^q(x,x) = (-1)^i a_i^q(\delta_{-1}(x),\delta_{-1}(x))$. As noted in Theorem \ref{lemfondamental}, this property implies that $a_i^q(0,0)=0$ for $i$ odd and thus the expansion \eqref{complete_expansion_1} does not involve odd powers of $\sqrt{t}$. But, actually, we can say more: for $i$ odd, $a_i^q$ is odd with respect to all variables $x_j$ whose nonholonomic order $\ord_q(x_j)$ is odd. Hence, when integrating on $\S\times\mathcal{B}^{n-k}$, all terms in $(\sqrt{t})^{2j+1}$, for $j\in\N$, are vanishing. 

This remark remains true if $f\big(\delta_{-1}^\S(y)\big)=f(y)$ for every $y\in\delta^\S_\varepsilon(\S\times\mathcal{B}^{n-k})$ for some $\varepsilon>0$, i.e., if $f$ is even with respect to $\S$, near $\S$, in the local chart (but this evenness property depends a priori on the choice of the privileged coordinates).

Note that, when $e=\widehat{e}^q$ and $\mu=\widehat{\mu}^q$ for some $q\in M$, we have $(\sqrt{t})^{\mathcal{Q}^M(\S)}\, e\big(t, \delta_{\sqrt{t}}^\S(y), \delta_{\sqrt{t}}^\S(y) \big) = e(1,y,y) = \widehat{e}^\S(1,y,y)$ and then $F_J(t) = F_J(0)+\mathrm{O}(t^\infty)$.
\end{remark}

\subsubsection{Estimating $K(t)$}\label{sec_estimating_K}
Since $e_{\triangle,\mu}(t,q,q)\, d\mu(q) = e_{\triangle,\widehat{\mu}^\S}(t,q,q)\, d\widehat{\mu}^\S(q)$, hereafter we consider the kernel (still denoted by $e$ to keep readability) associated with the measure $\widehat{\mu}^\S$ given by \eqref{muS}.
By \eqref{transverse_polar}, we have 
\begin{equation*}
K(t) = \int_{\sqrt{t}}^{+\infty} \tau^{\mathcal{Q}^M(\S)-\mathcal{Q}^\S-1} \int_{\S} \int_{\mathcal{S}^{n-k-1}} f\left(\delta_\tau^q(\sigma)\right)\, e\left(t,\delta_\tau^q(\sigma),\delta_\tau^q(\sigma)\right) \, d\sigma\, d\mu_\S(q)\, d\tau   .
\end{equation*}
Obtaining the small-time asymptotics for $K(t)$ is much more difficult than for $J(t)$.

Having in mind the procedure of nilpotentization at $q\in\S$, \eqref{relation_eeps_e} in Appendix \ref{app_lemfondam} gives
\begin{equation}\label{kernel_change1}
e\left( t, \delta_\tau^q(\sigma), \delta_\tau^q(\sigma) \right) = \tau^{-\mathcal{Q}^M(\S)} e_\tau^q \left( \frac{t}{\tau^2}, \sigma, \sigma \right) + \mathrm{O}(\vert\tau\vert^\infty)
\end{equation}
as $\tau\rightarrow 0$, because $\mathcal{Q}^M(q)=\mathcal{Q}^M(\S)$. In what follows, we will never write the infinite-order remainder term $\mathrm{O}(\vert\tau\vert^\infty)$, because it will have no impact on the small-time asymptotics of $K(t)$.
Recall that $e_\tau^q$ is the heat kernel generated by the sR Laplacian $\triangle_\tau^q$ where, denoting $D=\mathrm{Span}(X)$ with $X=(X_1,\ldots,X_m)$, the sR Laplacian $\triangle_\tau^q$ corresponds to the $m$-tuple $X_\tau^q = ((X_1)_\tau^q,\ldots,(X_m)_\tau^q)$, and $X_\tau^q = \tau (\delta_\tau^q)^* X \rightarrow \widehat{X}^{q}$ as $\tau\rightarrow 0$, with $\widehat{X}^{q} = (\widehat{X}^{q}_1,\ldots,\widehat{X}^{q}_m)$ (see Appendix \ref{sec_nilp_smoothsection}).
We obtain
\begin{equation}\label{intK_S=S1_1} 
K(t) = \int_{\sqrt{t}}^{+\infty} \tau^{-\mathcal{Q}^\S-1} \int_{\S}\int_{\mathcal{S}^{n-k-1}} f\left(\delta_\tau^q(\sigma)\right)\, e_\tau^q \left( \frac{t}{\tau^2}, \sigma,\sigma \right) \, d\sigma\, d\mu_\S(q) \, d\tau 
\end{equation}
and since $\S$ is equisingular, by Theorem \ref{lemfondamental}, $e_\tau^q$ depends smoothly on $\tau$ and on $q\in\S$ in $C^\infty$ topology, with $e_0^q = \widehat{e}^q$ (nilpotentization at $q$ of the heat kernel).

Considering \eqref{kernel_change1} and using \eqref{relation_eeps_e} again, but this time, at some point $\sigma$, we have (still neglecting the remainder terms)
\begin{equation}\label{kernel_change2}
e\left( t, \delta_\tau^q(\sigma), \delta_\tau^q(\sigma) \right) = \tau^{-\mathcal{Q}^M(\S)} e_\tau^q \left( \frac{t}{\tau^2}, \sigma, \sigma \right) 
= \frac{\tau^{\mathcal{Q}^{\R^n}(\sigma)-\mathcal{Q}^M(\S)}}{(\sqrt{t})^{\mathcal{Q}^{\R^n}(\sigma)}} 
e_{\tau,\sqrt{t}/\tau}^{q,\sigma}(1,0,0) 
\end{equation}
where, given arbitrary (fixed) $\tau\neq 0$ and $q\in\S$, for every $\varepsilon\in\R$ and every $\sigma\in\mathcal{S}^{n-k-1}$ (depending on $(\tau,q)$), $e_{\tau,\varepsilon}^{q,\sigma} = (e_\tau^q)_\varepsilon^\sigma$ is the heat kernel generated by the sR Laplacian $\triangle_{\tau,\varepsilon}^{q,\sigma} = (\triangle_\tau^q)_\varepsilon^\sigma$ corresponding to the $m$-tuple $X_{\tau,\varepsilon}^{q,\sigma} = (X_\tau^q)_\varepsilon^\sigma = \varepsilon (\delta_\varepsilon^\sigma)^* X_\tau^q$. 
We thus deal here with a \emph{double nilpotentization} procedure as the parameters $\tau$ and $\varepsilon$ converge to $0$: the parameter $\tau$ stands for the \emph{first} nilpotentization (of $D$ at $q\in\S$), and the parameter $\varepsilon$ stands for the \emph{second} nilpotentization (of $D^q_\tau$ at $\sigma\in\mathcal{S}^{n-k-1}$). 

We infer from \eqref{intK_S=S1_1} and \eqref{kernel_change2} that
\begin{equation}\label{intK_S=S1_2} 
K(t) =  \int_{\sqrt{t}}^{+\infty} \int_{\S}\int_{\mathcal{S}^{n-k-1}} \frac{\tau^{\mathcal{Q}^{\R^n}(\sigma)-\mathcal{Q}^\S-1}}{t^{\mathcal{Q}^{\R^n}(\sigma)/2}}  f\left(\delta_\tau^q(\sigma)\right)\, 
e_{\tau,\sqrt{t}/\tau}^{q,\sigma}(1,0,0) 
\, d\sigma\, d\mu_\S(q)\, d\tau . 
\end{equation}
Several remarks are in order.

\begin{remark}
Assuming that $\S$ is an equisingular smooth submanifold, we have $\mathcal{Q}^{\R^n}(\sigma)=\Qeq$ in \eqref{intK_S=S1_2} (because $\sigma$ is outside of $\S$) and then $t^{\Qeq/2} K(t)$ is the integral of a function of $(q,\sigma,\tau,\varepsilon=\sqrt{t}/\tau)$, which is smooth with respect to $q\in\S$ and $\sigma\in M\setminus\S$ because $\S$ is equisingular (by Theorem \ref{lemfondamental}, but not necessarily smooth with respect to $(\tau,\varepsilon)$: indeed, the limit of 
$e_{\tau,\varepsilon}^{q,\sigma}(1,0,0)$ 
as $(\tau,\varepsilon)\rightarrow (0,0)$ is not necessarily well defined! 
This first remark motivates the next Section \ref{sec_nilpotentizability_doublenilp}, in which we are going to prove that, under the so-called $\S$-nilpotentizability assumption, the double limit exists, 
$e_{\tau,\varepsilon}^{q,\sigma}(1,0,0)$ 
depends smoothly on $(\tau,\varepsilon,q,\sigma)$ and 
is equal to the double nilpotentization of the heat kernel at $(\tau,\varepsilon)=(0,0)$.
Under these two assumptions, in Section \ref{sec_equisingular_nilp}, we will then infer the local Weyl law. 
\end{remark}

\begin{remark}\label{rem_prelim_JK}
Assuming that $\S$ is an equisingular smooth submanifold, but that the nilpotentizability assumption is not satisfied, 
$e_{\tau,\varepsilon}^{q,\sigma}(1,0,0)$ 
may blow up as $(\tau,\varepsilon)\rightarrow (0,0)$, and then, computing its asymptotics is required to estimate that of $K(t)$.
This issue will be investigated in Section \ref{sec_nonnilp}.

At this step, we can however make the following remark. Since we always have $1=\mathrm{O} ( e_{\tau_1,\varepsilon}^{q_1,\sigma} (1,0) )$ for $(\tau_1,\varepsilon)\in[-1,1]^2$, we claim that, when $f$ is a positive continuous function, we have $J(t)=\mathrm{O}(K(t))$ as $t\rightarrow 0^+$. More precisely, as $t\rightarrow 0^+$:
\begin{itemize}[parsep=0cm,topsep=0cm,itemsep=1mm]
\item if $\Qeq > \mathcal{Q}^\S$ then $\displaystyle \frac{1}{t^{\Qeq/2}} = \mathrm{O}(K(t))$ and $\displaystyle J(t) \sim \frac{\Cst}{t^{\mathcal{Q}^\S/2}} = \mathrm{o}(K(t))$;
\item if $\Qeq = \mathcal{Q}^\S$ then $\displaystyle \frac{\vert\ln t\vert}{t^{\Qeq/2}} = \mathrm{O}(K(t))$ and $\displaystyle J(t) \sim \frac{\Cst}{t^{\mathcal{Q}^\S/2}} = \mathrm{o}(K(t))$;
\item if $\Qeq < \mathcal{Q}^\S$ then $\displaystyle \frac{1}{t^{\mathcal{Q}^\S/2}} = \mathrm{O}(K(t))$ and $\displaystyle J(t) \sim \frac{\Cst}{t^{\mathcal{Q}^\S/2}} = \mathrm{O}(K(t))$.
\end{itemize}
Indeed, since $\mathcal{Q}^{\R^n}(\sigma) \geq \Qeq$, we have $\big( \frac{\tau}{\sqrt{t}} \big)^{\mathcal{Q}^{\R^n}(\sigma)} \geq \big( \frac{\tau}{\sqrt{t}} \big)^{\Qeq}$ because $\frac{\tau}{\sqrt{t}}\geq 1$ in the integral \eqref{intK_S=S1_2} , and the result follows, using that 
$1=\mathrm{O}\big( e_{\tau_1,\sqrt{t}/\tau_1}^{q_1,\sigma} (1,0,0) \big)$.
This implies that the dominating term in the small-time asymptotics of $I(t)$ is of the order of that of $K(t)$ (but $J(t)$ may contribute to the main term when $\Qeq < \mathcal{Q}^\S$).
\end{remark}

\begin{remark}\label{remK_strat}
When $\S$ is stratified by equisingular smooth submanifolds, $\mathcal{Q}^{\R^n}(\sigma)$ 
may take various values, depending on whether $\sigma$ belongs to $M\setminus\S$ or to some stratum of $\S$. Then, the integral $K(t)$ has to be split according to this stratification and this will lead us, in Section \ref{sec_equisingular_stratified_nilp}, in order to establish the local Weyl law in the equisingular stratified case, to apply iteratively the $(J+K)$-procedure.
\end{remark}

\subsection{Nilpotentizability and double nilpotentization}\label{sec_nilpotentizability_doublenilp}
Given any $q\in M$ and any $\tau\in\R\setminus\{0\}$, the horizontal distribution $D_\tau^q = (\delta_\tau^q)^*D = \mathrm{Span}(X_\tau^q)$ (where $X_\tau^q = \tau (\delta_\tau^q)^*X$ and $X=(X_1,\ldots,X_m)$) is diffeomorphic to $D$ but this diffeomorphism may fail to be uniform when $\tau\rightarrow 0$.
In other words, all $D_\tau^q$, $\tau>0$, are diffeomorphic, but may fail to be diffeomorphic to $\widehat{D}^q$.

\begin{example}\label{simpleexample_nonnilp}
For instance, consider a two-dimensional manifold $M$, locally identified to $\R^2$ near $q=(0,0)$, endowed with the horizontal distribution $D=\mathrm{Span}(X)$ with $X=(X_1,X_2)$ given by $X_1=\partial_1$ and $X_2=(x_1^2+x_2^2)\,\partial_2$. This is an almost-Riemannian case, of singular set $\S=\{q\}$. In $M\setminus\S$, it is Riemannian and the sR weights are $w_1^{M\setminus\S}(D)=w_2^{M\setminus\S}(D)=1$, while  $w_1^q(D)=1$ and $w_2^q(D)=3$. We have $(X_1)_\tau^q=\partial_1$ and $(X_2)_\tau^q=(x_1^2+\tau^4x_2^2)\,\partial_2$ and thus the singular set of $D_\tau^q=\mathrm{Span}(X_\tau^q)$ is $\S_\tau^q=\{(0,0)\}$ for every $\tau>0$. However, the nilpotentized distribution is $\widehat{D}^q=\mathrm{Span}(\widehat{X}^q)$ with $\widehat{X}^q=(\widehat{X}^q_1,\widehat{X}^q_2)$ given by $\widehat{X}^q_1=\partial_1$ and $\widehat{X}^q_2=x_1^2\,\partial_2$, with singular set $\widehat{\S}^q=\{x_1=0\}$. Hence, on this example, $D=\mathrm{Span}(X)$ and $\widehat{D}^q=\mathrm{Span}(\widehat{X}^q)$ are not diffeomorphic.
\end{example}

\subsubsection{Nilpotentizability}\label{sec_nilpotentizability}
The above-mentioned loss of uniformity motivates the following definition.

\begin{definition}\label{def_nilpotentizable}
%
%
%
Let $N$ be a smooth submanifold of $M$. 

The horizontal distribution $D=\mathrm{Span}(X_1,\ldots,X_m)$ is said to be $N$-\emph{nilpotentizable} 
if $D$ is locally diffeomorphic to $\widehat{D}^q$ at every point $q\in N$, smoothly with respect to $q\in N$, in the following sense: for every $q\in N$, there exist a neighborhood $U$ of $q$ in $M$, a neighborhood $V$ of $0$ in $\R^n$, and a diffeomorphism $\phi^q:U\rightarrow V$, with $\phi^q(q)=0$, such that $\phi^q_*D=\widehat{D}^q$, i.e., $d\phi^q((\phi^q)^{-1}(x)).D((\phi^q)^{-1}(x)) = \widehat{D}^q(x)$ for every $x\in V$, and such that $\phi^q$ depends smoothly on $q\in N$ in $C^\infty$ topology.

This also means that there exist smooth functions $a_{ij}^q$ on $V$, smoothly depending on $q\in N$, such that $d\phi^q((\phi^q)^{-1}(x)).X_j((\phi^q)^{-1}(x)) = \sum_{i=1}^m a_{ij}^q(x) \widehat{X}_i^q(x)$ for every $x\in V$ and for every $j\in\{1,\ldots,m\}$, and the $m$-by-$m$ matrix $A^q(x)=(a_{ij}^q(x))_{1\leq i,j\leq m}$ is invertible at $x=0$.
In terms of the m-tuples $X=(X_1,\ldots,X_m)$ and $\widehat{X}^q=(\widehat{X}^q_1,\ldots,\widehat{X}^q_m)$ viewed as $n$-by-$m$ matrices, the latter equality is written as $\phi^q_*X = \widehat{X}^q A^q$.

When $N=\cup_{k=1}^s N_k$ is a Whitney stratified submanifold of $M$, where the strata $N_k$ are smooth submanifolds of $M$, 
we say that $D$ is $N$-nilpotentizable if $D$ is $N_k$-nilpotentizable for every 
$k\in\{1,\ldots,s\}$.
\end{definition}

We have written the above definition for a general submanifold $N$, but in this article the nilpotentizability concept is always used with $N=\S$ (singular set of $D$), except in the following remark.

\begin{remark}\label{rem_nilp_equiregular_region}
Assume that $\S=\cup_{k=1}^s \S_k$ is a Whitney stratified submanifold of $M$, where the strata $\S_k$ are equisingular smooth submanifolds of $M$. 
We thus have $M=\cup_{k=1}^s \S_k \bigcup (M\setminus\S)$, i.e., $M$ is Whitney stratified by equisingular smooth submanifolds (the stratum $M\setminus\S$ is the open regular region).

Let us make the following important observation: in Definition \ref{def_nilpotentizable}, $\S$-nilpotentizability means that $D$ is $\S_k$-nilpotentizable for every $k\in\{1,\ldots,s\}$. We \emph{do not assume} that $D$ is $(M\setminus\S)$-nilpotentizable, i.e., we do not assume that $D$ is locally diffeomorphic to its nilpotentization in the regular region.
Despite the fact that $D$ may fail to be locally diffeomorphic to $\widehat{D}^q$ at every $q\in M\setminus\S$, it is however true that the sR weights of $D$ at $q$ (defined in Appendix \ref{app_sRflag}) coincide with the sR weights of $\widehat{D}^q$ at $0$ (on this concern, see also Remark \ref{rem_weakerassumptions_nilp}) further.

It is anyway interesting to note that, if $D$ is not only $\S$-nilpotentizable but also $(M\setminus\S)$-nilpotentizable, then automatically $\S$ must be Whitney stratifiable by equisingular smooth strata (because $\widehat{D}^q$ is so).
\end{remark}

The uniformity property mentioned at the beginning of Section \ref{sec_nilpotentizability_doublenilp} is recovered under the nilpotentizability assumption (it fails in Example \ref{simpleexample_nonnilp} because $D$ is not $\S$-nilpotentizable).
Indeed, when $D$ is $\S$-nilpotentizable, we have $D = (\phi^q)^* \widehat{D}^q$. Following Appendix \ref{sec_nilp_diffeo}, we set $\phi_\tau^q = \delta_{1/\tau} \circ \phi^q \circ \delta_\tau^q$ for every $\tau\in\R\setminus\{0\}$ and $\phi_0^q = \widehat{\phi^q}^q = \lim_{\tau\rightarrow 0} \phi_\tau^q$, for every $q\in N_k$. Here, $\phi_0^q = \widehat{\phi^q}^q$ is the nilpotentization of the diffeomorphism $\phi^q$ at the point $q$. Then the family of diffeomorphisms $\phi_\tau^q$ depends smoothly on $q\in \S_k$ and continuously on $\tau\in\R$.
Moreover, using the homogeneity property $\widehat{D}^q = \delta_{1/\tau}^* \widehat{D}^q$, we have $D_\tau^q = (\phi_\tau^q)^* \widehat{D}^q$ for all $q\in \S_k$ and $\tau\in\R$.
In other words, if $D$ is $\S$-nilpotentizable then all $D_\tau^q$ are diffeomorphic to $\widehat{D}^q$, and these diffeomorphisms depend smoothly on $q\in \S_k$ and continuously on $\tau$ in $C^\infty$ topology.

\paragraph{Comments on the concept of nilpotentizability.}
The nilpotentizability assumption has been much used with $N=M$ in the context of motion planning (see \cite[Sections 3.1 and 3.2]{Jean_2014}), although nilpotentizability usually means being diffeomorphic to a nilpotent distribution only. Here, our definition is stronger since we require that $D\sim\widehat{D}^q$ at any $q\in M$ (hence, a nilpotent distribution may fail to be nilpotentizable!): it coincides with the notion of being ``strongly nilpotent" considered in \cite{Mormul_2003, Mormul_2005}. Its validity is related to the theory of normal forms of distributions (see \cite{Zh-92}). When $n\leq 4$, since there are no moduli in their normal forms, all equiregular horizontal distributions are nilpotentizable (see \cite{AgrachevBarilariBoscain_CV2012, Hermes_1989}). By the Darboux theorem, every horizontal distribution of rank $m=n-1$, which is regular at $q\in M$, is nilpotentizable near $q$ (see \cite{Hermes_1989, HermesLundellSullivan_JDE1984}).
In the Baouendi-Grushin case without tangency point and in the nonsingular Martinet case, the horizontal distribution is $\S$-nilpotentizable. The nilpotentizability assumption allows however to have moduli in the horizontal distributions. Given any two integers $2\leq m<n$ and any $q\in M$, there exists a horizontal distribution of $m$ vector fields (singular at $q$) that is not nilpotentizable near $q$ (see \cite{Hermes_1989}). 
The nilpotentizability assumption is not generic when $n$ is large enough (see \cite{Jean_2014, Mormul_2003, Mormul_2005}).    


\subsubsection{Double nilpotentization}\label{sec_doublenilp}
Throughout this section, we assume that the singular set $\S$ (and thus $M$) is Whitney stratified by equisingular smooth submanifolds and that $D$ is $\S$-nilpotentizable.

Let $\S_1$ and $\S_2$ be two equisingular strata of $M$ such that $\dim\S_1<\dim\S_2$ and $\S_1\subset\overline{\S_2}$. In the case where $\S$ is an equisingular smooth submanifold of $M$ (as in Section \ref{sec_equisingular_nilp}), we have $\S_1=\S$ and $\S_2=M\setminus\S$.

In this section, we explain how to perform a double nilpotentization: the first at some point $q_1\in\S_1$ and the second (in a sense to made precise) at some point of $\S_2$ nearby $q_1$.

\medskip

Let $\psi^{q_1}$ be a chart of privileged coordinates at $q_1\in M$ (see Appendix \ref{app_privileged}) defined as the composition of a chart of privileged coordinates at $0\in\widehat{M}^{q_1}\simeq\R^n$ with the local diffeomorphism $\phi^{q_1}$ given by Definition \ref{def_nilpotentizable} that maps $D$ to $\widehat{D}^{q_1}$. 

Recalling that $X=(X_1,\ldots,X_m)$ and that $\delta^{q_1}_{\tau_1}=(\psi^{q_1})^{-1}\circ\delta_{\tau_1}$, in the chart $\psi^{q_1}$ we identify $X^{q_1}_{\tau_1}=\tau_1(\delta^{q_1}_{\tau_1})^*X$ with $X$ and with its nilpotentization $\widehat{X}^{q_1}$ at $q_1$, for any $\tau_1\in[-1,1]$ (actually, if we do not perform this identification, we have a diffeomorphism depending smoothly on $\tau_1\in[-1,1]$ and on $q_1\in\S_1$), with the agreement that $X^{q_1}_0=\widehat{X}^{q_1}$ for $\tau_1=0$.

Hence, the horizontal distribution $D^{q_1}_{\tau_1}=(\delta^{q_1}_{\tau_1})^*D=\mathrm{Span}(X^{q_1}_{\tau_1})$ is identified with $D=\mathrm{Span}(X)$ and with $\widehat{D}^{q_1}=\mathrm{Span}(\widehat{X}^{q_1})$.
The singular set $\S_{\tau_1}^{q_1} = (\delta_{\tau_1}^{q_1})^{-1}(\S)$ of $D_{\tau_1}^{q_1}$ is identified with the singular set $\S$ of $D$ and with the singular set $\widehat{\S}^{q_1}$ of $\widehat{D}^{q_1}$, and similarly for the strata: $(\S_j)^{q_1}_{\tau_1}\simeq \S_i\simeq \widehat{\S_j}^{q_1}$, for $j=1,2$.

\medskip

Now, let $q_2\in\S_2$ belong to the chart $\psi^{q_1}$, and let us nilpotentize $X^{q_1}_{\tau_1}\simeq X$ at $q_2$, for any $\tau_1\in[-1,1]$. 
To perform this second nilpotentization, we use another chart $\psi^{q_2}$ of privileged coordinates $y=\psi^{q_2}(x)$ at $q_2$.
We set 
$$
X_{\tau_1,\tau_2}^{q_1,q_2} = (X_{\tau_1}^{q_1})_{\tau_2}^{q_2} = \tau_2\big(\delta_{\tau_2}^{q_2}\big)^* X_{\tau_1}^{q_1}
\qquad\textrm{and}\qquad
D_{\tau_1,\tau_2}^{q_1,q_2} = \big(\delta_{\tau_2}^{q_2}\big)^*D^{q_1}_{\tau_1} = \mathrm{Span}(X_{\tau_1,\tau_2}^{q_1,q_2}) 
$$
(where $\delta^{q_2}_{\tau_2} = (\psi^{q_2})^{-1}\circ\delta_{\tau_2}$).
Note that the dilation $\delta_{\tau_1}$ in $\delta^{q_1}_{\tau_1}=(\psi^{q_1})^{-1}\circ\delta_{\tau_1}$ is defined with the sR weights $w_i^{\S_1}(D)$ of $D$ along the stratum $\S_1$ (see Appendix \ref{app_sRflag}), while the dilation $\delta_{\tau_2}$ in $\delta^{q_2}_{\tau_2} = (\psi^{q_2})^{-1}\circ\delta_{\tau_2}$ is defined with the sR weights $w_i^{\S_2}(D)$ of $D$ along $\S_2$. 
Since $D$ is $\S$-nilpotentizable, we have $w_i^{\S_j}(D) = w_i^{\widehat{\S_j}^{q_1}}(\widehat{D}^{q_1})$.

Therefore, $X_{\tau_1,\tau_2}^{q_1,q_2}$ has an extension at $\tau_1\tau_2=0$ that depends smoothly on $(\tau_1,\tau_2)\in[-1,1]^2$, $q_1\in\S_1$, $q_2\in\S_2$ and  
that $X_{\tau_1,\tau_2}^{q_1,q_2} = \widehat{X}^{q_1,q_2}+ \mathrm{O(\tau_1,\tau_2)}$
as $(\tau_1,\tau_2)\rightarrow (0,0)$, where we have set
$$
\widehat{X}^{q_1,q_2} = \widehat{\widehat{X}^{q_1}}^{q_2}
$$
which is the nilpotentization of $\widehat{X}^{q_1}$ at $q_2$: this \emph{double nilpotentization} is the nilpotentization at $q_2$ of the nilpotentization of $X$ at $q_1$.

Accordingly, we denote by $e_{\tau_1,\tau_2}^{q_1,q_2}$ the heat kernel generated by the sR Laplacian corresponding to $X_{\tau_1,\tau_2}^{q_1,q_2}$. When $\tau_1=\tau_2=0$, $\widehat{e}^{\, q_1,q_2} = \widehat{\,\widehat{e}^{q_1}}^{q_2}$ is the heat kernel generated by the sR Laplacian $\widehat{\triangle}^{q_1,q_2} = \widehat{\widehat{\triangle}^{q_1}}^{q_2}$ corresponding to the $m$-tuple $\widehat{X}^{q_1,q_2} = \widehat{\widehat{X}^{q_1}}^{q_2}$ (double nilpotentization).

\begin{lemma}\label{lem_uniform_double_nilp}
The heat kernel $e_{\tau_1,\tau_2}^{q_1,q_2}(t,y,y')$ is a smooth function of $(\tau_1,\tau_2)\in[-1,1]^2$, $q_1\in\S_1$, $q_2\in\S_2$, $t\in(0,+\infty)$, $(y,y')\in\R^n\times\R^n$. This function is even with respect to $\tau_2$ when $y=y'=0$, and
$e_{\tau_1,\tau_2}^{q_1,q_2}(t,y,y') = \widehat{e}^{\, q_1,q_2}(t,y,y') + \mathrm{O}(\tau_1,\tau_2)$ as $(\tau_1,\tau_2)\rightarrow (0,0)$ in $C^\infty((0,+\infty)\times\R^n\times\R^n)$.
\end{lemma}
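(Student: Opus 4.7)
The plan is to reduce the statement to an iterated application of Theorem \ref{lemfondamental}, with the $\S$-nilpotentizability assumption supplying the structural compatibility that lets the two nilpotentizations be performed jointly rather than only sequentially.

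First I would establish joint smoothness of the $m$-tuple $X_{\tau_1,\tau_2}^{q_1,q_2}$ on $[-1,1]^2\times\S_1\times\S_2$ in $C^\infty$ topology. For the first variable, the discussion following Definition \ref{def_nilpotentizable} exhibits diffeomorphisms $\phi_{\tau_1}^{q_1}$ realizing $D_{\tau_1}^{q_1}=(\phi_{\tau_1}^{q_1})^*\widehat{D}^{q_1}$ smoothly in $(\tau_1,q_1)$ and extending at $\tau_1=0$ to $\widehat{\phi^{q_1}}^{q_1}$; hence $X_{\tau_1}^{q_1}$ extends smoothly through $\tau_1=0$ to $\widehat{X}^{q_1}$. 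For the second variable, $\S$-nilpotentizability applied to the stratum $\S_2$ ensures that the sR weights of $D$ along $\S_2$ coincide with those of $\widehat{D}^{q_1}$ along $\widehat{\S_2}^{q_1}$, so the family $X_{\tau_1,\tau_2}^{q_1,q_2}=(X_{\tau_1}^{q_1})_{\tau_2}^{q_2}$ extends smoothly through $\tau_2=0$ to the double nilpotentization $\widehat{X}^{q_1,q_2}=\widehat{\widehat{X}^{q_1}}^{q_2}$, jointly smoothly in all parameters, with both $\tau_i$ allowed to vanish.

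Second, I would invoke parametric regularity for H\"ormander-type heat kernels, which is the main content of Theorem \ref{lemfondamental}: given a smooth family of subelliptic Laplacians whose generating vector fields satisfy the H\"ormander condition uniformly at the origin (which they do here, since the Lie brackets at $0$ of $\widehat{X}^{q_1,q_2}$ span $\R^n$ and depend continuously on the parameters), the associated heat kernel depends smoothly on the parameters in $C^\infty((0,+\infty)\times\R^n\times\R^n)$. A first-order Taylor expansion of this jointly smooth function at $(\tau_1,\tau_2)=(0,0)$ then yields
$$e_{\tau_1,\tau_2}^{q_1,q_2}(t,y,y') = \widehat{e}^{\,q_1,q_2}(t,y,y') + \mathrm{O}(\tau_1,\tau_2)$$
in the required topology. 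The evenness in $\tau_2$ at $y=y'=0$ follows from the same symmetry mechanism as in Remark \ref{rem_J_nosqrt}: changing $\tau_2$ into $-\tau_2$ amounts to composing $\delta_{\tau_2}^{q_2}$ with $\delta_{-1}$, and the homogeneity identity of Theorem \ref{lemfondamental} forces the kernel evaluated at the coincident basepoint $0$ to be unchanged under this sign flip, so only even powers of $\tau_2$ appear in the Taylor expansion at the origin.

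The main obstacle I expect lies in controlling the \emph{joint} (rather than iterated) limit in $(\tau_1,\tau_2)\to(0,0)$: Example \ref{simpleexample_nonnilp} shows that without nilpotentizability the nilpotentized distribution at $q_2$ can depend discontinuously on whether one sends $q_2$ toward a singular point before or after nilpotentizing, and the corresponding heat kernel at the origin can blow up along certain approach directions. The $\S$-nilpotentizability hypothesis is precisely the structural input that eliminates this pathology, furnishing diffeomorphisms that intertwine $D$ with $\widehat{D}^{q_1}$ uniformly along $\S_1$ and compatibly with $\S_2$, so that the two nilpotentizations commute up to smooth remainders and the joint smoothness at the double origin becomes a parametric restatement of Theorem \ref{lemfondamental}.
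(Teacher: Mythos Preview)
Your proposal is correct and follows essentially the same approach as the paper: first use $\S$-nilpotentizability to show that $X_{\tau_1,\tau_2}^{q_1,q_2}$ extends smoothly to $(\tau_1,\tau_2)\in[-1,1]^2$ with value $\widehat{X}^{q_1,q_2}$ at the origin (the paper does this in the text preceding the lemma), then invoke Theorem~\ref{lemfondamental} to transfer this smoothness to the heat kernel, and finally derive the evenness in $\tau_2$ at $y=y'=0$ from the homogeneity identity~\eqref{homog_property_f_i} (the paper applies the expansion~\eqref{complete_expansion} of $e_{\tau_1}^{q_1}$ in $\tau_2$, which shows directly that only even powers survive at the origin).
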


\begin{proof}
The smoothness of the heat kernel with respect to its arguments comes from Theorem \ref{lemfondamental} in Appendix \ref{app_lemfondam}.

The fact that $e_{\tau_1,\tau_2}^{q_1,q_2}(t,0,0)$ is an even function of $\tau_2$ 
follows from \eqref{complete_expansion} and \eqref{homog_property_f_i} in Theorem \ref{lemfondamental}, applied to the kernel $e_{\tau_1}^{q_1}$ (depending smoothly on $\tau_1$ and $q_1$), which give 
$e_{\tau_1,\tau_2}^{q_1,q_2}(t,0,0) 
= \widehat{\, e_{\tau_1}^{q_1}}^{q_2}(t,0,0) + \sum_i \tau_2^{2i} f_{2i}^{q_1,q_2,\tau}(t,0,0) + \mathrm{O}(\vert\tau_2\vert^\infty)$ as $\tau_2\rightarrow 0$. 
\end{proof}

\begin{remark}\label{rem_multiple_nilp}[Multiple nilpotentization]
We have defined the double nilpotentization. By induction, it is straightforward to define the multiple nilpotentization, which will be used in Section \ref{sec_equisingular_stratified_nilp} in order to investigate the equisingular stratified nilpotentizable case.
Taking $j\geq 2$ strata of increasing dimensions such that $\S_1\subset\cdots\subset\S_j$ and taking points $q_i\in\S_j$, for $i\in\{1,\ldots,j\}$, all of them in a sufficiently small neighborhood, the $m$-tuple of vector fields
\begin{equation}\label{def_X_nilp_j}
X_{\tau_1,\ldots,\tau_j}^{q_1,\ldots,q_j} = \tau_j \big( \delta^{q_j}_{\tau_j} \big)^* X_{\tau_1,\ldots,\tau_{j-1}}^{q_1,\ldots,q_{j-1}} = \tau_1\cdots\tau_j \big( \delta^{q_1}_{\tau_1} \circ\cdots\circ \delta^{q_j}_{\tau_j} \big)^* X 
\end{equation}
has an extension at $\tau_1\cdots\tau_j$ that depends smoothly on $(\tau_1,\ldots,\tau_j)\in[-1,1]^j$ and on $(q_1,\ldots,q_j)\in\S_1\times\cdots\times\S_j$, and is equal to $\widehat{X}^{q_1,\ldots,q_j}$ at $(\tau_1,\ldots,\tau_j)=(0,\ldots,0)$. Here, the multiple nilpotentization $\widehat{X}^{q_1,\ldots,q_j}$ is the $m$-tuple $X$ that is first nilpotentized at $q_1$, then at $q_2$, etc, and finally at $q_j$; i.e., it is defined by the induction 
$$
\widehat{X}^{q_1,\ldots,q_{i+1}} = \widehat{\widehat{X}^{q_1,\ldots,q_i}}^{\!\! q_{i+1}} \qquad \forall  i\in\{1,\ldots,j\}.
$$ 
Accordingly, the singular set $(\S)_{\tau_1,\ldots,\tau_j}^{q_1,\ldots,q_j}$ of $D_{\tau_1,\ldots,\tau_j}^{q_1,\ldots,q_j} = \mathrm{Span}(X_{\tau_1,\ldots,\tau_j}^{q_1,\ldots,q_j})$ is diffeomorphic to $\S$.

Lemma \ref{lem_uniform_double_nilp} is straightforwardly generalized as follows: denoting by $e_{\tau_1,\ldots,\tau_j}^{q_1,\ldots,q_j}$ the heat kernel generated by the sR Laplacian corresponding to $X_{\tau_1,\ldots,\tau_j}^{q_1,\ldots,q_j}$, the function $e_{\tau_1,\ldots,\tau_j}^{q_1,\ldots,q_j}(t,y,y')$ depends smoothly on $(\tau_1,\ldots,\tau_j)\in[-1,1]^2$, $q_i\in\S_i$ for $i\in\{1,\ldots,j\}$, $t\in(0,+\infty)$ and $(y,y')\in\R^n\times\R^n$. 
\end{remark}

\begin{remark}\label{rem_weakerassumptions_nilp}
In Lemma \ref{lem_uniform_double_nilp}, the $\S$-nilpotentizability assumption can be slightly weakened to the following assumption: \emph{$\widehat{\S_j}^{q_1}$ is a nonempty equisingular submanifold whose sR weights coincide with the sR weights along $\S_j$, for $j=1,2$}.

More generally, if the nilpotentizability or the above more general assumption fails, although $\S_{\tau_1}^{q_1}$ is diffeomorphic to $\S$ for every $\tau_1\neq 0$, for $\tau_1=0$ the singular set $\widehat{\S}^{q_1}$ of $\widehat{D}^{q_1}$ may fail to be diffeomorphic to $\S$ in general (it may even be empty). This situation, studied in Section \ref{sec_nonnilp}, is much more challenging because $e_{\tau_1,\tau_2}^{q_1,q_2}(t,y,y')$ has no limit as $(\tau_1,\tau_2)\rightarrow 0$ and its blowing-up asymptotics must be studied.
\end{remark}

\section{Local Weyl law in the equisingular nilpotentizable case}\label{sec_equisingular_nilp}
Throughout this section, we assume that the singular set $\S$ is an equisingular smooth submanifold of $M$, of topological dimension $k\in\{0,\ldots,n-1\}$ and of Hausdorff dimension $\mathcal{Q}^\S$, and that the horizontal distribution $D$ is $\S$-nilpotentizable. 

We establish the local Weyl law in Theorem \ref{thm_onestratum} and we identify the main terms in an intrinsic way.
This case is already representative of a number of examples (see Section \ref{sec_examples_onestratum}).
The result that we obtain covers, in particular, the Baouendi-Grushin case without tangency point and the nonsingular Martinet case, which we treat in more generality in Sections \ref{sec_Baouendi-Grushin} and \ref{sec_Martinet}. In Section \ref{sec_QE}, we also derive Quantum Ergodicity properties.



\subsection{Hadamard finite part}
In order to identify terms in the local Weyl law in an intrinsic way in particular in the case where $\mathcal{Q}^\S=\Qeq$, we use the concept of \emph{Hadamard finite part}.

The Hadamard finite part can be defined in several ways, depending on the class of singular integrals under consideration. Here, we use the following definition.
Let $g$ be a function of class $C^1$ on $\R$. Let $\beta$ be the Borel measure on $\R\setminus\{0\}$ defined by 
$\frac{d\beta}{ds}=\frac{g(s)}{\vert s\vert}$. 
Given any $C^1$ function $f$ of compact support on $\R$, the integral $\int_\R f\, d\beta$ is singular at $0$. Its Hadamard finite part (in french, ``partie finie", which explains the usual short notation $\mathrm{p.f.}$) is defined by
$$
\mathrm{p.f.} \int_{\R\setminus\{0\}} f\, d\beta = \lim_{\varepsilon\rightarrow 0^+} \left( \int_{\vert s\vert\geq\varepsilon} f(s)\frac{g(s)}{\vert s\vert}\, ds - C\ln\frac{1}{\varepsilon} \right) 
$$
where $C$ is the unique real number for which this limit exists, namely, $C=2f(0)g(0)$. Note that, since the function $s\mapsto \frac{f(s)g(s)-f(0)g(0)}{\vert s\vert}$ has a continuous extension at $0$, we have
$$
\mathrm{p.f.} \int_{\R\setminus\{0\}} f\, d\beta = 2f(0)g(0)\ln(a) + \int_{-a}^a \frac{f(s)g(s)-f(0)g(0)}{\vert s\vert}\, ds 
$$
for every $a>0$ such that $\supp(f)\subset[-a,a]$. The quantity at the right-hand side does not depend on $a$.

In view of generalizing the definition to manifolds, it is useful to make the following remark. The set $\R$ is endowed with the family of dilations $\delta_\varepsilon(s)=\varepsilon s$, for $\varepsilon>0$. Defining $\beta^0=\lim_{\varepsilon\rightarrow 0^+}\delta_\varepsilon^*\beta$, we have $\frac{d\beta^0}{ds}=g(0)\frac{ds}{\vert s\vert}$ and $\beta^0$ is homogeneous of degree $0$. Denoting by $S(\R)$ the quotient of $\R$ under positive dilations and by $W=s\, \partial_s$ the infinitesimal dilation vector, we have $2g(0)=(\iota_W\beta^0)_{\vert S(\R)}$ and the above unique constant $C$ such that the limit exists is $C=\int_{S(\R)} f(0)\, (\iota_W\beta^0)_{\vert S(\R)}(dx)$.

\medskip


Let us now define the Hadamard finite part in an intrinsic way on the sR manifold $(M,D,g)$.
Let $\beta$ be a smooth measure on $M\setminus\S$, blowing up near $\S$, assumed to have a transverse nilpotentization $\widehat{\beta}^\S = \lim_{\tau\rightarrow 0} (\delta_\tau^\S)^*\beta$ that is homogeneous of degree $0$. An example is the Popp measure in the nilpotentizable case where $\mathcal{Q}^\S=\Qeq$: it has a smooth density with respect to $\mu$ in $M\setminus\S$, which blows up near $\S$.
Like in Section \ref{sec_geom_context}, $\S\times\mathcal{S}^{n-k-1}$ is then endowed with the smooth measure $(\iota_W\widehat{\beta}^\S)_{\vert \S\times\mathcal{S}^{n-k-1}}$.
Note that $\widehat{\beta}^\S = \frac{d\tau}{\tau}\otimes (\iota_W\widehat{\beta}^\S)_{\vert \S\times\mathcal{S}^{n-k-1}}$: indeed, both sides of the equality are homogeneous of degree $0$ and coincide on $\tau=1$.

Given any $C^1$ function $f$ of compact support on $M$, the integral $\int_M f\, d\beta$ 
is singular along $\S$. We define its Hadamard finite part by
$$
\mathrm{p.f.} \int_{M\setminus\S} f\, d\beta = \lim_{\varepsilon\rightarrow 0^+} \left( \int_{M\setminus \mathcal{B}(\S,\varepsilon)} f\, d\beta - C \ln\frac{1}{\varepsilon} \right)
$$
where $C = \int_{\S\times\mathcal{S}^{n-k-1}} f\circ\pi_\S(y)\, d(\iota_{W}\widehat{\beta}^\S)(y)$ is the unique real number for which the limit exists. 
The Hadamard finite part is intrinsic: it depends only on the sR structure and on $\beta$.

\medskip


Given any $q\in\S$, recalling that, near $q$, $\mu=\mu_\S\otimes dx$ where $\mu_\S$ is a smooth measure on $\S$ and $dx$ is the Lebesgue measure on $N_q\S\simeq\R^{n-k}$, we define the ``transverse trace" 
$$
\Tr_{N_q\S}(e^{\widehat{\triangle}^q}) = \int_{N_q\S} \widehat{e}^q(1,(0,x),(0,x))\, dx  
$$
of the nilpotentized heat semi-group. The mesure on $\S$ of density $\Tr_{N_q\S}(e^{\widehat{\triangle}^q})$ which respect to $\mu_\S$ does not depend on the smooth measure $\mu$.
We will see that $\Tr_{N_q\S}(e^{\widehat{\triangle}^q})<+\infty$ if and only if $\mathcal{Q}^M(q)>\Qeq$. When $\mathcal{Q}^M(q)=\Qeq$, the integral diverges in a logarithmic way at infinity and we define its Hadamard finite part by
$$
\mathrm{p.f.} \ \Tr_{N_q\S}(e^{\widehat{\triangle}^q}) = \lim_{\varepsilon\rightarrow 0^+} \left( \int_{\mathcal{B}^{n-k}(0,1/\varepsilon)} \widehat{e}^q(1,(0,x),(0,x))\, dx - C \ln\frac{1}{\varepsilon} \right)
$$
where $C$ is the unique real number for which the limit exists. 


\medskip

Finally, as a prelude to Theorem \ref{thm_onestratum} hereafter, we start by noting that, by Theorem \ref{thm_local_weyl_equiregular} in the equiregular case, given any $f\in C^\infty_c(M\setminus\S)$ (the set of smooth functions compactly supported on $M\setminus\S$), we have the expansion
\begin{equation}\label{prelude_equireg}
\Tr(\mathcal{M}_f \, e^{t\triangle}) = \int_M f(q)\, e(t,q,q)\, d\mu(q)
= \frac{1}{t^{\Qeq/2}} \sum_{j=0}^{+\infty}T_j(f) t^j  + \mathrm{O}(t^\infty)
\end{equation}
as $t\rightarrow 0^+$, where $T_j$ is the Schwartz distribution defined by $T_j(f) = \int_M a_j(q) f(q)\, d\mu(q)$, for some function $a_j\in C^\infty(M\setminus\S)$, for every $j\in\N$.

Let $C^\infty_0(M\setminus\S)$ be the closure of $C^\infty_c(M\setminus\S)$ in the Fr\'echet space $C^\infty(M)$: any function $f\in C^\infty_0(M\setminus\S)$ can be extended to a function on $M$ that is flat on $\S$, i.e., $f$ and all its derivatives vanish on $\S$.
Under the assumptions of Theorem \ref{thm_onestratum} below (in particular, $\S$-nilpotentizability), for every $j\in\N$, 
by the Hahn-Banach theorem applied in the Fr\'echet space $C^\infty(M)$ to the continuous linear form $T_j$ on the closed subspace $C^\infty_0(M\setminus\S)$, the distribution $T_j$ can be extended in a non-unique and non-canonical way to a distribution on $M$. Such an extension is done modulo a distribution supported on $\S$ and can be seen as a generalized Hadamard finite part (which we do not make explicit).

\subsection{Local Weyl law}

\begin{theorem}\label{thm_onestratum}
Given any $f\in C^\infty(M)$, the function $t\mapsto\Tr(\mathcal{M}_f \, e^{t\triangle})= \int_M f(q)\, e(t,q,q)\, d\mu(q)$ can be written in a unique way, modulo functions which are $\mathrm{O}(t^\infty)$ as well as their derivatives as $t\rightarrow 0^+$, as the sum of three terms: 
\begin{equation}\label{trace_sum3}
\frac{1}{t^{\Qeq/2}}\sum_{j=0}^{+\infty} \tilde T_j(f) t^j + \frac{1}{t^{\mathcal{Q}^\S/2}}\sum_{j=0}^{+\infty} R_j(f) t^{j/2} + \frac{\vert\ln t\vert}{t^{\min(\Qeq,\mathcal{Q}^\S)/2}} \sum_{j=0}^{+\infty} S_j(f) t^{j/2}
\end{equation}
where, for every $j\in\N$, $R_j$ and $S_j$ are distributions supported on $\S$, 
with $R_j=0$ whenever 
$t^{j/2}$ is already represented in the first sum (so that each power of $t$ appears at most one time),
and $\tilde T_j$ is a uniquely defined extension to $M$ of the distribution $T_j$ defined in \eqref{prelude_equireg}.
\end{theorem}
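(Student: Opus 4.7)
My plan is to refine the $(J+K)$-decomposition of Section \ref{sec_J+K} by splitting the outer part at a fixed auxiliary scale $\varepsilon_0>0$, chosen small enough for the transverse polar coordinates of Section \ref{sec_geom_context} to be valid on $\mathcal{B}(\S,\varepsilon_0)$, and then to handle each piece using the uniform smoothness provided by $\S$-nilpotentizability. Write
\[
\Tr(\mathcal{M}_f\,e^{t\triangle}) = J(t) + K_1(t) + K_2(t),
\]
with $J$ the integral over $\mathcal{B}(\S,\sqrt t)$, $K_1$ the integral over $\mathcal{B}(\S,\varepsilon_0)\setminus\mathcal{B}(\S,\sqrt t)$, and $K_2$ the integral over $M\setminus\mathcal{B}(\S,\varepsilon_0)$. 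The piece $K_2$ is compactly supported in the open equiregular stratum $M\setminus\S$, so Theorem \ref{thm_local_weyl_equiregular} supplies an asymptotic expansion $K_2(t) = t^{-\Qeq/2}\sum_j T_j^{K_2}(f)\,t^j + \mathrm O(t^\infty)$ whose coefficients are distributions with $\mu$-density the $a_j$ of \eqref{prelude_equireg}, truncated to $M\setminus\mathcal{B}(\S,\varepsilon_0)$. By Section \ref{sec_estimating_J}, $J(t) = t^{-\mathcal{Q}^\S/2}\,F_J(\sqrt t)$ with $F_J\in C^\infty(\R)$ whose Taylor jet at $0$ depends only on the jet of $f$ along $\S$; this contributes $t^{-\mathcal{Q}^\S/2}\sum_j R_j^J(f)\,t^{j/2}$ with each $R_j^J$ supported on $\S$.

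The heart of the argument is the estimation of $K_1$. Using \eqref{intK_S=S1_2} with $\mathcal{Q}^{\R^n}(\sigma)=\Qeq$ (since $\sigma$ parametrises directions landing in the single open equiregular stratum), define
\[
G(\tau_1,\tau_2) = \int_{\S}\int_{\mathcal S^{n-k-1}} f\bigl(\delta_{\tau_1}^q(\sigma)\bigr)\, e_{\tau_1,\tau_2}^{q,\sigma}(1,0,0)\,d\sigma\,d\mu_\S(q),
\]
so that $K_1(t) = t^{-\Qeq/2}\int_{\sqrt t}^{\varepsilon_0}\tau^{\Qeq-\mathcal{Q}^\S-1}\,G(\tau,\sqrt t/\tau)\,d\tau$. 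By $\S$-nilpotentizability and Lemma \ref{lem_uniform_double_nilp}, $G$ extends smoothly to $[-\varepsilon_0,\varepsilon_0]^2$ and is even in $\tau_2$, so a Taylor expansion $G(\tau_1,\tau_2)=\sum g_{i,j}\,\tau_1^i\tau_2^{2j}$ plus a uniformly controlled remainder reduces $K_1$ to elementary integrals $t^{j-\Qeq/2}\int_{\sqrt t}^{\varepsilon_0}\tau^{\beta-1}\,d\tau$ with $\beta := \Qeq-\mathcal{Q}^\S+i-2j$. Evaluating yields $(\varepsilon_0^\beta-t^{\beta/2})/\beta$ for $\beta\neq 0$ and $\tfrac12\lvert\ln t\rvert+\ln\varepsilon_0$ for $\beta=0$. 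The $\varepsilon_0^\beta$ pieces combine with $K_2$ to produce an $\varepsilon_0$-independent series in $t^{-\Qeq/2}\,t^j$ with coefficients $\tilde T_j(f)$; the $t^{\beta/2}$ pieces rearrange as $t^{-\mathcal{Q}^\S/2}\,t^{i/2}$ with $\S$-supported coefficients (as $g_{i,j}$ is a normal derivative of the integrand at $\tau_1=0$); and the logarithmic cases give terms $t^{-\mathcal{Q}^\S/2}\,t^{i/2}\lvert\ln t\rvert$ which, under the integrality constraint $\beta=0$, rewrite as $t^{-\min(\Qeq,\mathcal{Q}^\S)/2}\,t^{j'/2}\lvert\ln t\rvert$ with $j'\geq 0$ in every regime of $(\mathcal{Q}^\S,\Qeq)$.

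Collecting everything reorganises the expansion into the three sums of \eqref{trace_sum3}. For uniqueness modulo $\mathrm O(t^\infty)$, the logarithms isolate $S_j$, while the two remaining sums overlap only when $\Qeq-\mathcal{Q}^\S$ is an even nonnegative integer, in which case the stated convention $R_j=0$ removes the ambiguity. Finally, testing against $f\in C_c^\infty(M\setminus\S)$ makes $J$ and every $\S$-supported distribution vanish, so the first sum reduces to the equiregular expansion \eqref{prelude_equireg}, proving that $\tilde T_j$ extends $T_j$. The main obstacle throughout is the uniform smoothness of $G$ on $[-\varepsilon_0,\varepsilon_0]^2$, which crucially rests on Lemma \ref{lem_uniform_double_nilp} and hence on the $\S$-nilpotentizability hypothesis; without it, $e_{\tau_1,\tau_2}^{q,\sigma}(1,0,0)$ may blow up as $(\tau_1,\tau_2)\to(0,0)$, and this is precisely why the non-nilpotentizable case demands the finer stratification-based analysis of Section \ref{sec_nonnilp}.
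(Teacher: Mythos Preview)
Your argument is correct and follows the paper's proof closely: the same $(J+K)$-decomposition, the smoothness and $\tau_2$-evenness of $G$ from Lemma~\ref{lem_uniform_double_nilp}, and the reduction to elementary integrals of monomials, which the paper packages as Proposition~\ref{prop_general_expansion} in Appendix~\ref{app_integral} (your explicit $\varepsilon_0$-cutoff plays the role of the paper's partition-of-unity localization of $f$ near $\S$). The one step you leave implicit is the control of the Taylor remainder of $G$: a naive uniform bound on the remainder gives only $\mathrm O(t^{-\mathcal Q^{\S}/2})$ regardless of truncation order, and obtaining the full asymptotic expansion requires the duality $\tau\leftrightarrow\sqrt t/\tau$ (equivalently, the Hadamard-type factorization $R=\tau_1^N G_1+\tau_2^N G_2$ with $G_1,G_2$ smooth) exploited in the proof of Proposition~\ref{prop_general_expansion}.
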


In addition, we have the following more precise statements, depending on the respective values of the Hausdorff dimensions $\mathcal{Q}^\S$ and $\Qeq$. We have three cases. 
%
\paragraph{Case $\mathcal{Q}^\S > \Qeq$.} In this case, we have $S_j=0$ for $j$ odd, and
$$
\Tr(\mathcal{M}_f \, e^{t\triangle}) = \frac{1}{t^{\mathcal{Q}^\S/2}} R_0(f) + \mathrm{o}\left( \frac{1}{t^{\mathcal{Q}^\S/2}} \right)
$$ 
as $t\rightarrow 0^+$ (this one-term small-time expansion only requires $f$ to be continuous), with
$$
R_0(f) = \int_{\atop\!\! N\S} {\mkern-15mu} f\circ\pi_\S(y)\, \widehat{e}^\S(1,y,y)\, d\widehat{\mu}^\S(y)
= \int_{\atop\!\!\S} {\mkern-10mu} f(q) \int_{\atop\!\! N_q\S} {\mkern-20mu} \widehat{e}^q(1,(0,x),(0,x))\, dx\, d\mu_\S(q) 
= \int_{\atop\!\!\S} {\mkern-10mu} f\, d\nu 
$$
where $\nu$ is a smooth measure on $\S$, of density $\frac{d\nu}{d\mu_\S}(q) = \Tr_{N_q\S}(e^{\widehat{\triangle}^q})$ at any $q\in\S$ with respect to the smooth measure $\mu_\S$ on $\S$ (defined by \eqref{muS}). Therefore, the local Weyl measure exists, and we have $\supp(w_\triangle)=\S$ and $w_\triangle=\nu/\nu(\S)$. 
Moreover, 
$$
N(\lambda) \underset{\ \lambda\rightarrow+\infty}{\sim} \frac{\int_{N\S} \widehat{e}^\S(1,y,y)\, d\widehat{\mu}^\S(y)}{\Gamma(\mathcal{Q}^\S/2+1)} \lambda^{\mathcal{Q}^\S/2} .
$$
In addition, if $f=1$ near $\S$ then 
$R_j(f)=0$ for every odd integer $j<\mathcal{Q}^\S-\Qeq$.
If $e=\widehat{e}^q$ and $\mu=\widehat{\mu}^q$ for some $q\in M$, 
and if $f=1$ near $\S$, then 
$R_j(f)=0$ for $0<j<\mathcal{Q}^\S-\Qeq$ and $S_j(f)=0$ for every $j\in\N$, i.e., the expansion \eqref{trace_sum3} has no term in $\vert\ln t\vert$.

\paragraph{Case $\mathcal{Q}^\S = \Qeq$.} In this case, we have $S_j=0$ for $j$ odd, and
$$
\Tr(\mathcal{M}_f \, e^{t\triangle}) = \frac{\vert\ln t\vert}{t^{\Qeq/2}} S_0(f) + \frac{1}{t^{\Qeq/2}} \tilde T_0(f)  + \mathrm{o}\left( \frac{1}{t^{\Qeq/2}} \right)
$$ 
as $t\rightarrow 0^+$ (this two-terms small-time expansion only requires $f$ to be $C^1$; when $f$ is only continuous then we get only the first term), where, defining the mapping $\widehat{e}^{\S,\mathcal{S}^{n-k-1}}$ which to any $q\in\S$ and any $\sigma\in\mathcal{S}^{n-k-1}$ assigns the heat kernel $\widehat{e}^{\, q,\sigma}=\widehat{\,\widehat{e}^q}^\sigma$ (double nilpotentization, see Section \ref{sec_doublenilp}),
\begin{equation*}
\begin{split}
S_0(f) &= \frac{1}{2} \int_{\atop\!\!\S\times\mathcal{S}^{n-k-1}} {\mkern-58mu} f\circ\pi_\S\ 
\widehat{e}^{\S,\mathcal{S}^{n-k-1}}
{\mkern-25mu}(1,0,0)\, d(\iota_{W}\widehat{\mu}^\S) 
= \frac{1}{2} \int_{\atop\!\!\S} {\mkern-10mu} f(q) \int_{\atop\!\!\mathcal{S}^{n-k-1}} {\mkern-40mu}
\widehat{e}^{\, q,\sigma}(1,0,0)
\, d\sigma \, d\mu_\S(q) 
= \frac{1}{2} \int_{\atop\!\!\S} {\mkern-10mu} f \, d\nu \\
\tilde T_0(f) &= \mathrm{p.f.}  \int_{M\setminus\S} f(q) \, \widehat{e}^q(1,0,0)\, d\mu(q) + \mathrm{p.f.} \int_\S f(q)\, \Tr_{N_q\S}(e^{\widehat{\triangle}^q}) \, d\mu_\S(q) 
\end{split}
\end{equation*}
where $\nu$ is a smooth measure on $\S$, of density
$\frac{d\nu}{d\mu_\S}(q) = \int_{\mathcal{S}^{n-k-1}} 
\widehat{e}^{q,\sigma} (1,0,0)
\, d(\iota_{W}\widehat{\mu}^\S_q)(\sigma)$
at any $q\in\S$ (moreover, this integral does not depend on the radius of the sphere on which the integration is performed). Therefore, the local Weyl measure exists, and we have $\supp(w_\triangle)=\S$ and $w_\triangle=\nu/\nu(M)$. Moreover, 
$$
N(\lambda) \underset{\ \lambda\rightarrow+\infty}{\sim} \frac{\int_{\S\times\mathcal{S}^{n-k-1}} 
\widehat{e}^{\S,\mathcal{S}^{n-k-1}} (1,0,0)\, d(\iota_{W}\widehat{\mu}^\S)}{2\,\Gamma(\mathcal{Q}^\S/2+1)} \lambda^{\mathcal{Q}^\S/2}\ln\lambda .
$$
If $e=\widehat{e}^q$ and $\mu=\widehat{\mu}^q$ for some $q\in M$, and if $f=1$ near $\S$, then 
$R_j(f)=0$ for every $j\in\N$, and $S_j(f)=0$ for every $j\geq 1$, i.e., the expansion \eqref{trace_sum3} as a unique term in $\vert\ln t\vert$ (which is also the dominating one).

\paragraph{Case $\mathcal{Q}^\S < \Qeq$.} In this case, we have $S_j=0$ for $j$ having the parity of $\Qeq-\mathcal{Q}^\S+1$, and
$$
\Tr(\mathcal{M}_f \, e^{t\triangle}) = \frac{1}{t^{\Qeq/2}} \tilde T_0(f) + \mathrm{o}\left( \frac{1}{t^{\Qeq/2}} \right)
$$ 
as $t\rightarrow 0^+$ (this one-term small-time expansion only requires $f$ to be continuous), with
$\tilde T_0(f)(0)=\int_M f \, d\nu$ where $\nu$ is a smooth measure on $M$, of density $\frac{d\nu}{d\mu}(q)=\widehat{e}^q(1,0,0)$ at any $q\in M$. Therefore, the local Weyl measure exists, is absolutely continuous with respect to any smooth measure on $M$ (in particular, $\supp(w_\triangle)=M$) and is given by $w_\triangle=\nu/\nu(M)$ as in the equiregular case. Moreover, we have as well the Weyl law \eqref{asympt_N_equireg}. This means that the equiregular part dominates in this case. 

If $e=\widehat{e}^q$ and $\mu=\widehat{\mu}^q$ for some $q\in M$, and if $f=1$ near $\S$, then 
$S_j(f)=0$ for every $j\geq 1$, and thus, the expansion \eqref{trace_sum3} as a unique term in $\vert\ln t\vert$ if $\Qeq-\mathcal{Q}^\S$ is even, and no term in $\vert\ln t\vert$ if $\Qeq-\mathcal{Q}^\S$ is odd.

\begin{remark}\label{rem_concentration_onestratum}
The local Weyl measure is such that $\supp(w_\triangle)=\mathscr{S}$ if and only if $\mathcal{Q}^\S \geq \Qeq$, in contrast to the case $\mathcal{Q}^\S < \Qeq$ in which the local Weyl law does not differ from the one in the regular region and does not detect the singular set. 

In all cases, we have identified the main term of the local Weyl law in an intrinsic way. When $\mathcal{Q}^\S=\Qeq$, we have identified intrinsically the coefficients of the two-terms small-time asymptotics; this case covers the Baouendi-Grushin case without tangency point and the nonsingular Martinet case, studied with more generality in Sections \ref{sec_Baouendi-Grushin} and \ref{sec_Martinet}.
\end{remark}

\begin{remark}
If $k=\dim\S=n-1$ and if $\mathrm{rank}\, D=n-1$ then $w_n=1$ necessarily and thus $\mathcal{Q}^\S=\mathcal{Q}^M(\S)-1$. Hence we always have $\mathcal{Q}^\S \geq \Qeq$ in this case.
\end{remark}


\begin{remark}\label{rem_thm_onestratum_sqrtt}
When $f=1$ near $\S$, we have seen in Remark \ref{rem_J_nosqrt} that the expansion of $J(t)$ does not involve any odd power of $\sqrt{t}$. This fact is however not true for $K(t)$ in general, in other words, the function $F_0$ may fail to be even (see Section \ref{sec_add_Grushin} for an explicit example). 
\end{remark}

\begin{remark}\label{one-stratum_outsideS}
In Theorem \ref{thm_onestratum} it is assumed that $D$ is $\S$-nilpotentizable, i.e., that $D$ is locally diffeomorphic to $\widehat{D}^q$ at every point $q\in\S$, but nothing is assumed at $q\in M\setminus\S$, as underlined in Remark \ref{rem_nilp_equiregular_region}.
\end{remark}

\subsection{Examples}\label{sec_examples_onestratum}
Let us give some examples of application of Theorem \ref{thm_onestratum}. In all these examples, the singular set $\S$ is an equisingular smooth submanifold and the horizontal distribution $D$ is $\S$-nilpotentizable.

\medskip

\noindent
-- Consider the $p$-Baouendi-Grushin case, that is the almost-Riemannian case around $(0,0)$ in $\R^2$ generated by
$X_1 = \partial_1$, $X_2 = x_1^p a(x_1,x_2)\, \partial_2$,
where $p\in\N^*$ and $a$ is a smooth function such that $a(0,0)=1$. 
The singular set is $\S=\{x_1=0\}$. We have $\Qeq=2$ 
and $\mathcal{Q}^\S=p+1$.
The main term in the small-time asymptotics of the local Weyl law is then of the order of 
$\frac{\vert\ln t\vert}{t}$ if $p=1$ and $t^{-(p+1)/2}$ if $p\geq 2$. The local Weyl measure is supported on $\S$.

The case $p=1$ is studied in more detail in Section \ref{sec_Baouendi-Grushin}.

\medskip

\noindent
-- Consider the sR case around $(0,0,0)$ in $\R^3$ generated by
$X_1 = \partial_1$, $X_2 = \partial_2+x_1^p a(x_1,x_2,x_3) \, \partial_3$,
where $p\in\N^*$ and $a$ is a smooth function such that $a(0,0,0)=1$.
When $p=1$, we recover the 3D contact case, which is equiregular. When $p=2$, we recover the Martinet case. The singular set is $\S=\{x_1=0\}$, and we have $\Qeq=4$ 
and $\mathcal{Q}^\S=p+2$.
The main term in the small-time asymptotics of the local Weyl law is then of the order of 
$t^{-2}$ if $p=1$, $\frac{\vert\ln t\vert}{t^2}$ if $p=2$ and $t^{-p/2-1}$ if $p\geq 2$. 
The local Weyl measure is supported on $\S$ if $p\geq 2$.

The Martinet case, for $p=2$, is studied in more detail in Section \ref{sec_Martinet}.

\medskip

\noindent
-- Consider the nilpotent tangential elliptic case (see \cite{BonnardTrelat_AFST2001}), that is the sR case in $\R^3$ generated by
$X_1=\partial_1$, $X_2=\partial_2+\big( \frac{x_1^3}{3}+x_1x_2^2\big) \partial_3$.
The singular set is $\S=\{x_1=x_2=0\}$. We have $\Qeq=4$ 
and $\mathcal{Q}^\S=4$.
The main term in the small-time asymptotics of the local Weyl law is then of the order of $\frac{\vert\ln t\vert}{t^2}$.
The local Weyl measure is supported on $\S$.

It is interesting to note that, in this example, there is a nontrivial singular curve (equivalently, there is a nontrivial abnormal), which is not minimizing (see \cite{BonnardTrelat_AFST2001}), and however, there is a logarithm in the local Weyl law. This observation invalidates the (folklore) conjecture according to which a $\log$ in the Weyl law would be due to the presence of abnormal minimizers.

\medskip

\noindent
-- Consider the (nilpotent) aR case in $\R^3$ generated by $X_1=\partial_1$, $X_2=\partial_2$, $X_3=(x_1^2+x_2^2)\, \partial_3$. We have $\S=\{x_1=x_2=0\}$ and $\mathcal{Q}^{\S}=\Qeq=3$. The main term in the small-time asymptotics of the local Weyl law is of the order of $\frac{\vert\ln t\vert}{t^{3/2}}$. The local Weyl measure is supported on $\S_1$.

\medskip

\noindent
-- Consider the (nilpotent) sR cases in $\R^4$ generated either by $X_1=\partial_1$, $X_2=\partial_2+x_1^2 \, \partial_3+x_1x_2 \, \partial_4$, or by $X_1=\partial_1$, $X_2=\partial_2+x_1 \, \partial_3+\big(\frac{x_1^3}{3}+x_1x_2^2\big) \partial_4$, or by $X_1=\partial_1$, $X_2=\partial_2+x_1 \, \partial_3+x_1^2x_2 \, \partial_4$ (these are non-isometric normal forms for horizontal distributions of rank $2$ in $\R^4$). In all cases, the singular set is $\S=\{x_1=x_2=0\}$, $\Qeq=7$ 
and $\mathcal{Q}^\S=6$. The main term in the small-time asymptotics of the local Weyl law at any point of $\S$ is then of the order of $1/t^{7/2}$. The local Weyl measure is not concentrated (the equiregular part dominates).

\medskip

\noindent
-- Consider the (nilpotent) almost-Riemannian case in $\R^n$ generated by
$X_1=\partial_1$ ,$\ldots$, $X_{n-1}=\partial_{n-1}$, $X_n=(x_1^{2\ell}+\cdots+x_{n-1}^{2\ell})\,\partial_n$
for some $\ell\in\N^*$. For $n=2$, we recover the $2\ell$-Baouendi-Grushin case. 
The singular set is $\S=\{x_1=\cdots=x_{n-1}=0\}$ and $\Qeq=n$ 
and $\mathcal{Q}^\S=2\ell+1$. 
The main term in the small-time asymptotics of the local Weyl law is then of the order of
$t^{-\ell-\frac{1}{2}}$ if $n<2\ell+1$, $t^{-n/2}\vert\ln t\vert$ if $n=2\ell+1$ and $t^{-n/2}$ if $n>2\ell+1$.
The local Weyl measure is supported on $\S$ when $n\leq 2\ell+1$ and is not concentrated if $n>2\ell+1$.

Note that, in this example, there is no nontrivial singular curve (equivalently, there is no nontrivial abnormal), and however, when $n=2\ell+1$, there is a logarithm in the Weyl law. 

\medskip

\noindent
-- As a generalization of the previous case, consider the nilpotent aR case in $\R^n$ generated by
$X_i= \partial_i$ for $i=1,\ldots,n_1$, 
$X_{n_1+1} = (x_1^{2\ell_1}+\cdots+x_{k_1}^{2\ell_1}) \, \partial_{n_1+1}$,
\ldots,
$X_{n_1+p} = (x_{k_{p-1}+1}^{2\ell_p}+\cdots+x_{k_p}^{2\ell_p}) \, \partial_{n_1+p}$,
for some integers $\ell_1\leq\ell_2\leq\cdots\leq\ell_p$, $k_p=n_1$, $n_1+p=n$.
The singular set is $\S = \{x_1=\cdots,x_{n_1}=0\}$ and $\Qeq=n$ 
and $\mathcal{Q}^\S=2\ell+p$ where $\ell=\sum_{i=1}^p\ell_i$.
The main term in the small-time asymptotics of the local Weyl law is then of the order of
$t^{-\frac{p}{2}-\ell}$ if $n_1<2\ell$, $t^{-n/2}\vert\ln t\vert$ if $n_1=2\ell$ and $t^{-n/2}$ if $n_1>2\ell$.
The local Weyl measure is supported on $\S$ if $n_1\leq 2\ell$ and is not concentrated if $n_1>2\ell$.

\paragraph{Other examples by taking products.}
We can generate plenty of other examples by taking products of cases covered by Theorem \ref{thm_onestratum}. Indeed, when taking the product of two sR structures $(M_1,D_1,g_1)$ and $(M_2,D_2,g_2)$, by the formula \eqref{tensorproduct} of Appendix \ref{appendix_Schwartz}, the resulting heat kernel on the product manifold is the tensor product of the heat kernels on $M_1$ and $M_2$.
This simple remark thus yields interesting classes of examples that are not covered by Theorem \ref{thm_onestratum}.

For instance, taking the product of $N$ Baouendi-Grushin cases without tangency point (for some $N\in\N^*$) gives a $2N$-dimensional manifold on which the main term of the small-time asymptotics of the local Weyl law is of the order of $\frac{\vert\ln t\vert^N}{t^N}$. Taking the product of a Baouendi-Grushin case without tangency point with a nonsingular Martinet case gives a $6$-dimensional manifold on which the main term of the small-time asymptotics of the local Weyl law is of the order of $\frac{\vert \ln t\vert^2}{t^3}$.

In particular, we can always find cases where the local Weyl law involves an arbitrarily large integer power of $\vert\ln t\vert$. 

\subsection{Proof of Theorem \ref{thm_onestratum} and of the subsequent remarks}\label{sec_proof_thm_onestratum}
As explained in Section \ref{sec_J+K}, our main task is to derive the small-time asymptotic expansion for the integral $K(t)$ given by \eqref{intK_S=S1_2} in Section \ref{sec_estimating_K}.
Assuming that $f$ is supported near $\S$, we have
\begin{equation}\label{formulK}
K(t) = \frac{1}{t^{\Qeq/2}} \int_{\sqrt{t}}^1 \tau^{\Qeq-\mathcal{Q}^\S-1} \, G\left(\tau,\frac{\sqrt{t}}{\tau}\right) d\tau 
\end{equation}
where the function $G:\R^2\rightarrow\R$ is defined by
\begin{equation}\label{deffunctionG}
G(\tau,\varepsilon) = \int_\S \int_{\mathcal{S}^{n-k-1}} f(\delta_\tau^q(\sigma))\, 
e_{\tau,\varepsilon}^{q,\sigma} (1,0,0) 
\, d\sigma\, d\mu_\S(q) .
\end{equation}
Since $D$ is $\S$-nilpotentizable, Lemma \ref{lem_uniform_double_nilp} in Section \ref{sec_doublenilp} implies that $G$ is smooth, even with respect to $\varepsilon$, and 
$$
G(0,0) = \int_\S f(q) \int_{\mathcal{S}^{n-k-1}} \widehat{e}^{\,q,\sigma}(1,0,0) \, d\sigma\, d\mu_\S(q) .
$$
We are going to apply Proposition \ref{prop_general_expansion} (in Appendix \ref{app_integral}) to $K(t)$, with $x=\sqrt{t}$, $k=\Qeq-\mathcal{Q}^\S-1$ and $j=0$. 
Remark \ref{rem_prop_general_expansion_even} can be applied because $G$ is even with respect to $\varepsilon$.
If $f=1$ near $\S$ and if $e=\widehat{e}^q$ and $\mu=\widehat{\mu}^q$ for some $q\in M$, then $e^q_\tau=e$ and $G$ does not depend on $\tau$, and then Remark \ref{rem_prop_general_expansion_tau} can be applied. 

\medskip

As a preliminary remark, we have, by definition (see 
\eqref{relation_eeps_e} in Appendix \ref{app_lemfondam}), 
\begin{equation}\label{epssigma_prelim}
e_{\tau,\varepsilon}^{q,\sigma}(1,0,0) 
= \varepsilon^\Qeq e_\tau^q(\varepsilon^2,\sigma,\sigma) = \varepsilon^\Qeq \tau^{\mathcal{Q}^M(\S)} \, e(\tau^2\varepsilon^2,\delta_\tau^q(\sigma),\delta_\tau^q(\sigma))
\end{equation}
and we note that $\Qeq=\mathcal{Q}^{\R^{n}}(\delta_\tau(\sigma)) = \mathcal{Q}^{\R^{n}}(\sigma)$ and $\mathcal{Q}^M(\S)=\mathcal{Q}^M(q)$.

First, taking the limit $\varepsilon\rightarrow 0$ in \eqref{epssigma_prelim} and using the homogeneity property \eqref{homog_ehat} of the nilpotentized heat kernel, we obtain
\begin{equation}\label{epssigma1}
\widehat{ \, e_\tau^q }^\sigma (1,0,0) = \tau^{\mathcal{Q}^M(\S)}\, \widehat{e}^{\delta_\tau^q(\sigma)}(\tau^2,0,0) = \tau^{\mathcal{Q}^M(\S)-\Qeq} \, \widehat{e}^{\delta_\tau^q(\sigma)}(1,0,0) .
\end{equation}
Second, taking the limit $\tau\rightarrow 0$ in \eqref{epssigma_prelim} and using the homogeneity property \eqref{homog_ehat}, we obtain
\begin{equation}\label{epssigma2}
(\widehat{e}^q)_\varepsilon^\sigma(1,0,0) = \varepsilon^\Qeq \, \widehat{e}^q(\varepsilon^2,\sigma,\sigma) 
= \varepsilon^{\Qeq-\mathcal{Q}^M(\S)}\, \widehat{e}^q (1,\delta_{1/\varepsilon}(\sigma)) .
\end{equation}
Third, we also have $(\widehat{e}^q)_\varepsilon^\sigma(1,0,0) = \varepsilon^\Qeq \, \widehat{e}^q(\varepsilon^2,\sigma,\sigma) = \varepsilon^{\Qeq} \tau^{\mathcal{Q}^M(\S)} \, \widehat{e}^q(\tau^2\varepsilon^2,\delta_\tau(\sigma),\delta_\tau(\sigma))$ and, taking the limit $\varepsilon\rightarrow 0$, we obtain
\begin{equation}\label{epssigma3}
\widehat{e}^{q,\sigma}(1,0,0) 
= \tau^{\mathcal{Q}^M(\S)} \, 
\widehat{e}^{q,\delta_\tau(\sigma)}(\tau^2,0,0)
 = \tau^{\mathcal{Q}^M(\S)-\Qeq} \, 
\widehat{e}^{q,\delta_\tau(\sigma)}(1,0,0) .
\end{equation}

\paragraph{Case $\mathcal{Q}^\S>\Qeq$.}
Let us prove that
\begin{equation}\label{thm_onestratum_cas1}
\Tr(\mathcal{M}_f \, e^{t\triangle}) 
= \frac{1}{t^{\mathcal{Q}^\S/2}} F_0(\sqrt{t}) + \frac{1}{t^{\Qeq/2}} \left( F(\sqrt{t}) + F_1(t) \vert\ln t\vert \right) \qquad\forall t\in(0,1)
\end{equation}
for some $F_0,F,F_1\in C^\infty(\R)$, with $F_0(0)=R_0(f)$.

Proposition \ref{prop_general_expansion} (in Appendix \ref{app_integral}) implies that
$$
K(t) = \frac{\tilde F_0(t)}{(\sqrt{t})^{\mathcal{Q}^\S}}  + \frac{1}{(\sqrt{t})^{\Qeq}} \left( F(\sqrt{t}) + F_1(t) \vert\ln t\vert \right)  \qquad\forall t\in(0,1)
$$
for some $\tilde F_0, F, F_1\in C^\infty(\R)$ ($F_1$ is a smooth function of $t$ by Remark \ref{rem_prop_general_expansion_even}), with
$$
\tilde F_0(0) = \int_\S f(q) \int_0^1  \int_{\mathcal{S}^{n-k-1}} \varepsilon^{\mathcal{Q}^\S-\Qeq-1} \left( \widehat{e}^q \right)_\varepsilon^\sigma (1,0,0)\, d\sigma\, d\varepsilon \, d\mu_\S(q) .
$$ 
%
Using \eqref{epssigma2},  the change of variable $\varepsilon=1/s$, and then \eqref{transverse_polar} again, we obtain
$$
\tilde F_0(0) = \int_{\S} f(q) \int_{\R^{n-k}\setminus\mathcal{B}^{n-k}(0,1)} \widehat{e}^q (1,(0,x),(0,x)) \, dx\, d\mu_\S(q) .
$$
%
%
Using the asymptotic expansion \eqref{asympt_J} of $J(t)$ obtained in Section \ref{sec_estimating_J}, we obtain \eqref{thm_onestratum_cas1} with $F_0(\sqrt{t})=F_J(\sqrt{t})+\tilde F_0(t)$ and $F_0(0) = \int_{N\S} f\circ\pi_\S(y)\, \widehat{e}^\S(1,y,y)\, d\widehat{\mu}^\S(y)$.

If $f=1$ near $\S$, then by Remark \ref{rem_J_nosqrt}, $F_J$ is even and thus $F_0$ too.
If $e=\widehat{e}^q$ and $\mu=\widehat{\mu}^q$ for some $q\in M$, and if $f=1$ near $\S$, then $F_1(t)=\mathrm{O}(t^\infty)$ (by Remark \ref{rem_prop_general_expansion_tau}) and $\tilde F_0(t)=\tilde F_0(0)$ is constant.
By Remark \ref{rem_J_nosqrt}, we have $F_J(\sqrt{t})=F_J(0)+\mathrm{O}(t^\infty)$. Hence $F_0(t)=F_0(0)+\mathrm{O}(t^\infty)$.

\paragraph{Case $\mathcal{Q}^\S=\Qeq$.} 
Let us prove that
\begin{equation}\label{thm_onestratum_cas2}
\Tr(\mathcal{M}_f \, e^{t\triangle}) 
= \frac{\vert\ln t\vert}{t^{\Qeq/2}} F_1(t) + \frac{1}{t^{\Qeq/2}} F_0(\sqrt{t})   \qquad\forall t\in(0,1)
\end{equation}
for some $F_0,F_1\in C^\infty(\R)$, with $F_1(0) = S_0(f)$ and $F_0(0)=\tilde T_0(f)$.

Proposition \ref{prop_general_expansion} (in Appendix \ref{app_integral}) implies that 
$$
K(t) = \frac{\vert\ln t\vert}{t^{\Qeq/2}} F_1(t) + \frac{1}{t^{\Qeq/2}} \tilde F_0(\sqrt{t})   \qquad \forall t\in(0,1)
$$
for some $\tilde F_0, F_1\in C^\infty(\R^n)$ ($F_1$ is a smooth function of $t$ by Remark \ref{rem_prop_general_expansion_even}), with 
\begin{equation*}
\begin{split}
F_1(0) &= \frac{1}{2} \int_\S f(q) \int_{\mathcal{S}^{n-k-1}} 
\widehat{e}^{q,\sigma} (1,0,0) 
\, d\sigma\, d\mu_\S(q) \\
\tilde F_0(0) &= \int_\S \int_{\mathcal{S}^{n-k-1}} \bigg( f(q) \int_0^1 \frac{( \widehat{e}^q )_\varepsilon^\sigma (1,0,0) - 
\widehat{e}^{q,\sigma} (1,0,0)
}{\varepsilon}\, d\varepsilon \\
&\qquad\qquad\qquad\qquad\qquad\qquad
+ \int_0^1 \frac{f(\delta_\tau^q(\sigma)) \widehat{ \, e_\tau^q }^\sigma (1,0,0) - f(q) 
\widehat{e}^{q,\sigma} (1,0,0)
}{\tau}\, d\tau \bigg) \, d\sigma\, d\mu_\S(q) .
\end{split}
\end{equation*}
%
Adding to $K(t)$ the expansion \eqref{asympt_J} of $J(t)$ obtained in Section \ref{sec_estimating_J}, we obtain \eqref{thm_onestratum_cas2} with $F_0=F_J+\tilde F_0$.
If $e=\widehat{e}^q$ and $\mu=\widehat{\mu}^q$ for some $q\in M$, and if $f=1$ near $\S$ then, by Remark \ref{rem_prop_general_expansion_tau}, $F_1(t)=F_1(0)+\mathrm{O}(t^\infty)$, $\tilde F_0$ is even, and $F_J(\sqrt{t})=F_J(0)+\mathrm{O}(t^\infty)$ by Remark \ref{rem_J_nosqrt}.

Let us identify $F_0(0)$ and $F_1(0)$ in an intrinsic way. 
For the term $F_1(0)$, let us prove that, given any $q\in\S$, the term $\int_{\mathcal{S}^{n-k-1}} 
\widehat{e}^{q,\sigma} (1,0,0)
\, d\sigma$ does not depend on the radius of the sphere on which the integration is performed. 
The image of $(\iota_{W}\widehat{\mu}^\S)_{\vert \S\times\mathcal{S}^{n-k-1}} = \mu_\S\otimes d\sigma$ under the dilation $\delta^\S_\tau$ is the measure $\tau^{-\mathcal{Q}^M(\S)+\mathcal{Q}^\S} (\iota_W\widehat{\mu}^\S)_{\vert \delta^\S_\tau(\S\times\mathcal{S}^{n-k-1})}$ and the result follows by using \eqref{epssigma3} and the fact that $\mathcal{Q}^\S = \Qeq$.

The intrinsic identification of $F_0(0)$ is inferred from the following lemma.

\begin{lemma}\label{lem_intrinsic_identification}
We have
\begin{equation}\label{lem_intrinsic_identification_1}
\mathrm{p.f.}  \int_{M\setminus\S} f(q) \, \widehat{e}^q(1,0,0)\, d\mu(q)
= \int_{\S\times\mathcal{S}^{n-k-1}\times[0,1]} {\mkern-80mu}  \frac{f(\delta_\tau^q(\sigma)) \widehat{ \, e_\tau^q }^\sigma (1,0,0) - f(q) 
\widehat{e}^{q,\sigma} (1,0,0)
}{\tau}\, d\tau \, d\sigma\, d\mu_\S(q) 
\end{equation}
\begin{multline}\label{lem_intrinsic_identification_2}
\int_\S f(q)\, \mathrm{p.f.} \ \Tr_{N_q\S}(e^{\widehat{\triangle}^q}) \, d\nu(q) 
= \int_\S f(q) \int_{\mathcal{B}^{n-k}(0,1)} 
\widehat{e}^q(1,(0,x),(0,x))\, dx\, d\mu_\S(q) \\
+ \int_\S \int_{\mathcal{S}^{n-k-1}} f(q) \int_0^1 \frac{( \widehat{e}^q )_\varepsilon^\sigma (1,0,0) - 
\widehat{e}^{q,\sigma} (1,0,0)
}{\varepsilon}\, d\varepsilon \, d\sigma\, d\mu_\S(q)
\end{multline}
\end{lemma}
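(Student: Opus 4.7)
The proofs of both identities proceed via sR-adapted polar coordinates combined with the homogeneity relations \eqref{epssigma1}--\eqref{epssigma3} and the smoothness of doubly-nilpotentized heat kernels in their dilation parameters (Lemma \ref{lem_uniform_double_nilp}). The key algebraic coincidence is that, under $\mathcal{Q}^\S=\Qeq$, the anisotropic scaling exponents collapse exactly to $-1$, producing precisely the logarithmic divergence that the Hadamard regularization subtracts. After a partition-of-unity reduction I may assume that $f$ is supported in the tubular neighborhood $\mathcal{B}(\S,1)$.

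\textbf{For \eqref{lem_intrinsic_identification_1},} I apply the transverse polar coordinate formula \eqref{transverse_polar} for the measure $\widehat{\mu}^\S$ (the discrepancy $d\mu-d\widehat{\mu}^\S$ contributing only regular terms) to obtain, for $0<\varepsilon<1$,
\[
\int_{\mathcal{B}(\S,1)\setminus\mathcal{B}(\S,\varepsilon)}f(q)\,\widehat{e}^q(1,0,0)\,d\mu(q) = \int_\varepsilon^1 \tau^{\mathcal{Q}^M(\S)-\mathcal{Q}^\S-1}\int_\S\int_{\mathcal{S}^{n-k-1}}f(\delta_\tau^q(\sigma))\,\widehat{e}^{\delta_\tau^q(\sigma)}(1,0,0)\,d\sigma\,d\mu_\S\,d\tau.
\]
Substituting \eqref{epssigma1} gives $\widehat{e}^{\delta_\tau^q(\sigma)}(1,0,0)=\tau^{\Qeq-\mathcal{Q}^M(\S)}\widehat{e_\tau^q}^\sigma(1,0,0)$, so the $\tau$-exponents collapse to $-1$ by $\mathcal{Q}^\S=\Qeq$. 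Splitting
\[
\frac{f(\delta_\tau^q(\sigma))\widehat{e_\tau^q}^\sigma(1,0,0)}{\tau} = \frac{f(\delta_\tau^q(\sigma))\widehat{e_\tau^q}^\sigma(1,0,0)-f(q)\widehat{e}^{q,\sigma}(1,0,0)}{\tau}+\frac{f(q)\widehat{e}^{q,\sigma}(1,0,0)}{\tau},
\]
Lemma \ref{lem_uniform_double_nilp} makes the first summand uniformly bounded near $\tau=0$, so its integral over $[\varepsilon,1]$ converges to the RHS of \eqref{lem_intrinsic_identification_1}, while the second integrates to $\ln(1/\varepsilon)\cdot\int_{\S\times\mathcal{S}^{n-k-1}}f(q)\,\widehat{e}^{q,\sigma}(1,0,0)\,d\sigma\,d\mu_\S$. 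A direct computation of the transverse nilpotentization $\widehat{\beta}^\S$ of $\beta=\widehat{e}^q(1,0,0)\,d\mu$ using \eqref{epssigma1} shows this is precisely the Hadamard subtraction constant, yielding \eqref{lem_intrinsic_identification_1} as $\varepsilon\to 0^+$.

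\textbf{For \eqref{lem_intrinsic_identification_2},} I parametrize $N_q\S\setminus\{0\}$ bijectively by $x=\delta^\perp_\eta(\sigma)$ with $\eta>0$, $\sigma\in\mathcal{S}^{n-k-1}$; since $w_i\geq 1$, a direct check using $|\delta^\perp_\eta(\sigma)|^2=\sum\eta^{2w_i}\sigma_i^2$ shows that $\{\eta<1\}$ corresponds exactly to $\mathcal{B}^{n-k}\setminus\{0\}$. The normal-fiber version of \eqref{transverse_polar} gives Jacobian $\eta^{\mathcal{Q}^M(\S)-\mathcal{Q}^\S-1}$, and \eqref{epssigma2} yields $\widehat{e}^q(1,(0,\delta^\perp_\eta(\sigma)),(0,\delta^\perp_\eta(\sigma)))=\eta^{\Qeq-\mathcal{Q}^M(\S)}(\widehat{e}^q)_{1/\eta}^\sigma(1,0,0)$; combining and using $\mathcal{Q}^\S=\Qeq$, the integrand on $N_q\S\setminus\mathcal{B}^{n-k}$ times its Jacobian equals $\eta^{-1}(\widehat{e}^q)_{1/\eta}^\sigma(1,0,0)$. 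The substitution $\varepsilon=1/\eta$ turns the tail integral into $\int_{\mathcal{S}^{n-k-1}}\int_0^1\varepsilon^{-1}(\widehat{e}^q)_\varepsilon^\sigma(1,0,0)\,d\varepsilon\,d\sigma$; this diverges at $\varepsilon=0$, and subtracting $\widehat{e}^{q,\sigma}(1,0,0)/\varepsilon$ (whose $\varepsilon$-integral generates exactly the logarithmic divergence to be regularized, the remainder being $\mathrm{O}(1)$ by Lemma \ref{lem_uniform_double_nilp}) produces the second term on the RHS, while the first term arises from the finite interior integral over $\mathcal{B}^{n-k}$. The main technical care is in the bookkeeping of the anisotropic dilations and verifying that the Hadamard subtraction constants match exactly on both sides of each identity; the $\S$-nilpotentizability of $D$ underpins all uniformity and smoothness assertions via Lemma \ref{lem_uniform_double_nilp}.
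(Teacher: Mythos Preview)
Your proof is correct and follows essentially the same approach as the paper: transverse polar coordinates \eqref{transverse_polar}, the homogeneity identities \eqref{epssigma1}--\eqref{epssigma2}, the collapse of the radial exponent to $-1$ under $\mathcal{Q}^\S=\Qeq$, and the substitution $\varepsilon=1/\eta$ for the exterior part in \eqref{lem_intrinsic_identification_2}. The paper's proof is more terse (it simply cites these ingredients and Fubini), whereas you spell out the splitting of the integrand and verify that the subtracted logarithmic term matches the Hadamard constant; your justification that $\{\eta<1\}\leftrightarrow\mathcal{B}^{n-k}\setminus\{0\}$ via monotonicity of $\eta\mapsto\sum\eta^{2w_i}\sigma_i^2$ is a nice explicit check that the paper leaves implicit.
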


\begin{proof}
We establish \eqref{lem_intrinsic_identification_1} by using \eqref{epssigma1}, the Fubini theorem and \eqref{transverse_polar}, with $\mathcal{Q}^\S=\Qeq$.
To establish \eqref{lem_intrinsic_identification_2}, using successively \eqref{epssigma2}, the change of variable $\tau=1/\varepsilon$, and \eqref{transverse_polar} again, we obtain
\begin{equation*}
\int_{\atop\!\!\mathcal{S}^{n-k-1}} {\mkern-20mu} f(q) \int_0^1 \frac{( \widehat{e}^q )_\varepsilon^\sigma (1,0,0) - 
\widehat{e}^{q,\sigma} (1,0,0)
}{\varepsilon}\, d\varepsilon \, d\sigma\, d\mu_\S(q)  
= \mathrm{p.f.}  \int_{\atop\R^{n-k}\setminus\mathcal{B}^{n-k}(0,1)} {\mkern-60mu}  f(q)\, \widehat{e}^q(1,(0,x),(0,x))\, dx
\end{equation*}
where the latter Hadamard finite part is considered at infinity, because the integral diverges logarithmically at infinity.
Adding the contribution $F(0)$ due to $J(t)$ (see Section \ref{sec_estimating_J}) gives the integral over the whole $\R^{n-k}$, which is exactly $\Tr_{N_q\S}(e^{\widehat{\triangle}^q})$.
\end{proof}

\paragraph{Case $\mathcal{Q}^\S<\Qeq$.}
Let us prove that
\begin{equation}\label{thm_onestratum_cas3}
\Tr(\mathcal{M}_f \, e^{t\triangle}) 
= \frac{1}{t^{\Qeq/2}}F(t) + \frac{1}{t^{\mathcal{Q}^\S/2}} \left( F_0(\sqrt{t}) + F_1(\sqrt{t}) \vert\ln t\vert \right) \qquad\forall t\in(0,1)
\end{equation}
for some $F, F_0,F_1\in C^\infty(\R)$, 
with $F(0)=\tilde T_0(f)$.

Proposition \ref{prop_general_expansion} (in Appendix \ref{app_integral}) implies that
$$
K(t) = \frac{F(t)}{(\sqrt{t})^{\Qeq}} + \frac{1}{(\sqrt{t})^{\mathcal{Q}^\S}} \left( \tilde F_0(\sqrt{t}) + F_1(\sqrt{t}) \vert\ln t\vert \right)  \qquad\forall t\in(0,1)
$$
for some $F, \tilde F_0, F_1\in C^\infty(\R^n)$ such that 
$F_1$ has the parity of $\Qeq-\mathcal{Q}^\S$ (by Remark \ref{rem_prop_general_expansion_even}) and
$$
F(0) = \int_0^1 \tau^{\Qeq-\mathcal{Q}^\S-1} \int_\S \int_{\mathcal{S}^{n-k-1}} f(\delta_\tau^\S(\sigma)) \widehat{ \, e_\tau^q }^\sigma (1,0,0)\, d\sigma\, d\nu(q)\, d\tau  .
$$
We infer from \eqref{epssigma1} and from \eqref{transverse_polar} that $F(0) = \int_M f(q) \, \widehat{e}^{q}(1,0,0) \, d\mu(q)$.
Adding the expansion \eqref{asympt_J} of $J(t)$ obtained in Section \ref{sec_estimating_J} gives \eqref{thm_onestratum_cas3} with $F_0=F_J+\tilde F_0$. 
If $e=\widehat{e}^q$ and $\mu=\widehat{\mu}^q$ for some $q\in M$, and if $f=1$ near $\S$, then by Remark \ref{rem_prop_general_expansion_tau}, $F_1(\sqrt{t})=F_1(0)+\mathrm{O}(t^\infty)$.

\paragraph{Proof of \eqref{trace_sum3}.}
%
Having in mind the prelude to the theorem and in particular \eqref{prelude_equireg}, now, to arrive at the decomposition \eqref{trace_sum3}, we organize all terms in a different way, as follows.
%
%
Splitting $f$ into two parts, one supported near $\S$ and the other supported in $M\setminus\S$ to which \eqref{prelude_equireg} is applied, to this expansion coming from the regular region, we add all other possible terms in $t^{i-\Qeq/2}$ (for the three cases), for $i\in\N$, and this gives the first term in \eqref{trace_sum3}. 
The resulting decomposition is canonical.

\subsection{Baouendi-Grushin case}\label{sec_Baouendi-Grushin}
The Baouendi-Grushin case is often named ``Grushin case". It has been mentioned to us by N. Garofalo that, actually, S. Baouendi designed this famous model in 1967, while V. Grushin studied its hypoellipticity later, in 1970. This interesting story is reported in \cite[Section 11]{Garofalo_thoughts}.

\subsubsection{Definition of the model and local Weyl law}
We assume that $n=2$ and that $\mathrm{rank}(D)=2$ except along the singular set $\S$. 
This means that the $2D$ closed surface $M$ is Riemannian outside of $\S$.
Under generic assumptions (see \cite{ABS08, BoscainCharlotGhezzi_DGA2013}), the set $\S$ (along which $\mathrm{rank}(D)=1$) is a smooth closed curve, i.e., a finite union of embedded circles. 
Assume that $D=\mathrm{Span}(X_1,X_2)$ locally.
Outside isolated points of $\S$ (called \emph{tangency points}), $TM $ is spanned by $X_1$, $X_2$ and the Lie bracket $[X_1,X_2]$: such points are called \emph{Baouendi-Grushin points}.

Near Baouendi-Grushin points, we have the normal form for the metric
$$
X_1=\partial_1, \qquad X_2 = x_1 a(x_1,x_2) \, \partial_2
$$ 
($g$-orthonormal basis) where $a$ is a smooth function such that $a(0,x_2)=1$ for every $x_2$. Locally, the singular set is $\S=\{x_1=0\}$: it is equisingular and we have $\mathcal{Q}^\S=\Qeq=2$ and $\mathcal{Q}^M(\S)=3$. Moreover, $D$ is $\S$-nilpotentizable. We are thus in the framework of Theorem \ref{thm_onestratum} there.

Near tangency points, we need one more bracket, and we have the normal form for the distribution
$$
X_1=\partial_1, \qquad X_2=(x_1^2-x_2)\,\partial_2
$$
(there exists a normal form for the metric but we will not need it). Locally, the singular set is $\S=\{x_1^2-x_2=0\}$: it is not equisingular but is however stratified by two equisingular manifolds: $\S_1=\{(0,0)\}$ and $\S_2=\S\setminus\S_1$. The stratum $\S_2$ consists of Baouendi-Grushin points and thus Theorem \ref{thm_onestratum} can be applied there. For the stratum $\S_1$, we have $\mathcal{Q}^{\S_1}=0$ and $\mathcal{Q}(\S_1)=4$. Theorem \ref{thm_onestratum} does not cover this case.
Moreover, $D$ is not $\S$-nilpotentizable near tangency points: indeed the nilpotentization at a tangency point $q$ is $\widehat{X}^q_1=\partial_1$, $\widehat{X}^q_2=x_1^2\,\partial_2$, whose singular set is $\widehat{\!\S}^q=\{x_1=0\}$ which is not diffeomorphic to $\S$.

Let $P$ be the Popp measure on $M\setminus \S$, that is, here, the Riemannian measure associated with the metric $g$ (which is Riemannian outside of $\S$). Near $\S$, we have $dP=d\nu \otimes \big\vert\frac{d\phi}{\phi}\big\vert$ where $\nu$ is a smooth measure on $\S$ 
and $\phi=0$ is a local equation of $\S$, with $\phi$ a local submersion. 
The measure $\nu$ does not depend on the choice of $\phi$.\footnote{In the case without tangency point, the measure $\nu$ coincides with the Popp measure $P_\S$ along the equisingular submanifold $\S$, as defined in \cite{GhezziJean_TSG2015} (see also Appendix \ref{sec_popp}).}

\begin{theorem}\label{thm_Baouendi-Grushin}
Given any $f\in C^\infty(M)$ vanishing near the tangency points, we have
\begin{equation}\label{expansion_Baouendi-Grushin}
\Tr(\mathcal{M}_f \, e^{t\triangle}) = \int_M f(q)\, e(t,q,q)\, d\mu(q) 
= \frac{\vert\ln t\vert}{t} F_1(t) + \frac{1}{t} F_0(\sqrt{t})    \qquad\forall t\in(0,1)
\end{equation}
for some $F_0, F_1\in C^\infty(\R)$, with $F_1(0) = \frac{1}{4\pi} \int_\S f \, d\nu$. 
When $f$ is only of class $C^1$ and vanishes near the tangency points, we have a two-terms small-time asymptotics, with
$$
F_0(0) = \frac{1}{4\pi}\left( \mathrm{p.f.}\int _{M\setminus\S} f\, dP + (\gamma + 4 \ln 2)\int_\S f \, d\nu \right)
$$
where $\gamma$ is the Euler constant.
Moreover, if $(M,D,g)$ is a nilpotent Lie group (and $\mu$ is the Haar measure) and if $f=1$ near $\S$ then $F_1(t)=F_1(0)+\mathrm{O}(t^\infty)$ and $F_0$ is even.

Given any $f\in C^0(M)$ (which may be nonzero near the tangency points), we have 
$$
\Tr(\mathcal{M}_f \, e^{t\triangle}) 
= \frac{\vert\ln t\vert}{t} F_1(0) + \mathrm{o}\left( \frac{\vert\ln t\vert}{t}\right)
$$
as $t\rightarrow 0^+$.
The local Weyl measure is supported on $\S$ and is given by $w_\triangle=\nu /\nu(\S)$.
Moreover,
$$
N(\lambda) \underset{\lambda\rightarrow+\infty}{\sim} \frac{\nu(\S)}{4\pi}\lambda \ln \lambda .
$$

\end{theorem}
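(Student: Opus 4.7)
The proof specializes Theorem~\ref{thm_onestratum} to the Baouendi--Grushin geometry and then evaluates the two universal constants explicitly. Away from the tangency points the normal form $X_1=\partial_1$, $X_2=x_1a(x_1,x_2)\partial_2$ shows that $\mathscr S$ is a smooth equisingular submanifold with sR weights $w_1=1$, $w_2=2$, that $\mathcal Q^\S=\Qeq=2$ and $\mathcal Q^M(\mathscr S)=3$, and that $D$ is $\mathscr S$-nilpotentizable. Hence, for $f\in C^\infty(M)$ vanishing near the tangency points, we are exactly in the critical regime $\mathcal Q^\S=\Qeq$ of Theorem~\ref{thm_onestratum}, which delivers~\eqref{expansion_Baouendi-Grushin} together with the identifications $F_1(0)=S_0(f)$ and $F_0(0)=\tilde T_0(f)$. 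The nilpotent-Lie-group refinement (evenness of $F_0$, flatness of $F_1$) is the corresponding specialization in that same theorem.

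To make $F_1(0)$ explicit, observe that $\mathcal S^{n-k-1}=\mathcal S^0=\{\pm 1\}$ carries the counting measure of total mass $2$ (from the formula $d\sigma=\mathrm{ord}(x_1)\cdot 1=1$). The first nilpotentization at $q\in\S$ gives the flat Baouendi--Grushin model, in which $\sigma=\pm 1$ are Riemannian regular points; the second nilpotentization is therefore the Euclidean Laplacian on $\R^2$, whose heat kernel at the origin at time $1$ equals $\frac{1}{4\pi}$. The formula for $S_0(f)$ in Theorem~\ref{thm_onestratum} thus yields $S_0(f)=\tfrac12\cdot 2\cdot\tfrac{1}{4\pi}\int_\S f\,d\mu_\S=\tfrac{1}{4\pi}\int_\S f\,d\nu$, after a direct coordinate check identifying the canonical transverse measure $\mu_\S$ with the measure $\nu$ defined through the local factorization $dP=d\nu\otimes|d\phi/\phi|$.

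The computation of $F_0(0)=\tilde T_0(f)$ is the main obstacle, since both Hadamard finite parts in Theorem~\ref{thm_onestratum} need to be evaluated explicitly on the flat model. Applying Mehler's formula to the harmonic oscillator obtained after Fourier transform in the $x_2$-variable gives the closed-form diagonal kernel
\[
\widehat e(1,(x_1,x_2),(x_1,x_2))=\frac{1}{\pi}\int_0^{+\infty}\sqrt{\frac{\xi}{2\pi\sinh(2\xi)}}\,e^{-\xi x_1^2\tanh\xi}\,d\xi.
\]
Integrating in $x_1$ over $[-R,R]$, isolating the logarithmic divergence at $R\to\infty$, and using the classical integral representation $\gamma=-\int_0^{+\infty}e^{-u}\ln u\,du$ applied to the asymptotic tail of $1/\sinh(2\xi)$ at $\xi\to 0$, one obtains $\mathrm{p.f.}\,\Tr_{N_q\S}(e^{\widehat\triangle^q})=(\gamma+4\ln 2)/(4\pi)$. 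In parallel, since at a regular point $\widehat e^q(1,0,0)$ equals $\frac{1}{4\pi}$ with respect to the Riemannian (= Popp) volume, one matches the logarithmic divergences near $\S$ and identifies $\mathrm{p.f.}\int_{M\setminus\S}f\,\widehat e^q(1,0,0)\,d\mu=\frac{1}{4\pi}\,\mathrm{p.f.}\int_{M\setminus\S}f\,dP$. Adding the two contributions produces the claimed expression for $F_0(0)$; the precise appearance of $\gamma+4\ln 2$ is the only delicate point of the whole argument.

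For the asymptotic valid for merely continuous $f$ which may be nonzero near tangency points, write $f=f_0+f_1$ with $f_0$ supported away from the finitely many tangency points and $f_1$ supported in an $\varepsilon$-neighborhood $U_\varepsilon$ thereof. The contribution of $f_0$ follows from the smooth case by density, while the contribution of $f_1$ is dominated by $\frac{|\ln t|}{t}\cdot o_\varepsilon(1)$ thanks to the Fefferman--Phong estimate~\eqref{FeffermanPhong} applied on $U_\varepsilon$ (the tangency points form a measure-zero set, so the $\S$-weighted $|\ln t|/t$-mass of $U_\varepsilon$ tends to $0$ as $\varepsilon\to 0$); letting $\varepsilon\to 0$ gives the one-term asymptotic. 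The Weyl law $N(\lambda)\sim\frac{\nu(\S)}{4\pi}\lambda\ln\lambda$ and the identification $w_\triangle=\nu/\nu(\S)$ then follow from the Karamata tauberian statement of Theorem~\ref{thm_weyl_measures_equiv} applied with $f=1$, $\rho=1$ and $\chi(\lambda)=\ln\lambda$.
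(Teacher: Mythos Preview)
Your overall approach is correct and tracks the paper's proof closely: reduce to Theorem~\ref{thm_onestratum} in the critical case $\mathcal{Q}^\S=\Qeq$, read off $F_1(0)$ from the fact that the double nilpotentization is Euclidean so $\widehat{e}^{\,q,\sigma}(1,0,0)=\frac{1}{4\pi}$, identify the first Hadamard finite part with $\frac{1}{4\pi}\,\mathrm{p.f.}\int_{M\setminus\S}f\,dP$, and conclude via Karamata. The one genuine methodological difference concerns the constant $\mathrm{p.f.}\,\Tr_{N_q\S}(e^{\widehat{\triangle}^q})=(\gamma+4\ln 2)/(4\pi)$. You propose to extract it directly from the Mehler representation of the flat Baouendi--Grushin kernel; the paper instead computes it indirectly, by evaluating the zeta function of an explicit model on $\mathcal{S}^2$ (Section~\ref{sec_Baouendi-Grushin_sphere}) where the eigenvalues are known, expanding $\zeta(s)\Gamma(s)$ near $s=1$ via Hurwitz zeta functions, and matching against the known form of the trace expansion. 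Your route is in principle viable (the paper even acknowledges the explicit kernel formula in a footnote as an alternative), but the passage ``isolating the logarithmic divergence \dots\ one obtains'' hides a computation of comparable length to the paper's zeta argument; as written it is an assertion rather than a proof.

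One imprecision worth flagging: for the tangency-point localization you invoke the Fefferman--Phong estimate~\eqref{FeffermanPhong}, but that inequality controls the \emph{global} counting function $N(\lambda)$, not a local heat-trace integral over $U_\varepsilon$. What you actually need is the diagonal heat kernel bound~\eqref{exp_estimates_diago} combined with the uniform ball-box estimate near a tangency point, namely $\mu(\BsR(q,\sqrt{t}))\geq C t\,\max(|x_1^2-x_2|,\sqrt{t}|x_1|,t)$, which after integration over a ball of radius $r$ around the tangency point gives a contribution of order $\frac{|\ln t|}{t}\,\mathrm{O}(r)$. This is how the paper proceeds; your heuristic ``the tangency points form a measure-zero set'' is not sufficient by itself, since the singularity of $e(t,q,q)$ could in principle be strong enough to make a zero-measure set contribute at leading order.
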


When there are no tangency points, the result follows from Theorem \ref{thm_onestratum}. In this case, the consequence on the asymptotics of the spectral counting function is already known: it follows from results of \cite{Me-Sj-78}. But it is new when there are tangency points.
What is also new here is the expression of the local Weyl measure and the intrinsic two-terms small time-asymptotics of the local Weyl law (far from the tangency points).


Moreover, we establish in Section \ref{sec_QE} a Quantum Ergodicity (QE) result in the Baouendi-Grushin case when $\S$ is connected with at most one tangency point: there exists a density-one subsequence of probability measures $|\phi_{j_k}|^2 \mu$ converging weakly to $w_\triangle$. This is the first QE result in sR geometry where the limit measure is singular. 

The more general $k$-Baouendi-Grushin case $X_1=\partial_1$, $X_2=(x_1^k-x_2)$, for $k\geq 2$ (for which $\supp(w_\triangle)=\S$), as well as other variants like $X_2=(x_1^2-x_2^3)$ (for which $\supp(w_\triangle)=\{(0,0)\}$), much more difficult to treat, are considered in Section \ref{sec_nonnilp} in an analytic framework.

\subsubsection{Proof of Theorem \ref{thm_Baouendi-Grushin}}\label{sec_proof_thm_Baouendi-Grushin}
When there is no tangency point (or, when $f$ vanishes near the tangency points), Theorem \ref{thm_onestratum} can be applied.
Outside of $\S$, the Popp measure is given by $dP = \frac{1}{\vert x_1\vert a(x_1,x_2)}\, dx_1\, dx_2$.

Since the Baouendi-Grushin case is Riemannian outside of $\S$, noting that $a(0,0)=1$, we have 
$$
\widehat{e}^{\,q,\sigma}(1,0,0) = e_{\mathrm{Euclid}}(1,0,0) = \frac{1}{4\pi}
$$
for all $q\in\S$ and $\sigma\in \mathcal{S}^0=\{\pm1\}$, where $e_{\mathrm{Euclid}}(t,x,y)=\frac{1}{4\pi t}\exp(-\Vert x-y\Vert^2/4t)$ is the Euclidean heat kernel in $\R^2$. We obtain $F_1(0) = \frac{1}{4\pi} \int_\S f \, d\nu$.

The term $F_2(0)$ given by Theorem \ref{thm_onestratum} is the sum of two terms. The first is 
$$
\mathrm{p.f.}  \int_{M\setminus\S} f(q) \, \widehat{e}^q(1,0,0)\, d\mu(q) 
= \mathrm{p.f.}  \int_{M\setminus\S} f(q) \, e_{\widehat{\triangle}^q,\widehat{P}^q(1,0,0)}\, dP(q)
$$
where we have used \eqref{add_kernelnilp_mu_nu}. But since $e_{\widehat{\triangle}^q,\widehat{P}^q}(1,0,0) = e_{\mathrm{Euclid}}(1,0,0) = \frac{1}{4\pi}$, this first term is $\frac{1}{4\pi}\, \mathrm{p.f.}  \int_{M\setminus\S} f(q)\, dP(q)$.
The second is $\int_\S f(q)\, \mathrm{p.f.} \, \Tr_{N_q\S}(e^{\widehat{\triangle}^q}) \, d\nu(q)$, and we claim that, for every $q\in\S$,  
$\mathrm{p.f.} \ \Tr_{N_q\S}(e^{\widehat{\triangle}^q}) = \frac{\gamma+4\ln 2}{4\pi}$.
To prove this fact, we consider an explicit example in the equivalence class, and then to identify the unknown coefficient from this example: this is done in Section \ref{sec_Baouendi-Grushin_sphere}.\footnote{Alternatively, one may use the explicit formula for the flat Baouendi-Grushin case (see, e.g., \cite{ChangLi_2015})
$$
e(t,x,x') = \frac{1}{(2\pi t)^{3/2}} \int_\R \sqrt{\frac{\tau}{\sinh\tau}} e^{\left( i\tau(x_2'-x_2) - \frac{1}{2}(x_1^2+{x_1'}^2)\tau\,\mathrm{cotanh}\,\tau - 2x_1x_1' \right)/t} \, d\tau .
$$
}

It remains to prove that, when there are tangency points, we still have an asymptotic expansion \eqref{expansion_Baouendi-Grushin} (but in which we cannot identify $F_0(0)$.
Since we are in the generic case, the metric normal form near a tangency point $q$ is $X_1=a(x_1,x_2)\,\partial_1$, $X_2=(x_1^2-x_2)b(x_1,x_2)\,\partial_2$ ($g$-orthonormal basis), with $a$ and $b$ germs of smooth positive functions such that $a(0,0)=b(0,0)=1$. By the uniform ball-box theorem (see \eqref{mu_ball_eps} in Appendix \eqref{sec_uniformballbox}), we have $\mu(\BsR(q,r)) \geq C r^2 \max(\vert x_1^2-x_2\vert, r\vert x_1\vert, r^2)$ for some constant $C>0$. Now, it suffices to split the integral $I(t)$ as an integral over $\BsR(q,r)$ (we do that, actually, around each tangency point) and an integral over the rest. The latter is estimated by Theorem \ref{thm_onestratum}. For the integral over $\BsR(q,r)$, we use the heat kernel estimate \eqref{exp_estimates_diago} (see Appendix \ref{app_sR_kernel}), which yields the upper bound $\frac{\vert\ln t\vert}{t}\mathrm{O}(r)$ with a uniform $\mathrm{O}(r)$. The result follows.

\subsubsection{The Baouendi-Grushin case on the sphere $\mathcal{S}^2$}\label{sec_Baouendi-Grushin_sphere}
In this section, we present an explicit example of the Baouendi-Grushin case without tangency point on the $2$-sphere where we can compute explicitly the eigenelements of the sR Laplacian and the two-terms expansion of the heat asymptotics.

We set $M=\mathcal{S}^2$, the Euclidean $2$-sphere of $\R^3$ endowed with the canonical Riemannian metric $g_R$. We denote by $\mu_R$ the associated (smooth) Riemannian measure on $M$. 
The vector fields on $M$ defined by 
$$
X_1= -z\,\partial_x +x\,\partial_z, \qquad
X_2 = z\,\partial_y-y\,\partial_z, \qquad X_3=y\,\partial_x-x\,\partial_y
$$
span the tangent space to $M$ at every point. The Laplace-Beltrami Laplacian on $M$ (Riemannian Laplacian, associated with the canonical Riemannian metric and with the canonical Riemannian measure) is $\triangle_R = \triangle_{g_R,\mu_R} = X_1^2 + X_2^2 + X_3^2$. 

Note that $[X_1,X_2]=-X_3$, $[X_1,X_3]=X_2$ and $[X_2,X_3]=-X_1$. Note also that, in spherical coordinates $x=\cos\varphi\cos\theta$, $y=\sin\varphi\cos\theta$, $z=\sin\theta$, with $0\leq\varphi\leq2\pi$ and $-\pi/2\leq\theta\leq\pi/2$, we have $X_3= -\partial_\varphi$. For $\vert\theta \vert\neq\pi/2$, we have $X_1=\sin\varphi\tan\theta\,\partial_\varphi+\cos\varphi\,\partial_\theta$ and $X_2=\cos\varphi\tan\theta\,\partial_\varphi-\sin\varphi\,\partial_\theta$, and the canonical Riemannian measure is $d\mu_R = \cos\theta\, d\theta\, d\varphi$.

We consider on $M$ the almost Riemannian structure $(M,D,g)$ with $D=\mathrm{Span}(X_1,X_2)$ and $g$ the metric on $D$ for which $(X_1,X_2)$ is an orthonormal frame.
We denote by $\triangle=\triangle_{g,\mu_R}=X_1^2+X_2^2$ the sR Laplacian associated with the sR metric $g$ and with the (smooth) Riemannian measure $\mu_R$ on $M$. The singular set $\S$ is the equator $\{(x,y,z)\in\mathcal{S}^2\ \mid\ z=0\}$: $\triangle$ is elliptic outside of $\S$ but is only subelliptic on $\S$ (it is \emph{almost-Riemannian}).
This operator (which is selfadjoint with respect to the smooth measure $\mu_R$) has been considered in \cite{Casarino}. Note that the almost-Riemannian Laplacian studied in \cite{BPS-14} is different: they consider the sR Laplacian $\triangle_{g,P}$ associated with the sR metric $g$ and with the Popp measure $P$, which is singular along $\S$. 
Here, we have $dP = \frac{1}{\tan\theta}\,d\theta\,d\varphi$ outside of $\S$, and thus, with the notations of Theorem \ref{thm_Baouendi-Grushin}, we have $d\nu=d\varphi$ along $\S$.


Since $[\triangle, X_3]=0$, considering the standard basis $(Y_{l,m})_{|m|\leq l} $ of spherical harmonics, satisfying $-\triangle_R Y_{l,m}=l(l+1) Y_{l,m}$, $X_3 Y_{l,m}=-im  Y_{l,m}$ for $l\in\N$ and $|m|\leq l$, we find 
$-\triangle Y_{l,m}= \left( l(l+1) -m^2\right)Y_{l,m}$.
The eigenvalues are $\lambda_{l,m}= l(l+1) -m^2$ with multiplicity given by the number of such decompositions of $\lambda_{l,m}$. The eigenfunctions are $Y_{l,m}$ but the eigenvalues are ordered in different ways for $\triangle_R $ and for $\triangle$.

The zeta function of $\triangle + 1/4$ (whose eigenvalues are $(l+1/2)^2-m^2$ with $|m|\leq l$), as defined in Section \ref{sec_regdet}, is then
$$
\zeta(s) = \sum_{|m|\leq l} \frac{1}{(l+m+\frac{1}{2})^s }\frac{1}{(l-m+\frac{1}{2})^s} .
$$
Given any $p,q\geq 0$, we can find integers $l$ and $m$ such that $p=l+m$ and $q=l-m$ if and only $p$ and $q$ have the same parity. Therefore
$$
\zeta(s) = 4^{-s} (\zeta(s,1/4))^2 + 4^{-s} (\zeta(s,3/4))^2
$$
where the functions $\zeta(\cdot,\cdot)$ are the corresponding Hurwitz zeta functions%
\footnote{Given any $a>0$, the classical Hurwitz zeta function $\zeta(\cdot,a)$ is defined by $\zeta(s,a)=\sum_{j=0}^{+\infty}\frac{1}{(j+a)^s}$. Its meromorphic extension has a unique pole, at $s=1$, and $\zeta(s,a)=\frac{1}{s-1}-\psi(a)+\mathrm{O}(1)$ as $s\rightarrow 1$, where $\psi$ is the digamma function. We have $\psi_0 (1/4)=-\pi/2 - 3\ln 2 -\gamma $ and $\psi_0 (3/4)=\pi/2 - 3\ln 2 -\gamma $.}
(see \cite[Section 1.3]{SrivastavaChoi_book}). 
The function $\zeta\Gamma$ is holomorphic for $\mathrm{Re}(s)>1$. Its meromorphic extension has a pole of order $2$ at $s=1$.
When $s\rightarrow 1$, we have the expansions 
$$
4^{-s}= \frac{1}4 -\frac{\ln2}{2} (s-1) + \mathrm{o}(s-1), \qquad
(\zeta(s, 1/4))^2= \left( \frac{1}{s-1} +\pi/2 + 3\ln 2 +\gamma +\mathrm{o}(1) \right)^2,
$$
and a similar one for $(\zeta(s, 3/4))^2$ by replacing $+\pi/2$ by $-\pi/2$. Hence
$$
\zeta(s)= \frac{1}{2(s-1)^2}+ \frac{\gamma+2\ln 2}{s-1} + \mathrm{O}(1) ,
\qquad
\zeta(s)\Gamma (s)  = \frac{1}{2 (s-1)^2} + \frac{\gamma + 4 \ln 2}{2(s-1)} +\mathrm{O}(1)
$$
as $s\rightarrow 1$.
Setting $Z(t)=\Tr(e^{t(\triangle+1/4)})$, we have $\zeta(s)\Gamma(s) = \int_0^{+\infty} Z(t) t^{s-1}\, dt$. Splitting this integral in $\int_0^1$ and $\int_1^{+\infty}$, the latter gives a holomorphic function, while the first is meromorphic with a pole at $s=1$.
Since we already know that $Z(t)=A\frac{\vert\ln t\vert}{t} + \frac{B}{t} + \mathrm{O}(1)$ as $t\rightarrow 0^+$, we find, by identification, $A=\frac{1}{2}$ and $B= \frac{\gamma + 4 \ln 2}{2}$. 
Noting that $\nu(\S)=2\pi$ and that $\mathrm{p.f.}\int_{M\setminus\S}dP=0$, we can thus identify the unknown coefficient in Section \ref{sec_proof_thm_Baouendi-Grushin}.

\subsubsection{An additional remark}\label{sec_add_Grushin}
In Remark \ref{rem_thm_onestratum_sqrtt}, we have claimed that, when $f=1$ near $\S$, the function $F_0$ may fail to be even. In this section, we give an explicit example. 
Take the local Baouendi-Grushin model near $(0,0)$ given by the $g$-orthonormal frame $X_1=(1+x_1)\,\partial_1$, $X_2 = x_1 \, \partial_2$. Let us compute the small-time expansion of $I(t)$ near $(0,0)$. According to the procedure described in Sections \ref{sec_singular_prelim} and \ref{sec_equisingular_nilp}, we perform a first nilpotentization at $q=(0,0)$ and then at $\sigma=(1,0)$. Here, we obtain 
$$
(X_1)_{\tau,\varepsilon}^{q,\sigma} (y_1,y_2) 
= (1+\tau+\tau\varepsilon y_1)\, \partial_1 ,
\qquad
(X_2)_{\tau,\varepsilon}^{q,\sigma} (y_1,y_2) 
= ( 1+\varepsilon y_1 ) \,\partial_2  
$$
and thus 
$\triangle_{\tau,\varepsilon}^{q,\sigma}
= \triangle_\tau + R$ with 
\begin{multline*}
\triangle_\tau = (1+\tau)^2\,\partial_1^2 + \partial_2^2, \qquad
R = R_1+R_2, \\
R_1 = ((1+\tau)\tau\varepsilon + \tau^2\varepsilon^2 y_1)\, \partial_1  + \tau\varepsilon y_1(2(1+\tau)+\tau\varepsilon y_1)\,\partial_1^2, \qquad
R_2 = \varepsilon y_1(1+\varepsilon y_1)\, \partial_2^2 .
\end{multline*}
Using \eqref{formulas_kernel} in Appendix \ref{appendix_Schwartz}, the heat kernel $e_\tau$ generated by $\triangle_\tau$ satisfies
$e_\tau(1,0,0) = \frac{1}{4\pi(1+\tau)^2}$ (because $e_{\mathrm{Euclid}}(1,0,0) = \frac{1}{4\pi}$). In order to estimate the heat kernel 
$e_{\tau,\varepsilon}^{q,\sigma}$ generated by $\triangle_{\tau,\varepsilon}^{q,\sigma}$, 
we follow \cite{CHT_AHL}, noting that, by the Duhamel formula,
$e^{t \triangle_{\tau,\varepsilon}^{q,\sigma}}
= e^{t\triangle_\tau} + 
e^{t \triangle_{\tau,\varepsilon}^{q,\sigma}} 
\star Re^{t\triangle_\tau}$, where the convolution is defined by $A(t)\star B(t) = \int_0^t A(t-s)B(s)\, ds$. Hence we have the expansion 
$$
e^{t \triangle_{\tau,\varepsilon}^{q,\sigma}} 
= e^{t\triangle_\tau} + e^{t\triangle_\tau}\star Re^{t\triangle_\tau} + e^{t\triangle_\tau}\star Re^{t\triangle_\tau}\star Re^{t\triangle_\tau} + \cdots
$$
at any order in $(\tau,\varepsilon)$, as a sum of locally smoothing operators, as proved in \cite[Section 6]{CHT_AHL} (this is highly nontrivial).
Since $R=R_1+R_2$, it follows that 
$$
e_{\tau,\varepsilon}^{q,\sigma}(1,0,0) 
= e_\tau(1,0,0) + h_1(\tau,\varepsilon) + h_2(\tau,\varepsilon)
$$
where $h_i(\tau,\varepsilon)$ is the Schwartz kernel of $e^{t\triangle_\tau}\star R_ie^{t\triangle_\tau} + e^{t\triangle_\tau}\star R_ie^{t\triangle_\tau}\star R_ie^{t\triangle_\tau} + \cdots$, for $i=1,2$. Since $R_1= \mathrm{O}(\tau\varepsilon)$ and $R_2 = \mathrm{O}(\varepsilon)$ and since we already know that $(e^q_\tau)^\sigma_\varepsilon(1,0,0)$ is even with respect to $\varepsilon$, we obtain that 
$$
e_{\tau,\varepsilon}^{q,\sigma}(1,0,0) 
= \frac{1}{4\pi(1+\tau)^2} + \tau^2\varepsilon^2 h_1(\tau,\varepsilon) + \varepsilon^2 h_2(\tau,\varepsilon)
$$
for some $h_1,h_2\in C^\infty(\R^2)$. Therefore
we have \eqref{formulK} with $\Qeq=2$, $\mathcal{Q}^\S=2$, $f=1$ near $\S$ and 
$$
G(\tau,\varepsilon) = \int_\S \int_{\mathcal{S}^0} 
e_{\tau,\varepsilon}^{q,\sigma}(1,0,0) 
\, d\sigma\, d\nu(q) = \frac{\nu(\S)}{2\pi}\frac{1}{(1+\tau)^2} +\tau^2\varepsilon^2 H_1(\tau,\varepsilon) + \varepsilon^2 H_2(\tau,\varepsilon)
$$
for some $H_1,H_2\in C^\infty(\R^2)$.
In particular, $G(0,0)=\frac{\nu(\S)}{2\pi}$, $\partial_2G(\tau,0)=\partial_1\partial_2G(0,0)=0$ and $\partial_1G(0,\varepsilon)=-\frac{\nu(\S)}{\pi}$. According to Remark \ref{rem_expansion_4} in Appendix \ref{app_integral}, the term in $\sqrt{t}$ in the expansion of $I(t)$ is equal to $-\partial_1 G(0,0)=\frac{\nu(\S)}{\pi}$.
We finally obtain
$$
I(t) = \frac{\nu(\S)}{4\pi}\frac{\vert\ln t\vert}{t} + F_0(0)\frac{1}{t} + \frac{\nu(\S)}{\pi}\frac{1}{\sqrt{t}} + \mathrm{O}\left(\vert\ln t\vert\right)
$$
as $t\rightarrow 0^+$ (where $F_0(0)$ is given by Theorem \ref{thm_Baouendi-Grushin}). There is a nontrivial term in $1/\sqrt{t}$.

\subsection{Martinet case}\label{sec_Martinet}
We assume that $n=3$ and that $D$ is locally defined by $D=\ker\alpha$ where $\alpha$ is a real-valued $1$-form on $M$ such that $\alpha\wedge d\alpha$ vanishes transversally on a $2D$ smooth surface  $\S$ (called Martinet surface). Let $L=(L_z)_{z\in \S}$ be defined by $L=D\cap T\S$.
We assume that $L$ is line bundle over $\S$. Generically, $L$ admits singularities (see \cite{Ma-70,Zh-92}). We speak of the \emph{nonsingular Martinet case} when $L$ has no singularities, and in this case, according to \cite{Ma-70}, the distribution $D$ can be defined locally near $\S$ by $D=\ker\alpha$ with $\alpha = dx - x^2\, dy $.
The distribution $D$ is of contact type outside of $\S$.

Let $P$ be the Popp measure on $M\setminus \S$ (canonical contact measure outside of $\S$). Near $\S$, we have $dP=d\nu \otimes \big\vert\frac{d\phi}{\phi}\big\vert$ where $\nu$ is a smooth measure on $\S$ 
and $\phi =0$ is a local equation of $\S$, with $\phi$ a local submersion. 
The measure $\nu$ does not depend on the choice of $\phi$.

\begin{theorem}\label{thm_Martinet}
Given any $f\in C^\infty(M)$ vanishing near the singularities of $\S$, we have
$$
\Tr(\mathcal{M}_f \, e^{t\triangle}) = \int_M f(q)\, e(t,q,q) \, d\mu(q) 
= \frac{\vert\ln t\vert}{t^2} F_1(t) + \frac{1}{t^2} F_0(\sqrt{t})  \qquad \forall t\in(0,1)
$$
for some $F_0,F_1\in C^\infty(\R)$, with $F_1(0) = \frac{1}{16} \int_\S  f \, d\nu$.
When $f$ is only of class $C^1$ and vanishes near the singularities of $\S$, we have a two-terms small-time asymptotics, with
$$
F_0(0) = \mathrm{p.f.} \int_{M\setminus\S} fh \, d\mu + A \int_\S  f \, d\nu
$$
for some universal constant $A\in\R$ and for some smooth function $h$ on $M$, where $\mathrm{p.f.}\int _M fh\, d\mu$ is the Hadamard finite part of the integral with respect to the surface $\S$.
Moreover, if $(M,D,g)$ is a nilpotent Lie group (and $\mu$ is the Haar measure) and if $f=1$ near $\S$ then $F_1(t)=F_1(0)+\mathrm{O}(t^\infty)$ as $t\rightarrow 0^+$ and $F_0$ is even.

Given any $f\in C^0(M)$ (which may be nonzero at the singularities of $\S$), we have
$$
\Tr(\mathcal{M}_f \, e^{t\triangle}) 
= \frac{\vert\ln t\vert}{t^2} F_1(0) + \mathrm{o}\left( \frac{\vert\ln t\vert}{t^2}\right)
$$
as $t\rightarrow 0^+$.
The local Weyl measure is the probability measure supported on $\S$ defined by $w_\triangle=\nu /\nu(\S)$. Moreover,
$$
N(\lambda)\underset{\lambda\rightarrow +\infty}{\sim} \frac{\nu(\S)}{32}\lambda^2 \ln \lambda .
$$
\end{theorem}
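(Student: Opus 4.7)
The strategy is to apply Theorem~\ref{thm_onestratum} in the critical case $\mathcal{Q}^\S=\Qeq$. In the nonsingular Martinet normal form $X_1=\partial_x$, $X_2=\partial_y+\frac{x^2}{2}\partial_z$, the surface $\S=\{x=0\}$ is a two-dimensional equisingular submanifold whose sR weights along $\S$ are $(1,1,3)$; hence $\mathcal{Q}^M(\S)=5$, $\mathcal{Q}^\S=1+3=4$, matching $\Qeq=4$ in the 3D contact complement. Since $X_1,X_2$ are already in polynomial privileged form, $D$ is $\S$-nilpotentizable. For any $f\in C^\infty(M)$ vanishing near the singularities of $L=D\cap T\S$, the stratum $\S$ alone carries the singular analysis and Theorem~\ref{thm_onestratum} in the case $\mathcal{Q}^\S=\Qeq$ yields
\[
\Tr(\mathcal{M}_f\,e^{t\triangle}) = \frac{|\ln t|}{t^2}\,F_1(t) + \frac{1}{t^2}\,F_0(\sqrt{t})
\]
with $F_0,F_1\in C^\infty(\R)$ (the vanishing of $S_j$ for odd $j$ ensures $F_1$ is a smooth function of $t$).

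Next, I would identify $F_1(0)$ and the structure of $F_0(0)$ through the double nilpotentization $\widehat{e}^{\,q,\sigma}(1,0,0)$ for $q\in\S$ and $\sigma\in\mathcal{S}^0=\{\pm 1\}$. The first nilpotentization at $q$ yields the flat Martinet model; the second nilpotentization at the transverse unit direction $\sigma=\pm 1$ recenters at $(\sigma,0,0)$ and retains the principal part, which after the translation $x\mapsto x-\sigma$ and a linear shift of the $z$-coordinate becomes the flat 3D Heisenberg group, independently of $q$ and of the sign $\sigma=\pm 1$. By the explicit Heisenberg formula recalled in Remark~\ref{rem_3Dcase}, one has $\widehat{e}^{\,q,\sigma}(1,0,0)=1/16$. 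Choosing $\mu$ to be the coordinate Lebesgue measure near $\S$ gives $\mu_\S = dy\,dz = \nu$ (since $dP=\frac{1}{|x|}dx\,dy\,dz$ factors as $d\nu\otimes|dx/x|$ with $d\nu=dy\,dz$), and the formula of Theorem~\ref{thm_onestratum} then specialises to
\[
F_1(0)=\frac{1}{2}\int_\S f(q)\int_{\mathcal{S}^0}\widehat{e}^{\,q,\sigma}(1,0,0)\,d\sigma\,d\mu_\S(q)=\frac{1}{16}\int_\S f\,d\nu.
\]
The expression of $F_0(0)=\tilde T_0(f)$ is then read off directly: its first finite-part term equals $\mathrm{p.f.}\int_{M\setminus\S}f\,h\,d\mu$ with $h(q)=\widehat{e}^{\,q}(1,0,0)$ smooth on $M\setminus\S$, and the second finite part $\mathrm{p.f.}\,\Tr_{N_q\S}(e^{\widehat{\triangle}^q})$ depends only on the flat Martinet model at $q$, hence is constant in $q\in\S$, contributing a universal multiple $A\int_\S f\,d\nu$.

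The extension to $f\in C^0(M)$ possibly nonzero at the isolated singularities of $L$ follows by the localization used for tangency points in the Baouendi-Grushin case (Section~\ref{sec_proof_thm_Baouendi-Grushin}): split $f=f_0+f_r$ with $f_0$ vanishing on an $r$-tubular neighborhood of the singular locus and $f_r$ supported inside; the leading-order one-term expansion of Theorem~\ref{thm_onestratum} applies to $f_0$ (it only requires continuity), and for $f_r$ the upper heat kernel estimate $e(t,q,q)\le C/\mu(\BsR(q,\sqrt{t}))$ combined with a ball lower bound gives $\Tr(\mathcal{M}_{f_r}e^{t\triangle})=O(r)\cdot|\ln t|/t^2$ uniformly in $t\in(0,1]$, and sending $r\to 0$ after extracting the contribution of $f_0$ yields the claimed asymptotic. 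The nilpotent Lie group statement comes from the final paragraph of the case $\mathcal{Q}^\S=\Qeq$ in Theorem~\ref{thm_onestratum}: when $e=\widehat{e}^q$, $\mu=\widehat{\mu}^q$ and $f=1$ near $\S$, all $R_j$ vanish and all $S_j$ with $j\ge 1$ vanish, so $F_1(t)=F_1(0)+\mathrm{O}(t^\infty)$ and $F_0$ is even. The Weyl asymptotic is obtained from Theorem~\ref{thm_weyl_measures_equiv} with $\rho=2$, $\chi(\lambda)=\ln\lambda$ and $L\mathbf{1}=\nu(\S)/16$, giving $N(\lambda)\sim (L\mathbf{1}/\Gamma(3))\lambda^2\ln\lambda=\nu(\S)\lambda^2\ln\lambda/32$. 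The main obstacle is the precise normalization bookkeeping needed to track the value $1/16$ through both nilpotentizations and through the intrinsic identification $\mu_\S=\nu$; the secondary difficulty is recognizing that $\mathrm{p.f.}\,\Tr_{N_q\S}(e^{\widehat{\triangle}^q})$ is independent of $q\in\S$ (so that $A$ is genuinely universal), which follows from the translation invariance of the flat Martinet along $\S$.
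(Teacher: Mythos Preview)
Your proposal is correct and follows essentially the same approach as the paper: the paper explicitly states that the proof is ``similar to the one done in the Baouendi-Grushin case and is thus skipped,'' with the coefficient $\tfrac{1}{16}$ coming from Remark~\ref{rem_3Dcase} (the Martinet complement being 3D contact), the universality of $A$ from the existence of a metric normal form, and the handling of the singularities of $L$ by the same localization-and-ball-volume argument used for tangency points in Section~\ref{sec_proof_thm_Baouendi-Grushin}. You have simply written out these steps in detail, invoking Theorem~\ref{thm_onestratum} in the case $\mathcal{Q}^\S=\Qeq=4$ and carrying the double nilpotentization through to the flat Heisenberg model.
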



The proof is similar to the one done in the Baouendi-Grushin case and is thus skipped.
The coefficient $\frac{1}{16}$ in $F_1(0)$ is computed thanks to Remark \ref{rem_3Dcase} (because, outside of the singular set $\S$, the Martinet case is a 3D contact case).
The fact that the constant $A$ is universal is due to the knowledge of normal forms (see \cite{ABCK97}).
In contrast to the Baouendi-Grushin case, we do not know any sufficiently tractable model for the Martinet case that would allow us to identify the constant $A$ and the function $h$ in $F_0(0)$.

\subsection{Quantum Ergodicity properties}\label{sec_QE}
In this section, we derive QE (Quantum Ergodicity) properties, as a consequence of the local Weyl law.
We denote by $(\phi_j)_{j\in\N}$ an orthonormal eigenbasis, as considered in Section \ref{sec_karamata}.

\subsubsection{Concentration of Quantum Limits on the singular set} 
Under the assumptions done in Theorem \ref{thm_onestratum}, if $\mathcal{Q}^\S \geq \Qeq$ then the Weyl measure is supported on the singular set $\S$.
It follows from Theorem \ref{thm_onestratum} and from Theorem \ref{thm_weyl_measures_equiv} that
$$
\lim_{\lambda\rightarrow+\infty} \frac{1}{N(\lambda)}\sum_{\lambda_k\leq\lambda} \int_M f\vert\phi_k\vert^2\, d\mu = \int_\S f\, dw_\triangle \qquad\forall f\in C^0(M).
$$
By using a well known lemma\footnote{Given a bounded sequence $(u_n)_{n\in\N}$ of nonnegative real numbers, the Ces\'aro mean $\frac{1}{n}\sum_{k=0}^{n-1}u_k$ converges to $0$ if and only if there exists a subset $S\subset\N$ of density
 one such that $(u_k)_{k\in S}$ converges to $0$. We recall that $S$ is of density one if $\frac{1}{n}\#\{k\in S\mid k\leq n-1\}$ converges to $1$
 as $n$ tends to $+\infty$.} due to Koopman and Von Neumann (see, e.g., \cite[Chapter 2.6, Lemma 6.2]{Petersen}), we infer the following corollary.

\begin{corollary}\label{cor_concentration_onestratum}
In the framework of Theorem \ref{thm_onestratum}, if $\mathcal{Q}^\S \geq \Qeq$ then there exists a density-one sequence $(k_j)_{j\in\N}$ of positive integers such that, for every real-valued continuous function $f$ on $M$ whose support does not intersect the singular set $\S$, 
$$
\lim_{j\rightarrow+\infty} \int_M f\vert\phi_{k_j}\vert^2\, d\mu = 0 .
$$
This means that all Quantum Limits (weak limits) on the base of $(\phi_{k_j})_{j\in\N}$ are supported on $\S$.
\end{corollary}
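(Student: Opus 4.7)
The plan is to combine the local Weyl law (Theorem \ref{thm_onestratum}) with the Cesàro reformulation (Theorem \ref{thm_weyl_measures_equiv}) and a diagonal extraction to produce a single density-one sequence that works uniformly for all test functions vanishing near $\S$.

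First, I would fix a nonnegative $f\in C^0(M)$ with $\supp(f)\cap\S=\emptyset$. Under the hypothesis $\mathcal{Q}^\S\geq\Qeq$, Theorem \ref{thm_onestratum} asserts that $w_\triangle$ is supported on $\S$, so $\int_\S f\,dw_\triangle=0$. Applying Theorem \ref{thm_weyl_measures_equiv} with this $f$ gives
\[
\frac{1}{N(\lambda)}\sum_{\lambda_k\leq\lambda}\int_M f\,|\phi_k|^2\,d\mu \;\xrightarrow[\lambda\to+\infty]{}\;\int_\S f\,dw_\triangle = 0.
\]
The sequence $u_k(f):=\int_M f\,|\phi_k|^2\,d\mu$ is nonnegative and bounded by $\|f\|_\infty$ (since $\|\phi_k\|_{L^2(M,\mu)}=1$), so the Koopman--Von Neumann lemma quoted in the footnote yields a density-one subset $S(f)\subset\N$ with $u_{k}(f)\to 0$ along $S(f)$.

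The main step is to produce a single density-one sequence $(k_j)$ that works for \emph{every} admissible $f$. For this I would use the fact that the space $C_0(M\setminus\S)$ of continuous functions on $M$ vanishing in a neighborhood of $\S$ is separable for the sup norm; choose a countable family $(f_m)_{m\in\N}$ of nonnegative elements that is dense in this space. For each $m$ the above argument produces a density-one set $S_m\subset\N$ along which $u_k(f_m)\to 0$. A standard diagonal extraction then assembles these into a single density-one set $S$: pick $\lambda_1<\lambda_2<\cdots\to+\infty$ increasing fast enough that the ``bad set'' $B_m=\{k:u_k(f_m)>1/m\}$ has counting function $o(N(\lambda))$, and define $S$ by removing from $\N$, in the window $[\lambda_m,\lambda_{m+1})$, the union $B_1\cup\cdots\cup B_m$. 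A direct counting argument shows $\#(S\cap[0,\lambda])\sim N(\lambda)$, i.e.\ $S$ has density one, while $u_k(f_m)\to 0$ along $S$ for each fixed $m$.

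Finally, I would extend from the dense family to arbitrary $f\in C^0(M)$ with $\supp(f)\cap\S=\emptyset$ by a standard $\varepsilon/3$ argument: given such $f$, split $f=f^+-f^-$ into its positive and negative parts (both are continuous and supported off $\S$), approximate each in sup norm by some $f_m$, and use $\bigl|u_k(f^\pm)-u_k(f_m)\bigr|\leq \|f^\pm-f_m\|_\infty$ uniformly in $k$. Enumerating $S=\{k_j\}_{j\in\N}$ in increasing order gives the claimed density-one sequence, and any weak limit of the probability measures $|\phi_{k_j}|^2\,\mu$ must then charge only $\S$. The only genuine delicacy is the diagonal construction of $S$; everything else is a direct application of results already established in the excerpt.
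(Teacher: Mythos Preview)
Your proof is correct and follows essentially the same approach as the paper: deduce Ces\`aro convergence to zero from Theorems \ref{thm_onestratum} and \ref{thm_weyl_measures_equiv}, then invoke the Koopman--Von Neumann lemma. The paper compresses everything into a single citation of that lemma, while you have made explicit the separability/diagonal-extraction step needed to pass from ``one density-one sequence per $f$'' to a single sequence valid for all $f$ supported off $\S$; this is a standard part of the quantum ergodicity toolkit that the paper leaves implicit.
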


\begin{remark}
For the Baouendi-Grushin case on the sphere $\mathcal{S}^2$, this result may seem surprising because the eigenfunctions of the aR Laplacian on $\mathcal{S}^2$ coincide with those of the usual Laplace-Beltrami Laplacian, which are the spherical harmonics $Y_{l,m}$. As mentioned in the previous section, the explanation is that the eigenvalues are ordered in different ways for the two Laplacians. Being a sequence of density one depends on the order of the sequence: any infinite sequence of density zero can be reordered as a sequence of density one! More precisely, any weak limit of a sequence $\vert Y_{l,m}\vert^2\mu$ is supported on the equator if and only if $\vert m/l\vert$ converges to $1$.
\end{remark}

\subsubsection{Quantum Limits (QLs) and Quantum Ergodicity (QE)}
Let us first briefly recall the definition of what is a QL and of what is QE. 
Let $E$ be a smooth compact manifold, endowed with a probability measure $\Theta$, and let $T$ be a self-adjoint operator on $L^2(E,\Theta)$, bounded below and having a compact resolvent (and hence a discrete spectrum). Let $(\Phi_k)_{k\in\N}$ be a (complex-valued) Hilbert basis of $L^2(E,\Theta)$, consisting of eigenfunctions of $T$, associated with the ordered sequence of eigenvalues $\lambda_0\leq\cdots\leq\lambda_j\leq\cdots$. 

A \emph{local Quantum Limit} (local QL, or QL on the base) is a probability measure on $E$ that is the weak limit of a subsequence of probability measures $\vert\Phi_{k_j}\vert^2 \Theta$. A \emph{microlocal QL} (or QL in the phase space) is a probability measure on the co-sphere bundle $S^*E$ that is the weak limit of a subsequence of Radon measures $\mu_{k_j}(a)=\langle\Op(a)\Phi_{k_j},\Phi_{k_j}\rangle_{L^2(E,\Theta)}$ (the measures $\mu_j$ are asymptotically positive), where $\Op$ is an arbitrary quantization and $a$ is a classical symbol of order $0$. Microlocal QLs do not depend on the choice of the quantization. Any local QL is the image of a (not necessarily unique) microlocal QL under the canonical projection $S^*E\rightarrow E$.

We say that Quantum Ergodicity (QE) on the base (resp., in the phase space) is satisfied for the eigenbasis $(\Phi_j)_{j\in\N}$ of  $T$ if there exist a local QL $\beta$ on $E$ (resp., a microlocal QL $\beta$ on $S^*E$) and a density-one sequence $(k_j)_{j\in\N}$ of positive integers such that the sequence of probability measures $\vert\Phi_{k_j}\vert^2\Theta$ (resp., $\mu_{k_j}$) converges weakly to $\beta$.

QE theorems have a glorious history, starting with the well known Shnirelman theorem (see \cite{Shn-74} and see \cite{yCdV-85,Zel-87} for a proof), establishing QE in the Riemannian case provided that the geodesic flow be ergodic for the canonical Riemannian measure. This theorem has been extended in a number of ways in elliptic cases and it is not our objective here to report on such extensions. In \cite{CHT-I}, we have established the first QE result in a sR case, namely, in the 3D contact case: QE is satisfied under the assumption that the Reeb flow be ergodic for the Popp measure, with the Popp measure (canonical contact measure) as a limit measure. 

In all above-mentioned results, the limit QL $\beta$ is absolutely continuous. Hereafter, we provide the first example in sR geometry of a QE property with a limit measure that is singular.

\subsubsection{QE in the Baouendi-Grushin case}

\begin{theorem}\label{thm_QE_Grushin}
QE is satisfied in the Baouendi-Grushin case when $\S$ is connected with at most one tangency point, with $w_\triangle = \nu/\nu(\S)$ (Weyl measure) as a limit measure on the base, and $W_\triangle$ in the phase space, which is half of the pullback of $w_\triangle$ by the double covering $S\Sigma\rightarrow M$ which is the restriction to $S\Sigma$ (with $\Sigma=D^\perp$) of the canonical projection of $T^\star M$ onto $M$.
\end{theorem}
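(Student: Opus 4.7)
The proof will follow the Shnirelman--Zelditch--Colin de Verdi\`ere scheme in the variant developed in Section 10 of \cite{CHT-I} for the 3D contact case, now adapted to a setting where the limit measure is singular (supported on $\S$ rather than absolutely continuous). The target is the variance estimate
\begin{equation*}
V_A(\lambda) := \frac{1}{N(\lambda)} \sum_{\lambda_k \leq \lambda} \Big| \langle A\phi_k,\phi_k\rangle_{L^2(M,\mu)} - \int_{S^\star M} \sigma_0(A)\, dW_\triangle \Big|^2 \ \xrightarrow[\lambda\to\infty]{} 0
\end{equation*}
for every pseudo-differential operator $A$ of order $0$ with principal symbol $\sigma_0(A)$. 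A standard diagonal extraction over a countable dense family of such $A$ then produces the density-one subsequence along which microlocal QE holds, and projection via the canonical map $\pi:S^\star M\to M$ gives QE on the base with limit $w_\triangle=\nu/\nu(\S)$; compatibility of this limit with $W_\triangle$ is then read off from Remark \ref{rem_double_covering} and Theorem \ref{thm_Baouendi-Grushin}.

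I would first reduce to symbols microsupported near $S\Sigma$. By Corollary \ref{cor_concentration_onestratum}, applied in its microlocal refinement to $|\sigma_0(A)|^2$ supported in a conic neighborhood of the complement of $S\Sigma$, the contribution of any such component to $V_A(\lambda)$ tends to zero along a density-one subsequence, so it suffices to handle $A$ microsupported in an arbitrarily small conic neighborhood of $S\Sigma$. Next, away from the (at most one) tangency point, the Baouendi-Grushin normal form provides a microlocal Birkhoff-type reduction of $\triangle$: in privileged coordinates, $\triangle$ is unitarily equivalent, modulo lower-order errors, to the family of transverse harmonic oscillators $\partial_1^2 - x_1^2\tau^2$ parametrized by the tangential cotangent coordinate $\tau$ along $\S$. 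Diagonalizing in the oscillator modes produces an effective family of first-order operators on $\S$ whose Hamiltonian flow $\Phi_t$ lifts to a flow on the double cover $S\Sigma\to\S$ and projects to a rotation-type flow on $\S$. The assumption that $\S$ is connected with at most one tangency point is used precisely here: after removing the tangency point, $\S$ remains a connected one-dimensional manifold, on which the only normalized smooth measure invariant under rotation is (proportional to) the length element $\nu$; pulling back to $S\Sigma$ this is exactly $W_\triangle$, which provides ergodicity of $\Phi_t$ with respect to $W_\triangle$.

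The argument closes by the usual averaging trick. For $a$ of zero mean with respect to $W_\triangle$, ergodicity gives $\frac{1}{T}\int_0^T a\circ \Phi_t\, dt \to 0$ uniformly; an Egorov theorem $U_t^\star \Op(a) U_t = \Op(a\circ\Phi_t) + t\,\Op_{-1}$ for the effective reduced propagator $U_t$, combined with the near-invariance of each $\phi_k$ under $U_t$ modulo a scalar phase, yields
\begin{equation*}
\langle \Op(a)\phi_k,\phi_k\rangle = \big\langle \Op\big( \tfrac{1}{T}\textstyle\int_0^T a\circ\Phi_t\, dt\big)\phi_k,\phi_k\big\rangle + \mathrm{O}(T/\lambda_k^{1/2}),
\end{equation*}
and Cauchy--Schwarz plus the (already established) Weyl law then give $\limsup V_A(\lambda) = \mathrm{o}_T(1)$; letting $T\to+\infty$ concludes.

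The main obstacle is twofold. First, the Egorov theorem and the Birkhoff normal form must be developed in an anisotropic pseudo-differential calculus adapted to the Baouendi-Grushin H\"ormander structure (in the spirit of Beals--Greiner or Boutet de Monvel--Grigis--Helffer), since the classical homogeneous calculus fails near $S\Sigma$. Second, the neighborhood of the tangency point requires care: there the normal form degenerates and $\mathcal{Q}^M$ jumps. This is handled by a $\sqrt t$-scale microlocal excision analogous to the $(J+K)$-decomposition of Section \ref{sec_J+K}, using that the tangency point is a single point of $\nu$-measure zero and that Corollary \ref{cor_concentration_onestratum} already forces concentration on $\S$ away from it along a density-one subsequence, so that removing a shrinking neighborhood of the tangency point affects neither the limit measure nor connectedness of $\S$.
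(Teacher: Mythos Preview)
Your proposal takes a genuinely different route from the paper. You set up the classical Shnirelman variance scheme: a Birkhoff normal form reducing $\triangle$ to transverse harmonic oscillators over an effective one-dimensional flow on $\S$, an Egorov theorem in an adapted anisotropic calculus, and ergodic averaging. The paper instead \emph{lifts} the 2D Baouendi--Grushin structure to a 3D contact structure on $M\times\mathcal{S}^1$ (via the desingularization $\tilde X_1=\partial_1$, $\tilde X_2=X_2+\partial_3$, contact form $\alpha=dx_2-x_1a\,dx_3$), observes that each pullback $p^*\phi_{k_j}$ is an eigenfunction of the lifted contact Laplacian, and then invokes directly the structure theorem for quantum limits in the 3D contact case from \cite{CHT-I}: any QL decomposes as $\tilde\beta_0+\tilde\beta_\infty$ with $\tilde\beta_0$ invariant under the contact geodesic flow and $\tilde\beta_\infty$ invariant under the Reeb flow. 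A two-line Hamiltonian computation shows no contact geodesic is contained in $\{x_1=\xi_3=0\}$, forcing $\tilde\beta_0=0$; the Reeb vector field projects to a vector tangent to $\S$, and connectedness of $\S$ gives uniqueness of the invariant probability measure, hence $\beta=w_\triangle$. The tangency point is then handled by the soft argument that any QL must be $(1-\alpha)\nu/\nu(\S)+\alpha\delta_{q_0}$, and the Weyl law forces $\alpha=0$ along a density-one subsequence.

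The trade-off is clear: the paper's lifting trick reduces everything to already-established machinery from \cite{CHT-I} and requires no new pseudo-differential calculus at all, yielding a proof of under a page. Your route could in principle succeed, but the two obstacles you flag---the anisotropic calculus near $S\Sigma$ and the associated Egorov theorem---are exactly the hard analytic core of \cite{CHT-I} itself, which you would be re-deriving in the Baouendi--Grushin geometry rather than importing. In particular, your claim that ``each $\phi_k$ is near-invariant under $U_t$ modulo a scalar phase'' for the \emph{effective} propagator $U_t$ presupposes that the normal-form conjugation intertwines eigenfunctions of $\triangle$ with those of the reduced operator up to controlled errors; this is the content of a quantum Birkhoff normal form with remainder estimates, which you have not established and which is where the real work lies.
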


\begin{proof}
By Corollary \ref{cor_concentration_onestratum}, there exists a local QL $\beta$, supported on $\S$, which is the weak limit of a density-one subsequence of probability measures $|\phi_{k_j}|^2 \mu$ on $M$.

We first treat the Baouendi-Grushin case without tangency point. In order to apply results on QLs established in \cite{CHT-I} in the 3D contact case, let us lift this 2D case in dimension three. A local normal form is given by the $g$-orthonormal frame $X_1=\partial_1$, $X_2=x_1a(x_1,x_2)\,\partial_2$ with $a(0,x_2)=1$. As a particular case of the general desingularization procedure (see \cite{Jean_2014, RS1976}), 
the (2D) Baouendi-Grushin case without tangency point is the projection onto $M$ of the 3D contact structure given on $M\times\mathcal{S}^1$ by the contact form $\alpha=dx_2 - x_1a(x_1,x_2)\, dx_3$, associated with the orthonormal frame $\tilde X_1=\partial_1$, $\tilde X_2=X_2+\partial_3$. We denote by $p:M\times\mathcal{S}^1\rightarrow M$ the canonical projection. Endowing $M\times\mathcal{S}^1$ with the measure $\tilde\mu=\mu\otimes dx_3$, we have $p_*\tilde\mu=\mu$. Let $\tilde\triangle = \tilde X_1^2+\tilde X_2^2$ be the sR Laplacian on $M\times\mathcal{S}^1$. The Reeb vector field is $(a+x_1\partial_1 a)\,\partial_2$. 

Noting that $p^*\phi_{k_j}$ is an eigenfunction of $\tilde\triangle$, the sequence of probability measures $|p^*\phi_{k_j}|^2 \tilde\mu$ on $M\times\mathcal{S}^1$ converges weakly to a QL $\tilde\beta$ of the 3D contact case, such that $p_*\tilde\beta=\beta$, which is, in the local coordinates, supported in $\{x_1=0\}$ (by the choice of $\beta$) and in $\{\xi_3=0\}$ (because $p^*\phi_{k_j}$ does not depend on $x_3$). By \cite[Theorem B]{CHT-I}, we have the Radon-Nikodym decomposition $\tilde\beta = \tilde\beta_0+\tilde\beta_\infty$ with $\tilde\beta_0$ that is invariant under the 3D contact geodesic flow and $\tilde\beta_\infty$ that is invariant under the Reeb flow. 
We claim that there is no 3D contact geodesic contained in $\{x_1=\xi_3=0\}$. Indeed, if such a geodesic were to exist, since the Hamiltonian of the lifted case is $\tilde H=\frac{1}{2}\xi_1^2+\frac{1}{2}(x_1\xi_2+\xi_3)^2$, this geodesic should be contained in $\vert\xi_1\vert=1$ and satisfy $\dot x_1=\xi_1\neq 0$ on $\S$, thus its projection should be transverse to $\S$, which is a contradiction. Therefore $\tilde\beta_0=0$ and $\tilde\beta = \tilde\beta_\infty$. Moreover, the projection of the Reeb vector field is tangent to $\S$ and lets $w_\triangle$ invariant, as expected. The quantum ergodicity property on the base follows.
In the phase space, since $\supp(W_\triangle)\subset\{x_1=\xi_1=0\}$, the only possibility is that $W_\triangle$ is supported on the $\mathcal{S}^1$-fiber bundle (in $\xi_2$) over $\S$, whence the result.

Let us now treat the Baouendi-Grushin case when $\S$ is a circle with exactly one tangency point $q_0$. Since $\S\setminus\{q_0\}$ is connected and since the Weyl measure $w_\triangle$ is the unique measure that is invariant under the projection of the Reeb flow, it follows from the above result obtained for the Baouendi-Grushin case without tangency point that the restriction of $\beta$ to $\S\setminus\{q_0\}$ coincides with $\nu$ up to scaling. Hence, there exists $\alpha\in[0,1]$ such that $\beta=(1-\alpha)\nu/\nu(\S)+\alpha\delta_{q_0}$, where $\delta_{q_0}$ is the Dirac mass at $q_0$. Let $\alpha_0\in(0,1)$ be fixed. Since the Weyl measure is $\nu/\nu(\S)$, any subsequence of the sequence of probability measures $|\phi_{k_j}|^2 d\mu$, for which $\alpha>0$, must have density zero.
The result follows by a classical diagonal argument.
\end{proof}

\subsubsection{QE in the Martinet case}
In the Martinet case, QE properties are open. 
The classical dynamics to consider might be a suitable normalization of the abnormal geodesics (more precisely, the dynamics of a characteristic vector field $Y$ on $\S$ leaving invariant the measure $\nu$). There does not seem to exist such a vector field in the general case. Assuming the existence of such a characteristic vector field and the ergodicity of the corresponding dynamics, we conjecture that the QE property is satisfied for any eigenbasis of the sR Laplacian $\triangle$.

\subsection{The Baouendi-Grushin and Martinet Laplacians associated with the Popp measure}
We have previously considered the Baouendi-Grushin and Martinet Laplacians $\triangle_\mu$ associated with the \emph{smooth} measure $\mu$ on $M$.
To complete our study, in this section we consider the Baouendi-Grushin and Martinet Laplacians $\triangle_P$ associated with the Popp measure $P$, which is singular on $\S$.
It is proved in \cite{BL-13} that both Baouendi-Grushin and Martinet Laplacians $\triangle_P$ are essentially selfadjoint on $M\setminus\S$.

\begin{theorem}\label{thm_BS_M_Popp}
Given any $f\in C^0(M)$, we have
$$
\int_M f(q)\, e_{\triangle,P}(t,q,q)\, dP(q) \sim 
\left\{ \begin{array}{ll}
\frac{1}{4\pi} \int_\S f \, d\nu\ \frac{\vert\ln t\vert}{t} & \textrm{in the Baouendi-Grushin case} \\[1mm]
\frac{1}{16} \int_\S f \, d\nu\ \frac{\vert\ln t\vert}{t^2} & \textrm{in the Martinet case} 
\end{array}\right.
$$
as $t\rightarrow 0^+$, and thus, as in Theorems \ref{thm_Baouendi-Grushin} and \ref{thm_Martinet}, $w_\triangle=\nu /\nu(\S)$ and 
$$
N(\lambda) \underset{\lambda\rightarrow+\infty}{\sim}
\left\{ \begin{array}{ll}
\frac{\nu(\S)}{4\pi}\lambda \ln \lambda & \textrm{in the Baouendi-Grushin case,} \\[1mm]
\frac{\nu(\S)}{32}\lambda^2 \ln \lambda & \textrm{in the Martinet case.}
\end{array}\right.
$$
Moreover, as in Corollary \ref{cor_concentration_onestratum}, there exists density-one sequence $(k_j)_{j\in\N}$ of positive integers such that, if $\supp(f)\cap\S=\emptyset$ then
$$
\lim_{j\rightarrow+\infty} \int_M f\vert\phi_{k_j}\vert^2\, dP = 0 .
$$
\end{theorem}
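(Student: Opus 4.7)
The plan is to adapt the $(J+K)$-decomposition of Sections \ref{sec_Baouendi-Grushin}--\ref{sec_Martinet} to the Popp-measure setting, after reducing the Popp-Laplacian heat trace to a perturbed smooth-measure trace via a gauge transformation. Locally near $\S$, write $dP = h^2\,d\mu$ where $h$ is smooth and positive on $M\setminus\S$ and blows up like $|\phi|^{-1/2}$, with $\phi$ a local defining function for $\S$. The gauge $J\psi = h\psi$ is an isometric isomorphism $L^2(M,P)\to L^2(M,\mu)$, and a direct computation parallel to Lemma \ref{lem_prelim_measure} yields $J\triangle_P J^{-1} = \triangle_\mu - V$ with $V = h^{-1}\triangle_\mu(h)$, a potential of Calogero type (order $|\phi|^{-2}$) near $\S$. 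Since Schwartz kernels transform diagonally as $[A]_P(q,q)\,dP(q) = [A]_\mu(q,q)\,d\mu(q)$, the unitarity of $J$ gives the key identity
\[
\int_M f(q)\, e_{\triangle_P, P}(t,q,q)\,dP(q) \;=\; \mathrm{Tr}_{L^2(M,\mu)}\bigl(\mathcal{M}_f\, e^{t(\triangle_\mu - V)}\bigr).
\]

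I would then apply the $(J+K)$-decomposition to the right-hand side at the scale $\sqrt{t}$. For the outer piece over $M\setminus\mathcal{B}(\S,\sqrt{t})$, the potential $V$ is bounded by $C/t$ on this region, and a Duhamel expansion using the exponential estimates \eqref{exp_estimates} of Appendix \ref{app_sR_kernel}, combined with the positivity of $V$ near $\S$ (so that $\triangle_\mu - V \leq \triangle_\mu$ in the operator sense and $\|e^{s(\triangle_\mu - V)}\|_{op}\leq 1$), shows that the $V$-contribution only affects subleading terms and the leading behaviour matches that of $K(t)$ in Theorems \ref{thm_Baouendi-Grushin}--\ref{thm_Martinet}. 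For the inner piece over $\mathcal{B}(\S,\sqrt{t})$, the transverse dilation $q = \delta^\S_{\sqrt{t}}(y)$ rescales $\triangle_\mu$ and $V$ by the same factor $1/t$, since $V$ has the exact homogeneity dictated by $h$; the limiting rescaled operator is therefore precisely the gauge-transformed nilpotent flat Popp-Laplacian $\widehat{\triangle}^{\,q}_P$ at $q\in\S$. Integration in transverse polar coordinates, as in Section \ref{sec_estimating_J}, then produces the direct analogue of the leading term of Theorem \ref{thm_onestratum} in the case $\mathcal{Q}^\S = \Qeq$, with $\widehat{e}^{\,q}$ replaced by $\widehat{e}^{\,q}_P$.

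The key point for identifying the numerical constants is that the heat kernel $\widehat{e}^{\,q,\sigma}_P(1,0,0)$ of the doubly-nilpotentized flat Popp-Laplacian coincides, at the diagonal origin, with its smooth-measure counterpart: indeed, this can be read off the explicit Beals--Gaveau--Greiner-type formulas for the flat Baouendi--Grushin and Heisenberg heat kernels cited in Section \ref{sec_proof_thm_Baouendi-Grushin} and Remark \ref{rem_3Dcase}, producing the constants $1/(4\pi)$ and $1/16$ respectively. The Weyl law on $N(\lambda)$ then follows immediately from the Karamata tauberian theorem (Theorem \ref{thm_weyl_measures_equiv}), and the density-one concentration of $|\phi_{k_j}|^2\,dP$ on $\S$ follows from the Koopman--von Neumann lemma exactly as in the proof of Corollary \ref{cor_concentration_onestratum}. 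I expect the main obstacle to be the rigorous control of the Calogero-type singular potential $V$ in the Duhamel expansion near $\S$; this step crucially relies on the essential selfadjointness of $\triangle_P$ on $C^\infty_c(M\setminus\S)$ established in \cite{BL-13}, which ensures that the heat semigroup $e^{t(\triangle_\mu - V)}$ is well-defined and amenable to the trace-class perturbative estimates.
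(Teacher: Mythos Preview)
Your overall strategy coincides with the paper's: gauge-transform $\triangle_P$ to $\triangle_\mu - V$ with a Calogero-type potential $V=c/\phi^2$, then run the $(J+K)$-decomposition; your treatment of the inner piece $J(t)$ via the exact homogeneity of $V$ under transverse dilation is correct, and the Karamata and Koopman--von~Neumann steps at the end are exactly right.

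The one weak point is your handling of the outer integral $K(t)$. You propose a Duhamel comparison of $e^{t(\triangle_\mu-V)}$ with $e^{t\triangle_\mu}$ on $M\setminus\mathcal B(\S,\sqrt t)$, but there one only has $V\le C/t$, so $tV=\mathrm O(1)$ and the Duhamel remainder is not manifestly subleading; the ingredients you list (positivity of $V$, contractivity of the semigroup) control operator norms but not the pointwise diagonal kernel to the precision required for the $\frac{|\ln t|}{t^{\Qeq/2}}$ asymptotics. The paper sidesteps this entirely: rather than comparing with the potential-free semigroup, it extends Theorem~\ref{lemfondamental} itself to operators $\triangle_\mu-V$ whose potential satisfies $\varepsilon^2(\delta_\varepsilon^q)^*V\to\widehat V^q$ in $C^\infty$ (which holds here with $\widehat V^q=V$ exactly, since $V=c/x_1^2$). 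With that extended small-time expansion in hand, the analysis of $K(t)$ in Section~\ref{sec_proof_thm_onestratum} applies verbatim to the operator-with-potential, and the function $G$ of \eqref{deffunctionG} is again smooth. The leading constant then matches the smooth-measure case not by appeal to explicit Beals--Gaveau--Greiner formulas, but simply because at the \emph{second} nilpotentization point $\sigma$ (a regular point, with Riemannian resp.\ contact weights) one has $\varepsilon^2 V(\delta_\varepsilon^\sigma(\cdot))\to 0$, so $\widehat{\triangle}^{\,q,\sigma}$ carries no potential and the doubly-nilpotentized kernels agree at the origin. That homogeneity observation is what should replace your Duhamel step.
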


In the Baouendi-Grushin case we can even obtain as in Theorem \ref{thm_Baouendi-Grushin} an intrinsic two-terms expansion. But, although the first term is the same with a smooth measure and with the Popp measure, the second terms differ: the intrinsic constants are different. We refer to \cite{BPS-14} for the computation of these constants.

To obtain Theorem \ref{thm_BS_M_Popp} in the Baouendi-Grushin case, we proceed as follows. In a local model where $X_1=\partial_1$, $X_2=x_1\partial_2$, we have $dP=\frac{1}{\vert x_1\vert} \, dx_1\, dx_2$ and, following \cite{BL-13} the aR Laplacian $\triangle_P = \partial_1^2+x_1^2\partial_2-\frac{1}{x_1}\partial_1$ is unitarily equivalent to the second-order operator $\partial_1^2+x_1^2\partial_2-\frac{3}{4x_1^2}$ considered on $L^2(\R^2)$ with the Euclidean measure, which is proved to be essentially selfadjoint on $M\setminus\S$.
Then, we use the same general method as before, i.e., a $(J+K)$-decomposition and the use of Theorem \ref{lemfondamental} in Appendix \ref{app_lemfondam}.
The only difference is that the nilpotentized Laplacian is not the same as the one obtained with a smooth measure: it has a potential that is singular along $\S$. It remains anyway Riemannian far from $\S$, which explains why the first term is the same as with a smooth measure. 

The method is the same in the Martinet case: the local model $\triangle_P = \partial_1^2+(\partial_2+\frac{x_1^2}{2}\partial_3)^2-\frac{1}{x_1}\partial_1$ is unitarily equivalent to the second-order operator $\partial_1^2+(\partial_2+\frac{x_1^2}{2}\partial_3)^2-\frac{3}{4x_1^2}$ considered on $L^2(\R^2)$ with the Euclidean measure. 

In both cases however, with respect to Theorem \ref{lemfondamental}, which is established for more general operators in \cite{CHT_AHL}, we need an additional ingredient that is not written in that article: in \cite{CHT_AHL}, we have taken smooth bounded potentials, in particular in order to ensure dissipativity of the operator and thus existence of the semigroup. But actually the main result of \cite{CHT_AHL} can be extended to potentials $V$ blowing up only mildly, in the following sense: we assume that $\varepsilon^2 (\delta_\varepsilon^q)^* V  \rightarrow  \widehat{V}^q$ as $\varepsilon\rightarrow 0$ in $C^\infty$ topology (indeed, in the framework of \cite{CHT_AHL}, the operator $\triangle^\varepsilon$ involves the term $\varepsilon^2 (\delta_\varepsilon^q)^* V$ and this assumption is exactly devised to give a sense to its limit). 
For the above Baouendi-Grushin and Martinet cases with the Popp measure where the potential $V(x)=1/x_1^2$ is added to a smooth sR Laplacian, we obtain exactly $\widehat{V}^q = 1/x_1^2$.

\section{Local Weyl law in the equisingular stratified nilpotentizable case}\label{sec_equisingular_stratified_nilp}

Throughout this section, we assume that the singular set $\S$ is \emph{stratified by equisingular smooth submanifolds},
i.e., $\S$ is a Whitney stratified submanifold of $M$, disjoint union of strata, and
each stratum 
is an equisingular smooth connected submanifold of $M$ (see Appendix \ref{app_sRflag}). 
Hence $M$ is stratified as well by equisingular smooth submanifolds: its strata are the open set $M\setminus\S$ (regular region), of Hausdorff dimension $\Qeq$, and the strata of $\S$. 
%
We consider the set $\{\mathcal{Q}^1,\ldots,\mathcal{Q}^s\}$ (where $s\in\N\setminus\{0,1\}$) of all possible Hausdorff dimensions of strata of $M$, ordered in the increasing way, i.e., $\mathcal{Q}^1 < \cdots < \mathcal{Q}^s$. This means that, for each stratum of $M$, there exists $i\in\{1,\ldots,s\}$ such that $\mathcal{Q}^i$ is the Hausdorff dimension of this stratum. In particular, $\Qeq$ is equal to one of the $\mathcal{Q}^i$'s. The integer $\mathcal{Q}^s$ is the maximal possible Hausdorff dimension of a stratum.

The equisingular stratification assumption, which is a refinement of the stratification by topological dimension, has been introduced in \cite{Gromov} and considered as well in \cite{GhezziJean_NA2015, GhezziJean_TSG2015}. It is satisfied for generic smooth sR structures and for analytic sR structures (see \cite[Section 1.3.A]{Gromov}), 
in particular for nilpotent Lie groups and their quotients (because they have an analytic structure), and thus for nilpotentizable sR structures. 

As alluded in Remark \ref{rem_nilp_equiregular_region}, if $D$ is locally diffeomorphic to $\widehat{D}^q$ for every $q\in M$ (this is stronger than $\S$-nilpotentizability where this property is required for every $q\in\S$ only) then the equisingular stratification property is automatically satisfied.

\subsection{Main result}\label{sec_thm_multistrates_nilp}
Let $q\in\S$ be arbitrary. By definition of the Whitney stratification, $q$ belongs to a stratum $\S_1$ and to the closure of $p$ other nested strata of increasing topological dimensions: 
$$
q\in\S_1\subset\overline\S_2\subset\cdots\subset\overline\S_p\subset\overline\S_{p+1}=\overline{M\setminus\S}=M
$$
(where $p\in\N^*$ and the sequence of strata depend on $q$), with $\dim\S_i<\dim\S_{i+1}\leq n$ for $i=1,\ldots,p+1$ (hence, $p\leq n$). We call the sequence $\mathscr{C}=(\S_1,\ldots,\S_p,\S_{p+1})$ a \emph{chain of strata} at $q$, of length $p+1$. By convention, we always have $\S_{p+1}=M\setminus\S$. Of course, there may exist several chains of strata at $q$, of possibly different lengths. 
The chains are however invariant along $\S_1$, meaning that if $\mathscr{C}$ is a chain at $q\in\S_1$ then $\mathscr{C}$ is a chain at any other $q'\in\S_1$.

%


Let $q\in\S$ and let $\mathscr{C}=(\S_1,\ldots,\S_p,\S_{p+1})$ be a 
chain of strata at $q$, of length $p+1$. 
For every $j\in\{1,\ldots,p+1\}$, the Hausdorff dimension $\mathcal{Q}^{\S_j}$ is equal to $\mathcal{Q}^i$ for some $i\in\{1,\ldots,s\}$. We denote by $m_i(q,\mathscr{C}) \in\{0,\ldots, n+1\}$ the ``multiplicity" of $\mathcal{Q}^i$ in the chain $\mathscr{C}$ at $q$, that is, the number of integers $j\in\{1,\ldots,p+1\}$ such that $\mathcal{Q}^{\S_j} = \mathcal{Q}^i$.
The multiplicity $m_i(q,\mathscr{C})$ does not depend on $q\in\S_1$.

Finally, we define the ``maximal multiplicity" of $\mathcal{Q}^i$ by
$$
m_i = \displaystyle\max\{m_i(q,\mathscr{C})\ \mid\ q\in\S,\ \mathscr{C}\textrm{ chain of strata at } q \}\qquad \forall i\in\{1,\ldots,s\} .
$$
Note that $1\leq m_i\leq n+1$.
The $2s$ integers $\mathcal{Q}^i,m_i$, standing for all possible Hausdorff dimensions of strata of $M$ together with their maximal multiplicity, are the characteristic integers appearing in the local Weyl law.

We say that a chain $\mathscr{C}=(\S_1,\ldots,\S_p,\S_{p+1})$ of strata at $q$, of length $p+1$, is \emph{maximal} if it contains $m_s$ strata of maximal Hausdorff dimension $\mathcal{Q}^s$. Let $M_s$ be the equisingular stratified submanifold of $\S$ defined as the set of all $q\in M$ at which there exists a maximal chain.


\begin{theorem}\label{thm_multistrates_nilp}
Assume that the horizontal distribution $D$ is $\S$-nilpotentizable. 
Given any $f\in C^\infty(M)$, the function $t\mapsto\Tr(\mathcal{M}_f \, e^{t\triangle})= \int_M f(q)\, e(t,q,q)\, d\mu(q)$ has an asymptotic expansion as $t\rightarrow 0^+$ with respect to the scale of functions $t^{(k-\mathcal{Q}^i)/2}\vert\ln t\vert^j$, with $i\in\{1,\ldots,s\}$, $j\in\{0,\ldots,m_i-1\}$ and $k\in\N$.
%
Moreover, there exists a nontrivial Borel measure $\nu$ on $M$ such that, for every $f\in C^0(M)$,
$$
\Tr(\mathcal{M}_f \, e^{t\triangle}) = \left( \int_{\mathcal{N}_s} f \, d\nu\right) \frac{\vert\ln t\vert^{m_s-1}}{t^{\mathcal{Q}^s/2}}  + \mathrm{o}\left( \frac{\vert\ln t\vert^{m_s-1}}{t^{\mathcal{Q}^s/2}} \right)
$$
as $t\rightarrow 0^+$. 
The support of $\nu$ is the equisingular stratified submanifold $\mathcal{N}_s$ of $M_s$ defined as follows:
take any $q\in M_s$ and any maximal chain $\mathscr{C}$ of strata at $q$; among the strata of $\mathscr{C}$ of maximal Hausdorff dimension $\mathcal{Q}^s$, consider the stratum that is of minimal topological dimension; then $\mathcal{N}_s$ is the closure of the union of all such strata, over all $q\in M_s$ and all maximal chains at $q$.

As a consequence, the local Weyl measure exists, is smooth on $\mathcal{N}_s$ and is given by $w_\triangle=\nu/\nu(\mathcal{N}_s)$, and we have
$$
N(\lambda) \underset{\ \lambda\rightarrow+\infty}{\sim}  \frac{\nu(\mathcal{N}_s)}{\Gamma(\mathcal{Q}^s/2+1)} \lambda^{\mathcal{Q}^s/2} (\ln\lambda)^{m_s-1} .
$$
\end{theorem}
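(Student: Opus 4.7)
The plan is to proceed by iterating the $(J+K)$-decomposition of Section \ref{sec_J+K}, treating Theorem \ref{thm_onestratum} as the base case and using the chain structure $\mathscr{C} = (\S_1,\ldots,\S_{p+1})$ as the induction parameter. Using a smooth partition of unity subordinate to the stratification of $M$, I would first reduce to the case where $f$ is supported in a small neighborhood of a point $q_0 \in \S_1$, with $\S_1$ a minimal-dimension stratum through $q_0$, and work along a single chain at $q_0$. At the first level I would split $I(t) = J_1(t) + K_1(t)$ exactly as in Section \ref{sec_J+K}: the term $J_1(t)$ integrates over the $\sqrt{t}$-tubular neighborhood of $\S_1$ and, by Section \ref{sec_estimating_J}, contributes a piece of the form $t^{-\mathcal{Q}^{\S_1}/2} F(\sqrt{t})$; the term $K_1(t)$, after transverse polar coordinates and a first nilpotentization at $q_1\in\S_1$, becomes an integral over $\S_1\times\mathcal{S}^{n-k_1-1}\times[\sqrt{t},1]$ involving $e_{\tau_1}^{q_1}(t/\tau_1^2,\sigma,\sigma)$ weighted by $\tau_1^{\mathcal{Q}^M(\S_1)-\mathcal{Q}^{\S_1}-1}$.

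The key point will be that the transverse sphere $\mathcal{S}^{n-k_1-1}$ carries an induced equisingular stratification: each stratum $\S_j$ of $M$ containing $\S_1$ in its closure meets the sphere in a lower-dimensional submanifold $\S_j^{(1)}$, and by $\S$-nilpotentizability these inherit the sR-weight data of the original strata. I would then iterate: apply $(J+K)$ on the sphere with respect to $\S_2^{(1)}$, perform a second nilpotentization at some $q_2 \in \S_2^{(1)}$, and continue up the chain. Each iteration introduces a scale $\tau_j$ tied by the relation $\varepsilon = \sqrt{t}/(\tau_1\cdots\tau_p)$, and by the multi-step generalization of Lemma \ref{lem_uniform_double_nilp} recorded in Remark \ref{rem_multiple_nilp}, the resulting integrand extends smoothly in all $\tau$-variables to the whole cube $[-1,1]^p$, with boundary value given by the multiply nilpotentized heat kernel $\widehat{e}^{\,q_1,\ldots,q_p}$. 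The nested integral takes the form
\begin{equation*}
\int_{\sqrt{t}}^{1}\!\!\cdots\!\!\int_{\sqrt{t}/(\tau_1\cdots\tau_{p-1})}^{1}\prod_{j=1}^{p}\tau_j^{\alpha_j-1}\,G\Bigl(\tau_1,\ldots,\tau_p,\tfrac{\sqrt{t}}{\tau_1\cdots\tau_p}\Bigr)\,d\tau_p\cdots d\tau_1,
\end{equation*}
where the exponents $\alpha_j$ encode the jumps in Hausdorff dimension between consecutive strata of $\mathscr{C}$ and $G$ is smooth. Applying Proposition \ref{prop_general_expansion} iteratively in each variable produces an asymptotic expansion on the announced scale $t^{(k-\mathcal{Q}^i)/2}\vert\ln t\vert^j$: whenever a scale $\tau_j$ has exponent $\alpha_j = 0$ (which is precisely what happens when the same Hausdorff dimension appears twice in a row in $\mathscr{C}$), a factor $\vert\ln t\vert$ is produced, so the total logarithmic power attached to $\mathcal{Q}^i$ equals the multiplicity $m_i(q_0,\mathscr{C})$.

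Extracting the dominant term then amounts to maximizing lexicographically the pair $(\mathcal{Q}^i, m_i(q_0,\mathscr{C}))$ over all chains and all $q_0$, which singles out $\mathcal{Q}^s$ with logarithmic power $m_s-1$. Among chains realizing this maximum, the stratum contributing a non-degenerate boundary value (i.e., one whose contribution is not cancelled by a deeper $(J+K)$-descent into a still lower-dimensional stratum) is the one of minimal topological dimension carrying $\mathcal{Q}^s$; this characterizes $\mathcal{N}_s$ geometrically as stated. Setting $\tau_1 = \cdots = \tau_p = 0$ in $G$ then identifies the coefficient of $t^{-\mathcal{Q}^s/2}\vert\ln t\vert^{m_s-1}$ as an integral over $\mathcal{N}_s$ of an intrinsic density built from the iterated nilpotentized heat kernels $\widehat{e}^{\,q_1,\ldots,q_p}(1,0,0)$ and the volumes of the successive transverse spheres; this defines the measure $\nu$. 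The Weyl law and the identification $w_\triangle = \nu/\nu(\mathcal{N}_s)$ then follow from the Karamata tauberian argument of Theorem \ref{thm_weyl_measures_equiv}.

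The main obstacle is the combinatorial bookkeeping of the iterated procedure: one must verify that every chain yields an expansion on the announced scale, that the maximum logarithmic power over chains is exactly $m_s-1$, and that the ``minimal topological dimension'' rule for $\mathcal{N}_s$ correctly identifies where the dominant term is supported (i.e., that chains terminating at a higher-dimensional $\mathcal{Q}^s$-stratum genuinely produce lower-order contributions through a further $(J+K)$-descent). A secondary, more technical difficulty is propagating the uniform exponential estimates of Appendix \ref{app_sR_kernel} through the iterated nilpotentizations, so that the dominated-convergence arguments underlying each application of Proposition \ref{prop_general_expansion} remain valid when all scale parameters are simultaneously small; this requires carefully chained versions of the smoothness and uniformity provided by Lemma \ref{lem_uniform_double_nilp} and its multi-step analogue in Remark \ref{rem_multiple_nilp}.
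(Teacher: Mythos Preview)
Your approach is essentially that of the paper's proof in Section~\ref{sec_proof_thm_multistrates_nilp}: iterate the $(J+K)$-decomposition along an exhaustive chain $(\S_1,\ldots,\S_{p+1})$, invoke the multi-nilpotentization smoothness of Remark~\ref{rem_multiple_nilp} to make the integrand $G$ smooth in all $\tau_j$, and feed the resulting nested integral into Proposition~\ref{prop_equiv_iter} (which is exactly your ``Proposition~\ref{prop_general_expansion} applied iteratively''), arriving at the decomposition~\eqref{I_multistrates}.

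There is, however, one technical step you are taking for granted that does require an argument. You assert that the exponents $\alpha_j-1$ in the nested integral are the Hausdorff-dimension jumps $\mathcal{Q}^{\S_j}-\mathcal{Q}^{\S_{j-1}}-1$. After the first $(J+K)$ pass, the next iteration lives on the transverse sphere $\mathcal{S}^{n-k_1-1}$, and the exponent produced by transverse polar coordinates there is a priori $\mathcal{Q}^{\mathcal{S}^{n-k_1-1}}\bigl(\mathcal{S}^{n-k_1-1}\cap(\S_2)_{\tau_1}^{q_1}\bigr)-\mathcal{Q}^{\mathcal{S}^{n-k_1-1}\cap(\S_2)_{\tau_1}^{q_1}}-1$. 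For a generic transverse sphere this quantity need \emph{not} equal $\mathcal{Q}^M(\S_2)-\mathcal{Q}^{\S_2}-1$; the paper flags this explicitly in Remark~\ref{rem_lemQinter}. The fix (Lemma~\ref{lemQinter} and Remark~\ref{rem_lemQinter_extension}) is to replace the round sphere by a transverse hypersurface $P(q_1,\tau_1)$ built from a frame simultaneously adapted to the flags of $D$, of $D\cap T(\S_2)_{\tau_1}^{q_1}$, and of the intersection; only then does the identity~\eqref{diff_Q} hold, the exponents reduce to the ambient Hausdorff data $\mathcal{Q}^{\S_j}$, and Proposition~\ref{prop_equiv_iter} delivers the scale $t^{(k-\mathcal{Q}^i)/2}|\ln t|^j$ with the correct multiplicities~$m_i$. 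Without this adjustment your combinatorial bookkeeping of $(\mathcal{Q}^i,m_i)$ and the identification of $\mathcal{N}_s$ would not go through as written.

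A smaller point: logarithms in Proposition~\ref{prop_equiv_iter} are produced whenever two of the $k_l$ (including the conventional $k_{p+1}=-1$) coincide, i.e.\ whenever two strata in the chain share the same Hausdorff dimension, not only ``in a row''; and the resulting power is $m_i-1$, not $m_i$. Your conclusion about the dominant term is unaffected, but the mechanism you describe is slightly off.
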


\begin{remark}\label{rem_density_equisingular_stratified_nilp}
Let us describe the density of $\nu$ on $\mathcal{N}_s$.
For every $q\in\mathcal{N}_s$, there exists a chain $\mathscr{C}=(\S_1,\ldots,\S_{p},\S_{p+1})$ of strata at $q$, of length $p+1$, containing $m_s$ strata $\S_{i_j}$ ($j=1,\ldots,m_s$, with $i_1<\cdots<i_s$) of maximal Hausdorff dimension $\mathcal{Q}^{\S_{i_j}} = \mathcal{Q}^s$ and of maximal multiplicity $m_s$. Among these $m_s$ strata $\S_{i_j}$, the stratum $\S_{i_1}$ is the one that is of minimal topological dimension.
Then the density of $\nu$ with respect to $\mu_{\S_{i_1}}$ (the smooth measure on 
$\S_{i_1}$ inferred from $\mu$ as in \eqref{muS} in Section \ref{sec_geom_context}) is smooth and is given at any point $q_1\in \S_{i_1}$ by
\begin{multline}\label{densite_multistrates}
\frac{d\nu}{d\mu_{\S_{i_1}}}(q_1) = \frac{1}{2^{m_s-1}\, (m_s-1)!} \int_{M_2(q_1)} \cdots \int_{M_{j+1}(q_1,\ldots,q_j)} \cdots \int_{M_{i_{m_s}}(q_1,\ldots,q_{m_s-1})} \\
\int_{\R^{n-\dim\S_{i_{m_s}}}} \widehat{e}^{\, q_1,q_2,\ldots,q_{m_s}}(1,(0,x),(0,x))\, dx\, d\mu_{m_s}(q_{m_s})\cdots d\mu_2(q_2) 
\end{multline}
where 
$M_{j+1}(q_1,\ldots,q_j) = \mathcal{S}^{n-\dim\S_{i_j}-1}\cap \widehat{\S_{i_{j+1}}}^{q_1,\ldots,q_j}$.

Here, for a stratum or for the heat kernel, the notation $\widehat{\star}^{\, q_1,q_2,\ldots,q_j}$ stands for the $j$-th nilpotentization of $\star$, successively at $q_1\in\S_{i_1}$, then at $q_2\in M_1(q_1)$, etc, and finally at $q_j \in M_j(q_1,\ldots,q_{j-1})$ (it is defined by induction, see Remark \ref{rem_multiple_nilp} in Section \ref{sec_doublenilp}).
The measure $\mu_i$ is the smooth measure on $M_j(q_1,\ldots,q_{j-1})$ inferred from $\mu$ as in \eqref{muS}.
In \eqref{densite_multistrates}, by convention, we remove the nested integrals $\int_{M_{j+1}(q_1,\ldots,q_j)}$ ($j=1,\ldots,i_{m_s}-1$) whenever $m_s=1$.

Note that, if $\mathcal{Q}^{M\setminus\S}=\mathcal{Q}^s$ then $\S_{i_{m_s}}=M\setminus\S$ and thus $\dim\S_{i_{m_s}}=n$ and, in \eqref{densite_multistrates}, we have $\int_{\R^{n-\dim\S_{i_{m_s}}}} \widehat{e}^{\, q_1,q_2,\ldots,q_{m_s}}(1,(0,x),(0,x))\, dx = \widehat{e}^{\, q_1,q_2,\ldots,q_{m_s}}(1,0,0)$.
\end{remark}


\begin{remark}
Theorem \ref{thm_multistrates_nilp} generalizes Theorem \ref{thm_onestratum}: 
\begin{itemize}[parsep=0cm,topsep=0cm]
\item if $m_s=1$ and $\mathcal{Q}^s>\Qeq$ then \eqref{densite_multistrates} gives $\frac{d\nu}{d\mu_{\S_{i_1}}}(q_1) = \Tr_{N_{q_1}\S_{i_1}}(e^{\widehat{\triangle}^{q_1}})$, as in the first case of Theorem \ref{thm_onestratum}: the dominating term and the density are given by the stratum $\S_{i_1}$;
\item if $m_s=2$ and $\mathcal{Q}^s=\Qeq$ then $M_2(q_1)=\mathcal{S}^{n-\dim\S_{i_1}-1}$, $\S_{i_2}=M\setminus\S$ and \eqref{densite_multistrates} gives $\frac{d\nu}{d\mu_{\S_{i_1}}}(q_1) = \frac{1}{2} \int_{M_2(q_1)} \widehat{e}^{\, q_1,\sigma}(1,0,0)\, d\sigma$, as in the second case of Theorem \ref{thm_onestratum};
\item if $m_s=1$ and $\mathcal{Q}^s=\Qeq$ then $\mathcal{N}_s=M$, $\dim\S_{i_{m_s}}=n$ and \eqref{densite_multistrates} gives $\frac{d\nu}{d\mu}(q)=\widehat{e}^q(1,0,0)$ at any $q\in M$, as in the third case of Theorem \ref{thm_onestratum}: the equiregular part dominates. 
\end{itemize}
\end{remark}

\begin{remark}\label{rem_concentration_multistrates_nilp}
In Theorem \ref{thm_multistrates_nilp}, the only situation where one has $\mathcal{N}_s=M$ is when all strata of the singular set $\S$ have a Hausdorff dimension (stricly) less than $\Qeq$ (third case $m_s=1$ and $\mathcal{Q}^s=\Qeq$ of the latter remark). As soon as the Hausdorff dimension of one of the strata of $\S$ is greater than or equal to $\Qeq$, the support of the Weyl measure is contained in $\S$.
\end{remark}

\begin{remark}
As in Remark \ref{one-stratum_outsideS}, note that, in Theorem \ref{thm_multistrates_nilp}, it is assumed that $D$ is locally diffeomorphic to $\widehat{D}^q$ at every point $q$ of every stratum of $\S$, but nothing is assumed at $q\in M\setminus\S$.
\end{remark}

\subsection{Examples}\label{sec_examples_stratified_nilp}
In the examples hereafter, the singular set $\S$ is stratified by equisingular smooth submanifolds and the horizontal distribution $D$ is $\S$-nilpotentizable.

%

\medskip

\noindent
-- Consider the sR case in $\R^3$ generated by $X_1=\partial_1$, $X_2 = \partial_2 + x_1^k x_2 \, \partial_3$, for $k\in\N^*$. When $k=2$, this is the so-called nilpotent tangential hyperbolic case (see \cite{BonnardTrelat_AFST2001}). The singular set $\S=\{x_1=0\}\cup\{x_2=0\}$ consists of the three strata $\S_1 = \{x_1=x_2=0\}$, $\S_2 = \{x_1=0, x_2\neq 0\}$ and $\S_2' = \{x_1\neq 0, x_2=0\}$.
We have 
$\mathcal{Q}^{\S_1}=\mathcal{Q}^{\S_2}=k+2$ and $\mathcal{Q}^{\S_2'}=\Qeq=4$. 
Note that the possible chains of strata at points of $\S_1$ are either $(\S_1,\S_2,M\setminus\S)$ or $(\S_1,\S_2',M\setminus\S)$. 
If $k=1$ then $m_s=2$ and thus the main term in the small-time asymptotics of the local Weyl law is of the order of $\frac{\vert\ln t\vert}{t^2}$, and the local Weyl measure is supported on $\overline{\S_2'}=\{x_2=0\}$.
For $k\geq 2$, we have $m_s=3$ if $k=2$ and $m_s=2$ if $k\geq 3$, and thus
the main term in the small-time asymptotics of the local Weyl law is of the order of $\frac{\vert\ln t\vert^2}{t^2}$ if $k=2$ and $\frac{\vert\ln t\vert}{t^{1+\frac{k}{2}}}$ if $k\geq 3$, and the local Weyl measure is supported on $\S_1$.

\medskip

\noindent
-- Consider the (nilpotent) aR case in $\R^5$ generated by $X_i=\partial_i$ for $i=1,2,3,4$, and $X_5=(x_1^2+x_2^2)(x_3^2+x_4^2)\, \partial_5$. The singular set $\S=\{x_1=x_2=0\}\cup\{x_3=x_4=0\}$ consists of the three strata $\S_1 = \{x_1=x_2=x_3=x_4=0\}$, $\S_2 = \{x_1=x_2=0, x_3^2+x_4^2\neq 0\}$ and $\S_2' = \{x_1^2+x_2^2\neq 0, x_3=x_4=0\}$. We have 
$\mathcal{Q}^{\S_1}=\mathcal{Q}^{\S_2}=\mathcal{Q}^{\S_2'}=\Qeq=5$. The main term in the small-time asymptotics of the local Weyl law is of the order of $\frac{\vert\ln t\vert^2}{t^{5/2}}$. The local Weyl measure is supported on $\S_1$.

\medskip

\noindent
-- As noted as the end of in Section \ref{sec_examples_onestratum}, we can generate other examples by taking products.

\subsection{Proof of Theorem \ref{thm_multistrates_nilp}}\label{sec_proof_thm_multistrates_nilp}


Let $q\in\S$ and let $\mathscr{C}=(\S_1,\ldots,\S_{p},\S_{p+1})$ be a chain of strata at $q$ (with $\S_{p+1}=M\setminus\S$).
We are going to estimate the local Weyl law around $q$ by iterating the $(J+K)$-decomposition procedure along the sequence of strata $\S_i$, $i=1,\ldots,p$. We assume that $p\geq 2$.
Without loss of generality, we assume that $\mathscr{C}$ is \emph{exhaustive} in the following sense: for every $i\in\{1,\ldots,p\}$, if $P$ is a stratum such that $\S_i\subset\overline P\subset\overline\S_{i+1}$ then $P=\S_i$ or $P=\S_{i+1}$.
Setting $k_i=\dim(\S_i)$ (topological dimension), we have $k_1< \cdots < k_p<n$.

Following Section \ref{sec_geom_context}, we take privileged coordinates at $q$ straightening the stratum $\S_1$. 
Following Section \ref{sec_J+K}, we write $I(t)=J(t)+K(t)$ with $J(t)$ and $K(t)$ defined by \eqref{I=J+K} (with $\S=\S_1$ in that formula).
The analysis done in Section \ref{sec_estimating_J}, which yields the expansion \eqref{asympt_J} for $J(t)$ remains valid here (with $\S=\S_1$).
%
Concerning $K(t)$, the formula \eqref{intK_S=S1_1} in Section \ref{sec_estimating_K} remains valid (with $\S=\S_1$), but, as explained in Remark \ref{remK_strat}, \eqref{intK_S=S1_2} raises problems because $\mathcal{Q}^{\R^n}(\sigma)$ is not constant on $\mathcal{S}^{n-k_1-1}$: the set $\S_1\times\mathcal{S}^{n-k_1-1}$, viewed as a cylinder around the stratum $\S_1$, intersects the other strata $\S_i$ of the singular set $\S$.
We are thus led to consider the stratification of $(\S_1\times\mathcal{S}^{n-k_1-1})\cap\S$ and to iterate the $(J+K)$-decomposition along this stratified submanifold. 

By \eqref{intK_S=S1_1}, we have
\begin{equation}\label{K_2strat}
K(t) 
 = \int_{\sqrt{t}}^1 \tau_1^{-\mathcal{Q}^{\S_1}-1}  I_{\tau_1}\left( \frac{t}{\tau_1^2}\right)  d\tau_1 
\end{equation}
where
$$
I_{\tau_1}(t_1) = \int_{\S_1} I_{\tau_1}^{q_1}(t_1) \, d\mu_{\S_1}(q_1), \qquad
I_{\tau_1}^{q_1}(t_1) = \int_{\mathcal{S}^{n-k_1-1}} f(\delta_{\tau_1}^{q_1}(0,\sigma)) \, e_{\tau_1}^{q_1} (t_1,(0,\sigma),(0,\sigma)) \, d\sigma
$$
for every $\tau_1\in\R$ and every $t_1>0$ (we will take $t_1=\frac{t}{\tau_1^2}$). Given any $\tau_1\in\R$ and any $q_1\in\S_1$, the integral $I_{\tau_1}^{q_1}(t_1)$ is of the same kind as the integral $I(t)$ defined by \eqref{def_int_I}, and we are thus going to apply to $I_{\tau_1}^{q_1}(t_1)$ the $(J+K)$-decomposition procedure developed in Section \ref{sec_equisingular_nilp}; except that now the integration is performed over the 
submanifold $\mathcal{S}^{n-k_1-1}$ (instead of $M$), which is viewed as a sphere centered at $q_1$, and we have to consider the heat kernel $e_{\tau_1}^{q_1}$ (instead of $e$) associated with the sR Laplacian $\triangle_{\tau_1}^{q_1}$, depending on the parameters $\tau_1\in\R$ and $q_1\in\S_1$. 

For any $\tau_1\neq0$, 
the singular set $\S_{\tau_1}^{q_1}=(\delta^{q_1}_{\tau_1})^{-1}(\S)$ of $D_{\tau_1}^{q_1}=(\delta^{q_1}_{\tau_1})^*D$ is stratified by the equisingular smooth submanifolds $(\S_i)_{\tau_1}^{q_1}=(\delta^{q_1}_{\tau_1})^{-1}(\S_i)$, for $i=2,\ldots,p$. 
Hence $\mathcal{S}^{n-k_1-1}$ is Whitney stratified by the smooth (but not necessarily equisingular) submanifolds $\mathcal{S}^{n-k_1-1} \cap (\S_i)_{\tau_1}^{q_1}$, for $i=2,\ldots,p$. 
This Whitney stratification of $\mathcal{S}^{n-k_1-1}$ leads to write $I_{\tau_1}^{q_1}(t_1)$, and thus $K(t)$, as a sum of integrals over various regions. 
%
Note that $\mathcal{Q}^{\R^n}(\mathcal{S}^{n-k_1-1}\setminus\S_{\tau_1}^{q_1})=\Qeq$, 
and the results of Section \ref{sec_equisingular_nilp} can be applied in the region $\mathcal{S}^{n-k_1-1}\setminus\S_{\tau_1}^{q_1}$: far from all strata $\S_i$, $i\geq 2$, we only see the influence of the single stratum $\S_1$.


Let us now investigate the influence of the second stratum $\S_2$.
Note that $\mathcal{S}^{n-k_1-1}$ and $(\S_2)_{\tau_1}^{q_1}$ 
intersect transversally, and $\dim (\mathcal{S}^{n-k_1-1}\cap (\S_2)_{\tau_1}^{q_1}) = k_2-k_1-1$. This is due to the Whitney stratification property and to the fact that $n-k_1-1+k_2 \geq n$. 

Thanks to the nilpotentizability assumption, $(\S_i)_{\tau_1}^{q_1}$ depends smoothly on $\tau_1\in\R$, including $\tau_1=0$, and for $\tau_1=0$ we have $(\S_i)_0^{q_1} = \widehat{\S_i}^{q_1}$, which is the $i^\textrm{th}$ stratum of the singular stratification of the nilpotentized sR structure at $q_1$.
The manifold $\mathcal{S}^{n-k_1-1}\cap \widehat{\S_2}^{q_1}$ is stratified by equisingular smooth submanifolds, hence, by nilpotentizability, the manifold $\mathcal{S}^{n-k_1-1}\cap (\S_2)_{\tau_1}^{q_1}$ is stratified by equisingular smooth submanifolds depending smoothly on $q_1\in\S_1$ and on $\tau_1\in[-1,1]$.
Since the reasoning hereafter can be applied to each equisingular stratum of $\mathcal{S}^{n-k_1-1}\cap (\S_2)_{\tau_1}^{q_1}$, without loss of generality we assume that $\mathcal{S}^{n-k_1-1}\cap (\S_2)_{\tau_1}^{q_1}$ is equisingular.


Let us choose an arbitrary point $q_2\in \mathcal{S}^{n-k_1-1}\cap (\S_2)_{\tau_1}^{q_1}$ (depending on $q_1$ and $\tau_1$).
%
Following Section \ref{sec_geom_context}, we take privileged coordinates around $q_2$ straightening $(\S_2)_{\tau_1}^{q_1}$, depending smoothly on $q_1\in\S_1$ and, thanks to the nilpotentizability assumption, on $\tau_1\in[-1,1]$ (including $\tau_1=0$).
We are led to consider the following characteristic integers, which do not depend on $\tau_1\in\R$ for $\vert\tau_1\vert>0$ small enough 
nor on $q_1\in\S_1$:
\begin{itemize}[parsep=0.1cm,itemsep=0.25mm,topsep=0.1cm]
\item $\mathcal{Q}^M(\S_2) = \mathcal{Q}^{\R^n}(  (\S_2)_{\tau_1}^{q_1} )$ 
(defined by \eqref{def_Q} in Appendix \ref{app_sRflag});
\item $\mathcal{Q}^{\S_2} = \mathcal{Q}^{ (\S_2)_{\tau_1}^{q_1}}$ 
(Hausdorff dimension of $\S_2$, defined by \eqref{def_QN} in Appendix \ref{app_sRflag}); 
\item $\mathcal{Q}^{\mathcal{S}^{n-k_1-1}\cap (\S_2)_{\tau_1}^{q_1}} = \mathcal{Q}^{\delta^{q_1}_{\tau_1}(\mathcal{S}^{n-k_1-1})\cap \S_2}$ 
(Hausdorff dimension of $\mathcal{S}^{n-k_1-1}\cap (\S_2)^{q_1}_{\tau_1}$);  
\item $\mathcal{Q}^{\mathcal{S}^{n-k_1-1}} \left( \mathcal{S}^{n-k_1-1}\cap (\S_2)_{\tau_1}^{q_1} \right)$. 
\end{itemize}
Actually, since $D$ is $\S$-nilpotentizable, the above four integers are respectively equal to $\mathcal{Q}^{\R^n}(\widehat{\S_2}^q)$, $\mathcal{Q}^{\widehat{\S_2}^{q_1}}$, $\mathcal{Q}^{\mathcal{S}^{n-k_1-1}\cap \widehat{\S_2}^{q_1}}$ and $\mathcal{Q}^{\mathcal{S}^{n-k_1-1}} ( \mathcal{S}^{n-k_1-1}\cap \widehat{\S_2}^{q_1} )$, but this fact is not useful.

By Lemma \ref{lemQinter} and Remark \ref{rem_lemQinter_extension} (in Appendix \ref{app_privileged}) applied to $P_1(q_1,\tau_1)=\mathcal{S}^{n-k_1-1}\cap (\S_2)_{\tau_1}^{q_1}$ and $N_2(q_1,\tau_1)=(\S_2)_{\tau_1}^{q_1}$, for every $q_1\in\S_1$ and for every $\tau_1$ with $\vert\tau_1\vert>0$ small enough, including $\tau_1=0$ by using the nilpotentizability assumption, there exists a smooth submanifold $P(q_1,\tau_1)$ of $M$ of topological dimension $n-k_1-1$, depending smoothly on $q_1\in\S_1$ and on $\tau_1$, containing $P_1(q_1,\tau_1)$ and intersecting $(\S_2)_{\tau_1}^{q_1}$ transversally, such that 
$$
\mathcal{Q}^{P(q_1,\tau_1)}(P(q_1,\tau_1)\cap (\S_2)_{\tau_1}^{q_1}) - \mathcal{Q}^{P(q_1,\tau_1)\cap (\S_2)_{\tau_1}^{q_1}} = \mathcal{Q}^M(\S_2) - \mathcal{Q}^{\S_2} .
$$
Note that the $\S$-nilpotentizability assumption ensures that $P(q_1,\tau_1)$ depends smoothly on $\tau_1$ at $\tau_1=0$. 

Now, we replace $\mathcal{S}^{n-k_1-1}$ by $P(q_1,\tau_1)$. Although it now depends on $\tau_1$, we choose to keep the same notation $\mathcal{S}^{n-k_1-1}$ in what follows. The important formula \eqref{transverse_polar} is still valid although $\mathcal{S}^{n-k_1-1}$ is not exactly the unit Euclidean sphere (see Remark \ref{rem_sphere_transverse}). 
Hence, we have 
\begin{equation}\label{diff_Q}
\mathcal{Q}^{\mathcal{S}^{n-k_1-1}} \left( \mathcal{S}^{n-k_1-1}\cap (\S_2)_{\tau_1}^{q_1} \right) - \mathcal{Q}^{\mathcal{S}^{n-k_1-1}\cap (\S_2)_{\tau_1}^{q_1}} = \mathcal{Q}^M(\S_2)-\mathcal{Q}^{\S_2}
\end{equation}
for every $q_1\in\S_1$ and every $\tau_1$ with $\vert\tau_1\vert$ small enough. Note that 
\eqref{diff_Q} may fail if $\mathcal{S}^{n-k_1-1}$ differs from $P(q_1,\tau_1)$ (see Remark \ref{rem_lemQinter} in Appendix \ref{app_privileged}). Hence, adapting the definition of $\mathcal{S}^{n-k_1-1}$ is important.
Note also that, when $\mathcal{S}^{n-k_1-1}\cap (\S_2)_{\tau_1}^{q_1}$ is stratified by equisingular smooth submanifolds, it is understood that \eqref{diff_Q} holds for any equisingular stratum of $\mathcal{S}^{n-k_1-1}\cap (\S_2)_{\tau_1}^{q_1}$.

\medskip

Applying the $(J+K)$-decomposition to the integral $I_{\tau_1}^{q_1}(t_1)$, 
we have
\begin{equation}\label{J+K_tau1}
I_{\tau_1}^{q_1}(t_1)=J_{\tau_1}^{q_1}(t_1)+K_{\tau_1}^{q_1}(t_1)
\end{equation}
for every $t_1>0$ with $J_{\tau_1}^{q_1}(t_1)$ and $K_{\tau_1}^{q_1}(t_1)$ defined as in \eqref{I=J+K}, with $M$ replaced by $\mathcal{S}^{n-k_1-1}$ and $\S$ by $\mathcal{S}^{n-k_1-1}\cap (\S_2)_{\tau_1}^{q_1}$.
In what follows, we denote 
$$
f_{\tau_1}^{q_1}(\sigma) = f(\delta_{\tau_1}^{q_1}(0,\sigma)),\qquad
f_{\tau_1,\tau_2}^{q_1,q_2}(\sigma) = f_{\tau_1}^{q_1}(\delta_{\tau_2}^{q_2}(0,\sigma)),
\qquad
e_{\tau_1,\tau_2}^{q_1,q_2} = ( e_{\tau_1}^{q_1} )_{\tau_2}^{q_2} .
$$
Following Section \ref{sec_estimating_J}, 
since $(\sqrt{t_1})^{\mathcal{Q}^M(\S_2)}\, e_{\tau_1}^{q_1} \big( t_1,\delta_{\sqrt{t_1}}^{q_2}(x),\delta_{\sqrt{t_1}}^{q_2}(x) \big) = e_{\tau_1,\sqrt{t_1}}^{q_1,q_2}(1,x,x)$, we obtain  
\begin{equation*}
J_{\tau_1}^{q_1}(t_1) = \frac{1}{t_1^{\mathcal{Q}^{\S_2}/2}} \int_{\mathcal{S}^{n-k_1-1}\cap (\S_2)_{\tau_1}^{q_1}} \int_{\mathcal{B}^{n-k_2}}  f_{\tau_1,\sqrt{t_1}}^{q_1,q_2}(x) \, e_{\tau_1,\sqrt{t_1}}^{q_1,q_2}(1,x,x)  \, dx \, d\mu_2(q_2)  .
\end{equation*}
In order to express $K_{\tau_1}^{q_1}(t_1)$, we first need to adapt the change of variable \eqref{transverse_polar} to the new context: in Section \ref{sec_geom_context}, we considered transverse polar coordinates along $\S$ on the manifold $M \simeq [0,+\infty)\times\S\times\mathcal{S}^{n-k-1}$ endowed with the measure $\tau^{\mathcal{Q}^M(\S)-\mathcal{Q}^\S-1}\, d\sigma\otimes\mu_\S\otimes d\tau$. Now, we consider transverse polar coordinates along $\mathcal{S}^{n-k_1-1}\cap (\S_2)_{\tau_1}^{q_1}$ on the manifold $\mathcal{S}^{n-k_1-1} \simeq [0,+\infty)\times \left( \mathcal{S}^{n-k_1-1}\cap (\S_2)_{\tau_1}^{q_1} \right) \times \mathcal{S}^{n-k_2-1}$ endowed, thanks to \eqref{diff_Q}, with the measure
$$
\tau_2^{\mathcal{Q}^M(\S_2)-\mathcal{Q}^{\S_2}-1} \, d\tau \otimes\mu_2 \otimes d\sigma
$$
where $\mu_2$ is the smooth measure defined as in Section \ref{sec_geom_context} on the $(k_2-k_1-1)$-dimensional submanifold $\mathcal{S}^{n-k_1-1}\cap (\S_2)_{\tau_1}^{q_1}$ 
(in the local chart, $\mathcal{S}^{n-k_2-1}$ is considered as a submanifold of $\mathcal{S}^{n-k_1-1}$).
Recall that, since $D$ is $\S$-nilpotentizable, $(\S_2)_{\tau_1}^{q_1}$ is diffeomorphic to $\widehat{\S_2}^{q_1}$ with a diffeomorphism depending smoothly on $\tau_1$.
Actually, this construction is performed on every equisingular stratum of $\mathcal{S}^{n-k_1-1}\cap (\S_2)_{\tau_1}^{q_1}$.
Then, following Section \ref{sec_estimating_K} and in particular \eqref{intK_S=S1_1}, we obtain
\begin{equation*}
K_{\tau_1}^{q_1}(t_1) = \int_{\sqrt{t_1}}^1 \tau_2^{-\mathcal{Q}^{\S_2}-1} \int_{\mathcal{S}^{n-k_1-1}\cap (\S_2)_{\tau_1}^{q_1}}  I_{\tau_1,\tau_2}^{q_1,q_2}\left( \frac{t_1}{\tau_2^2}\right) \, d\mu_2(q_2) \, d\tau_2 
\end{equation*} 
where 
\begin{equation}\label{Itau1tau2}
I_{\tau_1,\tau_2}^{q_1,q_2}(t_2) = \int_{\mathcal{S}^{n-k_2-1}} f_{\tau_1,\tau_2}^{q_1,q_2}(\sigma) \, e_{\tau_1,\tau_2}^{q_1,q_2} (t_2, \sigma, \sigma) \, d\sigma   \qquad \forall t_2>0 .
\end{equation}
In \eqref{Itau1tau2}, $\mathcal{S}^{n-k_2-1}$ is identified with a submanifold of codimension one of $\mathcal{S}^{n-k_1-1}$, that is an hypersphere of $\mathcal{S}^{n-k_2-1}$ centered around $\mathcal{S}^{n-k_1-1}\cap (\S_2)^{q_1}_{\tau_1}$.
%
Taking $t_1=\frac{t}{\tau_1^2}$, it follows from \eqref{K_2strat} and \eqref{J+K_tau1} that $K(t)$ is the sum of two terms:
\begin{multline}\label{KS1S2}
K(t) 
= \frac{1}{t^{\mathcal{Q}^{\S_2}/2}} \int_{\sqrt{t}}^1 \tau_1^{\mathcal{Q}^{\S_2}-\mathcal{Q}^{\S_1}-1}  G_1\left( \tau_1 , \frac{\sqrt{t}}{\tau_1} \right) \, d\tau_1  \\
+ \int_{\sqrt{t}}^1 \tau_1^{-\mathcal{Q}^{\S_1}-1} \int_{\frac{\sqrt{t}}{\tau_1}}^1 \tau_2^{-\mathcal{Q}^{\S_2}-1}  I_{\tau_1,\tau_2}\left( \frac{t}{\tau_1^2\tau_2^2}\right) \, d\tau_2  \, d\tau_1
\end{multline}
with
\begin{equation}\label{defG1}
G_1(\tau_1,\tau_2) = \int_{\S_1} \int_{\mathcal{S}^{n-k_1-1}\cap (\S_2)_{\tau_1}^{q_1}} \int_{\mathcal{B}^{n-k_2}} f_{\tau_1,\tau_2}^{q_1,q_2}(x) \, e_{\tau_1,\tau_2}^{q_1,q_2}(1,x,x) \, dx \, d\mu_2(q_2) \, d\mu_{\S_1}(q_1)
\end{equation}
for all $(\tau_1,\tau_2)\in\R^2$, and
$$
I_{\tau_1,\tau_2}(t_2) = \int_{\S_1} \int_{\mathcal{S}^{n-k_1-1}\cap (\S_2)_{\tau_1}^{q_1}}  I_{\tau_1,\tau_2}^{q_1,q_2} (t_2) \, d\mu_2(q_2) \, d\mu_{\S_1}(q_1) 
$$
for all $(\tau_1,\tau_2)\in\R^2$ and for every $t_2>0$ (we will take $t_2=\frac{t_1}{\tau_2^2}=\frac{t}{\tau_1^2\tau_2^2}$).
Note that the two terms at the right-hand side of \eqref{KS1S2} are coming from a $(J+K)$-decomposition. By Remark \ref{rem_prelim_JK}, we know in advance that the dominating term in the small-time asymptotics of $K(t)$ will be of the order of that of the second term at the right-hand side of \eqref{KS1S2}, although the first term may contribute to the main term.


\paragraph{Case $p=2$.}
When $p=2$, in the integral \eqref{Itau1tau2}, the variable $\sigma$, which ranges over the sphere $\mathcal{S}^{n-k_2-1}$, belongs to the regular region $\S_3=M\setminus\S$, hence $\mathcal{Q}^{\R^n}(\sigma)=\Qeq=\mathcal{Q}^{\S_3}$ and thus, using \eqref{relation_eeps_e} (and neglecting the remainder terms),
$$
e_{\tau_1,\tau_2}^{q_1,q_2}\left( \frac{t}{\tau_1^2\tau_2^2}, \sigma, \sigma \right) = \frac{\tau_1^{\mathcal{Q}^{\S_3}} \tau_2^{\mathcal{Q}^{\S_3}}}{t^{\mathcal{Q}^{\S_3}/2}} \, e_{\tau_1,\tau_2,\frac{\sqrt{t}}{\tau_1\tau_2}}^{q_1,q_2,\sigma}(1,0,0)
$$
and therefore, in this case, the second term at the right-hand side of \eqref{KS1S2} is
$$
\frac{1}{t^{\Qeq/2}} \int_{\sqrt{t}}^1 \tau_1^{\mathcal{Q}^{\S_3}-\mathcal{Q}^{\S_1}-1} \int_{\frac{\sqrt{t}}{\tau_1}}^1 \tau_2^{\mathcal{Q}^{\S_3}-\mathcal{Q}^{\S_2}-1} G_2\left(\tau_1,\tau_2,\frac{\sqrt{t}}{\tau_1\tau_2} \right) d\tau_2\, d\tau_1
$$
with
\begin{equation}\label{defG2}
G_2\left(\tau_1,\tau_2,\varepsilon \right) = \int_{\S_1} \int_{\mathcal{S}^{n-k_1-1}\cap (\S_2)_{\tau_1}^{q_1}}  \int_{\mathcal{S}^{n-k_2-1}} f_{\tau_1,\tau_2}^{q_1,q_2}(\sigma) \, e_{\tau_1,\tau_2,\varepsilon}^{q_1,q_2,\sigma}(1,0,0) \, d\sigma \, d\mu_2(q_2) \, d\mu_{\S_1}(q_1)
\end{equation}
for all $(\tau_1,\tau_2,\varepsilon)\in\R^3$. 
Hence, with the notations of Appendix \ref{sec_nested}, we have $I(t)=J(t)+K(t)$ with
\begin{equation}\label{K_sum2}
K(t) = \frac{1}{t^{\mathcal{Q}^{\S_2}/2}} I_{\mathcal{Q}^{\S_2}-\mathcal{Q}^{\S_1}-1}[G_1](\sqrt{t}) + \frac{1}{t^{\mathcal{Q}^{\S_3}/2}} I_{\mathcal{Q}^{\S_3}-\mathcal{Q}^{\S_1}-1,\mathcal{Q}^{\S_3}-\mathcal{Q}^{\S_2}-1}[G_2](\sqrt{t}) .
\end{equation}
Since $D$ is $\S$-nilpotentizable, by Lemma \ref{lem_uniform_double_nilp}, 
the function $G_1$ defined by \eqref{defG1} is smooth. 
Using the extension of Lemma \ref{lem_uniform_double_nilp} stated in Remark \ref{rem_multiple_nilp}, $e_{\tau_1,\tau_2,\varepsilon}^{q_1,q_2,\sigma}(1,0,0)$ depends smoothly on $\tau_1,\tau_2,\varepsilon\in\R$, $q_1\in\S_1$, $q_2\in \mathcal{S}^{n-k_1-1}\cap (\S_2)_{\tau_1}^{q_1}$, $\sigma\in\mathcal{S}^{n-k_2-1}$, is even with respect to $\varepsilon$, and hence the function $G_2$ defined by \eqref{defG2} is smooth and is even with respect to $\varepsilon$.

\paragraph{Case $p>2$.}
When $p>2$, the procedure is continued: we apply the $(J+K)$-decomposition to $I_{\tau_1,\tau_2}(t_2)$. This iterative procedure is performed until we reach the stratum $\S_{p+1}=M\setminus\S$. We do not give any details.
We obtain finally, with the notations of Appendix \ref{app_integral}, 
\begin{equation}\label{I_multistrates}
I(t) = \frac{F_J(\sqrt{t})}{t^{\mathcal{Q}^{\S_1}/2}} + \sum_{i=2}^{p+1} \frac{1}{t^{\mathcal{Q}^{\S_i}/2}} I_{\mathcal{Q}^{\S_i}-\mathcal{Q}^{\S_1}-1,\ldots,\mathcal{Q}^{\S_i}-\mathcal{Q}^{\S_{i-1}}-1}[G_{i-1}](\sqrt{t})
\end{equation}
where all functions $G_i$ are smooth thanks to the $\S$-nilpotentizability assumption. 

The results stated in the theorem now readily follow from Proposition \ref{prop_equiv_iter} in Appendix \ref{app_integral}. 


\section{Local Weyl law in the non-nilpotentizable case}\label{sec_nonnilp}
In this section, we investigate the more general case where the horizontal distribution $D$ may fail to be $\S$-nilpotentizable, as in Example \ref{simpleexample_nonnilp} given in Section \ref{sec_nilpotentizability_doublenilp} ($X_1=\partial_1$, $X_2=(x_1^2+x_2^2)\,\partial_2$).
We first give in Section \ref{sec_nonnilp_analytic} a general result for analytic sR structures, proved in Sections \ref{sec_proof_thm_analyticsingularities} and \ref{sec_proof_lem_nonnilp_general}. In Section \ref{sec_nonnilp_flat} we give examples of local Weyl laws for non-analytic sR structures, exhibiting non-standard Weyl asymptotics.

\subsection{Analytic sR structures}\label{sec_nonnilp_analytic}

In this section, we assume that the sR structure $(M,D,g)$ is real analytic, meaning that the manifold $M$ and the vector fields $X_1,\ldots,X_m$ defining the sR structure are real analytic. 
We also assume that the measure $\mu$ is real analytic.
Actually, the following slightly weaker assumption is sufficient: given any $q\in M$, there exists a chart at $q$ in which the vector fields $X_1,\ldots,X_m$ and the density of $\mu$ are real analytic. 


Since $(M,D,g)$ is analytic, the singular set 
$\S$ and thus also the manifold $M$ are Whitney stratified by strata that are 
equisingular analytic submanifolds of $M$ (this is a consequence of subanalytic theory, see Appendix \ref{app_subanalytic}).

With the notations given at the beginning of Section \ref{sec_equisingular_stratified_nilp}, 
we consider the integers $\mathcal{Q}^s$ (maximal Hausdorff dimension of equisingular strata of $M$) and $m_s\in\{1,\ldots,n+1\}$ (maximal multiplicity of $\mathcal{Q}^s$). 
Let 
$p_{\max}+1$ be the maximal length of all chains of strata at $q$, over all $q\in M$. Note that $p_{\max}\in\{1,\ldots,n\}$.

%

\begin{theorem}\label{thm_analyticsingularities}
There exist $k\in\{0,\ldots,p_{\max}\}$ and a rational number $\gamma\in\Q$, 
depending only on the horizontal distribution $D=\mathrm{Span}(X)$ (and not on the metric $g$), satisfying 
$\gamma\geq\frac{1}{2}\mathcal{Q}^s$ and if $\gamma=\frac{1}{2}\mathcal{Q}^s$ then $k\geq m_s-1$,
and there exists $\ell\in\N^*$ 
such that, for any $f\in C^\infty(M)$, 
the function $t\mapsto\Tr(\mathcal{M}_f \, e^{t\triangle})= \int_M f(q)\, e(t,q,q)\, d\mu(q)$ 
has an asymptotic expansion as $t\rightarrow 0^+$ with respect to the scale of functions $t^{j/\ell-\gamma}\vert\ln t\vert^i$, with $i\in\{0,\ldots,k\}$ and $j\in\N$.
%
Moreover, there exists a nontrivial Borel measure $\nu$ on $M$ such that, for every $f\in C^0(M)$,
$$
\Tr(\mathcal{M}_f \, e^{t\triangle}) = \left( \int_{\mathcal{N}} f \, d\nu\right) \frac{\vert\ln t\vert^k}{t^\gamma} + \mathrm{o}\left( \frac{\vert\ln t\vert^k}{t^\gamma} \right)
$$
as $t\rightarrow 0^+$. The support of $\nu$ is an equisingular stratified submanifold $\mathcal{N}$ of $M$.  
As a consequence, the local Weyl measure exists and is $w_\triangle=\nu/\nu(\mathcal{N})$, and 
$$
N(\lambda) \underset{\ \lambda\rightarrow+\infty}{\sim}  \frac{\nu(\mathcal{N})}{\Gamma(\gamma+1)} \lambda^\gamma \ln^k\lambda .
$$
\end{theorem}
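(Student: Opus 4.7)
The plan is to adapt the iterated $(J+K)$-decomposition developed in the proof of Theorem \ref{thm_multistrates_nilp} to the analytic non-nilpotentizable setting, substituting the smoothness of multiply-nilpotentized heat kernels with their subanalytic regularity. Since $(M,D,g)$ is real analytic, the singular set $\S$ is subanalytic and admits, by Appendix \ref{app_subanalytic}, a finite Whitney stratification by equisingular analytic submanifolds. This yields finitely many chains of strata $\mathscr{C}=(\S_1\subset\overline{\S_2}\subset\cdots\subset\overline{\S_{p+1}})$ with $\S_{p+1}=M\setminus\S$ and $p\leq p_{\max}$. First I would, using a subordinated partition of unity and restriction of $f$ near each stratum, write $\Tr(\mathcal{M}_f e^{t\triangle})$ as a finite sum of contributions, each associated to a chain, and to each apply the iterated $(J+K)$-decomposition exactly as in Section \ref{sec_proof_thm_multistrates_nilp}. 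This produces expressions of the form \eqref{I_multistrates} in terms of nested integrals $I_{\alpha_1,\ldots,\alpha_i}[G_i](\sqrt{t})$ as in Appendix \ref{app_integral}, where each $G_i(\tau_1,\ldots,\tau_i,\varepsilon)$ is built from iterated rescalings of the heat kernel $e$ and test functions.

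The core difficulty, and the main obstacle of the proof, is that without the $\S$-nilpotentizability assumption the limits $X_{\tau_1,\ldots,\tau_j}^{q_1,\ldots,q_j}\to\widehat{X}^{q_1,\ldots,q_j}$ may fail to exist or to be uniform, so Lemma \ref{lem_uniform_double_nilp} and its multi-stratum extension no longer apply: the functions $G_i$ need not be smooth (nor even continuous) in $(\tau_1,\ldots,\tau_i,\varepsilon)$. The key observation is that analyticity of the vector fields $X_1,\ldots,X_m$ makes each $X_{\tau_1,\ldots,\tau_j}^{q_1,\ldots,q_j}$ and hence the nilpotentization data, and, via Theorem \ref{lemfondamental} together with the exponential heat kernel estimates \eqref{exp_estimates}, the kernels $e_{\tau_1,\ldots,\tau_j}^{q_1,\ldots,q_j}(1,0,0)$ depend \emph{subanalytically} (with eventual logarithmic factors) on the parameters restricted to a bounded polydisc.

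The second key step is then to invoke a subanalytic preparation theorem (of Lion--Rolin / Parusi\'nski type) to write each $G_i$, after finitely many analytic blow-ups resolving the non-nilpotentizability loci to normal crossings, as a finite sum
\begin{equation*}
G_i(\tau_1,\ldots,\tau_i,\varepsilon)=\sum_\alpha u_\alpha(\tau,\varepsilon)\,\prod_j\tau_j^{\alpha_j}\,\varepsilon^{\beta_\alpha}\,\prod_j|\ln\tau_j|^{c_{\alpha,j}}|\ln\varepsilon|^{d_\alpha},
\end{equation*}
with $\alpha_j,\beta_\alpha\in\Q$, $c_{\alpha,j},d_\alpha\in\N$, and $u_\alpha$ a bounded subanalytic unit. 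A common denominator of all such rational exponents across all chains gives the integer $\ell\in\N^*$ of the statement. Inserting this normal form into the nested integrals and applying Proposition \ref{prop_general_expansion} (extended in a routine way to weights of the form $\tau^\alpha|\ln\tau|^c$) yields an asymptotic expansion of $\Tr(\mathcal{M}_f e^{t\triangle})$ in the scale $t^{j/\ell-\gamma}|\ln t|^i$.

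Finally I would identify $(\gamma,k)$ as the exponent pair maximizing the dominant order across all chains and strata; since a fortiori chains that are nilpotentizable contribute with $(\mathcal{Q}^s/2,m_s-1)$, one has $\gamma\geq\mathcal{Q}^s/2$, with $k\geq m_s-1$ in case of equality. Collecting the leading-order contributions over the strata where $(\gamma,k)$ is attained defines the Borel measure $\nu$, whose support $\mathcal{N}$ is the closure of a union of equisingular analytic strata, hence is itself an equisingular stratified submanifold. The Weyl law for $N(\lambda)$ and the identification $w_\triangle=\nu/\nu(\mathcal{N})$ then follow from the Karamata tauberian statement of Theorem \ref{thm_weyl_measures_equiv}, applied with $\rho=\gamma$ and slowly varying function $\chi(\lambda)=\ln^k\lambda$. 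The rationality of $\gamma$ is an immediate consequence of the rationality of the exponents $\alpha_j,\beta_\alpha$ produced by the subanalytic preparation; its dependence only on $D$ (and not on $g$) reflects the fact that the preparation exponents are determined by the analytic stratification of $\S$ and of the loci where nilpotentization fails, which are intrinsic to $D=\mathrm{Span}(X_1,\ldots,X_m)$.
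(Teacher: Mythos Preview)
Your overall architecture is correct and matches the paper's: iterated $(J+K)$-decomposition along chains of equisingular strata, nested integrals of the type in Appendix~\ref{app_integral}, subanalytic preparation of the integrands, then term-by-term integration producing the scale $t^{j/\ell-\gamma}|\ln t|^i$, and finally Karamata. The gap is in the ``core difficulty'' paragraph, where you assert that the kernels $e_{\tau_1,\ldots,\tau_j}^{q_1,\ldots,q_j}(1,0,0)$ depend subanalytically (up to logs) on the parameters, and then apply a preparation theorem directly to~$G_i$. This is false in general: heat kernels of analytic sR Laplacians are smooth but typically \emph{not} analytic or subanalytic in their arguments (the paper notes this explicitly after Lemma~\ref{lem_nonnilp_uniform_multiple_nilp}), and Theorem~\ref{lemfondamental} only yields $C^\infty$ dependence. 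A subanalytic preparation theorem therefore cannot be applied to $G_i$ as written.

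What the paper does instead, and what your outline is missing, is to separate the singular behaviour from the heat kernel before preparing. The coefficients of the rescaled vector fields $X_{\tau_{\leq j}}^{q_{\leq j}}$ and the volume function $\mathtt{V}_{\tau_{\leq j}}^{q_{\leq j}}=\mathtt{v}^{0,1}_m(X_{\tau_{\leq j}}^{q_{\leq j}})$ of Appendix~\ref{sec_uniformballbox} \emph{are} globally subanalytic (they are built from determinants of iterated Lie brackets of analytic fields composed with dilations), so the preparation theorem applies to \emph{them}. In each prepared cell one extracts a fractional monomial in the $\tau_i$'s from $\mathtt{V}$; the quotient $\mathtt{V}_{\tau_{\leq j}}^{q_{\leq j}}\, e_{\tau_{\leq j}}^{q_{\leq j}}(1,y,y)$ is then a positive $C^\infty$ function of the prepared (``angular'') variables, up to the boundary of the cell. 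The mechanism giving this smoothness is that, after factoring out the monomial, the rescaled vector fields satisfy a \emph{uniform} H\"ormander condition across the cell, whence Theorem~\ref{thm_parameter_heat_smooth} applies (this is Lemmas~\ref{lem_expansion_epstau} and~\ref{lem_nonnilp_uniform_multiple_nilp}, summarized in the key Lemma~\ref{lem_nonnilp_general}). Thus the integrand is (subanalytic monomial)$\times$(smooth positive unit), and it is only the monomial that needs preparing; no logarithms appear in the integrand itself---they arise solely from the integration, which is why $k\le p_{\max}$. Your prepared form for $G_i$ with built-in $|\ln\tau_j|^{c_{\alpha,j}}$ factors is not how things work.

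Two smaller points. First, you skip the construction of a subanalytic parametrization of $\mathcal{S}^{n-k_{j-1}-1}\cap(\S_j)_{\tau_{<j}}^{q_{<j}}$ valid down to $\tau=0$ (Lemma~\ref{lem_subanalytic_param}); without it the objects $X_{\tau_{\leq j}}^{q_{\leq j}}$ are not even defined as subanalytic families. Second, your argument that $\gamma$ depends only on $D$ (via ``loci where nilpotentization fails'') is heuristic; the paper's argument is direct: two metrics on the same $D$ give equivalent sR distances, hence comparable ball volumes, hence by the Fefferman--Phong estimate~\eqref{FeffermanPhong} the same leading exponent of $N(\lambda)$.
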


Theorem \ref{thm_analyticsingularities} shows that, in the real analytic case, the maximal complexity of the asymptotics of $N(\lambda)$ is a positive \emph{rational} power of $\lambda$ times an integer power of $\ln\lambda$. The asymptotic expansion of the local Weyl law cannot involve an irrational power of $t$ nor a term in $\ln\vert\ln t\vert$ for instance, as it may happen for non-analytic sR structures (see Section \ref{sec_nonnilp_flat}). 

The statement on $\gamma$ and $k$ implies that the dominating term $\vert\ln t\vert^k / t^\gamma$ of the small-time asymptotics of the local Weyl law is always greater than or equal to the dominating term $\vert\ln t\vert^{m_s-1} / t^{\mathcal{Q}^s/2}$ obtained in Theorem \ref{thm_multistrates_nilp} in the equisingular stratified nilpotentizable case.


In contrast to Theorem \ref{thm_multistrates_nilp}, as confirmed by the examples hereafter, the Hausdorff dimensions of equisingular strata do not seem to play a role and 
we do not have any information on the submanifold $\mathcal{N}$ on which the Weyl measure is concentrated.
The next examples and the proof of the theorem (done in Section \ref{sec_proof_thm_analyticsingularities}) 
show anyway that, in the absence of nilpotentizability, the situation may become extremely complicated. Actually, $\mathcal{N}$, $\gamma$ and $k$ depend not only on the nilpotentizations at points of $M$, but also on terms of higher order which do not seem to have any evident geometric interpretation.


\paragraph{Examples.}
In the examples given in Table \ref{table_examples}, we give the dominating term (up to a multiplying scalar) of the trace $\mathrm{Tr}(\mathcal{M}_f \, e^{t\triangle})$ as $t\rightarrow 0^+$ and of the Weyl counting function $N(\lambda)$ as $\lambda\rightarrow +\infty$. 
For each example, the computations are quite lengthy and are not reported here. They consist in following the algorithmic procedure described throughout the steps of the proof of the theorem given in Sections \ref{sec_proof_thm_analyticsingularities} and \ref{sec_proof_lem_nonnilp_general}.
\begin{table}[h]
\caption{Examples of trace asymptotics and Weyl laws}\label{table_examples}
$$
\arraycolsep=3pt 
\begin{array}{|c|c|cccc|}
\hline \rule{0pt}{3.5mm}
& \textrm{Case} & \mathrm{Tr}(\mathcal{M}_f \, e^{t\triangle}) 
& N(\lambda)
& \mathcal{N}  & \\ 
\hline
1 & \begin{array}{c} X_1=\partial_1,\quad X_2=(x_1^k-x_2) \, \partial_2 \\ k\geq 2 \end{array}
& \frac{\vert\ln t\vert}{t} 
& \lambda\ln\lambda 
& \{x_2=x_1^k\} 
& \\
\hline \rule{0pt}{5mm}
2 & X_1=\partial_1,\quad X_2=(x_1^{2p}+x_1x_2^k) \, \partial_2 
&  \frac{\vert\ln t\vert^2}{t}
& \lambda\ln^2\lambda 
& \{(0,0)\}
& \textrm{if}\ k=1 \\[1mm]
& p,k\in\N^* 
&  \frac{1}{t^{p+\frac{1}{2}-\frac{2p-1}{2k}}}
& \lambda^{p+\frac{1}{2}-\frac{2p-1}{2k}}
& \{(0,0)\}
& \textrm{if}\ k\geq 2 \\[2mm]
\hline \rule{0pt}{5mm}
3 & X_1=\partial_1,\quad X_2=(x_1^2+x_2^2)^k \, \partial_2
&  \frac{\vert\ln t\vert}{t}
& \lambda\ln\lambda
& \{(0,0)\}
& \textrm{if}\ k=1 \\[1mm]
& k\in\N^* &  \frac{1}{t^k}
& \lambda^k
& \{(0,0)\}
& \textrm{if}\ k\geq 2 \\[1mm]
\hline \rule{0pt}{5mm}
4 & X_1=\partial_1,\quad X_2=x_1^m(x_1^{2p}+x_2^{2k}) \, \partial_2 
&  \frac{\vert\ln t\vert}{t^{1+\frac{m}{2}}}
& \lambda^{1+\frac{m}{2}}\ln\lambda
& \{(0,0)\} 
& \textrm{if}\ p=k=1 \\[1mm]
& m\in\N, \quad p,k\in\N^* 
&  \frac{1}{t^{\frac{m+1}{2}+p-\frac{p}{2k}}}
& \lambda^{\frac{m+1}{2}+p-\frac{p}{2k}} 
& \{(0,0)\} 
& \{(0,0)\} \\[2mm]
\hline \rule{0pt}{4mm}
5 & X_1=\partial_1,\quad X_2=(x_1^2-x_2^3) \, \partial_2
& \frac{1}{t^{7/6}}
& \lambda^{7/6}
& \{(0,0)\} & \\[1mm]
\hline \rule{0pt}{4mm}
6 & X_1=\partial_1,\quad  X_2=(x_1^4+x_1^2x_2^2+x_2^{2k}) \, \partial_2 
& \frac{1}{t^2}
& \lambda^2
& \{(0,0)\} 
& \textrm{if}\ k\geq 3 \\[1mm]
\hline 
7 & X_1=\partial_1,\quad X_2=\partial_2+x_1 \, \partial_3+x_1^2 \, \partial_5 ,
& \frac{1}{t^{7/2}}
& \lambda^{7/2} 
& \R^5
& \textrm{if}\ k=2 \\[1mm]
& X_3=\partial_4+(x_1^k+x_2^k) \, \partial_5
&  \frac{\vert\ln t\vert}{t^{\frac{7}{2}}}
& \lambda^{7/2}\ln\lambda
& \{ x_1=x_2=0 \}
& \textrm{if}\ k=3 \\[2mm]
& k\geq 2 &  \frac{1}{t^{4-\frac{1}{k-1}}}
& \lambda^{4-\frac{1}{k-1}}
& \{ x_1=x_2=0 \}
& \textrm{if}\ k\geq 4 \\[2mm]
\hline
\end{array}
$$
\end{table}

Case 1 of Table \ref{table_examples} 
is the $k$-Baouendi-Grushin case with tangency point. For $k=2$, it has been studied in Section \ref{sec_Baouendi-Grushin} where it was already observed that the tangency point does not add any complexity to the trace asymptotics. 

Cases 2, 4 and 5 illustrate the genuine rationality of $\gamma$ in Theorem \ref{thm_analyticsingularities}, in contrast to Theorem \ref{thm_multistrates_nilp}. Note that, in Cases 2 and 4, the Hausdorff dimensions of the singular strata do not depend on $p$ and $k$. 
For instance in Case 2 we have $\S=\S_1\cup\S_2\cup\S_2'$ with $\S_1=\{x_1=x_2=0\}$, $\S_2=\{x_1=0, x_2\neq 0\}$ and $\S_2'=\{x_1\neq 0, x_1^{2p-1}+x_2^k=0\}$ (equisingular strata), and we have $\mathcal{Q}^{\S_1}=0$ and $\mathcal{Q}^{\S_2}=\mathcal{Q}^{\S_2'}=\Qeq=2$ for any $p,k\in\N^*$, while the dominating term in the small-time asymptotics of the Weyl law depends on $p$ and $k$.
Since the sR weights do not depend on $k$, this shows that, here, terms of higher order, not related to the Lie structure, have an impact on the Weyl law (contrarily to Case 6).

Case 3 is a generalization of Example \ref{simpleexample_nonnilp}, which was given in Section \ref{sec_nilpotentizability_doublenilp} as a simple example where nilpotentizability fails. We have $\S=\{x_1=x_2=0\}$ and $\mathcal{Q}^{\S}=0$ and $\Qeq=2$ for any $k\in\N^*$. Note that, although $M$ consists of only two strata and $\Qeq>\mathcal{Q}^{\S}$, the Weyl measure is concentrated on $\S$, in contrast to Theorem \ref{thm_onestratum} (see in particular Remark \ref{rem_concentration_onestratum}).

More generally, in contrast to Theorem \ref{thm_multistrates_nilp} where the support $\mathcal{N}_s$ of the Weyl measure is the closure of a union of strata of maximal Hausdorff dimension, in Cases 2, 3, 4 and 5 we have $\mathcal{N}=\{(0,0)\}$ which is of Hausdorff dimension $0$ and thus not maximal. We have $\mathcal{N}\subsetneq\mathcal{N}_s$ on these examples.


Case 7 corresponds to \cite[Example 6.5]{GhezziJean_NA2015}.
We have $\S=\{x_1=x_2=0\}$ and $\mathcal{Q}^{\S}=6$ and $\Qeq=7$ for any $k\geq 2$ (see also Example \ref{ex_GhezziJean} further). For $k=3$ a logarithm appears in the asymptotics of the Weyl law, in contrast to Theorem \ref{thm_multistrates_nilp} where the power of the logarithm was $m_s-1$ (here, $m_s=1$).

In view of those examples, one may wonder whether, given any rational number $\gamma\geq\frac{1}{2}\mathcal{Q}^s$ and any $k\in\{0,\ldots,n\}$, there exists a sR structure whose Weyl counting function's asymptotics is $\lambda^\gamma\ln^k\lambda$ up to a multiplying scalar. We leave this issue open.

%

\begin{remark}
The fact that $N(\lambda)/\lambda^\gamma\ln^k\lambda$ is bounded above and below by some positive constants follows from the Fefferman-Phong estimate \eqref{FeffermanPhong} recalled in the introduction. Indeed, it can be proved, by following (in a much simpler way) the developments done in Section \ref{sec_proof_lem_nonnilp_general}, that $\int_M \frac{1}{ \mu( \BsR(q,1/\sqrt{\lambda}) ) } \, d\mu(q)$ is of the order of $\lambda^\gamma\ln^k\lambda$ as $\lambda\rightarrow+\infty$. This is what is done in the recent preprint \cite{ChenChenLi_2022} for homogeneous sR structures (also dealing with Dirichlet boundary conditions), which we have discovered while finishing the present article. As already said in the introduction, here, 
our analysis not only leads to an equivalent, but also to a small-time expansion of the local Weyl law at any order.
\end{remark}

\subsection{Proof of Theorem \ref{thm_analyticsingularities}}\label{sec_proof_thm_analyticsingularities}

\subsubsection{A first motivating example} 
As a prelude, let us consider Example \ref{simpleexample_nonnilp}, which is a very simple example where the nilpotentizability property fails. Near $q_1=(0,0)\in M\simeq\R^2$, we have $X_1(x)=\partial_1$ and $X_2(x)=(x_1^2+x_2^2)\,\partial_2$ and thus $\S=\{(0,0)\}$ (one singular stratum). For any $\tau_1>0$, we have $(X_1)^{q_1}_{\tau_1}(x)=\partial_1$ and $(X_2)^{q_1}_{\tau_1}(x)=(x_1^2+\tau_1^4 x_2^2)\,\partial_2$, so that $\S^{q_1}_{\tau_1}=\{(0,0)\}$ (one singular stratum), $\mathcal{Q}^{\S}=0$ and $\mathcal{Q}^{M\setminus\S}=2$. 
As in \eqref{def_int_I} in Section \ref{sec_singular_prelim}, we set $I(t)=\int_{\R^2} f(q)\, e(t,q,q)\, dq$ with $f$ smooth, compactly supported in $B(0,1)$, such that $f(q_1)\neq 0$. We perform the $(J+K)$-decomposition (see Section \ref{sec_J+K}) and we obtain $I(t)=J(t)+K(t)$ with $J(t)$ that is a smooth function of $\sqrt{t}$, and $J(t)\sim f(q_1)\int_{\R^2}\widehat{e}^{q_1}(t,y,y)\, dy$ as $t\rightarrow 0^+$ (see Section \ref{sec_estimating_J}), while $K(t)$ is much more difficult to estimate. Here, since there is only one singular stratum (which is a singleton), we can follow the beginning of Section \ref{sec_proof_thm_onestratum}: according to \eqref{formulK} and \eqref{deffunctionG}, we have
$$
K(t) = \frac{1}{t} \int_{\sqrt{t}}^1 \tau_1 \int_{\mathcal{S}^1} f(\delta^{q_1}_{\tau_1}(\sigma))\, e^{q_1,\sigma}_{\tau_1,\sqrt{t}/\tau_1}(1,0,0)\, d\sigma\, d\tau_1 
$$
but $e^{q_1,\sigma}_{\tau_1,\tau_2}(1,0,0)$ does not depend smoothly on $(\tau_1,\tau_2)$ due to the lack of nilpotentizability at $q_1$. 

Any $\sigma\in\mathcal{S}^1$ can be written as $\sigma=(\sin\theta,\cos\theta)$ for $\theta\in[-\pi,\pi]$, and making the change of variable $x_1=\sin\theta+y_1$, $x_2=\cos\theta+y_2$ we find $(X_1)^{q_1,\sigma}_{\tau_1,\tau_2}(y)=\partial_1$ and $(X_2)^{q_1,\sigma}_{\tau_1,\tau_2}(y)=(\sin^2\theta+\tau_1^4\cos^2\theta+2\tau_2\sin\theta\, y_1+\tau_2^2y_1^2+2\tau_1^4\tau_2\cos\theta\, y_2+\tau_1^3\tau_2^2 y_2^2)\,\partial_2$. 

Far from $\theta\in\{0,\pi\}$, the sR structure generated by $((X_1)^{q_1,\sigma}_{\tau_1,\tau_2},(X_2)^{q_1,\sigma}_{\tau_1,\tau_2})$ is ``uniformly" Riemannian with respect to $(\tau_1,\tau_2)$ 
and thus in this region $e^{q_1,\sigma}_{\tau_1,\tau_2}(1,0,0)$ depends smoothly on $(\tau_1,\tau_2)$ and we can apply the arguments developed in Section \ref{sec_proof_thm_onestratum} (see in particular \eqref{thm_onestratum_cas3}): the corresponding contribution in $K(t)$ is equivalent to $\frac{\Cst}{t}$ as $t\rightarrow 0$.
Hence, the difficulty is concentrated near $\theta=0$ and $\theta=\pi$. Let us focus on $\theta=0$.

We have $\sigma_\theta\simeq(\theta,1)$ and thus $(X_2)^{q_1,\sigma_\theta}_{\tau_1,\sqrt{t}/\tau_1}(y) \simeq (\tau_1^4+\theta^2+2\frac{\theta\sqrt{t}}{\tau_1} y_1+\frac{t}{\tau_1^2}y_1^2+\cdots)\,\partial_2$: the above ``uniform" Riemannian property is lost. Here, there is a kind of ``competition" between the parameters $t$, $\tau_1$ and $\theta$, leading to an algebraic decomposition of $[0,1]^3$:
\begin{itemize}[leftmargin=*,parsep=0cm,itemsep=0cm,topsep=1mm]
\item The monomial $\tau_1^4$ dominates in the region of $[0,1]^3$ where $\tau_1^2>\vert\theta\vert$ and $\tau_1>t^{1/6}$. In this region, we have $(X_2)^{q_1,\sigma_\theta}_{\tau_1,\tau_2}(y) = \tau_1^4 \, U\big(t,\tau_1,\theta,\frac{\theta}{\tau_1^2}, \frac{t^{1/6}}{\tau_1},y_1\big)\,\partial_2  = \tau_1^4 Y_2(y)$ where $U$ is a non-vanishing smooth function and, setting $Y_1=(X_1)^{q_1,\sigma_\theta}_{\tau_1,\tau_2}$, we see that the sR structure generated by $(Y_1,Y_2)$ is Riemannian in this region and therefore the corresponding heat kernel is a smooth function of the variables inside the function $U$. The corresponding contribution in the integral $K(t)$ is then equivalent, as $t\rightarrow 0$, to
$$
\frac{\Cst}{t} \int_{t^{1/6}}^1 \tau_1 \int_{-\tau_1^2}^{\tau_1^2} \frac{1}{\tau_1^4}\, d\theta\, d\tau_1 \sim \Cst\frac{\vert\ln t\vert}{t} .
$$
\item The monomial $\frac{t}{\tau_1^2}$ dominates in the region of $[0,1]^3$ where $\tau_1<t^{1/6}$ and $\vert\theta\vert<\sqrt{t}/\tau_1$. In this region, we have $(X_2)^{q_1,\sigma_\theta}_{\tau_1,\tau_2}(y) = \frac{t}{\tau_1^2} Y_2(y)$ where $Y_2(y) = (y_1^2 + \cdots)\, \partial_{y_2}$. The sR structure generated by $(Y_1,Y_2)$ is a $2$-Baouendi-Grushin structure in this region. Then, similarly, the corresponding contribution in the integral $K(t)$ is equivalent, as $t\rightarrow 0$, to
$$
\frac{\Cst}{t} \int_{\sqrt{t}}^{t^{1/6}} \tau_1 \int_{-\sqrt{t}/\tau_1}^{\sqrt{t}/\tau_1} \frac{\tau_1^2}{t}\, d\theta\, d\tau_1 \sim \frac{\Cst}{t} .
$$
\item The monomial $\theta^2$ dominates in the region of $[0,1]^3$ where $\tau_1<\sqrt{\theta}$ and $\sqrt{t}<\tau_1\theta$. In this region, we have $(X_2)^{q_1,\sigma_\theta}_{\tau_1,\tau_2}(y) = \theta^2 Y_2(y)$, where $(Y_1,Y_2)$ is Riemannian. The corresponding contribution in the integral $K(t)$ is equivalent, as $t\rightarrow 0$, to
$$
\frac{\Cst}{t} \int_t^{t^{1/6}} \tau_1 \int_{\sqrt{t}/\tau_1}^1 \frac{d\theta}{\theta^2}\, d\tau_1
+ \frac{\Cst}{t} \int_{t^{1/6}}^1 \tau_1 \int_{\tau_1^2}^1 \frac{d\theta}{\theta^2}\, d\tau_1
\sim \Cst\frac{\vert\ln t\vert}{t} .
$$
\item The monomial $\frac{\sqrt{t}}{\tau_1}$ can never dominate.
\end{itemize}
We thus conclude that $I(t) \sim K(t)\sim \mathrm{Cst}\frac{\vert\ln t\vert}{t}$ as $t\rightarrow 0^+$.

\subsubsection{Preliminaries: difficulties due to the absence of nilpotentizability}
We follow the strategy developed in Section \ref{sec_equisingular_nilp} (case of one singular stratum) and in Section \ref{sec_equisingular_stratified_nilp} (case of multiple singular strata), but we have to adapt the arguments at all steps where the nilpotentizability assumption was used: the analysis performed in these sections remains valid except when we consider limits as $\tau_i\rightarrow 0$. 
Indeed, in the absence of nilpotentizability, these limits do not exist in general. This complicates significantly the analysis.

Here and in the sequel, we use the following convenient notation:
given any $k$-tuple $z=(z_1,\ldots,z_k)$ of elements of some set, for any $i\in\{1,\ldots,k\}$ we denote $z_{\leq i}=(z_1,\ldots,z_i)$ and $z_{<i}=(z_1,\ldots,z_{i-1})$, with the agreement that $z_{<1}=\emptyset$.

\paragraph{What has to be done.}
Like in Section \ref{sec_proof_thm_multistrates_nilp}, we consider $q\in M$ and an exhaustive (in terms of topological dimensions) chain $\mathscr{C}=(\S_1,\ldots,\S_{p},\S_{p+1})$ of strata at $q$, with $\S_{p+1}=M\setminus\S$. 
When $p=1$, we have $\S_2=M\setminus\S$ like in the case of only one singular stratum treated in Section \ref{sec_equisingular_nilp}.

When $p=1$, in the $(J+K)$-decomposition \eqref{I=J+K} written in Section \ref{sec_J+K}, the integral $J(t)$ is still expanded as \eqref{asympt_J} and the integral $K(t)$, which is performed on $\sqrt{t}\leq\tau_1\leq 1$, is still written as \eqref{formulK} in Section \ref{sec_proof_thm_onestratum} 
and involves the term $e^{q_1,\sigma}_{\tau_1,\frac{\sqrt{t}}{\tau_1}}(1,0,0)$, with $q_1\in\S_1$ and $\sigma\in\mathcal{S}^{n-k_1-1}$.

When $p\geq 2$, we still have \eqref{K_sum2} or the general formula \eqref{I_multistrates}, 
and we have to estimate the small-time asymptotics of nested integrals over $q_1\in\S_1$, over $q_j\in\mathcal{S}^{n-k_{j-1}-1}\cap(\S_j)_{\tau_{<j}}^{q_{<j}}$ 
for $j\in\{2,\ldots,p+1\}$, 
and possibly over $y\in\mathcal{B}^{n-k_p}$, of nested integrals with respect to $\tau_1,\ldots,\tau_j$ that are of the form
\begin{equation}\label{gen_form_int}
\frac{1}{t^{\ell/2}} \int_{\sqrt{t}}^1 \tau_1^{\ell-\ell_1} \int_{\frac{\sqrt{t}}{\tau_1}}^1 \tau_2^{\ell-\ell_2} \cdots \int_{\frac{\sqrt{t}}{\tau_1\cdots\tau_{j-1}}}^1 \tau_j^{\ell-\ell_j} \, f_{\tau_{\leq j},\frac{\sqrt{t}}{\tau_1\cdots\tau_j}}^{q_{\leq j},q_{j+1}}(y)\, e_{\tau_{\leq j},\frac{\sqrt{t}}{\tau_1\cdots\tau_j}}^{q_{\leq j},q_{j+1}}(1,y,y)\, d\tau_j\cdots d\tau_2\, d\tau_1
\end{equation}
with $\ell=\mathcal{Q}^{\S_{j+1}}$ and $\ell_i=\mathcal{Q}^{\S_i}+1$ for $i=1,\ldots,j$.


In the absence of nilpotentizability, such integrals \eqref{gen_form_int} are challenging to estimate in small time $t$, because not only we have to estimate, for any $j\in\N^*$, the (blowing-up) asymptotics of $e_{\tau_{\leq j}}^{q_{\leq j}}(1,y,y)$ as $\tau_1\cdots\tau_j\rightarrow 0$, but we also have to parametrize the manifold $\mathcal{S}^{n-k_{j-1}-1}\cap(\S_j)_{\tau_{<j}}^{q_{<j}}$ in an adequate way in order to estimate its asymptotics as $\tau_1\cdots\tau_j\rightarrow 0$, as discussed hereafter.

Before coming to that point, similarly as in Remark \ref{rem_prelim_JK} in Section \ref{sec_estimating_K}, we can already observe that, since 
the heat kernel function inside the nested integral \eqref{gen_form_int} is always greater than a positive constant (on the domain of integration), it follows that $I(t)$ is always greater than \eqref{I_multistrates}, i.e., than the trace asymptotics given by Theorem \ref{thm_multistrates_nilp}. Taking $f=1$, this already shows that $\gamma\geq\frac{1}{2}\mathcal{Q}^s$ and that if $\gamma=\frac{1}{2}\mathcal{Q}^s$ then $k\geq m_s-1$, in the statement of Theorem \ref{thm_analyticsingularities}.

\paragraph{Loss of smoothness of the heat kernel.}
The conclusion of Lemma \ref{lem_uniform_double_nilp} (and Remark \ref{rem_multiple_nilp}) in Section \ref{sec_doublenilp} fails if $D$ is not $\S$-nilpotentizable: given any $j\in\N^*$, it is not true anymore that $e_{\tau_{\leq j}}^{q_{\leq j}}(1,y,y)$ can be extended to a smooth function of $\tau_{\leq j}\in[-1,1]^j$, equal to $\widehat{e}^{\, q_{\leq j}}(1,y,y)$ (nilpotentized heat kernel) at $\tau_{\leq j}=(0,\ldots,0)$. 
Therefore, the functions $G$ defined by \eqref{deffunctionG}, $G_1$ defined by \eqref{defG1}, $G_2$ defined by \eqref{defG2}, and $G_j$ in \eqref{I_multistrates}, may fail to have a smooth extension, and thus Propositions \ref{prop_general_expansion} or \ref{prop_equiv_iter} in Appendix \ref{app_integral} cannot be applied directly to estimate the integrals \eqref{gen_form_int}.

We recall that, by definition, $e_{\tau_{\leq j}}^{q_{\leq j}}$ is the heat kernel generated by the sR Laplacian corresponding to the $m$-tuple of vector fields $X_{\tau_{\leq j}}^{q_{\leq j}} 
= \tau_1\cdots\tau_j ( \delta^{q_1}_{\tau_1} \circ\cdots\circ \delta^{q_j}_{\tau_j} )^* X$ (see \eqref{def_X_nilp_j} in Section \ref{sec_doublenilp}). In the absence of nilpotentizability, $X_{\tau_{\leq j}}^{q_{\leq j}}$ does not depend smoothly on its arguments.

Actually, as alluded above, we are going to show that, in the non-nilpotentizable case, $e_{\tau_{\leq j}}^{q_{\leq j}}(1,y,y)$ may blow up as $\tau_1\cdots\tau_j\rightarrow 0$, with an asymptotics that is given ``piecewise" as a fractional monomial in the $\tau_i$ (see Lemma \ref{lem_nonnilp_general} in the next section).
Knowing this asymptotics will enable us to estimate the integral \eqref{gen_form_int}.


\paragraph{Problem of smooth parametrization.}
The manifold $\mathcal{S}^{n-k_{j-1}-1}\cap(\S_j)_{\tau_{<j}}^{q_{<j}}$ is constructed by induction on $j\in\{2,\ldots,p+1\}$. Recall that, by definition,
$$
(\S_j)_{\tau_{<j}}^{q_{<j}} = (\S_j)_{\tau_{<j}}^{q_{<j}} = \big(\delta^{q_{j-1}}_{\tau_j}\big)^{-1} \big( (\S_j)_{\tau_1,\ldots,\tau_{j-2}}^{q_1,\ldots,q_{j-2}} \big) = \big( \delta^{q_1}_{\tau_1} \circ\cdots\circ \delta^{q_{j-1}}_{\tau_{j-1}} \big)^{-1} (\S_j)
$$
with $\delta^{q_i}_{\tau_i} = (\psi^{q_i})^{-1}\circ\delta_{\tau_i})$, where $\psi^{q_i}$ is a chart of privileged coordinates at $q_i$.

For $j=2$, we consider $X_{\tau_1,\tau_2}^{q_1,q_2}=(\delta^{q_2}_{\tau_2})^*X_{\tau_1}^{q_1}=(\delta^{q_1}_{\tau_1}\circ\delta^{q_2}_{\tau_2})^*X$ for $\tau_1,\tau_2\in(0,1]$, $q_1\in\S_1$ and $q_2\in\mathcal{S}^{n-k_1-1}\cap(\S_1)^{q_1}_{\tau_1}$. Extending the $m$-tuple $X_{\tau_1,\tau_2}^{q_1,q_2}$ 
at $\tau_1=0$ 
requires to find a parametrization of the manifold $\mathcal{S}^{n-k_1-1}\cap(\S_1)^{q_1}_{\tau_1}$ that can be extended at $\tau_1=0$ and that depends at least piecewise smoothly on $\tau_1\in[0,1]$ and $q_1\in\S_1$.
But, when nilpotentizability fails, it is not true anymore that 
$(\S_1)^{q_1}_{\tau_1}$ is diffeomorphic to $\widehat{\S_1}^{q_1}$, uniformly with respect to $\tau_1\in(0,1]$: in Example \ref{simpleexample_nonnilp} (given at the beginning of Section \ref{sec_nilpotentizability_doublenilp}), we have $\S_\tau^q=\{(0,0)\}$ for every $\tau>0$ but $\widehat{\S}^{\, q}=\{x_1=0\}$.
Let us give another example where, in contrast, we have $\widehat{\S}^{q_1}=\emptyset$ while $(\S)^{q_1}_\tau\neq\emptyset$ for $\tau>0$. 

\begin{example}\label{ex_GhezziJean}
Following \cite[Example 6.5]{GhezziJean_NA2015}, consider the sR case in $\R^5$ generated by 
$$
X_1=\partial_{x_1} , \qquad
X_2=\partial_{x_2}+x_1\,\partial_{x_3}+x_1^2\,\partial_{x_5}, \qquad
X_3=\partial_{x_4}+(x_1^k+x_2^k)\,\partial_{x_5} ,
$$
for $k\geq 2$. We have $\S=\{x_1=x_2=0\}$ (equisingular), 
$\mathcal{Q}^{\S}=6$ and $\Qeq=7$.
Taking $q_1=(0,0,0,0,0)$, for every $\tau\neq 0$ we have 
$(X_1)^{q_1}_\tau = X_1$, $(X_2)^{q_1}_\tau = X_2$ and $(X_3)^{q_1}_\tau = \partial_{x_4}+\tau^{k-2}(x_1^k+x_2^k)\,\partial_{x_5}$, 
hence $(\S)^{q_1}_\tau=\S=\{x_1=x_2=0\}$. However, for $\tau=0$ we find $\widehat{X_1}^{q_1} = X_1$, $\widehat{X_2}^{q_1} = X_2$ and $\widehat{X_3}^{q_1} = \partial_{x_4}$ if $k\geq 3$ (while $\widehat{X_3}^{q_1} = \partial_{x_4}+(x_1^2+x_2^2)\,\partial_{x_5}$ if $k=2$), hence $\widehat{\S}^{q_1}=\emptyset$ if $k\geq 3$, meaning that the nilpotentization at $q_1$ is equiregular.
\end{example}


Moreover, once an adequate parametrization of $\mathcal{S}^{n-k_1-1}\cap(\S_1)^{q_1}_{\tau_1}$ in function of $\tau_1\in[0,1]$ (giving a sense to the limit at $\tau_1=0$) and $q_1\in\S_1$ has been found, in order to define $X^{q_1,q_2}_{\tau_1,\tau_2}$ and compute an expansion of it for small $\tau_1$ and $\tau_2$,  we have to define $\delta^{q_2}_{\tau_2} = (\psi^{q_2})^{-1}\circ\delta_{\tau_2}$, which requires to construct a chart of privileged coordinates $\psi^{q_2}$ at any $q_2\in\mathcal{S}^{n-k_1-1}\cap(\S_1)^{q_1}_{\tau_1}$ that does not degenerate as $\tau_1\rightarrow 0$, i.e., such that its Jacobian remains positive and bounded. 

Finally, once all these problems have been solved, one has to estimate, by induction, the asymptotics of the heat kernel $e_{\tau_{\leq j}}^{q_{\leq j}}(1,y,y)$ as $\tau_1\cdots\tau_j\rightarrow 0$ and infer the small-time asymptotics of the integrals \eqref{gen_form_int}.

\subsubsection{The key lemma}
The lemma hereafter is the key result to establish Theorem \ref{thm_analyticsingularities}.
A mapping $F$ defined on a smooth finite-dimensional manifold $P$ is said to be piecewise smooth if $P$ is Whitney stratified and $F$ is smooth on every stratum of maximal dimension (up to the boundary), i.e., $P$ is the closure of the finite union of disjoint open smooth submanifolds of $P$ and $F$ is has a smooth extension on an open set containing the closure of every such submanifold.

\begin{lemma}\label{lem_nonnilp_general}
We set $N_1=\S_1$, $\theta_1=q_1$, $\tau=(\tau_1,\ldots,\tau_p)$ and $\theta=(\theta_1,\ldots,\theta_p)$. For every $j\in\{2,\ldots,p+1\}$, there exists a piecewise smooth parametrization of $\mathcal{S}^{n-k_{j-1}-1}\cap(\S_j)_{\tau_{<j}}^{q_{<j}}$ given as follows:
there exists a smooth compact manifold $N_j$ of dimension $k_j-k_{j-1}-1$ and a piecewise smooth mapping $(\tau_{<j},\theta_{<j},\theta_j)\mapsto q_j(\tau_{<j},\theta_{<j},\theta_j)$ on $[0,1]^{j-1}\times N_{<j}\times N_j$ 
such that 
$$
q_j(\tau_{<j},\theta_{<j},N_j) = \mathcal{S}^{n-k_{j-1}-1}\cap(\S_j)_{\tau_{<j}}^{q_{<j}}
\qquad \forall \tau_{<j} \in(0,1]^{j-1} \qquad\forall \theta_{<j} \in N_{<j} .
$$
For every $j\in\{1,\ldots,p\}$, we define
\begin{equation}\label{def_Dj}
\mathcal{D}_{j+1} = \{ (t,\tau_{\leq j})\in(0,1]^{j+1} \ \mid\ \sqrt{t}\leq \tau_1\cdots\tau_j\leq 1 \} .
\end{equation}
Denoting for short $q_j = q_j(\tau_{<j},\theta_{\leq j})$, we consider the function 
\begin{equation}\label{function_e}
e_{\tau_{\leq j},\frac{\sqrt{t}}{\tau_1\cdots\tau_j}}^{q_{\leq j},q_{j+1}}(1,y,y)
\end{equation}
of $(t,\tau_{\leq j})\in\mathcal{D}_{j+1}$, of $\theta_{\leq j+1}\in N_{\leq j+1}$ and of $y\in W$ where $W$ is a relatively compact open neighborhood of $0$ in $\R^n$. 
The set $\mathcal{D}_{j+1}\times N_{\leq j+1} \times W$ is the closure of the finite union of disjoint open smooth submanifolds, the projection onto $(0,1]^{j+1}$ of each of them having the (cylindrical) form
\begin{eqnarray*}
a_0(\theta_{\leq j+1},y) &\!\!\! <\ t\ < &\!\!\! b_0(\theta_{\leq j+1},y) , \\
a_i(t,\tau_{<i},\theta_{\leq j+1},y) &\!\! <\, \tau_i\, < &\!\! b_i(t,\tau_{<i},\theta_{\leq j+1},y) , \qquad i=1,\ldots,j,
\end{eqnarray*}
for some smooth functions $a_i$ and $b_i$ such that $0\leq a_i(\cdot)<b_i(\cdot)\leq 1$ for every $i\in\{0,\ldots,j\}$, with either $a_i(\cdot)>0$ or $a_i(\cdot)\equiv 0$, having a Puiseux expansion with respect to $t$ near $t=0$ at any order with coefficients that are smooth functions of $(\tau_{<i},\theta_{\leq j+1},y)$, and in each such submanifold the function \eqref{function_e} can be written as
\begin{equation}\label{function_e_cell}
t^{-\ell'/2} \prod_{i=1}^j \tau_i^{\alpha_i} \, 
F \left( t^{1/\ell}, \left( \frac{a_i(t,\tau_{<i},\theta_{\leq j+1},y)}{\tau_i} \right)^{1/\ell_i}_{1\leq i\leq j}, \left( \frac{\tau_i}{b_i(t,\tau_{<i},\theta_{\leq j+1},y)} \right)^{1/\ell_i}_{1\leq i\leq j} , \theta_{\leq j+1}, y \right) 
\end{equation}
for some $\ell'\in\N$, $\alpha_1,\ldots,\alpha_j\in\Q$, $\ell,\ell_1,\ldots,\ell_j\in\N^*$, and some positive $C^\infty$ function $F$ defined on an open set containing $[0,1]^{1+2j}\times N_{\leq j+1}\times W$. 
\end{lemma}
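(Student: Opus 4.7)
The plan is to proceed by induction on $j \in \{1,\ldots,p\}$, with the engine at each step being a subanalytic preparation theorem (Lion--Rolin / Parusinski type) combined with Theorem \ref{lemfondamental} applied on each cell. Since $(M,D,g)$ is analytic, every object encountered---the strata $\S_j$, their rescalings $(\S_j)^{q_{<j}}_{\tau_{<j}} = (\delta^{q_1}_{\tau_1}\circ\cdots\circ\delta^{q_{j-1}}_{\tau_{j-1}})^{-1}(\S_j)$, the sR weights, and the entries of the $m$-tuple $X^{q_{\leq j}}_{\tau_{\leq j}}$---is subanalytic in its arguments (see Appendix \ref{app_subanalytic}). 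Hence the parameter space $[0,1]^{j-1}\times N_{<j}$ admits a finite subanalytic stratification on each open cell of which the objects of interest have Puiseux expansions in the $\tau_i$ with smooth coefficients in the remaining variables. This is the source of the fractional exponents $1/\ell$ and $1/\ell_i$ in \eqref{function_e_cell}, and of the cylindrical cells described in the statement.

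The first step, inductively, is the construction of the piecewise smooth parametrization $q_j$. The analytic set $\mathcal{S}^{n-k_{j-1}-1}\cap (\S_j)^{q_{<j}}_{\tau_{<j}}$ is a subanalytic family over $[0,1]^{j-1}\times N_{<j}$ (relying, at $\tau_i=0$, on an iterated blow-up/desingularization rather than on nilpotentizability). Applying a subanalytic cell decomposition, one produces a compact smooth model manifold $N_j$ of the right dimension $k_j - k_{j-1} - 1$ and a piecewise smooth map $(\tau_{<j},\theta_{\leq j}) \mapsto q_j$ whose image, for $\tau_{<j} \in (0,1]^{j-1}$, is the desired intersection. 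Simultaneously, one must construct, at each such $q_j$, a chart of privileged coordinates $\psi^{q_j}$ whose Jacobian remains bounded away from $0$ and $\infty$ uniformly in $\tau_{<j}$; this is done on each cell by selecting, among the Puiseux monomials of the components of $X^{q_{<j}}_{\tau_{<j}}$, the dominant one for each sR weight, and normalizing accordingly.

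The second step is to analyze the rescaled $m$-tuple $X^{q_{\leq j}}_{\tau_{\leq j}}$ in this chart. On each subanalytic cell of $\mathcal{D}_{j+1}\times N_{\leq j+1}\times W$, the coefficients of $X^{q_{\leq j},q_{j+1}}_{\tau_{\leq j},\varepsilon}$ (with $\varepsilon = \sqrt t/(\tau_1\cdots\tau_j)$) factor as a fractional monomial $\prod_i \tau_i^{\beta_i}$ times a \emph{smooth, non-vanishing} family on an open neighborhood of the closure of the cell, parametrized by the bounded quantities $t^{1/\ell}$, $(a_i/\tau_i)^{1/\ell_i}$ and $(\tau_i/b_i)^{1/\ell_i}$. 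After pulling out these monomial prefactors, one obtains a uniform family of sR structures to which Theorem \ref{lemfondamental} in Appendix \ref{app_lemfondam} applies; the homogeneity property \eqref{homog_ehat} of the nilpotentized heat kernel then absorbs the monomial prefactors into the global $t^{-\ell'/2}\prod_i \tau_i^{\alpha_i}$ of \eqref{function_e_cell}, and the smooth family of heat kernels supplies the factor $F$. The rationals $\alpha_i$ and the integer $\ell'$ are read off from the weights of the privileged coordinates and the dominant monomials; the common denominator $\ell$ (and the $\ell_i$) comes from clearing the Puiseux exponents.

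The main obstacle is the first step: building, subanalytically, a parametrization $q_j$ and a compatible chart $\psi^{q_j}$ of privileged coordinates that extend continuously and piecewise smoothly down to the boundary $\tau_i = 0$, while keeping the $m$-tuple of vector fields a frame with controlled sR flag on each cell. Indeed, as illustrated by Example \ref{simpleexample_nonnilp} and Example \ref{ex_GhezziJean}, the singular set $(\S_j)^{q_{<j}}_{\tau_{<j}}$ may be non-diffeomorphic to $\widehat{\S_j}^{q_{<j}}$ at $\tau_{<j}=0$, and the naive limit of privileged coordinates may degenerate. This is precisely where subanalytic preparation is essential: it furnishes exactly the finite case distinction (which monomial dominates where) needed to define the cells and the rescalings. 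Once the cellular description and the dominant-monomial normalization are established, the remainder---obtaining the factored form \eqref{function_e_cell} of the heat kernel and identifying $\ell,\ell_i,\alpha_i$---follows from Theorem \ref{lemfondamental} applied uniformly on each cell.
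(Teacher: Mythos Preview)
Your overall architecture is correct and matches the paper's: induction on $j$, subanalytic cell preparation of the vector-field coefficients, a parametrization lemma for $\mathcal{S}^{n-k_{j-1}-1}\cap(\S_j)^{q_{<j}}_{\tau_{<j}}$, construction of privileged coordinates whose Jacobian stays bounded, and then smoothness of the heat kernel cell-by-cell via Theorem~\ref{lemfondamental} (together with Theorem~\ref{thm_parameter_heat_smooth}).

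There is, however, a genuine imprecision in your second step. You write that ``the homogeneity property \eqref{homog_ehat} of the nilpotentized heat kernel then absorbs the monomial prefactors.'' This is not the right mechanism, and as stated it does not work: on a given cell the vector fields $Y_k$ carry \emph{different} monomial prefactors $\prod_i\tau_i^{\beta_{k,i}}$ (see \eqref{desingXj}), so the rescaled Laplacian is not of the form $c\,\widehat\triangle^q$ and \eqref{homog_ehat} does not apply. The paper's device is instead the \emph{volume function} $\mathtt{V}^{q_{\leq j}}_{\tau_{\leq j}}=\mathtt{v}_m^{0,1}(X^{q_{\leq j}}_{\tau_{\leq j}})$ of \eqref{def_V_iter} (built from the uniform ball--box theorem, Appendix~\ref{sec_uniformballbox}). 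One selects, in each cell, an adapted frame $(Y_1,\dots,Y_n)\in\mathscr X^{q_{\leq j}}_{\tau_{\leq j}}$ whose determinant at $0$ is equivalent to $\mathtt{V}^{q_{\leq j}}_{\tau_{\leq j}}$, and uses it to define a change of coordinates $\varphi_j$ whose Jacobian at $0$ equals $\mathtt{V}^{q_{\leq j}}_{\tau_{\leq j}}$ up to a unit. By the change-of-variable formula for heat kernels \eqref{formulas_kernel}, the pullback kernel satisfies $\tilde e^{q_{\leq j}}_{\tau_{\leq j}}(1,z,z)=\mathtt{V}^{q_{\leq j}}_{\tau_{\leq j}}\,e^{q_{\leq j}}_{\tau_{\leq j}}(1,y,y)$, and it is this product that is smooth in the cell variables (Lemma~\ref{lem_nonnilp_uniform_multiple_nilp}). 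The monomial $t^{-\ell'/2}\prod_i\tau_i^{\alpha_i}$ in \eqref{function_e_cell} is then nothing but the prepared form of $1/\mathtt{V}^{q_{\leq j},q_{j+1}}_{\tau_{\leq j},\sqrt t/(\tau_1\cdots\tau_j)}$.

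This same volume-function device is also what resolves your ``main obstacle'': it simultaneously supplies the non-degenerate privileged chart $\psi^{q_j}$ (Lemma~\ref{lem_expansion_epstau}) and the desingularization of the kernel, so the two steps you separate are in fact handled by one construction.
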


What is important in this lemma is that, in each submanifold of the partition, the function $F$ is smooth up to the boundary of the submanifold and is bounded below and above by a positive constant. 
Note that, when $t\rightarrow 0$ and $\tau_{\leq j}\rightarrow 0$, the limit value of $F(\star)$ in \eqref{function_e_cell} depends on those of the ratios ${a_i(t,\tau_{<i},\theta_{\leq j+1},y)}/{\tau_i}$ and ${\tau_i}/{b_i(t,\tau_{<i},\theta_{\leq j+1},y)}$. Hence, the term $F(\star)$ in \eqref{function_e_cell} is \emph{not} a piecewise smooth function of $(t,\tau_{\leq j},\theta_{\leq j+1},y)$ in the closure of the submanifold.
In some sense, $F(\star)$ plays the role of an angle in polar coordinates, and for each fixed ``angle" the asymptotics of $e_{\tau_{\leq j},\frac{\sqrt{t}}{\tau_1\cdots\tau_j}}^{q_{\leq j},q_{j+1}}(1,y,y)$ as $t\rightarrow 0$ and $\tau_{\leq j}\rightarrow 0$ is the fractional monomial $t^{-\ell'/2} \prod_{i=1}^j \tau_i^{\alpha_i}$ multiplied by a positive constant. 
Lemma \ref{lem_nonnilp_general} thus provides a 
kind of
desingularization of the heat kernel. 

Lemma \ref{lem_nonnilp_general} is proved in Section \ref{sec_proof_lem_nonnilp_general}, where we establish more precise results, based on subanalytic geometry and in particular on subanalytic cell preparation theorems (see Appendix \ref{app_subanalytic}), showing that any subanalytic function can be written as a locally fractional normal crossings form in adequate cells partitioning the manifold, i.e., in each cell, as the product of a fractional monomial with a unit analytic function.
The prepared form \eqref{function_e_cell} is a bit technical (anyway, this is the nature of things) but is adequately devised to infer the small-time asymptotics of the integrals \eqref{gen_form_int}, as shown hereafter.

\subsubsection{End of the proof of Theorem \ref{thm_analyticsingularities}}\label{sec_end_proof_thm_analyticsingularities}
Thanks to Lemma \ref{lem_nonnilp_general}, we can establish Theorem \ref{thm_analyticsingularities}.

For $p=1$, it follows from Lemma \ref{lem_nonnilp_general} that $K(t)$ (given by \eqref{formulK}) is a finite sum of integrals over submanifolds of $\S_1\times\mathcal{S}^{n-k_1-1}$ of integrals of the form
$$
\frac{1}{t^{(\Qeq+\ell')/2}} \int_{a_1(t,q_1,\sigma)}^{b_1(t,q_1,\sigma)} \tau_1^{\beta_1} F\left( t^{1/\ell}, \left( \frac{a_1(t,q_1,\sigma)}{\tau_1} \right)^{1/\ell_1}, \left( \frac{\tau_1}{b_1(t,q_1,\sigma)} \right)^{1/\ell_1}, q_1, \sigma \right) d\tau_1
$$
for some $\ell'\in\N$, $\beta_1\in\Q$ and some smooth function $F$. 
Using that $a_1$ and $b_1$ have Puiseux expansions with respect to $t$ at $t=0$, with coefficients that are smooth functions of $q_1$ and $\sigma$, by an easy adaptation of the proof of Proposition \ref{prop_general_expansion} in Appendix \ref{app_integral}, such integrals have an infinite-order asymptotic expansion as $t\rightarrow 0$ of the form
$$
\frac{1}{t^\gamma} \sum_{j=1}^{+\infty} \left( c_j(q_1,\sigma) t^{j/\ell}\vert\ln t\vert + d_j(q_1,\sigma)t^{j/\ell} \right) + \mathrm{O}(t^\infty)
$$
for some $\gamma\in\Q$ and some smooth functions $c_j$ and $d_j$. 
The result follows.

For $p\geq 2$, we have to estimate integrals \eqref{gen_form_int}, which are finite sums of integrals of the form
$$
\frac{1}{t^{(\Qeq+\ell')/2}} \int_{\sqrt{t}}^1 \tau_1^{\beta_1} \int_{\frac{\sqrt{t}}{\tau_1}}^1 \tau_2^{\beta_2} \cdots \int_{\frac{\sqrt{t}}{\tau_1\cdots\tau_{j-1}}}^1 \tau_j^{\beta_j} F(\star)\, d\tau_j\cdots d\tau_2\, d\tau_1
$$
for some $\ell'\in\N$, $\beta_i\in\Q$, where $F(\star)$ is like in \eqref{function_e_cell}.
Integrating iteratively as in the proof of Proposition \ref{prop_equiv_iter} in Appendix \ref{sec_nested}, we obtain the result. A new occurence of $\vert\ln t\vert$ may (only) occur each time we integrate with respect to a parameter $\tau_i$, whence $k\leq p_{\max}$ in the theorem. 

\begin{remark}
The above argument of proof is close to that done in \cite[Th\'eor\`eme 1]{LionRolin_AIF1998}, in \cite[Theorem 1.3 and Section 8]{CluckersMiller_DMJ2011} or in \cite[Corollary 6.4]{Parusinski_2001}, where it is shown that parametric integrals of globally subanalytic functions are log-analytic functions of order at most $1$ (see \cite{LionRolin_AIF1997}), meaning that they can always be written, piecewise, as sums of products of globally subanalytic functions and their logarithms and thus can be expanded exactly like in the theorem. 
%
\end{remark}

\subsection{Proof of Lemma \ref{lem_nonnilp_general}}\label{sec_proof_lem_nonnilp_general}
As alluded above, the proof of Lemma \ref{lem_nonnilp_general} uses some results of subanalytic geometry that are recalled in Appendix \ref{app_subanalytic}. The reader is thus invited to read this appendix before going through the details of the present section.
%
%
Since the proof of Lemma \ref{lem_nonnilp_general} also provides an algorithmic way to compute the trace asymptotics, we also give, along the steps of the proof, a number of examples. 
As a remark, we will also see in Section \ref{sec_further_comments_exp_estimates} that Lemma \ref{lem_nonnilp_general} allows us to recover the exponential estimates \eqref{exp_estimates} (see Appendix \ref{app_sR_kernel}) for the sR heat kernel.

The strategy is in three steps. 
We first show how to parametrize the manifold $\mathcal{S}^{n-k_{j-1}-1}\cap(\S_j)_{\tau_{<j}}^{q_{<j}}$, for $j\in\{2,\ldots,p+1\}$, in a subanalytic way.
Second, we prove that there exists a ``uniform" chart of privileged coordinates at any point 
$q_j\in\mathcal{S}^{n-k_{j-1}-1}\cap(\S_j)_{\tau_{<j}}^{q_{<j}}$, 
``uniform" in the sense that its Jacobian does not tend to $0$ nor blow up as 
the $\tau_i$ converge to $0$, 
in which the $m$-tuple of vector fields $X_{\tau_{\leq j}}^{q_{\leq j}}$ 
depends subanalytically on its parameters: in other words, we perform a desingularization of $X_{\tau_{\leq j}}^{q_{\leq j}}$, in each cell of a subanalytic cell decomposition. 
Finally, in each cell, we infer the asymptotics of the heat kernel $e_{\tau_{\leq j}}^{q_{\leq j}}(1,y,y)$ as $\tau_1\cdots\tau_j\rightarrow 0$.



\subsubsection{Subanalytic parametrization of $\mathcal{S}^{n-k_{j-1}-1}\cap(\S_j)_{\tau_{<j}}^{q_{<j}}$} 
Since $\S_j$ is a globally subanalytic and analytic submanifold of $M$ (with $\dim\S_j=k_j$) whose closure contains the lower dimensional strata $\S_1,\ldots,\S_{j-1}$, for all $(\tau_1,\ldots,\tau_{j-1})\in(0,1]^{j-1}$ the set 
$\mathcal{S}^{n-k_{j-1}-1}\cap (\S_j)_{\tau_{<j}}^{q_{<j}}$
is a globally subanalytic and analytic submanifold of $\mathcal{S}^{n-k_{j-1}-1}$ of topological dimension $k_j-k_{j-1}-1$.

\begin{lemma}\label{lem_subanalytic_param}
We set $N_1=\S_1$, $\theta_1=q_1$, $\tau=(\tau_1,\ldots,\tau_p)$ and $\theta=(\theta_1,\ldots,\theta_p)$. 
For every $j\in\{1,\ldots,p+1\}$, there exist a real analytic compact manifold $N_j$ of dimension $k_j-k_{j-1}-1$ and a bounded globally subanalytic mapping $(\tau_{<j},\theta_{<j},\theta_j)\mapsto q_j(\tau_{<j},\theta_{<j},\theta_j)$ on $[0,1]^{j-1}\times N_{<j}\times N_j$ 
such that
$$
q_j(\tau_{<j},\theta_{<j},N_j) = \mathcal{S}^{n-k_{j-1}-1}\cap(\S_j)_{\tau_{<j}}^{q_{<j}}
\qquad \forall \tau_{<j}\in(0,1]^{j-1} \qquad \forall \theta_{<j}\in N_{<j} .
$$
\end{lemma}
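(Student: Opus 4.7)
The plan is to argue by induction on $j$, the base case $j=1$ being immediate with $N_1 = \S_1$ (analytic and compact by the standing analyticity assumption) and $q_1(\theta_1) = \theta_1$.

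For the inductive step, assume $q_{j'}$ has been constructed for all $j' < j$. The first step is to verify that the total space
\[
A_j = \bigl\{(\tau_{<j}, \theta_{<j}, x) \in [0,1]^{j-1} \times N_{<j} \times \mathcal{S}^{n-k_{j-1}-1} \ \mid\ x \in (\S_j)_{\tau_{<j}}^{q_{<j}(\tau_{<j}, \theta_{<j})}\bigr\}
\]
is globally subanalytic. This uses three ingredients: the equisingular stratum $\S_j$ is an analytic submanifold of $M$ because the sR structure is analytic and $\S_j$ is a stratum of an analytic Whitney stratification of the singular set (see Appendix \ref{app_subanalytic}); privileged coordinates at points of $\S_i$ can be chosen analytically in the base point, so that the dilation $\delta^q_\tau = (\psi^q)^{-1}\circ\delta_\tau$ is a bounded analytic function of $(q,\tau,\cdot)$ for $|\tau|\leq 1$; and by induction the composed map $q_{<j}$ is bounded and globally subanalytic.

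Next I would apply Hironaka's uniformization theorem (equivalently, the globally subanalytic cell decomposition recalled in Appendix \ref{app_subanalytic}) to the projection $\pi : A_j \to B_j := [0,1]^{j-1} \times N_{<j}$. Over $(0,1]^{j-1} \times N_{<j}$ the fibers of $\pi$ are subanalytic analytic submanifolds of constant dimension $k_j - k_{j-1} - 1$, so uniformization produces finitely many compact real analytic manifolds $N_j^{(\alpha)}$ of this dimension together with bounded globally subanalytic maps $\Phi^{(\alpha)} : B_j \times N_j^{(\alpha)} \to A_j$ over $B_j$ such that for every $(\tau_{<j}, \theta_{<j}) \in (0,1]^{j-1} \times N_{<j}$,
\[
\bigcup_\alpha \Phi^{(\alpha)}\bigl(\tau_{<j}, \theta_{<j}, N_j^{(\alpha)}\bigr) = \mathcal{S}^{n-k_{j-1}-1} \cap (\S_j)_{\tau_{<j}}^{q_{<j}}.
\]
Setting $N_j = \bigsqcup_\alpha N_j^{(\alpha)}$ (a real analytic compact manifold of the required dimension) and assembling the $\Phi^{(\alpha)}$ into a single bounded globally subanalytic map $q_j$ closes the induction.

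The main obstacle is the change of topological type of the fibers as one or more of the $\tau_i$ tend to $0$: the fiber can split, collapse to a lower-dimensional set, or become empty in the limit, as already illustrated by Example \ref{ex_GhezziJean} where $\widehat{\S}^{q_1}$ is empty while $(\S)^{q_1}_\tau$ is nonempty for $\tau>0$. This is accommodated because the lemma only requires $q_j$ to be surjective onto each fiber (not injective, nor a submersion), so the uniformizing maps $\Phi^{(\alpha)}$ are allowed to drop rank or be constant on open pieces of $N_j^{(\alpha)}$ over degenerate parameters without harm. The boundedness and global subanalyticity of $q_j$ are then automatic from those of the $\Phi^{(\alpha)}$, and the compactness of $N_j$ follows from that of the pieces produced by the subanalytic resolution.
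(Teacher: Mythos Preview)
Your proposal is correct and follows essentially the same approach as the paper: define the total space of fibers over the parameter base, check it is globally subanalytic (the paper takes the closure to make it compact), and then produce a fiberwise parametrization by a compact real analytic manifold of the correct dimension via subanalytic cell decomposition. The paper simply packages the last step as a separate result, Lemma~\ref{lem_parametric_Hironaka} (a parametric version of Hironaka uniformization, proved precisely by the cylindrical cell decomposition you invoke), and then applies it with $P=[0,1]\times\S_1$ for $j=2$ and iterates; your inline argument amounts to reproving that lemma on the spot.
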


\begin{proof}
For $j=1$ there is nothing to prove.
For $j=2$, let us prove that
there exist a real analytic compact manifold $N_2$ of dimension $k_2-k_1-1$ and a bounded globally subanalytic mapping $(\tau_1,q_1,\theta_2)\mapsto q_2(\tau_1,q_1,\theta_2)$ on $[0,1]\times\S_1\times N_2$ such that $q_2(\tau_1,q_1,N_2) = \mathcal{S}^{n-k_1-1}\cap(\S_2)_{\tau_1}^{q_1}$ for all $\tau_1\in(0,1]$ and $q_1\in\S_1$.
We consider the globally subanalytic compact subset $X$ of $[0,1]\times\S_1\times\R^n$ defined as the closure of the set of all $(\tau_1,q_1,x)\in(0,1]\times\S_1\times\R^n$ such that $x\in\mathcal{S}^{n-k_1-1}\cap(\S_2)_{\tau_1}^{q_1}$.
Given any $\tau\in(0,1]$ and $q_1\in\S_1$, the fiber $X_{\tau_1,q_1} = \{ x\in\R^n\ \mid\ (\tau_1,q_1,x)\in X\}$ is exactly $\mathcal{S}^{n-k_1-1}\cap(\S_2)_{\tau_1}^{q_1}$. 
Applying Lemma \ref{lem_parametric_Hironaka} in Appendix \ref{app_subanalytic_useful} with $P=[0,1]\times\S_1$, $p=1+k_1$ and $k=k_2-k_1-1$, the claim follows.
A straightforward induction, using the same argument, gives the lemma.
\end{proof}

Taking $j=2$ to simplify, it is interesting to note that the mapping $(\tau_1,q_1,\theta_2)\mapsto q_2(\tau_1,q_1,\theta_2)$ 
may fail to be analytic in $\tau_1$ at $\tau_1=0$. Anyway, it follows from the subanalytic preparation theorem recalled in Appendix \ref{app_subanalytic} that there exists a subanalytic cell decomposition of $[0,1]\times \S_1\times N_2$ 
such that, in each cell of maximal dimension, we can write $q_2(\tau_1,q_1,\theta_2) = \sum_{i\in\N} a_i(q_1,\theta_2) \tau_1^{i/\ell_1}$ for some $\ell_1\in\N^*$, i.e., $q_2(\cdot,q_1,\theta_2)$ can be expanded as a convergent Puiseux series in $\tau_1$ with analytic coefficients (see also \cite{Pawlucki_1984}).
%
Here, a nontrivial integer $\ell_1$ comes into the picture and cannot in general be avoided, as shown in Example \ref{ex_BG} below.
This is in contrast with the nilpotentizable case where the dependence in $\tau_1$ was smooth at $\tau_1=0$.
Note also that, due to analyticity, this is a rational power of $\tau_1$, and not, for instance, a flat term in $\tau_1$ as this could be in non-analytic non-nilpotentizable cases (see Section \ref{sec_nonnilp_flat}).

\begin{example}\label{ex_BG}
Consider the Baouendi-Grushin case with a tangency point (see Section \ref{sec_Baouendi-Grushin}), whose local model in privileged coordinates $(x_1,x_2)$ at $q_1=(0,0)\in\R^2$ is given by $X_1=\partial_{x_1}$ and $X_2=(x_1^2-x_2)\, \partial_{x_2}$. We have $\S=\S_1\cup \S_2$ with $\S_1=\{q_1\}$ and $\S_2=\{x_2=x_1^2, x_1\neq 0\}$ (equisingular smooth strata), and the sR weights along these strata are $w_1^{q_1}(D)=1$ and $w_2^{q_1}(D)=3$, $w_1^{\S_2}(D)=1$ and $w_2^{\S_2}(D)=2$, while outside of $\S$ the sR structure is Riemannian. 
For every $\tau_1\neq 0$, we have 
$(X_1)^{q_1}_{\tau_1}=\partial_{x_1}$ and $(X_2)^{q_1}_{\tau_1}=(x_1^2-\tau_1 x_2)\, \partial_{x_2}$,
hence $\S^{q_1}_{\tau_1}=(\S_1)^{q_1}_{\tau_1}\cup (\S_2)^{q_1}_{\tau_1}$ with $(\S_1)^{q_1}_{\tau_1}=\{(0,0)\}$ and $(\S_2)^{q_1}_{\tau_1}=\{x_1^2=\tau_1 x_2, \, x_1\neq 0\}$. For $\tau_1=0$, we have $\widehat{X_1}^{q_1}=\partial_{x_1}$ and $\widehat{X_2}^{q_1}=x_1^2\, \partial_{x_2}$ and thus $\widehat{\S}^{q_1} = \{ x_1=0 \}$ (equisingular submanifold). 

Here, $\mathcal{S}^{n-k_1-1}\cap(\S_2)_{\tau_1}^{q_1}$ is reduced to the single point $q_2(\tau_1) = (x_1(\tau_1),x_2(\tau_1))$, and an obvious argument shows that $x_1(\cdot)$ and $x_2(\cdot)$ are analytic functions of $\sqrt{\tau_1}$, near $\tau_1=0$ and for $\tau_1\geq 0$, and we have $x_1(\tau_1)\sim\sqrt{\tau_1}$ and $x_2(\tau_1)\sim 1$ as $\tau_1\rightarrow 0^+$. Hence, here, $\ell_1=2$.
\end{example}

More generally, according to Lemma \ref{lem_subanalytic_param}, the picture is the following: 
given any fixed $\theta_{\leq j}\in N_{\leq j}$, the curve $\tau_{<j}\mapsto q_j(\tau_{<j},\theta_{\leq j})$ on $\mathcal{S}^{n-k_{j-1}-1}\cap(\S_j)_{\tau_{<j}}^{q_{<j}}$ (for $\tau_1\cdots\tau_{j-1}>0$) has an extension at $\tau_1\cdots\tau_{j-1}=0$, which is globally subanalytic.
In particular, there exists a subanalytic cell decomposition of $[0,1]^{j-1}\times N_{\leq j}$ such that, in each cell of maximal dimension, we can write $q_j(\tau_{<j},\theta_{\leq j})$ as the product of $\prod_{i=1}^{j-1} \tau_i^{\alpha_i}$ with a unit function (i.e., not vanishing), for some $\alpha_i\in\Q$.

\subsubsection{Desingularization of $X_{\tau_{\leq j}}^{q_{\leq j}}$ in subanalytic cells}

In this section, we are going to define $X_{\tau_{\leq j}}^{q_{\leq j}}$ by induction on $j\in\{2,\ldots,p+1\}$ and show how it can be desingularized in each cell of a subanalytic cell decomposition.

\paragraph{Preliminary remark.}
In particular, we are going to construct, by induction on $j$, a system of privileged coordinates at 
$q_j=q_j(\tau_{<j},\theta_{\leq j})$, depending subanalytically on $(\tau_{<j},\theta_{\leq j})$ and not degenerating as $\tau_1\cdots\tau_j\rightarrow 0$.
Starting with $j=2$, the first (naive) guess is to take exponential coordinates of the first or second kind (see Appendix \ref{app_privileged}) with an adapted frame consisting of iterated Lie brackets of $X^{q_1}_{\tau_1}$, but this may fail because the resulting system of coordinates may degenerate as $\tau_1\rightarrow 0$, as shown in the following example.

\begin{example}\label{ex_BG2}
Consider again Example \ref{ex_BG} (Baouendi-Grushin case with a tangency point), with $q_2=q_2(\tau_1)=(\sqrt{\tau_1},1)$ to simplify. We have $[(X_1)^{q_1}_{\tau_1},(X_2)^{q_1}_{\tau_1}](x)=2x_1\,\partial_{x_2}$, whose value at $q_2$ is $2\sqrt{\tau_1}\,\partial_{x_2}$. For every $\tau_1>0$, the frame $((X_1)^{q_1}_{\tau_1}),[(X_1)^{q_1}_{\tau_1},(X_2)^{q_1}_{\tau_1}])$ is adapted to the sR flag of $D^{q_1}_{\tau_1}$ at $q_2$ but the corresponding change of privileged exponential coordinates 
$$
x = \exp(y_1 (X_1)^{q_1}_{\tau_1}) \circ \exp(y_2 [(X_1)^{q_1}_{\tau_1},(X_2)^{q_1}_{\tau_1}]) (q_2)
$$
degenerates when $\tau_1\rightarrow 0$ in the sense that its Jacobian tends to $0$. 

Here, instead, we can choose another adapted frame yielding a system of privileged coordinates at $q_2$ that depends smoothly on $\sqrt{\tau_1}$ as $\tau_1\rightarrow 0$: we simply take the adapted frame $(\partial_{x_1},\partial_{x_2})$, noticing that $\partial_{x_2}=\frac{1}{2\sqrt{\tau_1}} [(X_1)^{q_1}_{\tau_1},(X_2)^{q_1}_{\tau_1}](q_2(\tau))$ for $\tau_1>0$. The resulting privileged coordinates are then $y_1=x_1-\sqrt{\tau_1}$, $y_2=x_2-1$, and we have $(X_1)^{q_1}_{\tau_1}(y)=\partial_{y_1}$ and $(X_2)^{q_1}_{\tau_1}(y) = (2\sqrt{\tau_1} y_1+y_1^2-\tau_1 y_2)\, \partial_{y_2}$.
For $\tau_1>0$, the sR structure generated by $X^{q_1}_{\tau_1}$ is of Baouendi-Grushin type, hence the sR weights at $(0,0)$ are $w_1=1$, $w_2=2$ and thus 
$(X_1)^{q_1,q_2}_{\tau_1,\tau_2}(y) = \partial_{y_1}$ and $(X_2)^{q_1,q_2}_{\tau_1,\tau_2}(y) = (2\sqrt{\tau_1} y_1+\tau_2 y_1^2-\tau_1\tau_2 y_2) \, \partial_{y_2}$.
We observe that, in the privileged coordinates $y$, $X^{q_1,q_2}_{\tau_1,\tau_2}$ depends analytically on $\sqrt{\tau_1}$ and on $\tau_2$.
\end{example}

\paragraph{Volume of $X_{\tau_{\leq j}}^{q_{\leq j}}$.}
As the $m$-tuple $X_{\tau_{\leq j}}^{q_{\leq j}}$ will be constructed by induction, we will consider its ``volume" $\mathtt{V}_{\tau_{\leq j}}^{q_{\leq j}}$, defined as follows.

In Appendix \ref{sec_uniformballbox}, it is recalled how to associate to any $m$-tuple $Y=(Y_1,\ldots,Y_m)$ of vector fields on $\R^n$ endowed with its Lebesgue measure $m$, the function $\mathtt{v}_m^{x,\rho}(Y_1,\ldots,Y_m)$ of $x\in\R^n$ and of $\rho>0$, defined by \eqref{def_v}, that is bounded above and below, up to scaling, by the volume $m(\BsR(x,\rho))$ of the sR ball of center $x$ and radius $\rho$, uniformly with respect to $x$ in a compact subset of $\R^n$ and to $\rho\in(0,1]$ (see \eqref{mu_ball_eps}). Considering the sR heat kernel $e$ associated with the sR Laplacian corresponding to $Y$, the latter fact, combined with the exponential estimate \eqref{exp_estimates} recalled in Appendix \ref{app_sR_kernel}, implies that $\mathtt{v}_m^{x,\sqrt{\rho}}(X_1,\ldots,X_m)\, e(\rho,x,x)$ is bounded above and below by positive constants on any compact, uniformly with respect to $\rho\in(0,1]$. Here, $\rho$ plays the role of a small parameter. 

Let us apply this fact to $Y=X_{\tau_{\leq j}}^{q_{\leq j}}$ (constructed by induction in what follows) with $x=0$ and $\rho=1$.
Following Appendix \ref{sec_uniformballbox}, given any ordered set $I=(i_1,\ldots,i_p)$ of $p$ indices taken in $\{1,\ldots,m\}$, we consider the vector field $(X_{\tau_{\leq j}}^{q_{\leq j}})_I$ defined as the Lie bracket of vector fields $(X_{i_j})_{\tau_{\leq j}}^{q_{\leq j}}$ of length $p=\vert I\vert$ according to $I$. 
Finally, we define
\begin{equation}\label{def_V_iter}
\mathtt{V}_{\tau_{\leq j}}^{q_{\leq j}} = \mathtt{v}^{0,1}_m(X_{\tau_{\leq j}}^{q_{\leq j}})
\end{equation}
for all $\tau_1,\ldots,\tau_j\in(0,1]$, $q_1\in\S_1$ and $q_i\in\mathcal{S}^{n-k_{i-1}-1}\cap(\S_i)_{\tau_{<i}}^{q_{<i}}$, for $i\in\{2,\ldots,j\}$.
Now, in \eqref{def_V_iter}, each $\tau_i$ plays the role of a small parameter.


\begin{lemma}\label{lem_expansion_epstau}
For every $j\in\{1,\ldots,p+1\}$, there exist relatively compact open neighborhoods $V_j$ and $W_j$ of $0$ in $\R^n$ and a chart $\psi^{q_j}:V_j\rightarrow W_j$ of privileged coordinates at $q_j=q_j(\tau_{<j},\theta_{\leq j})$, depending subanalytically on $(\tau_{<j},\theta_{\leq j})\in[0,1]^{j-1}\times N_{\leq j}$, whose Jacobian is uniformly bounded above and below by positive constants on $[0,1]^{j-1}\times N_{\leq j}\times V_j$.
In the chart $\psi^{q_j}$, the function $\mathtt{V}_{\tau_{\leq j}}^{q_{\leq j}}$ and all coefficients of the $m$-tuple of vector fields $X_{\tau_{\leq j}}^{q_{\leq j}}$ 
have an extension at $\tau_1\cdots\tau_j=0$, and are bounded globally subanalytic functions of $(\tau_{\leq j},\theta_{\leq j},y)\in[0,1]^j\times N_{\leq j}\times W_j$.
\end{lemma}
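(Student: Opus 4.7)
The plan is to proceed by induction on $j$, building $\psi^{q_j}$ together with the prepared expression of $X_{\tau_{\leq j}}^{q_{\leq j}}$ at each step. The base case $j=1$ is handled by the privileged coordinates straightening $\S_1$ introduced in Section~\ref{sec_geom_context}, which depend smoothly (hence subanalytically) on $q_1 \in \S_1$ with bounded Jacobian. For the inductive step, I would assume the conclusion up to level $j-1$, so that in the chart $\psi^{q_{j-1}}$ the $m$-tuple $X_{\tau_{<j}}^{q_{<j}}$ and all its iterated Lie brackets are bounded subanalytic functions of $(\tau_{<j},\theta_{<j},y)$ that extend to $\tau_1\cdots\tau_{j-1}=0$. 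Using the subanalytic parametrization $q_j=q_j(\tau_{<j},\theta_{\leq j})$ from Lemma~\ref{lem_subanalytic_param}, the goal is to produce privileged coordinates centered at $q_j$ whose adapted frame is assembled from \emph{renormalized} Lie brackets of $X_{\tau_{<j}}^{q_{<j}}$, in such a way that no component collapses as $\tau_1\cdots\tau_{j-1}\to 0$.

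The construction of this adapted frame is the crux. Standard sub-Riemannian theory (Appendix~\ref{app_privileged}) provides, at each $q_j$, an adapted basis $((X_{\tau_{<j}}^{q_{<j}})_{I_1},\ldots,(X_{\tau_{<j}}^{q_{<j}})_{I_n})$ of $T_{q_j}\R^n$ with $|I_k|$ equal to the $k$-th sR weight along the stratum containing $q_j$. However, as Example~\ref{ex_BG2} shows, the determinant of this basis can vanish as $\tau_1\cdots\tau_{j-1}\to 0$. To remedy this I would apply the subanalytic cell preparation theorem of Appendix~\ref{app_subanalytic} to the finite collection of determinants $\det((X_{\tau_{<j}}^{q_{<j}})_{I_1},\ldots,(X_{\tau_{<j}}^{q_{<j}})_{I_n})(q_j)$, viewed as subanalytic functions of $(\tau_{<j},\theta_{\leq j})$. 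On each cell of the resulting decomposition, these determinants are fractional monomials in $\tau_{<j}$ times units; within a cell I would choose a tuple $(I_1,\ldots,I_n)$ realizing the minimal order of vanishing and then divide each bracket by an appropriate $\prod_i \tau_i^{\alpha_{i,k}}$ so that the normalized frame $Z_1,\ldots,Z_n$ has determinant bounded above and below by positive constants. The chart $\psi^{q_j}$ is then defined by exponential coordinates of the second kind associated with $Z_1,\ldots,Z_n$ (pulled back through $(\psi^{q_{j-1}})^{-1}$): its Jacobian at $0$ equals $\det(Z_1,\ldots,Z_n)(q_j)$, bounded on the cell by construction, and subanalytic dependence is preserved because iterated brackets, determinants, and exponential flows of subanalytic vector fields are subanalytic.

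It then remains to check that, in this chart, the pushforward $X_{\tau_{\leq j}}^{q_{\leq j}}=\tau_j(\delta^{q_j}_{\tau_j})^*X_{\tau_{<j}}^{q_{<j}}$ has coefficients extending subanalytically to $\tau_j=0$ and, together with the inductive hypothesis, to $\tau_1\cdots\tau_j=0$. This is a direct computation: by the choice of adapted frame, the sR weights of $X_{\tau_{<j}}^{q_{<j}}$ at $q_j$ are exactly those used to define $\delta^{q_j}_{\tau_j}$, and the standard expansion of Appendix~\ref{app_nilp} realizes each coefficient of $X_{\tau_{\leq j}}^{q_{\leq j}}$ in $y$ as a polynomial in $\tau_j$ whose coefficients are bounded subanalytic functions of $(\tau_{<j},\theta_{\leq j})$. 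The two-sided boundedness of $\mathtt{V}_{\tau_{\leq j}}^{q_{\leq j}}$ then follows from \eqref{def_V_iter} applied to the normalized frame: the determinants appearing in $\mathtt{v}_m^{0,1}$ are polynomial expressions in the controlled coefficients, hence bounded globally subanalytic functions, and the cell-wise normalization keeps them away from zero.

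The main obstacle is the second paragraph: ensuring that after renormalization the frame stays adapted while all coefficients remain bounded subanalytic functions. This forces a genuine decomposition of parameter space into finitely many subanalytic cells, inside each of which the chart $\psi^{q_j}$, the adapted frame, and the exponential flow are given by essentially different explicit formulas. Subanalyticity of the final chart is then a consequence of the finiteness of the decomposition; the careful bookkeeping of weights, brackets, and cells is where the bulk of the technical work lies, even though each individual ingredient (Hironaka-type rectilinearization, subanalytic cell preparation, classical privileged coordinates) is available off the shelf.
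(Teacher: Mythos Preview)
Your approach is essentially the same as the paper's: induction on $j$, subanalytic cell preparation of the Lie-bracket data, selection in each cell of an adapted frame whose determinant realises $\mathtt{V}_{\tau_{<j}}^{q_{<j}}$, renormalisation $Y_k=\prod_i\tau_i^{\beta_{k,i}}Z_k$ to a frame with unit determinant, and exponential coordinates of the second kind built from $Z_1,\ldots,Z_n$.

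Two small points where the paper is sharper. First, the paper applies the preparation theorem not only to the determinants but to the whole finite family consisting of $\mathtt{V}_{\tau_{<j}}^{q_{<j}}$ and \emph{all coefficients} of the vector fields $(X_{\tau_{<j}}^{q_{<j}})_I$; this is what directly produces the per-vector-field exponents $\beta_{k,i}$ in $Y_k=\prod_i\tau_i^{\beta_{k,i}}Z_k$ and guarantees that every coefficient of $Z_k$ is itself a nonnegative-exponent monomial times a unit. Your scheme prepares only the determinants, so the existence of the ``appropriate $\prod_i\tau_i^{\alpha_{i,k}}$'' you invoke is not yet justified --- you would need a second pass of preparation on the individual coefficients. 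Second, your last sentence overclaims: $\mathtt{V}_{\tau_{\leq j}}^{q_{\leq j}}$ is \emph{not} bounded away from zero (it is a monomial in the $\tau_i$ times a unit, and vanishes as $\tau_i\to 0$); the lemma only asserts that it is a bounded globally subanalytic function, which is all your construction actually yields.
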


By subanalyticity, there exists a subanalytic cell decomposition of $[0,1]^j\times N_{\leq j}\times W_j$ such that, in each cell, the $m$-tuple $X_{\tau_{\leq j}}^{q_{\leq j}}$ depends analytically on $(\tau_{\leq j},\theta_{\leq j},y)$. 

\begin{proof}
The proof is done by induction on $j$. There is nothing to prove for $j=1$.
Let us assume that the result is true until the step $j-1$ and let us establish it at the step $j$.

Since $M$ is compact, let $\mathcal{Q}_\mathrm{max}$ be the maximum of $\mathcal{Q}^M(q)$ over all $q\in M$.
Using the notations of Appendix \ref{sec_uniformballbox}, we consider the finite set $\mathscr{X}_{\tau_{<j}}^{q_{<j}}$ of all $n$-tuples 
$$
\mathtt{X}_{\tau_{<j}}^{q_{<j}}=((X_{\tau_{<j}}^{q_{<j}})_{I_1}, \ldots, (X_{\tau_{<j}}^{q_{<j}})_{I_n})
$$
such that $\vert\mathtt{X}_{\tau_{<j}}^{q_{<j}}\vert = \sum_{i=1}^n \vert I_i\vert \leq \mathcal{Q}_\mathrm{max}$. 

Using the induction assumption,
let $\mathcal{F}$ be the finite set of globally subanalytic functions of $(\tau_{<j},\theta_{<j},y) \in [0,1]^{j-1}\times N_{<j}\times W_{j-1}$, consisting of the function $\mathtt{V}_{\tau_{<j}}^{q_{<j}}$ (which does not depend on $y$) and of all coefficients of all vector fields of the frames $\mathtt{X}_{\tau_{<j}}^{q_{<j}}\in\mathscr{X}_{\tau_{<j}}^{q_{<j}}$. 
We consider all these functions in a neighborhood $W_j$ of the point $q_j=q_j(\tau_{<j},\theta_{<j},\theta_j)$, with $\theta_j\in N_j$, given by Lemma \ref{lem_subanalytic_param}. The elements of $\mathcal{F}$ are then globally subanalytic functions of $(\tau_{<j},\theta_{\leq j},y) \in [0,1]^{j-1}\times N_{\leq j}\times W_j$.
By the subanalytic cell preparation theorem (see Appendix \ref{app_subanalytic}), there exists a subanalytic cell decomposition of $[0,1]^{j-1}\times N_{\leq j}\times W_j$ such that, in each cell, each $f\in\mathcal{F}$ can be prepared with respect to $\tau_{<j}$ (we can choose $\zeta_i=0$ as a center, for every $i\in\{1,\ldots,j\}$) and written as
the product of a fractional monomial $\prod_{i=1}^{j-1} \tau_i^{\alpha_i}$, for some $\alpha_1,\ldots,\alpha_j\in\Q$, with a unit (i.e., not vanishing in the cell) globally analytic and analytic function.


We remove from $\mathscr{X}_{\tau_{<j}}^{q_{<j}}$ all elements $\mathtt{X}_{\tau_{<j}}^{q_{<j}}$ such that $\det(\mathtt{X}_{\tau_{<j}}^{q_{<j}})=0$ (they are the same for all $\tau_1,\ldots,\tau_{j-1}>0$) so that, now, every $\mathtt{X}_{\tau_{<j}}^{q_{<j}}\in\mathscr{X}_{\tau_{<j}}^{q_{<j}}$ is a frame at $q_j$, anyway not necessarily adapted to the sR flag. 

In each cell, by definition of the volume, there exists a $n$-tuple $(Y_1,\ldots,Y_n)\in\mathscr{X}_{\tau_{<j}}^{q_{<j}}$ that is an adapted frame at $q_j$, whose determinant at $0$ is equivalent to $\mathtt{V}_{\tau_{<j}}^{q_{<j}}$ as $\tau_{<j}\rightarrow 0$. 
This means that
\begin{equation}\label{desingXj}
Y_k = \prod_{i=1}^{j-1} \tau_i^{\beta_{k,i}} Z_k \qquad \textrm{with}\qquad Z_k = \tilde Z_k + \mathrm{o}(1)\quad\textrm{as}\quad \tau_{<j}\rightarrow 0 ,
\end{equation}
and the product of all $\tau_i^{\beta_{k,i}}$, for $1\leq i\leq j-1$ and $1\leq k\leq n$, is equivalent to $\mathtt{V}_{\tau_{<j}}^{q_{<j}}$ as $\tau_{<j}\rightarrow 0$. 
Moreover, any coefficient of any vector field $Z_k$ can be written as the product of a monomial $\prod_{i=1}^{j-1} \tau_i^{\alpha_i}$, for some $\alpha_1,\ldots,\alpha_j\in\Q\cap[0,+\infty)$, with a unit (i.e., not vanishing in the cell) globally analytic and analytic function.
Note that $Y_k = (X_k)_{\tau_{<j}}^{q_{<j}}$ if $1\leq k\leq m$.
Since the $m$-tuple of vector fields $X_{\tau_{<j}}^{q_{<j}}$ satisfies the H\"ormander condition for all $\tau_1,\ldots,\tau_{j-1}>0$, with a uniform degree of nonholonomy, it follows that the $m$-tuple of vector fields $Z=(Z_1,\ldots,Z_m)$ satisfies the H\"ormander condition with a uniform degree of nonholonomy.
In this way, \eqref{desingXj} provides a desingularization of $X_{\tau_{<j}}^{q_{<j}}$ in each cell of a subanalytic cell decomposition. 

Now, in the cell, we define the privileged change of coordinates $y=\psi^{q_j}(x) = \exp(x_1 Z_1)\circ\cdots\circ\exp(x_n Z_n ) (0)$. By construction, the Jacobian of $\psi^{q_j}$ at $0$ is positive.
Setting $(X)_{\tau_{\leq j}}^{q_{\leq j}} = \tau_j (\delta^{q_j}_{\tau_j})^* (X)_{\tau_{<j}}^{q_{<j}}$, the lemma follows. 
\end{proof}


Let us give several examples.

\begin{example}\label{ex_BG2_2}
In Example \ref{ex_BG2} (Baouendi-Grushin case with a tangency point), we have $\mathtt{V}^{q_1,q_2}_{\tau_1,\tau_2} = 2\sqrt{\tau_1}+2\tau_2$,
$(X_2)_{\tau_1,\tau_2}^{q_1,q_2}$ converges to $0$, but we see here that the set of all $(\tau_1,\tau_2)\in(0,1]^2$ has a partition in two cells, delimited by the (globally subanalytic) curve $\tau_2=\sqrt{\tau_1}$:
\begin{itemize}[parsep=0.5mm,itemsep=0.2mm,topsep=0.5mm]
\item If $\sqrt{\tau_1}>\tau_2$ then 
$(X_2)^{q_1,q_2}_{\tau_1,\tau_2}(y) = \sqrt{\tau_1}\big( 2y_1 + \frac{\tau_2}{\sqrt{\tau_1}} y_1^2 - \sqrt{\tau_1}\tau_2 y_2 \big) \, \partial_{y_2}$: 
the asymptotics is given by a smooth perturbation of a sR structure of Baouendi-Grushin type where the second vector field is multiplied by $\sqrt{\tau_1}$. 
\item If $\tau_2>\sqrt{\tau_1}$ then 
$(X_2)^{q_1,q_2}_{\tau_1,\tau_2}(y) = \tau_2\big( y_1^2 + 2\frac{\sqrt{\tau_1}}{\tau_2} y_1 - \frac{\tau_1}{\tau_2} y_2 \big) \, \partial_{y_2}$: 
the asymptotics is given by a smooth perturbation of a sR structure of $2$-Baouendi-Grushin type where the second vector field is multiplied by $\tau_2$. 
\end{itemize}
In each cell, the sR structure ``resembles" either to a Baouendi-Grushin or to a $2$-Baouendi-Grushin sR structure, multiplied by a rational power of $\tau_i$.
\end{example}

\begin{example}\label{simpleexample_nonnilp_1}
Consider the sR structure of Example \ref{simpleexample_nonnilp}, and set $q_1=(0,0)$ and $q_2=(0,1)$. 
We have $\S_1=\{q_1\}$ and $\S_2=\R^2\setminus\S_1$.
We obtain 
$(X_1)^{q_1,q_2}_{\tau_1,\tau_2}(y)=\partial_{y_1}$ and $(X_2)^{q_1,q_2}_{\tau_1,\tau_2}(y)=(\tau_2^2 y_1^2+\tau_1^4 + 2\tau_1^4\tau_2 y_2 + \tau_1^4\tau_2^2y_2^2) \, \partial_{y_2}$.
We have $\mathtt{V}^{q_1,q_2}_{\tau_1,\tau_2} \sim \tau_1^4+\tau_2^2$ as $(\tau_1,\tau_2)\rightarrow (0,0)$.
The set of parameters $(\tau_1,\tau_2)\in(0,1]^2$ is decomposed in two cells, delimited by the (analytic) curve $\tau_2=\tau_1^2$:
\begin{itemize}[parsep=0.5mm,itemsep=0.2mm,topsep=0.5mm]
\item If $\tau_1^2>\tau_2$ then $(X_2)^{q_1,q_2}_{\tau_1,\tau_2}(y)$ is a smooth perturbation of $\tau_1^4\,\partial_{y_2}$ (more precisely, the coefficient is written as the product of $\tau_1^4$ with a unit analytic function of $(\tau_1,\tau_2,\frac{\tau_2}{\tau_1^2},y)$), i.e., the sR structure is a smooth perturbation of a Riemannian structure where the second vector field is multiplied by $\tau_1^4$. 
\item If $\tau_2>\tau_1^2$ then $(X_2)^{q_1,q_2}_{\tau_1,\tau_2}(y)$ is a smooth perturbation of $\tau_2^2 y_1^2\,\partial_{y_2}$, i.e., the sR structure is a smooth perturbation of a $2$-Baouendi-Grushin type where the second vector field is multiplied by $\tau_2^2$; similarly, we can factorize by $\tau_2^2$.
\end{itemize}
\end{example}

\begin{example}\label{example_tower_2BG}
Let us consider a ``tower" of intricated Baouendi-Grushin cases with tangency points, with two ``floors": we set 
$X_1=\partial_{x_1}$, $X_2=(x_1^2-x_2)\,\partial_{x_2}$ and $X_3=(x_2^2-x_3)\,\partial_{x_3}$.
The singular set $\S = \{ x_1^2=x_2 \} \cup \{ x_2^2=x_3 \}$ consists of $7$ equisingular strata. Setting $q_1=(0,0,0)$ and $q_2 = (\tau_1^{3/4}, \tau_1^{1/2}, 1)$, we find
\begin{multline*}
(X_1)^{q_1,q_2}_{\tau_1,\tau_2}(y) = \partial_{y_1}, \qquad
(X_2)^{q_1,q_2}_{\tau_1,\tau_2}(y) = (2\tau_1^{3/4}y_1+\tau_2 y_1^2-\tau_1\tau_2 y_2)\,\partial_{y_2}, \\
(X_3)^{q_1,q_2}_{\tau_1,\tau_2}(y) = (2\tau_1^{1/2}y_2+\tau_2^2y_2^2-\tau_1\tau_2 y_3)\,\partial_{y_3} .
\end{multline*}
We have $\mathtt{V}^{q_1,q_2}_{\tau_1,\tau_2} \sim \tau_1^2+\tau_1^{1/2}\tau_2^2+\tau_2^4$ as $(\tau_1,\tau_2)\rightarrow(0,0)$. 
The set of parameters $(\tau_1,\tau_2)\in[0,1]^2$ is decomposed in three cells, delimited by the globally subanalytic curves $\tau_2=\tau_1^{1/4}$ and $\tau_2=\tau_1^{3/4}$:
\begin{itemize}[leftmargin=1cm,parsep=0.5mm,itemsep=0.2mm,topsep=0.5mm]
\item If $\tau_2<\tau_1^{3/4}$ then $(X_2)^{q_1,q_2}_{\tau_1,\tau_2}(y)$ (resp., $(X_3)^{q_1,q_2}_{\tau_1,\tau_2}(y)$) is a smooth perturbation of $2\tau_1^{3/4} y_1\, \partial_{y_2}$ (resp., of $2\tau_1^{1/2} y_2\, \partial_{y_3}$).
\item If $\tau_1^{3/4}<\tau_2\ll\tau_1^{1/4}$ then $(X_2)^{q_1,q_2}_{\tau_1,\tau_2}(y)$ (resp. $(X_3)^{q_1,q_2}_{\tau_1,\tau_2}(y)$) is a smooth perturbation of $\tau_2 y_1^2\, \partial_{y_2}$ (resp., of $2\tau_1^{1/2} y_2\, \partial_{y_3}$).
\item 
If $\tau_1^{1/4}<\tau_2$ then $(X_2)^{q_1,q_2}_{\tau_1,\tau_2}(y)$ (resp., $(X_3)^{q_1,q_2}_{\tau_1,\tau_2}(y)$) is a smooth perturbation of $\tau_2 y_1^2\, \partial_{y_2}$ (resp., of $\tau_2^2 y_2^2\, \partial_{y_3}$).
\end{itemize}
Note that, here, we have $\ell_1=4$. 
%
Taking a tower with more floors, the integer $\ell_1$ could be arbitrarily large.
\end{example}

\begin{example}\label{example_plarge}
Consider the sR structure generated in $\R^2$ by $X_1=\partial_{x_1}$ and $X_2=(x_1^4+x_1^2x_2^2+x_2^{2k})\, \partial_{x_2}$ where $k\geq 3$ (this is the case 6 of the table of examples given in Section \ref{sec_nonnilp_analytic}).
The singular set is $\S=\{(0,0)\}$ and we have $w_1=1$ and $w_2=5$ at $q_1=(0,0)$. We have 
$$
(X_1)^{q_1}_{\tau_1}=\partial_{x_1}, \qquad
(X_2)^{q_1}_{\tau_1}=(x_1^4+\tau_1^8 x_1^2x_2^2+\tau_1^{10k-4}x_2^{2k})\, \partial_{x_2} .
$$
Hence $\widehat{X_1}^{q_1}=\partial_{x_1}$ and $\widehat{X_2}^{q_1}=x_1^4\,\partial_{x_2}$ and thus $\widehat{\S}^{q_1}=\{x_1=0\}$. Wet set $q_2=(0,1)$, $x_1=y_1$ and $x_2=1+y_2$. We find
$$
(X_1)^{q_1,q_2}_{\tau_1,\tau_2}(y) = \partial_{y_1}, \qquad
(X_2)^{q_1,q_2}_{\tau_1,\tau_2}(y) = ( \tau_2^4 y_1^4 + \tau_1^8 \tau_2^2 y_1^2 (1+\tau_2 y_2)^2 + \tau_1^{10k-4} (1+\tau_2 y_2)^{2k} ) \, \partial_{y_2} .
$$
We have $\mathtt{V}^{q_1,q_2}_{\tau_1,\tau_2} \sim \tau_1^{10k-4}+\tau_1^8\tau_2^2+\tau_2^4$ as $(\tau_1,\tau_2)\rightarrow(0,0)$.
Here, we have $\ell_1=1$, 
and the ``competition" is between the three monomials $\tau_1^{10k-4}$, $\tau_1^8\tau_2^2$ and $\tau_2^4$, yielding as well an appropriate cell decomposition.
\end{example}

\subsubsection{Desingularization of the heat kernel}\label{sec_cell_decomp_VF}

The volume function $\mathtt{V}_{\tau_{\leq j}}^{q_{\leq j}}$ defined by \eqref{def_V_iter} is the good quantity in order to desingularize the heat kernel $e_{\tau_{\leq j}}^{q_{\leq j}}(1,y,y)$.
This is not surprising, because we already know that the product $\mathtt{V}_{\tau_{\leq j}}^{q_{\leq j}}\, e_{\tau_{\leq j}}^{q_{\leq j}}(1,y,y)$ is bounded above and below by positive constants on any compact set of $y$. Actually, by extending the uniform ball-box theorem and in particular the estimate \eqref{mu_ball_eps} of Appendix \ref{sec_uniformballbox} and the exponential estimates \eqref{exp_estimates} of Appendix \ref{app_sR_kernel} to parameter-dependent vector fields (framework of Appendix \ref{app_parameter_kernels}), it can even be proved that these bounds are uniform with respect to the $\tau_i$.
But the following lemma, which can be seen as a counterpart to Lemma \ref{lem_uniform_double_nilp}, is much more precise.

\begin{lemma}\label{lem_nonnilp_uniform_multiple_nilp}
Let $V$ be a relatively compact open neighborhood of $0$ in $\R^n$. 
For every $j\in\{1,\ldots,p\}$, denoting for short $q_j = q_j(\tau_{<j},\theta_{\leq j})$ as before, the function $\mathtt{V}_{\tau_{\leq j}}^{q_{\leq j}}\, e_{\tau_{\leq j}}^{q_{\leq j}}(1,y,y)$ has an extension at $\tau_1\cdots\tau_j=0$ that is a bounded positive function of $(\tau_{\leq j},\theta_{\leq j},y)\in [0,1]^j\times N_{\leq j}\times V$. More precisely, there exists a subanalytic cell decomposition of $[0,1]^j\times N_{\leq j}\times V$ such that the projection onto $[0,1]^j$ of each cell of maximal dimension has the (cylindrical) form
$$
a_i(\tau_{<i},\theta_{\leq j},y) < \tau_i <  b_i(\tau_{<i},\theta_{\leq j},y) , \qquad i=1,\ldots,j,
$$
for some analytic and globally subanalytic functions $a_i$ and $b_i$ such that $0\leq a_i(\cdot)<b_i(\cdot)\leq 1$ for every $i\in\{0,\ldots,j\}$, with either $a_i(\cdot)>0$ or $a_i(\cdot)\equiv 0$, 
and such that in each such cell, we have
$$
\mathtt{V}_{\tau_{\leq j}}^{q_{\leq j}} = t^{\ell'/2} \prod_{i=1}^j \tau_i^{\alpha_i} \, U(h(\tau_{\leq j},\theta_{\leq j},y))
\qquad\textrm{and}\qquad
\mathtt{V}_{\tau_{\leq j}}^{q_{\leq j}}\, e_{\tau_{\leq j}}^{q_{\leq j}}(1,y,y) = F(h(\tau_{\leq j},\theta_{\leq j},y))
$$
where
$$
h(\tau_{\leq j},\theta_{\leq j},y) =  \left(  \left( c_i(\theta_{\leq j},y) \right)_{1\leq i\leq N}, \left( \frac{a_i(\tau_{<i},\theta_{\leq j},y)}{\tau_i} \right)^{1/\ell_i}_{1\leq i\leq j}, \left( \frac{\tau_i}{b_i(\tau_{<i},\theta_{\leq j},y)} \right)^{1/\ell_i}_{1\leq i\leq j}  \right) 
$$
for some $\ell'\in\N$, $\alpha_1,\ldots,\alpha_j\in\Q$, $N, \ell_1,\ldots,\ell_j\in\N^*$, 
for some analytic and globally subanalytic functions $c_1,\ldots,c_N$ taking their values in $[0,1]$, 
and for some analytic unit (i.e., not vanishing) function $U$ and some positive $C^\infty$ function $F$ defined on an open set containing $[0,1]^{N+2j}$.
\end{lemma}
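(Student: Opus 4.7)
My plan is to combine, in a systematic way, Lemma \ref{lem_expansion_epstau}, the subanalytic cell preparation theorem, and the parametric smoothness of the sR heat kernel from \cite{CHT_AHL}. The starting point is to regard as globally subanalytic bounded functions on $[0,1]^j \times N_{\leq j} \times V$ the volume $\mathtt{V}^{q_{\leq j}}_{\tau_{\leq j}}$ together with all coefficients (in the chart $\psi^{q_j}$ of Lemma \ref{lem_expansion_epstau}) of the $m$-tuple $X^{q_{\leq j}}_{\tau_{\leq j}}$ and of a fixed family of adapted frames realizing the volume. Applying the subanalytic cell preparation theorem of Appendix \ref{app_subanalytic} to this finite family, iteratively in the variables $\tau_1,\ldots,\tau_j$ with center $\zeta_i=0$, yields a finite subanalytic cell decomposition of $[0,1]^j \times N_{\leq j} \times V$ whose cells of maximal dimension have the cylindrical form required in the lemma, with boundary functions $a_i, b_i$ satisfying the stated conditions. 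On each such cell, every prepared function becomes the product of a fractional monomial $\prod_i \tau_i^{\alpha_i}$ and a unit analytic function of the compactifying coordinates $c_i(\theta_{\leq j},y)$ and the ratios $(a_i/\tau_i)^{1/\ell_i}$, $(\tau_i/b_i)^{1/\ell_i}$. This delivers directly the claimed factorization of $\mathtt{V}^{q_{\leq j}}_{\tau_{\leq j}}$.

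Within each cell I would next invoke the desingularization from the proof of Lemma \ref{lem_expansion_epstau}: pick the adapted frame $(Y_1,\ldots,Y_n)\in \mathscr{X}^{q_{\leq j}}_{\tau_{\leq j}}$ realizing the volume and write $Y_k = \prod_i \tau_i^{\beta_{k,i}}\, Z_k$, where the $m$-tuple $Z = (Z_1,\ldots,Z_m)$ has coefficients that are analytic units of the cell-compactifying coordinates and satisfies the H\"ormander condition with uniform degree of nonholonomy on the closure of the cell. An explicit anisotropic dilation (the composition of the dilations $\delta_{\tau_i}^{q_i}$ rescaled by the $\beta_{k,i}$) intertwines $X^{q_{\leq j}}_{\tau_{\leq j}}$ with $Z$, so the sR Laplacian $\triangle^{q_{\leq j}}_{\tau_{\leq j}}$ is conjugated to a Laplacian $\triangle_Z$ whose generators depend analytically on the cell-compactifying coordinates, modulo a scalar factor that, at the level of heat kernels, amounts to multiplication by a fractional monomial in the $\tau_i$ times a smooth unit prefactor in the $h$-variables.

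Applying then the parametric version of Theorem \ref{lemfondamental} (framework of Appendix \ref{app_parameter_kernels} and \cite{CHT_AHL}) to the family $\{\triangle_Z\}$, whose generators depend analytically on the cell-coordinates and satisfy a uniform H\"ormander condition with uniform degree of nonholonomy, one obtains that $e_Z(1,y,y)$ extends to a positive $C^\infty$ function on the closure of the compactified cell, expressible as a smooth function of the $h$-variables. Multiplying by the prepared form of $\mathtt{V}^{q_{\leq j}}_{\tau_{\leq j}}$ and using the intertwining relation, the product $\mathtt{V}^{q_{\leq j}}_{\tau_{\leq j}} \cdot e^{q_{\leq j}}_{\tau_{\leq j}}(1,y,y)$ is then of the form $F(h)$ with $F$ smooth and globally subanalytic. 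Positivity and boundedness away from $0$ and $\infty$ of $F$ follow from the exponential heat kernel estimates \eqref{exp_estimates} of Appendix \ref{app_sR_kernel} applied uniformly to the family $\triangle_Z$, together with the uniform ball-box estimate \eqref{mu_ball_eps} of Appendix \ref{sec_uniformballbox} in its parametric form.

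The main obstacle I anticipate is precisely the parametric smoothness of $e_Z$ up to the boundary of the compactified cell, i.e.\ as some of the ratios $(a_i/\tau_i)^{1/\ell_i}$ or $(\tau_i/b_i)^{1/\ell_i}$ approach $0$ or $1$. This requires a careful parameter-dependent version of the heat kernel expansion of \cite{CHT_AHL}, in which the H\"ormander constant, the sR weights of $Z$ at $0$, and all bracket estimates remain uniform over the closure of the cell. The uniformity on the closure is guaranteed by the analyticity of the cells, by the uniform bound on the degree of nonholonomy of $Z$ inherited from $X^{q_{\leq j}}_{\tau_{\leq j}}$, and crucially by the uniform positivity of the determinant of the adapted frame $(Z_1,\ldots,Z_n)$ at $0$, which is precisely what was engineered in Lemma \ref{lem_expansion_epstau} and which persists up to the closure of each cell thanks to the preparation.
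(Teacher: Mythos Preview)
Your plan follows essentially the same architecture as the paper's proof: prepare the finite family of subanalytic data (the volume and all coefficients of $X^{q_{\leq j}}_{\tau_{\leq j}}$ and of the adapted frames in $\mathscr{X}^{q_{\leq j}}_{\tau_{\leq j}}$) via the iterated cell preparation theorem with centers $\zeta_i=0$, extract the maximizing adapted frame $(Y_1,\ldots,Y_n)$ in each cell, desingularize, and then invoke Theorems \ref{thm_parameter_heat_smooth} and \ref{lemfondamental} for the resulting uniformly H\"ormander family. Two points deserve sharpening.

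First, your ``explicit anisotropic dilation intertwining $X^{q_{\leq j}}_{\tau_{\leq j}}$ with $Z$'' is not literally correct: a pure coordinate dilation conjugates $X_l=\tau^{\beta_l}Z_l$ to $Z_l$ only if the $Z_l$ are homogeneous with respect to that dilation, which they are not in general. The paper instead uses the exponential change of coordinates
\[
\varphi_j(z)=\exp(z_1 Y_1)\circ\cdots\circ\exp(z_n Y_n)(0),
\]
whose Jacobian at $0$ is $\det(Y_1,\ldots,Y_n)(0)=\mathtt{V}^{q_{\leq j}}_{\tau_{\leq j}}$ times a unit. The change-of-variable formula \eqref{formulas_kernel} then gives directly
\[
\tilde e^{q_{\leq j}}_{\tau_{\leq j}}(1,z,z)=\mathtt{V}^{q_{\leq j}}_{\tau_{\leq j}}\, e^{q_{\leq j}}_{\tau_{\leq j}}(1,y,y),
\]
with $\tilde e$ the heat kernel of $\tilde X=\varphi_j^* X$; the prepared form of all coefficients of the $Y_k$ (hence of the flows defining $\varphi_j$) is what forces $\tilde X$ to depend smoothly on the compactifying $h$-variables and to satisfy a uniform H\"ormander condition up to the boundary of the cell. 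This single change of variable replaces your two-step ``dilation plus scalar prefactor'' and makes the cancellation between the Jacobian and $\mathtt{V}$ exact rather than ``up to a unit to be tracked''. Positivity of $F$ then comes for free from positivity of $\tilde e$, without needing to invoke the exponential estimates separately.

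Second, your claim that $F$ is globally subanalytic is too strong: the heat kernel is not analytic in its parameters in general, so $F$ is only $C^\infty$ (the paper notes this explicitly). Only the volume factor $U$ is a unit \emph{analytic} function; the function $F$ coming from the heat kernel side is merely smooth and positive on the closed cube $[0,1]^{N+2j}$.
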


\begin{remark}
Lemma \ref{lem_nonnilp_uniform_multiple_nilp} shows that the function $\mathtt{V}_{\tau_{\leq j}}^{q_{\leq j}}\, e_{\tau_{\leq j}}^{q_{\leq j}}(1,y,y)$ is smooth on each cell of maximal dimension of a subanalytic cell decomposition of $[0,1]^j\times N_{\leq j}\times V$. Such cells are analytic manifolds and are also globally subanalytic sets. Anyway, $\mathtt{V}_{\tau_{\leq j}}^{q_{\leq j}}\, e_{\tau_{\leq j}}^{q_{\leq j}}(1,y,y)$ is not a piecewise smooth function of $(\tau_{\leq j},\theta_{\leq j},y)\in [0,1]^j\times N_{\leq j}\times V$ in the sense used in Lemma \ref{lem_nonnilp_general}, i.e., it is not smooth up to the boundary of each cell: in particular, its limits as $\tau_{\leq j}\rightarrow 0$ depend of those of the ratios $a_i(\tau_{<i},\theta_{\leq j},y)/\tau_i$ and $\tau_i/b_i(\tau_{<i},\theta_{\leq j},y)$.
\end{remark}

\begin{remark}
Note that, in Lemma \ref{lem_nonnilp_uniform_multiple_nilp}, the function $U$ is analytic, while the function $F$ is only  smooth and actually may fail to be analytic. Indeed, the heat kernel $e_{\tau_{\leq j}}^{q_{\leq j}}(1,y,y)$ is \emph{not}, in general, a subanalytic function of its arguments. 
\end{remark}

\begin{proof}
We use the same notations as in the proof of Lemma \ref{lem_expansion_epstau}: in particular, we consider the finite set $\mathscr{X}_{\tau_{\leq j}}^{q_{\leq j}}$, from which we remove all elements $\mathtt{X}_{\tau_{\leq j}}^{q_{\leq j}}$ such that $\det(\mathtt{X}_{\tau_{\leq j}}^{q_{\leq j}})=0$, so that every $\mathtt{X}_{\tau_{\leq j}}^{q_{\leq j}}\in\mathscr{X}_{\tau_{\leq j}}^{q_{\leq j}}$ is a frame at $q_j$, anyway not necessarily adapted to the sR flag. 

By Lemma \ref{lem_expansion_epstau}, there exists a subanalytic cell decomposition of $[0,1]^j\times N_{\leq j}\times W_j$ such that, in each cell, the $m$-tuple $X_{\tau_{\leq j}}^{q_{\leq j}}$ depends analytically on $(\tau_{\leq j},\theta_{\leq j},y)$. Each cell can be written in a cylindrical form as in the statement of the lemma.

Like in the proof of Lemma \ref{lem_expansion_epstau}, in each cell, by definition, there exists a $n$-tuple $(Y_1,\ldots,Y_n)\in\mathscr{X}_{\tau_{\leq j}}^{q_{\leq j}}$ that is an adapted frame at $q_j$, whose determinant at $0$ is equivalent to $\mathtt{V}_{\tau_{\leq j}}^{q_{\leq j}}$ as $\tau_{\leq j}\rightarrow 0$. Then, in the cell, we define the change of coordinates $y=\varphi_j(z) = \exp(z_1 Y_1)\circ\cdots\circ\exp(z_n Y_n ) (0)$. By construction, the Jacobian of $\varphi_j$ at $0$ is equal to $\mathtt{V}_{\tau_{\leq j}}^{q_{\leq j}}$ multiplied by a unit globally subanalytic function of $(\tau_{\leq j},\theta_{\leq j})$.

Hence, the pullback $m$-tuple $\tilde X_{\tau_{\leq j}}^{q_{\leq j}} = \varphi_j^* X_{\tau_{\leq j}}^{q_{\leq j}}$ and its corresponding volume $\mathtt{\tilde V}_{\tau_{\leq j}}^{q_{\leq j}}$ (as defined in Appendix \ref{sec_uniformballbox}) depend globally subanalytically on $(\tau_{\leq j},\theta_{\leq j},z)$. 
Moreover, defining $\tilde{\mathcal{F}} =  \varphi_j^* \mathcal{F}$, in the cell, each function of $\tilde{\mathcal{F}}$ can be written as 
$$
F(\theta_{\leq j},z) \, 
U \left( \left( c_i(\theta_{\leq j},z) \right)_{1\leq i\leq N}, \, \tau_1^{1/\ell_1}, \left( \frac{a_i(\tau_{<i},\theta_{\leq j},z)}{\tau_i} \right)^{1/\ell_i}_{2\leq i\leq j} , \left( \frac{\tau_i}{b_i(\tau_{<i},\theta_{\leq j},z)} \right)^{1/\ell_i}_{2\leq i\leq j} \right)
$$
for some $N,\ell_1,\ldots,\ell_j\in\N^*$, some analytic and globally subanalytic functions $F$, $c_1,\ldots,c_N$, with $c_1,\ldots,c_N$ taking their values in $[0,1]$, and some unit (i.e., not vanishing) analytic function $U$ defined on an open subset of $\R^{N+2j-1}$ containing $[0,1]^{N+2j-1}$.

In particular, all possible limits of $\tilde X_{\tau_{\leq j}}^{q_{\leq j}}$ as $\tau_{\leq j}\rightarrow 0$ in the cell satisfy the H\"ormander condition with a uniform degree of nonholonomy and depend smoothly in the cell on the variables that are inside the above function $U$,
up to the boundary of the cell.
Therefore, denoting by $\tilde e_{\tau_{\leq j}}^{q_{\leq j}}$ the heat kernel generated by $\tilde X_{\tau_{\leq j}}^{q_{\leq j}}$, it follows from Theorem \ref{thm_parameter_heat_smooth} in Appendix \ref{app_parameter_kernels} and Theorem \ref{lemfondamental} in Appendix \ref{app_lemfondam} (in particular, the end of Theorem \ref{lemfondamental}) that $\tilde e_{\tau_{\leq j}}^{q_{\leq j}}(1,z,z)$ depends smoothly on the same variables in the cell. 

To end the proof, it suffices to note that, applying the first formula of \eqref{formulas_kernel} in Appendix \ref{appendix_Schwartz} (change of variable in a heat kernel), we have 
$\tilde e_{\tau_{\leq j}}^{q_{\leq j}}(1,z,z) = \mathtt{V}_{\tau_{\leq j}}^{q_{\leq j}} \, e_{\tau_{\leq j}}^{q_{\leq j}}(1,y,y)$.
The lemma follows.
\end{proof}

\subsubsection{End of the proof of Lemma \ref{lem_nonnilp_general}}
In order to end the proof Lemma \ref{lem_nonnilp_general}, this is not exactly the functions $\mathtt{V}_{\tau_{\leq j}}^{q_{\leq j}}$ and $e_{\tau_{\leq j}}^{q_{\leq j}}(1,y,y)$ studied in Lemma \ref{lem_nonnilp_uniform_multiple_nilp} that we need to consider, but rather the functions
$$
f_1(t,\tau_{\leq j},\theta_{\leq j+1},y) = \mathtt{V}^{q_{\leq j},q_{j+1}}_{\tau_{\leq j},\frac{\sqrt{t}}{\tau_1\cdots\tau_j}} 
\qquad\textrm{and}\qquad
f_2(t,\tau_{\leq j},\theta_{\leq j+1},y) = e_{\tau_{\leq j},\frac{\sqrt{t}}{\tau_1\cdots\tau_j}}^{q_{\leq j},q_{j+1}}(1,y,y)  
$$
($f_1$ does not depend on $y$), which are globally subanalytic functions on $\mathcal{D}_{j+1}\times N_{\leq j+1}\times W$ (recall that $\mathcal{D}_{j+1}$ is defined by \eqref{def_Dj}), where $W$ is a relatively compact open neighborhood of $0$ in $\R^n$. Moreover, $f_1$ and $f_1f_2$ are bounded.
We have anyway chosen to present Lemma \ref{lem_nonnilp_uniform_multiple_nilp} and its proof in order to keep a better readability.

Now, to obtain Lemma \ref{lem_nonnilp_general}, we proceed similarly as in the proof of Lemma \ref{lem_nonnilp_uniform_multiple_nilp}, but instead of preparing the functions $f\in\mathcal{F}$ with respect to $\tau_{\leq j}\in[0,1]^j$, we now prepare them with respect to $(t,\tau_{\leq j})\in\mathcal{D}_{j+1}$: there exists a subanalytic cell decomposition of $\mathcal{D}_{j+1}\times N_{\leq j+1} \times W$, the projection onto $(0,1]^{j+1}$ of each cell of maximal dimension having the (cylindrical) form
\begin{eqnarray*}
a_0(\theta_{\leq j+1},y) &\!\!\! <\ t\ < &\!\!\! b_0(\theta_{\leq j+1},y) , \\
a_i(t,\tau_{<i},\theta_{\leq j+1},y) &\!\! <\, \tau_i\, < &\!\! b_i(t,\tau_{<i},\theta_{\leq j+1},y) , \qquad i=1,\ldots,j,
\end{eqnarray*}
such that in each such cell the function $f_1$ can be written as $\displaystyle t^{\ell'/2} \prod_{i=1}^j \tau_i^{\beta_i} \, U(h(t,\tau_{\leq j},\theta_{\leq j+1},y))$
and the product function $f_1f_2$ as $F(h(t,\tau_{\leq j},\theta_{\leq j+1},y))$
with $h(t,\tau_{\leq j},\theta_{\leq j+1},y)$ equal to
$$
\left( \left( c_i(\theta_{\leq j+1},y) \right)_{1\leq i\leq N}, \, t^{1/\ell}, \left( \frac{a_i(t,\tau_{<i},\theta_{\leq j+1},y)}{\tau_i} \right)^{1/\ell_i}_{1\leq i\leq j}, \left( \frac{\tau_i}{b_i(t,\tau_{<i},\theta_{\leq j+1},y)} \right)^{1/\ell_i}_{1\leq i\leq j} \right) 
$$
for some $\ell'\in\N$, $\beta_1,\ldots,\beta_j\in\Q$, $N,\ell,\ell_1,\ldots,\ell_j\in\N^*$, for some analytic and globally subanalytic functions $a_1,\ldots,a_j$, $b_1,\ldots,b_j$, $c_1,\ldots,c_N$, with $0\leq a_i(\cdot)<b_i(\cdot)\leq 1$ for every $i\in\{1,\ldots,j\}$, with either $a_i(\cdot)>0$ or $a_i(\cdot)\equiv 0$, and with $c_1,\ldots,c_N$ taking their values in $[0,1]$, and for some unit analytic function $U$ and some positive $C^\infty$ function $F$ defined on an open subset of $\R^{N+2j+1}$ containing $[0,1]^{N+2j+1}$.
Lemma \ref{lem_nonnilp_general} follows.

\subsubsection{Further comments}\label{sec_further_comments_exp_estimates}
\paragraph{$\gamma$ and $k$ depend only on $D$, and not on the metric $g$.}
Indeed, if two horizontal distributions $D_1$ and $D_2$ are diffeomorphic, i.e., there exists a diffeomorphism $\phi$ such that $\phi_*D_1=D_2$ (this is weaker than the concept of sR isometry defined in Appendix \ref{app_isom}), then the corresponding sR distances $d_1$ and $d_2$ are equivalent in the sense that $d_1$ is bounded above and below, up to scaling, by $d_2$ on any compact; therefore the same is true for the volumes of the corresponding sR balls, and the claim follows from the Fefferman-Phong estimate \eqref{FeffermanPhong}.

\paragraph{Recovering the exponential estimates for the heat kernel.}
In this section, we show how to recover the exponential estimates \eqref{exp_estimates} for sR heat kernels, or equivalently \eqref{exp_estimates_diago} (see Appendix \ref{app_sR_kernel}), for real analytic sR structures, thanks to Lemma \ref{lem_nonnilp_uniform_multiple_nilp} and to the uniform ball-box theorem. 

As a preliminary remark, by Lemma \ref{lem_nonnilp_uniform_multiple_nilp}, there exist $C_1>0$ and $C_2>0$ such that 
\begin{equation}\label{prelimtau1tau2}
C_1 \leq \mathtt{V}^{q_1,q_2}_{\tau_1,\tau_2}\, e_{\tau_1,\tau_2}^{q_1,q_2}(1,0,0) \leq C_2
\qquad \forall(\tau_1,\tau_2)\in(0,1]^2\qquad \forall q_1\in\S_1\qquad \forall \theta_2\in N_2
\end{equation}
(recall that $q_2=q_2(\tau_1,q_1,\theta_2)$). 
Denoting by $(\BsR)^{q_1,q_2}_{\tau_1,\tau_2}(0,1)$ the unit sR ball for the sR structure generated by $X^{q_1,q_2}_{\tau_1,\tau_2}$, we have 
$$
\delta^{q_1}_{\tau_1}\circ\delta^{q_2}_{\tau_2}\big( (\BsR)^{q_1,q_2}_{\tau_1,\tau_2}(0,1) \big) = \delta^{q_1}_{\tau_1} \big( (\BsR)^{q_1}_{\tau_1}(q_2,\tau_2) \big) = \BsR( \delta^{q_1}_{\tau_1}(q_2), \tau_1\tau_2) ,
$$
hence
$$
\mu( \BsR( \delta^{q_1}_{\tau_1} (q_2), \tau_1\tau_2) ) = \tau_1^{\mathcal{Q}^M(q)} \tau_2^{\mathcal{Q}^M(q_2)} m ( (\BsR)^{q_1,q_2}_{\tau_1,\tau_2}(0,1) )
$$
for all $\tau_1,\tau_2>0$, where $m$ is the Lebesgue measure.
By the uniform ball-box theorem (see Appendix \ref{sec_uniformballbox}), the ratio $m ( (\BsR)^{q_1,q_2}_{\tau_1,\tau_2}(0,1) ) / \mathtt{V}^{q_1,q_2}_{\tau_1,\tau_2}$ is bounded above and below by positive constants, uniformly with respect to $(\tau_1,\tau_2)\in(0,1]^2$. Therefore, using \eqref{prelimtau1tau2}, the quantity
$$
\mu( \BsR( \delta^{q_1}_{\tau_1} (q_2), \tau_1\tau_2) \, e(\tau_1^2\tau_2^2, \delta^{q_1}_{\tau_1}(q_2), \delta^{q_1}_{\tau_1}(q_2) )
= \frac{ \mu( \BsR( \delta^{q_1}_{\tau_1} (q_2), \tau_1\tau_2)) } { \tau_1^{\mathcal{Q}^M(q)} \tau_2^{\mathcal{Q}^M(q_2)} } \, e^{q_1,q_2}_{\tau_1,\tau_2}(1,0,0)
$$
is bounded above and below by positive constants, uniformly with respect to $(\tau_1,\tau_2)\in(0,1]^2$.
Taking $\tau_2=\sqrt{t}/\tau_1$ and $q=\delta^{q_1}_{\tau_1}(q_2)$, we infer that the product $\mu(\BsR(q,\sqrt{t}))\, e(t,q,q)$ is bounded above and below by positive constants, uniformly with respect to $q$ in the compact manifold $M$ and with respect to $t\in(0,1]$.
This gives \eqref{exp_estimates_diago}. 

\subsection{Examples of non-analytic sR structures}\label{sec_nonnilp_flat}
In this section, we give classes of examples where the vector fields defining the sR structure are not analytic, i.e., may involve flat terms (however, $\S$ is stratified by equisingular smooth submanifolds, as in Section \ref{sec_equisingular_stratified_nilp}). Such examples have more exotic Weyl laws.

\begin{proposition}\label{prop_nonanalytic}
Consider in $\R^2$ the sR structure generated by the two vector fields $X_1=\partial_1$ and $X_2=(x_1^2+g(x_2))\, \partial_2$, where $g$ is a continous function such that $g(0)=0$ and $g(s)>0$ if $s\neq 0$, and satisfying $g(s)=\mathrm{o}(s^2)$ as $s\rightarrow 0$.
Then, for any $f\in C^0(\R^2)$ such that $f(0,0)>0$, we have
$$
\Tr(\mathcal{M}_f \, e^{t\triangle})\ \sim\  \mathrm{Cst}\ \frac{\vert \{g<t\}\vert}{t^{3/2}} +  \mathrm{Cst}\ \frac{1}{t} \int_{t < g(s)<1} \frac{1}{\sqrt{g(s)}}\, ds
$$
as $t\rightarrow 0^+$, where $\vert \{g<t\}\vert$ is the Lebesgue measure of the set of all $s\in(0,1)$ such that $g(s)<t$.
Moreover, the Weyl measure is the Dirac at $(0,0)$.
\end{proposition}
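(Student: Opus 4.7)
The plan is to combine the Fefferman--Phong type heat kernel estimate with an explicit computation of the volume of sR balls for this almost-Riemannian structure. First, since $g(0)=0$ and $g(s)>0$ for $s\neq 0$, the vector field $X_2$ vanishes precisely at $(0,0)$, so $\S=\{(0,0)\}$. The iterated brackets $[X_1,X_2]=2x_1\,\partial_2$ and $[X_1,[X_1,X_2]]=2\,\partial_2$ span $T_{(0,0)}M$; importantly, they do not involve derivatives of $g$, so the Lie structure is insensitive to the non-analyticity of $g$.

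Next, by the exponential estimates recalled in Appendix \ref{app_sR_kernel}, $e(t,q,q)\asymp 1/\mu(\BsR(q,\sqrt t))$ uniformly on $M\times(0,1]$. The main technical step is to establish the volume estimate
$$\mu(\BsR((x_1,x_2),r))\ \asymp\ r^2\bigl(x_1^2+g(x_2)+r^2\bigr)$$
uniformly for $(x_1,x_2)$ in a neighborhood of the origin and for $r\in(0,1]$. This would follow from the uniform ball-box theorem (Appendix \ref{sec_uniformballbox}): the adapted frames at $q=(x_1,x_2)$ are $(X_1,X_2)$ of determinant $x_1^2+g(x_2)$ and weight $2$, $(X_1,[X_1,X_2])$ of determinant $2x_1$ and weight $3$, and $(X_1,[X_1,[X_1,X_2]])$ of determinant $2$ and weight $4$; summing the contributions $r^2(x_1^2+g(x_2))$, $r^3|x_1|$ and $r^4$ and using $r|x_1|\leq x_1^2+r^2$ yields the claim.

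Third, for the trace asymptotics, I would first reduce to $f(0,0)$ using continuity of $f$: outside any fixed neighborhood of $(0,0)$ the structure is uniformly Riemannian and the corresponding contribution to the trace is $\mathrm{O}(1/t)$, hence subleading; inside a small neighborhood, $f(q)=f(0,0)+\mathrm{o}(1)$, so the leading behavior is
$$\Tr(\mathcal{M}_f\,e^{t\triangle})\ \asymp\ f(0,0)\int\frac{dx_1\,dx_2}{t\,(x_1^2+g(x_2)+t)}\ =\ \frac{\pi f(0,0)}{t}\int\frac{dx_2}{\sqrt{g(x_2)+t}}$$
via the exact evaluation $\int_\R dx/(x^2+a)=\pi/\sqrt a$. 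Splitting the $x_2$-integral according to whether $g(x_2)<t$ or $t\leq g(x_2)<1$, with $\sqrt{g+t}\asymp\sqrt t$ on the first region and $\sqrt{g+t}\asymp\sqrt{g}$ on the second, produces the two terms of the claimed formula; the contribution from $\{g\geq 1\}$ is $\mathrm{O}(1/t)$ and subleading. The Weyl measure statement then follows immediately: for any $f\in C^0(M)$, writing $f=f(0,0)+(f-f(0,0))$ with the remainder vanishing at the origin, the ratio $\Tr(\mathcal{M}_f\,e^{t\triangle})/\Tr(e^{t\triangle})$ tends to $f(0,0)$, so $w_\triangle=\delta_{(0,0)}$.

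The main obstacle will be establishing the uniformity of the ball-box volume estimate given that $g$ is only continuous and possibly flat: while the iterated brackets do not depend on $g$, the uniform ball-box theorem effectively requires $g$ to be slowly varying on $x_2$-intervals of size comparable to $r\,(x_1^2+g(x_2)+r^2)$, ensuring the approximation $g(x_2')\asymp g(x_2)$ inside the relevant box. For monotone or mildly varying $g$ this is immediate, but the fully general continuous case may need a slowly-varying hypothesis implicit in the statement; modulo this point, the rest of the argument is a direct computation.
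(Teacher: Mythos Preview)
Your route is different from the paper's. Rather than Fefferman--Phong plus ball-box, the paper runs its $(J+K)$-decomposition: nilpotentize at $q_1=(0,0)$ (weights $1,3$), getting $(X_2)^{q_1}_\tau=(x_1^2+\tau^{-2}g(\tau^3 x_2))\,\partial_2$; since the structure is nilpotentizable away from $x_1=0$, the only delicate point on the sphere $\mathcal{S}^1$ is $q_2=(0,1)$, where a second dilation with parameter $\varepsilon=\sqrt t/\tau$ gives $(X_2)^{q_1,q_2}_{\tau,\varepsilon}=(\varepsilon^2 y_1^2+\tau^{-2}g(\tau^3+\tau^3\varepsilon y_2))\,\partial_{y_2}$. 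One then splits on whether $\varepsilon^2$ or $\tau^{-2}g(\tau^3)$ dominates: in each regime the rescaled structure converges (Theorem~\ref{thm_parameter_heat_smooth}) to a fixed model (Riemannian, resp.\ $2$-Baouendi--Grushin), so $e^{q_1,q_2}_{\tau,\varepsilon}(1,0,0)$ equals $\tau^2/g(\tau^3)$, resp.\ $1/\varepsilon^2$, times a \emph{specific} smooth function. With $\varepsilon=\sqrt t/\tau$ the dichotomy becomes $g(\tau^3)<t$ versus $g(\tau^3)>t$, and the substitution $s=\tau^3$ yields the two terms. This also dissolves your worry about ball-box with non-smooth $g$: after dilation the $g$-dependence freezes to the constant $g(\tau^3)$ and the remaining vector fields are polynomial in $y$.

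Your ball-box volume estimate and $x_1$-integration are correct at the level of orders of magnitude, and your Weyl-measure argument is fine (that part needs only two-sided bounds plus localization near the origin). The gap is that \eqref{exp_estimates} gives only $e(t,q,q)\asymp 1/\mu(\BsR(q,\sqrt t))$, so your method produces $\asymp$, not the $\sim$ claimed; the paper itself stresses in the introduction that the Fefferman--Phong estimate \eqref{FeffermanPhong} ``does not give an equivalent''. The $\pi$ appearing in your $x_1$-integral is the constant for the proxy $1/(x_1^2+g(x_2)+t)$, not for the heat kernel. To obtain a genuine equivalent with the correct constants one needs the precise limiting heat kernels in each regime, which is exactly what the nilpotentization step supplies and what the Fefferman--Phong route cannot.
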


If $g(s)=s^{2k}$ with $k\geq 1$ then $\Tr(\mathcal{M}_f \, e^{t\triangle})\ \sim\  \mathrm{Cst} / t^{\frac{3}{2}-\frac{1}{2k}}$.

Denoting by $g^*$ the nondecreasing rearrangement of $g$ on $[0,1]$, we have, by equimeasurability, $\vert \{g<t\}\vert=\vert \{g^*<t\}\vert$ and $\int_{t < g(s)<1} g(s)^{-1/2}\, ds=\int_{t < g^*(s)<1} g^*(s)^{-1/2}\, ds$. Hence, without loss of generality, we can take $g$ nondecreasing on $[0,1]$.

Note that, if $g$ is increasing on $[0,1]$, then 
$$
\Tr(\mathcal{M}_f \, e^{t\triangle})\ \sim\ \mathrm{Cst}\ \frac{g^{-1}(t)}{t^{3/2}} +  \mathrm{Cst}\ \frac{1}{t} \int_t^1 \frac{1}{\sqrt{s}g'(g^{-1}(s))}\, ds .
$$
It is easy to see that $\int_t^1 \frac{1}{\sqrt{s}g'(g^{-1}(s)}\, ds = \mathrm{o}\big( \frac{g^{-1}(t)}{\sqrt{t}} \big)$ if $\frac{1}{g'(s)} = \mathrm{o}\big( \frac{s}{g(s)} \big)$ and if the integral diverges (this latter property is satisfied if $g'(s)=\mathrm{o}(\sqrt{g(s)})$); in this case, $\Tr(\mathcal{M}_f \, e^{t\triangle})\ \sim\ \mathrm{Cst}\ \frac{g^{-1}(t)}{t^{3/2}}$.

We obtain interesting examples by taking $g$ flat at $0$ (see Table \ref{table_examples_flat}).

\begin{table}[h]
\caption{Examples with $g$ flat at $0$.}\label{table_examples_flat}
$$
\begin{array}{|c|cc|}
\hline \rule{0pt}{3.5mm}
g(s) & \mathrm{Tr}(\mathcal{M}_f \, e^{t\triangle}) 
& N(\lambda) 
\\ 
\hline 
\displaystyle \frac{1}{e^{1/\vert s\vert^\alpha}},\ \ \alpha>0 &\displaystyle \frac{1}{t^{3/2}\vert\ln t\vert^{1/\alpha}} &\displaystyle \frac{\lambda^{3/2}}{\left(\ln\lambda\right)^{1/\alpha}} \\[3.5mm]
\hline
\displaystyle \frac{1}{e^{\beta e^{1/\vert s\vert^\alpha}}},\ \ \alpha,\beta>0  &\displaystyle \frac{1}{t^{3/2}\left(\ln\vert\ln t^{1/\beta}\vert\right)^{1/\alpha}} &\displaystyle \frac{\lambda^{3/2}}{\left(\ln\ln\lambda^{1/\beta}\right)^{1/\alpha}} \\[4.5mm]
\hline
\displaystyle \frac{1}{\exp^{[k]}\vert s\vert} = \frac{1}{e^{e^{\dots^{e^{1/\vert s\vert}}}}} &\displaystyle \frac{1}{t^{3/2}\ln^{[k]}\frac{1}{t}} &\displaystyle \frac{\lambda^{3/2}}{\ln^{[k]}\lambda} = \frac{\lambda^{3/2}}{\ln\cdots\ln\lambda} \\[4mm]
\hline 
\displaystyle e^{-\frac{\ln^2s}{s}} &\displaystyle \frac{\ln^2\vert\ln t\vert}{t^{3/2}\vert\ln t\vert}  &\displaystyle \frac{\lambda^{3/2}\ln^2\ln\lambda}{\ln\lambda}  \\[2.5mm]
\hline
\end{array}
$$
\end{table}

\begin{proof}
We have $\S=\{q\}$ with $q=(0,0)$ and $w_1=1$, $w_2=3$ at $q$. We compute $(X_1)^q_\tau=\partial_{x_1}$ and $(X_2)^q_\tau = (x_1^2+\tau^{-2}g(\tau^3 x_2))\, \partial_{x_2}$. The nilpotentization is $\widehat{X}^q_1=\partial_{x_1}$ and $\widehat{X}^q_2=x_1^2\,\partial_{x_2}$. Outside of $x_1=0$, the sR case is nilpotentizable. We thus focus on the point $q_2(\tau)=q_2=(0,1)$. Setting $x_1=y_1$ and $y_1=1+y_2$, we obtain $(X_1)^{q_1,q_2}_{\tau,\varepsilon}=\partial_{y_1}$ and $(X_2)^{q_1,q_2}_{\tau,\varepsilon}=(\varepsilon^2x_1^2+\tau^{-2}g(\tau^3+\tau^3\varepsilon y_2))\,\partial_{y_2}$. 

Therefore, if $\varepsilon < \sqrt{g(\tau^3)}/\tau$ (resp., $\varepsilon > \sqrt{g(\tau^3)}/\tau$) then the asymptotics of $e^{q_1,q_2}_{\tau,\varepsilon}(1,(u,0),(u,0))$ is $\frac{\tau^2}{g(\tau^3)}$ (resp., $\frac{1}{\varepsilon^2}$) times a smooth function of $u$.
This leads to split the integral defining $K(t)$, which is an integral over $\tau\in[\sqrt{t},1]$, in two integrals performed on the intersection of $[\sqrt{t},1]$ with either $g(\tau^3)<t$ or $g(\tau^3)>t$.
The formula now follows from computations.
\end{proof}

Another example, not coming from Proposition \ref{prop_nonanalytic}, is the following.

\begin{example}
Consider in $\R^5$ the sR structure generated by $X_1=\partial_1$, $ X_2=\partial_2+x_1\,\partial_3+x_1^2\,\partial_5$, $X_3=\partial_4+e^{-1/(x_1^2+x_2^2)}\,\partial_5$. Then, for any $f\in C^\infty(\R^2)$ such that $f>0$ along $x_1=x_2=0$, we have
$$
\mathrm{Tr}(\mathcal{M}_f \, e^{t\triangle}) \underset{t\rightarrow 0^+}{\sim} \frac{\Cst}{t^4\vert\ln t\vert} \qquad\qquad N(\lambda)\underset{\lambda\rightarrow+\infty}{\sim}\Cst\, \frac{\lambda^4}{\ln\lambda}.
$$
The computations, which are quite lengthy, are not reported.
\end{example}

As a final comment, we may wonder whether there exists a class $\mathfrak{C}$ of functions such that, if $M$ is stratified by equisingular smooth submanifolds and all singularities of the sR flag restricted to each stratum are in $\mathfrak{C}$, then the function $t\mapsto\Tr(\mathcal{M}_f \, e^{t\triangle})$ has a small-time asymptotic expansion in a well-identified asymptotic scale. Candidates for $\mathfrak{C}$ might be provided by some specific o-minimal classes which are stable under integration, as the class of log-analytic functions (see \cite{CluckersMiller_DMJ2011}).

\appendix
\part{Appendix}
In this part, we gather all reminders and results that are useful for our study.
Appendix \ref{sec_sR_geom} gathers some (well known and less known) definitions and facts in sub-Riemannian geometry.
In Appendix \ref{app_parameter_kernels}, we recall two useful statements : a hypoelliptic version of Kac's principle, stating the local nature of the small-time asymptotics of heat kernels; a result on continuity of heat kernels with respect to parameters. 
In Appendix \ref{app_lemfondam}, we give the complete small-time asymptotic expansion of heat kernels. 
Finally, in Appendix \ref{app_integral}, we establish a lemma on asymptotic expansion of some integrals.

\section{Sub-Riemannian geometry}\label{sec_sR_geom}
In this section (that one can also find in \cite{CHT_AHL}), we recall well known definitions and facts in sub-Riemannian geometry (see the textbooks \cite{AgrachevBarilariBoscain_book2019, Bellaiche, Gromov, Jean_2014, LeDonne_book2021, Mo-02, Rifford_2014}).
Throughout, ``\emph{sR}" means ``\emph{sub-Riemannian}".

\subsection{Definition}
Let $n\in\N^*$ and let $M$ be a smooth connected manifold of dimension $n$. 
Let $m\in\N^*$ and let $X=(X_1,\ldots,X_m)$ be a $m$-tuple of smooth vector fields on $M$. We set $D=\mathrm{Span}(X)$ (called \emph{horizontal distribution}).
The \emph{sR metric} $g$ associated with the $m$-tuple $X$ is defined as follows: given any $q\in M$ and any $v\in D(q)=\mathrm{Span}(X_1(q),\ldots,X_m(q))$, we define the positive definite quadratic form $g_q$ on $D(q)$ by
\begin{equation}\label{def_metric}
g_q(v) = \inf \left\{ \sum_{i=1}^m u_i^2 \ \ \Big\vert \ \ v=\sum_{i=1}^mu_iX_i(q) \right\} .
\end{equation}
The pair $(M,X)$ (or the triple $(M,D,g)$) is called the \emph{sub-Riemannian structure} on $M$ generated by $X$.
When $D$ has constant rank $m$ on $M$ with $m\leq n$, $D$ is a subbundle of $TM$, $g$ is a Riemannian metric on $D$ and the frame $X=(X_1,\ldots,X_m)$ of $D$ is $g$-orthonormal. But the rank of $D$ may vary (i.e., $D$ is a subsheaf of $TM$) and the above definition encompasses the so-called \emph{almost-Riemannian case}, for which $m\geq n$ and $\mathrm{rank}(D)<n$ at some singular points.

More formally, a sR structure on $M$ can be defined by giving an Euclidean vector bundle $E$ over $M$ and a smooth vector bundle morphism $\sigma:E\rightarrow TM$ such that $D(q)=\sigma(E(q))$ for every $q\in M$, and then
$$
g_q(V)=\inf\{\Vert u\Vert_{E(q)}^2\ \mid\ u\in E(q),\ \sigma(u)=V\}
$$
When $E=M \times\R^m$ and $\sigma(x,u)=\sum_{i=1}^m u_i X_i(x)$, we recover the definition of a sR structure attached with the $m$ vector fields $X_1,\ldots,X_m$.

A \emph{horizontal path} is, by definition, an absolutely continuous path $q(\cdot):[0,1]\rightarrow M$ for which there exist $m$ functions $u_i\in L^1(0,1)$ such that $\dot q(t) = \sum_{i=1}^m u_i(t)X_i(q(t))$ for almost every $t\in[0,1]$.
The metric $g$ induces a length on the set of horizontal paths, and thus a distance $\dsR$ on $M$ that is called the sR distance.
Given any $q\in M$ and any $R>0$, the \emph{sR ball} $\BsR(q,R)$ centered at $q$ of radius $R$ is the set of all $q'\in M$ such that $\dsR(q,q')<R$. 

The \emph{cometric} $g^*$ associated with $X$ is the nonnegative quadratic form on $T^*M$ defined as follows: given any $q\in M$, $g^*_q$ is the nonnegative quadratic form defined on $T_q^*M$ by $g^*_q(\xi)=\sum_{i=1}^m\langle\xi,X_i(q)\rangle^2$.
Note that $\frac{1}{2} g_q(v) = \sup_{\xi\in T^*_qM} \left( \langle\xi,v\rangle - \frac{1}{2}g^*_q(v) \right)$ (Legendre transform).
The cometric $g^*$ completely determines the horizontal distribution $D$ and the sR metric $g$. 

Given any smooth function $f$ on $M$, the \emph{horizontal gradient} $\nabla_gf$ of $f$ is the smooth section of $D$ defined by $g(\nabla_gf,Y)=df.Y$ for every smooth section $Y$ of $D$. We have $\nabla_gf = \sum_{i=1}^m (X_if)X_i$.

Let $\mu$ be an arbitrary smooth (Borel) measure on $M$.

\subsection{Sub-Riemannian Laplacian}\label{app_sRLaplacian}
Let $L^2(M,\mu)$ be the set of complex-valued functions $u$ such that $|u|^2$ is $\mu$-integrable over $M$. We define $-\triangle$ as the nonnegative selfadjoint operator on $L^2(M,\mu)$ that is the Friedrichs extension of the Dirichlet integral
$$
Q(\phi)=\int _M \Vert d\phi \Vert_{g^*}^2 \, d\mu  \qquad \forall \phi\in C^\infty_c(M)
$$
where the norm of $d\phi $ is calculated with respect to the (degenerate) dual metric $g^\star $ (also called co-metric) on $T^\star M $ associated with $g$.
The sR Laplacian $\triangle$ depends on $g$ and $\mu$.

We denote by $\mathrm{div}_\mu$ the divergence operator associated with the measure $\mu$, defined by $\mathcal{L}_Y \mu = \mathrm{div}_\mu(Y)\, \mu$ for any vector field $Y$ on $M$.
Hence
$\triangle\phi = \mathrm{div}_\mu(\nabla_g\phi)$ for every $\phi\in C^\infty_c(M)$
and, since $\nabla_g\phi=\sum_{i=1}^m(X_i\phi)X_i$ and $Q(\phi) = \int_M  \sum_{i=1}^m (X_i\phi)^2 \, d\mu$, it follows that
\begin{equation}\label{app_def_triangle}
\triangle= - \sum_{i=1}^m X_i^\star X_i = \sum_{i=1}^m \left( X_i^2+\mathrm{div}_\mu(X_i) X_i \right) 
\end{equation}
where the transpose is taken in $L^2(M,\mu)$.
Under the H\"ormander condition 
\begin{equation}\label{Hormander_assumption}
\mathrm{Lie}(D)=\mathrm{Lie}(X_1,\ldots,X_m)=TM  ,
\end{equation}
$\triangle$ is subelliptic (see \cite{Ho-67}), and thus, if $M$ is compact, has a compact resolvent and a discrete spectrum. 
Note that if $M$ is compact then $\triangle$ is essentially selfadjoint.

\subsection{Sub-Riemannian flag}\label{app_sRflag}
We define the sequence of subsheafs $D^k$ of $TM$ by $D^0=\{0\}$, $D^1=D=\mathrm{Span}(X_1,\ldots,X_m)$ and $D^{k+1}=D^k+[D,D^k]$ for $k\geq 1$. Under the H\"ormander condition \eqref{Hormander_assumption}, given any point $q\in M$, we consider the \emph{sR flag} of $D$
$$
\{0\}=D^0(q)\subset D(q)=D^1(q)\subset D^2(q)\subset \cdots\subset D^{r(q)-1}(q)\subsetneq D^{r(q)}(q)=T_qM 
$$
where $r(q)$ is called the \emph{degree of nonholonomy} at $q$. We set $n_i(q) = \dim D^i(q)$. The $r(q)$-tuple of integers $(n_1(q),\ldots,n_{r(q)}(q))$ is called the \emph{growth vector} at $q$, and we have $n_{r(q)}(q)=n=\dim M$. By convention, we set $n_0(q)=0$.

We define the nondecreasing sequence of \emph{sR weights} $w_i(q)$ (also denoted $w_i^q(D)$ when one wants to underline that they refer to the horizontal distribution $D$) 
as follows:
given any $i\in\{1,\ldots,n\}$, there exists a unique $j\in\{1,\ldots,r(q)\}$ such that $n_{j-1}(q)+1\leq i\leq n_j(q)$, and we set $w_i(q)=j$.
By definition, we have $w_1(q)=\cdots=w_{n_1}(q)=1$, and $w_{n_{j-1}+1}(q)=\cdots=w_{n_j}(q)=j$ when $n_j(q)>n_{j-1}(q)$. We also have $w_{n_{r-1}+1}(q)=\cdots=w_{n_r}(q)=r(q)$.

Given any $q\in M$, we set
\begin{equation}\label{def_Q}
\mathcal{Q}^M(q) = \sum_{i=1}^r i ( n_i(q)-n_{i-1}(q) ) =\sum_{i=1}^n w_i(q).
\end{equation}
If $q$ is regular then $\mathcal{Q}^M(q)$ is the Hausdorff dimension of a small ball in $M$ containing $q$ for the induced corresponding sR distance (see \cite{Gromov}).
Note that the maps $q\mapsto \mathcal{Q}^M(q)$ and $q\mapsto w_i(q)$, for $i=1,\ldots,n$, are upper semi-continuous. 

A point $q\in M$ is said to be \emph{regular} if the growth vector is constant in a neighborhood of $q$; otherwise it is said to be \emph{singular}. Throughout the paper, the singular set is denoted by $\S$; it depends on the horizontal distribution $D$ but not on the metric. The open set $M\setminus\S$ is called the \emph{regular region}; we have $\mathcal{Q}^M(q)=\mathcal{Q}^M(M\setminus\S)=\mathcal{Q}^{M\setminus\S}$ for every $q\in M\setminus\S$, and $\mathcal{Q}^{M\setminus\S}$ is the Hausdorff dimension of $M\setminus\S$.
The sR structure is said to be \emph{equiregular} if all points of $M$ are regular; in this case, the weights and the Hausdorff dimension are constant as well on $M$.

At a regular point $q$, 
$\triangle$ is locally subelliptic with a gain of regularity $2/r(q)$, meaning that if $\triangle u=v$ with $v$ of Sobolev class $H^s$ locally at $q$ then $u$ is (at least) of Sobolev class $H^{s+2/r(q)}$ locally at $q$ (see \cite{Ho-67}). 

We also define $\Sigma^i=(D^i)^\perp \subset T^\star M$ (annihilator of $D^i$) for $i=1,\ldots,r$.
For $i=1$, $\Sigma=\Sigma^1=D^\perp$ is called the characteristic manifold of the sR structure, and we also have $\Sigma=(g^*)^{-1}(0)$. 

\paragraph{Sub-Riemannian flag restricted to a submanifold.}
Let $N$ be a smooth submanifold of $M$. Given any $q\in N$, we consider the sR flag of $D$ at $q$ restricted to $N$ (also called the sR flag of $D\cap TN$)
\begin{equation}\label{sRflag_N}
\{0\} \subset \left( D^1(q)\cap T_qN \right) \subset 
\cdots\subset  \left( D^{r(q)-1}(q)\cap T_qN \right) \subset \left( D^{r(q)}(q)\cap T_qN \right) = T_qN 
\end{equation}
and we set $n_i^N(q) = \dim \left( D^i(q)\cap T_qN \right)$ and
\begin{equation}\label{def_QN}
\mathcal{Q}^N(q) = \sum_{i=1}^{r(q)} i ( n_i^N(q)-n_{i-1}^N(q) ) .
\end{equation}
The $r(q)$-tuple of integers $(n_1^N(q),\ldots,n_{r(q)}^N(q))$ is called the growth vector at $q$ restricted to $N$.
Following \cite{GhezziJean_NA2015,Gromov}, we say that $N$ is \emph{equisingular} if all integers $n_i(q)$ and $n_i^N(q)$ remain constant as $q\in N$, i.e., if the growth vector and the growth vector restricted to $N$ are constant on $N$. In this case, $\mathcal{Q}^N$ is the Hausdorff dimension of $N$ (see \cite[Theorem 5.3]{GhezziJean_NA2015}).
Note that a smooth submanifold of an equisingular smooth submanifold may fail to be equisingular.

\subsection{Sub-Riemannian isometries}\label{app_isom}
Given two sR structures $(M_1,D_1,g_1)$ and $(M_2,D_2,g_2)$, of respective cometrics $g^*_1$ and $g^*_2$, a (local) sR isometry $\phi:M_1\rightarrow M_2$ is a (local) smooth diffeomorphism mapping $g^*_1$ to $g^*_2$. 
This is stronger than requiring that $D_1=\mathrm{Span}(X)$ and $D_2=\mathrm{Span}(Y)$ are diffeomorphic which means that we have only $\phi_*D_1=D_2$ (see Appendix \ref{sec_nilp_diffeo}).

\subsection{Nilpotentization}\label{app_nilp}
\subsubsection{Definition}
Let $q\in M$ be arbitrary. The nilpotentization of the sR structure $(M,D,g)$ at $q$ is the sR structure $(\widehat{M}^{q},\widehat{D}^{q},\widehat{g}^{q})$ defined as the metric tangent space of $M$ (endowed with its sR distance) in the sense of Gromov-Hausdorff (see \cite{Bellaiche,Gromov}).\footnote{This means that $(\delta^q_\varepsilon)^{-1}(\BsR(q,\varepsilon))\rightarrow\hatBsR^q(0,1)$ for the Gromov-Hausdorff topology in the privileged coordinates introduced hereafter, where $\hatBsR^q(0,1)$ is the sR unit ball for the sR structure $(\widehat{M}^{q},\widehat{D}^{q},\widehat{g}^{q})$.}
In this triple, $\widehat{M}^{q}$ is a smooth connected manifold of dimension $n$ 
(the vector space $T_0\widehat{M}^q$ is canonically identified with $T_qM$)
which is identified to $\R^n$ thanks to the privileged coordinates defined hereafter,
the horizontal distribution is $\widehat{D}^{q}=\mathrm{Span}(\widehat{X}^{q}_1,\ldots,\widehat{X}^{q}_m)$ with smooth vector fields $\widehat{X}^{q}_1,\ldots,\widehat{X}^{q}_m$ on $\widehat{M}^q$ (given hereafter) called nilpotentizations at $q$ of the vector fields $X_1,\ldots,X_m$ at $q$, and the sR metric $\widehat{g}^{q}$ is defined accordingly as in \eqref{def_metric}.
The metric $\widehat{g}^{q}$ induces 
a distance $\hatdsR^{q}$ on $\widehat{M}^{q}$.


\subsubsection{Privileged coordinates}\label{app_privileged}
We first recall the notion of \emph{nonholonomic order} (see \cite{Bellaiche,Jean_2014,Mo-02} for details).
Given a germ $f$ of a real-valued smooth function at $q$, given $k\in\N$ and integers $j_1,\ldots,j_k$ in $\{1,\ldots,m\}$, the Lie derivative $(X_{j_1}\cdots X_{j_k}f)(q)$ is called a nonholonomic derivative of order $k$. By definition, the \emph{nonholonomic order} of $f$ at $q$, denoted by $\ord_{q}(f)$, is the smallest integer $k$ for which at least one nonholonomic derivative of $f$ of order $k$ at $q$ is not equal to zero.
Given a germ $Y$ of a smooth vector field at $q$, the nonholonomic order of $Y$ at $q$ is the largest integer $k$ such that $\ord_{q}(Yf)\geq k+\ord_{q}(f)$, for every germ $f$ at $q$.

The \emph{nonholonomic length} at $q$ of a vector field $Z$ on $M$ is defined by $\ell_q(Z) = \min\{j\in\N\ \vert\ Z(q)\in D^j(q)\}$. A family $(Z_1,\ldots,Z_n)$ of $n$ vector fields is said to be \emph{adapted to the sR flag} of $D$ at $q$ if $D^j(q) = \mathrm{Span}\, \{Z_i(q)\ \vert\ 1\leq i\leq n,\ \ell_q(Z_i)\leq j\}$ for every $j\in\{1,\ldots,r(q)\}$.%
\footnote{A usual way to construct an adapted local frame $(Z_1,\ldots,Z_n)$ of $T_{q}M$ at $q$ is the following: choose vector fields $Z_1,\ldots,Z_{n_1(q)}\in D$ whose values at $q$ form a basis of $D(q)$; complete them to vector fields $Z_1,\ldots,Z_{n_2(q)}\in D^2$ whose values at $q$ form a basis of $D^2(q)$; etc. With such a choice, we have $Z_i(q)\in D^{w_i(q)}(q)$ for every $i\in\{1,\ldots,n\}$; but in the definition of adapted frame that we adopt, we can consider as well a permutation of $(Z_1,\ldots,Z_n)$.}

Given any chart at $q$, i.e., given any smooth diffeomorphism $\psi^q:U\rightarrow V$, where $U$ is a neighborhood of $q$ in $M$ and $V$ is a neighborhood of $0$ in $\R^n$, with $\psi^q(q)=0$, inducing local coordinates $x=(x_1,\ldots,x_n)$, we have 
$$
\ord_q(x)=\sum_{j=1}^n\ord_q(x_j)\leq\mathcal{Q}(q)=\sum_{j=1}^nw_j(q) .
$$
We say that $\psi^q$ is a chart of \emph{privileged coordinates} at $q$ if it is ``maximal", in the sense that $\ord_q(x)=\mathcal{Q}(q)$, i.e., $\{\ord_q(x_1),\ldots,\ord_q(x_n)\} = \{ w_1(q),\ldots,w_n(q) \}$.

The sR weights $w_j(q)$, defined in Appendix \ref{app_sRflag}, are a nondecreasing sequence. Anyway, it may be convenient to relabel the weights so that $w_i(q) = \ord_q(x_i)$: in this case, following \cite{GhezziJean_NA2015}, we say that the weights are \emph{labeled according to the coordinates $x$}.
Note that, then, $dx_i(D^{w_i(q)}(q))\neq 0$ and $dx_i(D^{w_i(q)-1}(q))= 0$, meaning that $\partial_{x_i}\in D^{w_i(q)}(q)\setminus D^{w_i(q)-1}(q)$ at $q$, i.e., privileged coordinates are always adapted to the sR flag.

Classical examples of charts of privileged coordinates at $q$ are given by the coordinates of the first kind
$$
(\psi^q)^{-1}(x_1,\ldots,x_n) = \exp \left( x_1 Z_1 + \cdots + x_n Z_n \right) (q)
$$
and by the coordinates of the second kind $\exp \left( x_1 Z_1 \right) \circ \cdots \circ \exp \left( x_n Z_n \right) (q)$
where $(Z_i)_{1\leq i\leq n}$ is a frame of vector fields that is adapted to the sR flag at $q$.
Privileged coordinates 
can be obtained from adapted coordinates by a triangular change of variables (see \cite{Jean_2014}).

\paragraph{Privileged coordinates straightening an equisingular smooth submanifold.}
Let $N$ be an equisingular smooth submanifold of $M$ (see Appendix \ref{app_sRflag} for the definition) of topological dimension $k$. Let $q\in N$ and let $U$ be a neighborhood of $q$ in $M$. At each point $q'\in N\cap U$, assuming that $U$ is sufficiently small, there exist local privileged coordinates $x=(x_1,\ldots,x_n)$, 
depending smoothly on $q'\in N\cap U$, 
in which $N = \{ x_{k+1}=\cdots=x_n = 0\}$. 

The existence of such coordinates is proved in \cite[Lemma 4.3]{GhezziJean_NA2015}, thanks to the following argument: since all integers $n_i=\dim(D^i)$ and $n_i^N=\dim(D^i\cap TN)$ are constant along $N$, assuming $U$ small enough, we can choose a local frame $(Z_1,\ldots,Z_n)$ of $n$ vector fields that is adapted to the sR flag of $D$ on $U$ such that, moreover, the family $(Z_1,\ldots,Z_k)$ is adapted to the sR flag of $D\cap TN$ (defined by \eqref{sRflag_N}) on $N\cap U$.%
\footnote{Indeed, take $n_1^N$ vector fields $Z_1,\ldots,Z_{n_1^N}\in D\cap TN$ whose values at any point $q'\in N\cap U$ form a basis of $D(q')\cap T_{q'}N$; complete them to vector fields $Z_1,\ldots,Z_{n_1}\in D$ whose values at any point $q'\in N\cap U$ form a basis of $D(q')$; 
then iterate this construction along the flag. Finally, re-index the vector fields by a permutation.}
Then, it suffices to take exponential privileged coordinates (of the first or second kind) associated with the vector fields $Z_1,\ldots,Z_n$.

Assuming that the sR weights are labeled according to the coordinates $x$, for every $j\in\{1,\ldots,n\}$, $w_j(q)=w_j^q(D)=\ord_q(x_j)$ does not depend on $q\in N$ and we denote it by $w_j^N(D)$.
Hence, to the equisingular submanifold $N$ are attached the two integers $\mathcal{Q}^M(N) = \ord_q(x) = \sum_{j=1}^n\ w_j^N(D)$ (see \eqref{def_Q}) and $\mathcal{Q}^N = \ord_q(x_1,\ldots,x_k) = \sum_{j=1}^k w_j^N(D)$ (see \eqref{def_QN}), the latter being the Hausdorff dimension of $N$.
%
By convention, when $N$ is a single point, we set $k=0$ and $\mathcal{Q}^N=0$.

\begin{remark}\label{rem_transversally_smooth_system_stratified}
The above straightening procedure cannot be iterated on strata when $N=\bigcup_{i=1}^s N_i$ is (Whitney) stratified by equisingular smooth submanifolds, for some $s\in\N^*$, where $N_1,\ldots,N_s$ are (disjoint) equisingular smooth submanifolds of $M$ such that $N_i\subset \overline{N_{i+1}}$ for $i=1,\ldots,s-1$. More precisely, let $q\in N_1$. We set $k_i=\dim(N_i)$. There exist privileged coordinates in a (small enough) neighborhood $U$ in $N$ of $q\in N_1$, straightening $N_1$, depending smoothly on $q\in N_1$, in which $N_1=\{x_{k_1+1}=\cdots=x_n=0\}$. But, in general, it is not possible to construct privileged coordinates such that $N_i=\{x_{k_i+1}=\cdots=x_n=0\}$ for every $i\in\{1,\ldots,s\}$ when $s>1$: a counterexample is given by the Baouendi-Grushin case with a tangency point; the same counterexample shows that it is not possible in general to construct privileged coordinates at $q\in N_1$ that would depend smoothly on $q_i\in\overline{N_i}$ for $i>1$ with $q_i$ converging to $q$.
\end{remark}

\begin{lemma}\label{lemQinter}
Let $N_2$ be an equisingular smooth submanifold of $M$ of topological dimension $k_2\in\N^*$. Let $P_1$ be 
an equisingular submanifold of $N_2$ of topological dimension $k_2-k_1-1$ for some integer $0\leq k_1<k_2$. 
For any $q\in P_1$, locally around $q$ there exists a smooth submanifold $P$ of $M$ (not equisingular) of topological dimension $n-k_1-1$, satisfying $P\cap N_2=P_1$ and intersecting $N_2$ transversally at $q$ (i.e., $T_qM=T_qP+T_qN_2$), such that 
$$
\mathcal{Q}^{P}(P\cap N_2) - \mathcal{Q}^{P\cap N_2} = \mathcal{Q}^M(N_2) - \mathcal{Q}^{N_2} .
$$
\end{lemma}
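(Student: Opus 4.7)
The plan is to build $P$ locally around $q$ by finding privileged coordinates that simultaneously straighten both $P_1$ and $N_2$, and then cutting out $P$ by a coordinate slice that leaves the ``transverse-to-$N_2$'' directions untouched. Concretely, I would construct a local frame $(Z_1,\dots,Z_n)$ of $TM$ near $q$ that is simultaneously adapted to three flags: (i) $Z_1,\dots,Z_{k_2-k_1-1}$ are tangent to $P_1$ along $P_1$ and give an adapted frame for the sR flag of $D\cap TP_1$; (ii) $Z_1,\dots,Z_{k_2}$ are tangent to $N_2$ along $N_2$ and give an adapted frame for the sR flag of $D\cap TN_2$; (iii) $Z_1,\dots,Z_n$ is adapted to the full sR flag of $D$. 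This refines Lemma 4.3 of \cite{GhezziJean_NA2015} by nesting two equisingular submanifolds: using the equisingularity of $P_1$ one first selects the vectors tangent to $P_1$, then using the equisingularity of $N_2$ (combined with $P_1\subset N_2$) one can complete, at each Lie-bracket level, to the required number of $N_2$-tangent vectors with constant ranks along $P_1$, and finally one completes to an adapted frame on a neighborhood.

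Given such a frame, I would define privileged coordinates of the second kind at $q$ by $\psi^{-1}(y_1,\dots,y_n)=\exp(y_1Z_1)\circ\cdots\circ\exp(y_nZ_n)(q)$. Because $Z_1,\dots,Z_{k_2-k_1-1}$ are tangent to $P_1$ (resp.\ $Z_1,\dots,Z_{k_2}$ tangent to $N_2$), the slice $\{y_{k_2-k_1}=\cdots=y_n=0\}$ (resp.\ $\{y_{k_2+1}=\cdots=y_n=0\}$) maps locally onto $P_1$ (resp.\ $N_2$), so both submanifolds are straightened. By the standard construction, $y$ is privileged at $q$ and the sR weights labeled according to $y$ are $w_j:=\ell_q(Z_j)$, with $\sum_{j=1}^{k_2-k_1-1}w_j=\mathcal{Q}^{P_1}$, $\sum_{j=1}^{k_2}w_j=\mathcal{Q}^{N_2}$ and $\sum_{j=1}^n w_j=\mathcal{Q}^M(N_2)$. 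I then set $P:=\{y_{k_2-k_1}=\cdots=y_{k_2}=0\}$, a smooth submanifold of dimension $n-k_1-1$. Transversality and $P\cap N_2=P_1$ are immediate: $T_qP=\mathrm{span}(Z_j(q):j\notin\{k_2-k_1,\dots,k_2\})$, $T_qN_2=\mathrm{span}(Z_1(q),\dots,Z_{k_2}(q))$, whose sum is $T_qM$, and $P\cap N_2=\{y_{k_2-k_1}=\cdots=y_n=0\}=P_1$ locally.

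The dimension identity is then a direct weight count. Since $(Z_1,\dots,Z_n)$ is adapted to the sR flag, $D^i(q)=\mathrm{span}\{Z_j(q):w_j\leq i\}$, hence
\[
n_i^P(q)=\#\{j\in\{1,\dots,k_2-k_1-1\}\cup\{k_2+1,\dots,n\}:w_j\leq i\},
\]
which gives $\mathcal{Q}^P(P\cap N_2)=\sum_{j=1}^{k_2-k_1-1}w_j+\sum_{j=k_2+1}^n w_j=\mathcal{Q}^{P_1}+\sum_{j=k_2+1}^n w_j$, while $\mathcal{Q}^{P\cap N_2}=\mathcal{Q}^{P_1}$. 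Subtracting yields $\mathcal{Q}^P(P\cap N_2)-\mathcal{Q}^{P\cap N_2}=\sum_{j=k_2+1}^nw_j=\mathcal{Q}^M(N_2)-\mathcal{Q}^{N_2}$, as required. The main obstacle is the first step: producing the doubly adapted frame. The delicate point is that, when completing the frame from the $D\cap TP_1$-adapted portion to a $D\cap TN_2$-adapted portion, one must add vector fields that are tangent to $N_2$ and whose brackets have ranks conforming to the equisingularity of $N_2$; it is exactly the equisingularity of $P_1$ as a submanifold of $M$ (not merely of $N_2$) that guarantees these ranks stay constant along $P_1$ and hence the completion can be performed smoothly in a neighborhood.
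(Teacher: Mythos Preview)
Your proof is correct and follows essentially the same approach as the paper: build a frame $(Z_1,\dots,Z_n)$ simultaneously adapted to the sR flags of $D\cap TP_1$, $D\cap TN_2$, and $D$, pass to privileged (exponential) coordinates straightening both $P_1$ and $N_2$, and define $P$ as the coordinate slice $\{y_2=0\}$; the identity then reduces to the weight count you give. Your treatment is in fact slightly more explicit than the paper's on the final step (the paper simply writes $\mathcal{Q}^P(P_1)=\mathrm{ord}_q(y_1,y_3)$ and concludes), and you correctly flag the one nontrivial point---the construction of the doubly adapted frame---which the paper asserts without elaboration.
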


\begin{proof} 
For any $q\in P_1$, we claim that there exist privileged coordinates $y=(y_1,y_2,y_3)$ at $q$, depending smoothly on $q\in P_1$, straightening $P_1$ and $N_2$ so that $P_1=\{y_2=y_3=0\}$ and $N_2=\{y_3=0\}$: indeed, since $P_1$ and $N_2$ are equisingular, in a neighborhood $U$ of $q$, we can choose a local frame $(Z_1,\ldots,Z_n)$ of $n$ vector fields that is adapted to the sR flag of $D$ on $U$ such that, moreover, 
the family $(Z_1,\ldots,Z_{k_2-k_1-1})$ is adapted to the sR flag of $D\cap TP_1$ on $P_1\cap U$,
and the family $(Z_1,\ldots,Z_{k_2})$ is adapted to the sR flag of $D\cap TN_2$ on $N_2\cap U$.
We have then $\mathcal{Q}^{P_1}=\ord_q(y_1)$, $\mathcal{Q}^{N_2}(P_1)=\mathcal{Q}^{N_2}=\ord_q(y_1,y_2)$ and $\mathcal{Q}^M(N_2)=\ord_q(y)$.
We define $P$ as the bundle over $P_1$ whose fiber over each $q\in P_1$ is the submanifold $\{y_1=y_2=0\}$ (of topological dimension $n-k_2$). 
Then $\mathcal{Q}^{P}(P\cap N_2) = \mathcal{Q}^{P}(P_1)=\ord_q(y_1,y_3)$ and the lemma follows. 
\end{proof}

\begin{remark}\label{rem_lemQinter}
The conclusion of Lemma \ref{lemQinter} is not valid in general for any smooth local submanifold $P$ of $M$ containing $P_1$ and transverse to $N_2$ at $q$.
\end{remark}

\begin{remark}\label{rem_lemQinter_extension}
Lemma \ref{lemQinter} is obviously extended to the case where $N_2(\kappa)$ and $P_1(\kappa)$ depend smoothly on a parameter $\kappa$ (belonging to some smooth manifold), and yields $P(\kappa)$ depending as well smoothly on $\kappa$.

Another useful extension of the lemma is when $P_1$ is stratified by equisingular submanifolds of $N_2$: in this case, $P$ is a stratified bundle over $P_1$ and the conclusion of the lemma is satisfied for any stratum of $P$.
\end{remark}

\subsubsection{Dilations and nilpotentization of smooth sections of $D$}\label{sec_nilp_smoothsection}
We consider a chart $\psi^q$ of privileged coordinates at $q$. 
Given any $\varepsilon\in\R$, the dilation $\delta_\varepsilon^q$ at $q$, according to the flag at $q$, and the dilation $\delta_\varepsilon$ in $\R^n$, are defined by 
\begin{equation}\label{def_deltaepsilon}
\delta_\varepsilon^q = (\psi^q)^{-1}\circ\delta_\varepsilon , \qquad
\delta_\varepsilon(x) = \left( \varepsilon^{w_1(q)} x_1,\ldots, \varepsilon^{w_n(q)} x_n \right) \quad\forall x=(x_1,\ldots,x_n)\in\R^n 
\end{equation}
where the sR weights are labeled according to the coordinates $x$.
Note that, denoting by $m$ the Lebesgue measure on $\R^n$ (given by $dm=dx_1\cdots dx_n$), we have $\delta_\varepsilon^*m=\vert\varepsilon\vert^{\mathcal{Q}^M(q)}m$ for every $\varepsilon\neq 0$.

Given any vector field $Y$ on $M$ that is a smooth section%
\footnote{Note that we consider a smooth section of the subsheaf $D$, otherwise there are some difficulties: take $M=\R^2$, $D$ spanned by $X_1=\partial_x$ and $X_2=x^6\,\partial_y$, and the vector field $Y=x^2\,\partial_y$.}
of $D$ (i.e., $Y(q) = \sum_{i=1}^m a_i(q) X_i(q)$ at any $q\in M$, with smooth functions $a_i$), the nilpotentization $\widehat{Y}^{q}$ at $q$ of $X$ is the (nilpotent and complete) vector field on $\R^n$ defined by 
$$
\widehat{Y}^{q} = \lim_{\varepsilon\rightarrow 0\atop \varepsilon\neq 0} Y_\varepsilon^q 
\qquad\textrm{where}\qquad
Y_\varepsilon^q = \varepsilon (\delta_\varepsilon^q)^* Y = \varepsilon \delta_\varepsilon^* \psi^q_* Y .
$$
Actually this convergence is valid in $C^\infty$ topology (uniform convergence of all derivatives on compact subsets of $\R^n$); we also refer to \cite[Section 6.1.1]{CHT_AHL} for stronger results.
Note that $\widehat{Y}^{q}$ is homogeneous of degree $-1$ with respect to dilations, i.e., $\lambda\delta_\lambda^*\widehat{Y}^{q}=\widehat{Y}^{q}$ for every $\lambda\neq 0$,
and that the nonholonomic order of $Y-\widehat{Y}^{q}$ at $q$ is nonnegative.
Actually, writing in $C^\infty$ topology the Taylor expansion $Y=Y^{(-1)}+Y^{(0)}+Y^{(1)}+\cdots$ around $0$, where $Y^{(k)}$ is polynomial and homogeneous of degree $k$ (with respect to dilations), we get that $Y_\varepsilon^q$ has a Taylor expansion at any order $N$ with respect to $\varepsilon$, in $C^\infty$ topology:
\begin{equation*}
Y_\varepsilon^q = \varepsilon (\delta_\varepsilon^q)^* Y = \widehat{Y}^{q} + \varepsilon Y^{(0)} + \varepsilon^2 Y^{(1)} + \cdots + \varepsilon^N Y^{(N-1)} + \mathrm{o}\big(\vert\varepsilon\vert^N\big)
\end{equation*}
with $\widehat{Y}^{q} = Y^{(-1)}$ (see also \cite[Lemma 1]{Ba-13}), i.e., setting $Y_0^q=\widehat{Y}^{q}$ for $\varepsilon=0$, $Y_\varepsilon^q$ depends smoothly on $\varepsilon$ in $C^\infty$ topology. We also have
$Y_\varepsilon^q = \widehat{Y}^{q} + \varepsilon Z_\varepsilon^q$
for every $\varepsilon\in\R$ with $\vert\varepsilon\vert$ small enough so that we are in the chart, where $Z_\varepsilon^q$ is a smooth vector field depending smoothly on $\varepsilon$ in $C^\infty$ topology.

\subsubsection{Nilpotentization of the sR structure}\label{sec_def_nilpotentization}
In the above chart, we have $\widehat{M}^{q}\simeq\R^n$, endowed with the sR structure (denoted by $(\widehat{M}^{q},\widehat{D}^{q},\widehat{g}^{q})$) induced by the vector fields $\widehat{X}^{q}_i$, $i=1,\ldots,m$.
This definition does not depend on the choice of privileged coordinates at $q$ because two sets of such coordinates produce two sR-isometric sR structures. 
This is due to the fact that, since transition maps of charts of privileged coordinates are triangular with respect to the flag, the nilpotentization of any transition map is a sR isometry (see \cite[Proposition 5.20]{Bellaiche}).
Note that the nilpotent sR structure $(\widehat{M}^{q},\widehat{D}^{q},\widehat{g}^{q})$ is homogeneous with respect to the above dilations and that the corresponding sR distance is homogeneous of degree $1$. Moreover, the growth vector of $\widehat{D}^{q}$ coincides with that of $D$ at $q$, and $\mathrm{Lie}(\widehat{X}^{q}_1,\ldots,\widehat{X}^{q}_m)$ is a nilpotent Lie algebra of step $r(q)$.
%
Setting
$$
\widehat{g}^{q} = \lim_{\varepsilon\rightarrow 0\atop \varepsilon\neq 0} g_\varepsilon^q
\qquad \textrm{where}\qquad
g_\varepsilon^q = \varepsilon^{-2} (\delta_\varepsilon^q)^* g ,
$$
we have $\widehat{g}^{q}_x(\widehat{X}^{q}(x),\widehat{Y}^{q}(x)) = g_{q}(X(q),Y(q))$
for every $x\in\R^n$, for all vector fields $X$ and $Y$ on $M$ that are smooth sections of $D$. 

Another geometric identification of $(\widehat{M}^{q},\widehat{D}^{q},\widehat{g}^{q})$ is the following.
Let $\mathcal{G}_{q}$ be the (nilpotent) Lie group of diffeomorphims of $\R^n$ generated by $\exp(t\widehat{X}^{q}_i)$, for $t\in\R$ and $i=1,\ldots,m$. Its Lie algebra is
$$
\mathfrak{g}_{q} = \mathrm{Lie}(\widehat{X}^{q}_1,\ldots,\widehat{X}^{q}_m)
= \bigoplus_{i=1}^{r(q)} \big( \widehat{D}^{q} \big)^i / \big( \widehat{D}^{q} \big)^{i-1}  ,
$$
it is nilpotent, graded, and generated by its first component $\widehat{D}^{q}$. In other words, $\mathcal{G}_{q}$ is a Carnot group (see \cite{Mo-02}).
Under the H\"ormander condition $\mathrm{Lie}(D)=TM$, $\mathcal{G}_{q}$ acts transitively on $\R^n$.
Defining the isotropy group $H_{q}=\{\varphi\in \mathcal{G}_{q}\ \mid\ \varphi(0)=0\}$, of Lie algebra $\mathfrak{h}_{q}=\{Y\in \mathfrak{g}_{q}\ \mid\ Y(0)=0\}$, we identify $\widehat{M}^{q}$ to the homogeneous (coset) space $\mathcal{G}_{q}/H_{q}$. If $q$ is regular then $H_{q}=\{0\}$ and thus $\widehat{M}^{q}\simeq \mathcal{G}_{q}$ is a Carnot group endowed with a left-invariant sR structure.

\begin{remark}
Carnot groups are to sub-Riemannian geometry as Euclidean spaces are to Riemannian geometry.
However, there is a major difference, which is of particular importance here.
In Riemannian geometry, all tangent spaces are isometric, but this is not the case in sub-Riemannian geometry: given two points $q_1$ and $q_2$ of $M$, the nilpotentizations $(\widehat{M}^{q_1},\widehat{D}^{q_1},\widehat{g}^{q_1})$ and $(\widehat{M}^{q_2},\widehat{D}^{q_2},\widehat{g}^{q_2})$ of the sR structure respectively at $q_1$ and $q_2$ may not be sR-isometric, even though the growth vectors at $q_1$ and $q_2$ coincide.\footnote{Actually, the flags of two sR structures coincide at any point if and only if the sR structures are locally Lipschitz equivalent, meaning that the corresponding sR distances satisfy $c_1 d_2(q,q')\leq d_1(q,q')\leq c_2 d_2(q,q')$ for some uniform constants $c_1>0$ and $c_2>0$.
}
There are many algebraically non-isomorphic (and thus non-isometric) $n$-dimensional Carnot groups, and even uncountably many for $n\geq 5$ (due to moduli in their classification). 
We refer to \cite{AgrachevMarigo_2005,Marigo_2007} for a complete classification of rigid and semi-rigid Carnot algebras.

Note that, in dimension three, if the growth vector is $(2,3)$ 
then we have a unique model that is the Heisenberg flat case 
in the equivalence class of sR-isometric Carnot groups.
\end{remark}

\subsubsection{Nilpotentized sR Laplacian}
Let $q\in M$ be arbitrary. Associated with the sR structure $(\widehat{M}^{q},\widehat{D}^{q},\widehat{g}^{q})$, we define on $C^\infty(\widehat{M}^{q})$ the differential operator
\begin{equation}\label{def_triangle_nilp}
\widehat{\triangle}^{q} = \sum_{i=1}^m (\widehat{X}^{q}_i)^2 .
\end{equation}
Under the H\"ormander condition \eqref{Hormander_assumption}, we have as well $\mathrm{Lie}(\widehat{D}^{q})=T\widehat{M}^{q}$ and thus $\widehat{\triangle}^{q}$ is subelliptic.

\subsubsection{Nilpotentization of measures}\label{sec_nilp_mesures}
The nilpotentization of measures is defined by duality of the nilpotentization of functions. 
Let $\mu$ be a smooth measure on $M$ and let $q\in M$. Using a chart $\psi^q$ of privileged coordinates at $q$, the measure $\widehat{\mu}^{q}$ on $\widehat{M}^{q}\simeq\R^n$ (nilpotentization of the measure $\mu$ at $q$) is given by 
$$
\widehat{\mu}^{q} = \lim_{\varepsilon\rightarrow 0\atop \varepsilon\neq 0} \mu_\varepsilon^q
\qquad\textrm{where}\qquad \mu_\varepsilon^q = \vert\varepsilon\vert^{-\mathcal{Q}^M(q)} (\delta_\varepsilon^q)^* \mu
$$
with convergence in the vague topology (i.e., the weak star topology of $C_c(M)'$, where $C_c(M)$ is the set of continuous functions on $M$ of compact support).
Note that, since $(\delta_\varepsilon^q)^{-1}(\BsR(q,\varepsilon))\rightarrow\hatBsR^q(0,1)$ for the Gromov-Hausdorff topology, 
we have
\begin{equation}\label{equiv_mu_ball}
\mu(\BsR(q,\varepsilon))\sim\varepsilon^{\mathcal{Q}^M(q)}\widehat{\mu}^q\big(\hatBsR^q(0,1)\big)
\end{equation}
as $\varepsilon\rightarrow 0^+$ (see also \cite[Remark 3.6]{GhezziJean_TSG2015}).
According to the above definition of the nilpotentization of a measure, if $\mu$ and $\nu$ are two smooth measures on $M$, with $\mu=h\nu$, where $h$ is a positive smooth function on $M$, then $\widehat{\mu}^{q}=h(q)\widehat{\nu}^{q}$. Equivalently, this means that
\begin{equation}\label{density_nilp}
h(q) = \frac{d\mu}{d\nu}(q) = \frac{\widehat{\mu}^{q}}{\widehat{\nu}^{q}}.
\end{equation}
In particular, the nilpotentizations at $q$ of all smooth measures are proportional to the Lebesgue measure $m$ on $\widehat{M}^{q}\simeq\R^n$. If $q$ is regular, then $\widehat{\mu}^{q}$ is a left-invariant measure on the Carnot group $\widehat{M}^{q}$; in this case, $\widehat{M}^{q}$ is a nilpotent Lie group and thus is unimodular, and hence $\widehat{\mu}^{q}$ coincides with the Haar measure, up to scaling.
If $q$ is singular, $\widehat{M}^{q}$ is a homogeneous (quotient) space and $\widehat{\mu}^{q}$ is a left-invariant measure on it.

In passing, note that, applying \eqref{density_nilp} to the measure $\nu=\mathcal{H}_S$ that is the spherical Hausdorff measure and using the fact (proved in \cite{AgrachevBarilariBoscain_CV2012}) that $\widehat{\mathcal{H}_S}^{q}\big(\hatBsR^{q}(0,1)\big)=2^{\mathcal{Q}^M(q)}$, we obtain that the density at $q$ of $\mu$ with respect to the spherical Hausdorff measure is 
$$
h(q) = \frac{d\mu}{d\mathcal{H}_S}(q) = \frac{\widehat{\mu}^{q}\big(\hatBsR^{q}(0,1)\big)}{2^{\mathcal{Q}^M(q)}} .
$$

\begin{remark}\label{rem_nilpLap}
Let $q\in M$ be arbitrary, and let $\mu$ be an arbitrary smooth measure on $M$.
Endowing $\widehat{M}^{q}$ with the lest-invariant measure $\widehat{\mu}^{q}$, we have 
\begin{equation}\label{div_zero}
\mathrm{div}_{\widehat{\mu}^{q}}(\widehat{X}^{q}_i)=0 \qquad \forall i\in\{1,\ldots,m\}.
\end{equation}
Indeed, $\widehat{\mu}^{q}$ is invariant and the vector fields $\widehat{X}^{q}_i$ are the generators of the group action and thus must have a zero divergence.
As a consequence of \eqref{div_zero}, 
we have $(\widehat{X}^{q}_i)^*=-\widehat{X}^{q}_i$, where the transpose is considered in $L^2(\widehat{M}^{q},\widehat{\mu}^{q})$.  It follows that
$$
\widehat{\triangle}^{q} = \sum_{i=1}^m (\widehat{X}^{q}_i)^2 = \sum_{i=1}^m - (\widehat{X}^{q}_i)^*\widehat{X}^{q}_i  .
$$
Due to the cancellation of the divergence term, there are no terms of order one (compare with the general formula for a sR Laplacian, given, e.g., in \cite{CHT-SEDP}).
%
\end{remark}

\subsubsection{Nilpotentization of diffeomorphisms}\label{sec_nilp_diffeo}
Let $M_1$ and $M_2$ be two manifolds of same dimension, and let $D_1=\mathrm{Span}(X)$ (resp., $D_2=\mathrm{Span}(Y)$) be a horizontal distribution on $M_1$ (resp., on $M_2$), with $X=(X_1,\ldots,X_m)$ (resp., $Y=(Y_1,\ldots,Y_m)$). Let $q\in M_1$. 

We assume that the horizontal distributions $D_1$ and $D_2$ are locally diffeomorphic around $q$, i.e., there exists a germ of smooth diffeomorphism $\phi:M_1\rightarrow M_2$ around $q$ such that $\phi_*D_1=D_2$. This means that, for every $j\in\{1,\ldots,m\}$, $\phi_*X_j = \sum_{j=1}^m a_{ij} Y_i$ for some germs at $\phi(q)$ of smooth functions $a_{ij}$ on $M_2$. Then, $\widehat{D}^q_1$ and $\widehat{D}^{\phi(q)}_2$ are diffeomorphic, i.e., $\widehat{\phi}^q_*\widehat{D}^q_1=\widehat{D}^{\phi(q)}_2$, with a diffeomorphism $\widehat{\phi}^q:\widehat{M}^q_1\rightarrow\widehat{M}^q_2$ satisfying $\widehat{\phi}^q_* \widehat{X}^q_j = \sum_{j=1}^m a_{ij}(\phi(q)) \widehat{Y}^{\phi(q)}_i$ for every $i\in\{1,\ldots,m\}$.

Actually, the diffeomorphism $\widehat{\phi}^q$ is the limit as $\varepsilon\rightarrow 0$ of $\phi_\varepsilon^q$, where
$$
\phi_\varepsilon^q = (\delta_\varepsilon^{\phi(q)})^{-1}\circ\phi\circ\delta_\varepsilon^q
= \delta_{1/\varepsilon} \circ \psi_2^{\phi(q)} \circ \phi \circ (\psi_1^q)^{-1} \circ \delta_\varepsilon
$$
for every $\varepsilon>0$, where $\psi_1^q$ (resp., $\psi_2^{\phi(q)}$) is a local chart of privileged coordinates at $q$ (resp., at $\phi(q)$). To see that $\widehat{\phi}^q$, defined as this limit, is indeed a diffeomorphism, it suffices to choose the local charts of privileged coordinates so that $\phi$ is the identity in those coordinates.

Moreover, if $q$ is regular, i.e., if $q\in M\setminus\S$, then the diffeomorphism $\widehat{\phi}^q$ depends smoothly (in $C^\infty$ topology) on $q$ in $M\setminus\S$. More generally, if $M$ is Whitney stratified by equisingular smooth strata then the latter property is satisfied along strata.

\subsection{Uniform ball-box theorem}\label{sec_uniformballbox}
We follow \cite[Chap. 2, Sec. 2.2.2]{Jean_2014} (see also \cite{NSW-85}). 
Considering the $m$-tuple $X=(X_1,\ldots,X_m)$ of vector fields, given an ordered set $I=(i_1,\ldots,i_p)$ of $p$ indices taken in $\{1,\ldots,m\}$, we define the vector field $X_I$ as the Lie bracket of length $p=\vert I\vert$ given by 
$$
X_I = [\cdots[[X_{i_1},X_{i_2}],X_{i_3}],\ldots ,X_{i_p}] .
$$
Let $K$ be a compact subset of $M$ and let $\mathcal{Q}^K_\mathrm{max}$ be the maximum of $\mathcal{Q}^M(q)$ over all $q\in K$.
Let $\mathscr{X}$ be the (finite) set of all $n$-tuples $\mathtt{X}=(X_{I_1},\ldots,X_{I_n})$ such that $\vert\mathtt{X}\vert = \sum_{i=1}^n \vert I_i\vert \leq \mathcal{Q}^K_\mathrm{max}$.
Note that a $n$-tuple $\mathtt{X}\in\mathscr{X}$ of rank $n$ at $q$ is adapted to the sR flag of $D$ at $q$ if and only if $\vert\mathtt{X}\vert=\mathcal{Q}^M(q)$.
Given any $q\in K$ and any $\rho>0$, we define
\begin{equation}\label{def_v}
\mathtt{v}_\mu^{q,\rho}(X) = \sum_{\mathtt{X}\in\mathscr{X}} \rho^{\vert\mathtt{X}\vert} \left\vert {\det}_\mu \left( \mathtt{X}(q) \right) \right\vert .
\end{equation}
The function $q\mapsto \mathtt{v}_\mu^{q,\rho}(X)$ is continuous, and for every $q$ fixed, $\mathtt{v}_\mu^{q,\rho}(X)$ is polynomial in $\rho$, of valuation $\mathcal{Q}^M(q)$ and of degree not greater than $\mathcal{Q}^K_\mathrm{max}$. Actually, following \cite{GhezziJean_NA2015,GhezziJean_TSG2015} and defining
$$
\mathtt{w}_\mu^q(X) = \sum_{\mathtt{X}\in\mathscr{X}, \vert\mathtt{X}\vert = \mathcal{Q}^M(q)} \left\vert {\det}_\mu \left( \mathtt{X}(q) \right) \right\vert  ,
$$
we have $\mathtt{v}_\mu^{q,\rho}(X) \sim \rho^{\mathcal{Q}^M(q)} \, \mathtt{w}_\mu^q(X)$ as $\rho\rightarrow 0$, but this limit is not uniform with respect to $q$ near singular points.
The function $q\mapsto\mathtt{w}^q_\mu(X)$ is positive on $K$ (because there always exists an adapted frame at any point $q$, consisting of Lie brackets), is smooth in the regular region $K\setminus\S$, and is not continuous at singular points (if a singular point $q$ is the limit of regular points $q_k$ then $\mathtt{w}_\mu^{q_k}(X)\rightarrow 0$ while $\mathtt{w}_\mu^q(X)>0$).

Given any $q\in K$, any $\mathtt{X}\in\mathscr{X}$ and any $\rho>0$, we define the $\mathtt{X}$-box 
$$
\mathrm{Box}_{\mathtt{X}}(q,\rho) = \{ \exp(x_1 X_{I_1}) \circ\cdots\circ \exp(x_n X_{I_n})(q)\ \mid\ \vert x_i\vert\leq \rho^{\vert I_i\vert},\ i=1,\ldots,n\} .
$$
By the \emph{uniform ball-box theorem} (see \cite{Jean_2001} or \cite[Theorem 2.4]{Jean_2014}, see also \cite[Proposition A.1]{GhezziJean_NA2015}), there exist $C>0$ and $\rho_0>0$ (depending on $K$) such that, for every $q\in K$ and every $\rho\in(0,\rho_0]$, for every $\mathtt{X}\in\mathscr{X}$ (depending on $q$ and $\rho$) achieving the maximum of $\rho^{\vert\mathtt{X}\vert} \left\vert {\det}_\mu \left( \mathtt{X}(q) \right) \right\vert$ over all $\mathtt{X}\in\mathscr{X}$,
we have 
$$
\mathrm{Box}_{\mathtt{X}}(q,\rho/C) \subset \BsR(q,\rho) \subset \mathrm{Box}_{\mathtt{X}}(q,\rho C) .
$$
As a consequence, there exists $C>0$ (depending on $K$) such that
\begin{equation}\label{mu_ball_eps}
\frac{1}{C}\mathtt{v}_\mu^{q,\rho}(X_1,\ldots,X_m) \leq \mu(\BsR(q,\rho)) \leq C \mathtt{v}_\mu^{q,\rho}(X_1,\ldots,X_m)  \qquad \forall q\in K \qquad\forall \rho\in(0,\rho_0] .
\end{equation}
The above double inequality is the main result of \cite{NSW-85}, from which the authors infer the volume doubling property.
Note that, using \eqref{equiv_mu_ball}, we easily infer (see also \cite[Remark 5.8]{GhezziJean_NA2015}) that there exists $C>0$ (depending on $K$) such that
\begin{equation}\label{equiv_muhat}
\frac{1}{C}\mathtt{w}_\mu^q(X)\leq \widehat{\mu}^q\big(\hatBsR^q(0,1)\big) \leq C \mathtt{w}_\mu^q(X) \qquad \forall q\in K .
\end{equation}
%
In passing, one can note that, applying \eqref{mu_ball_eps} to the $m$-tuple $\widehat{X}^q = (\widehat{X}^q_1,\ldots,\widehat{X}^q_m)$ of vector fields, we obtain that $\mathtt{w}^q_\mu(X)$ is bounded above and below on $K$ by constant multiples of $\mathtt{v}^{0,1}_{\widehat{\mu}^q}(\widehat{X}^q)$.


\subsection{The Popp measure}\label{sec_popp}
Similarly to the fact that any Riemannian manifold has a canonical smooth measure associated with the metric, any sR structure has canonical (intrinsic) measures, i.e., measures that depend only on the sR structure and not on the choice of a system of coordinates or of a local frame.
This question has been addressed in \cite{AgrachevBarilariBoscain_CV2012,Mo-02}.
The $n$-dimensional Hausdorff measure and the $n$-dimensional spherical Hausdorff measure are of course intrinsic measures on a sR structure, because a sR manifold is a metric space for the corresponding sR distance.
The Popp measure, introduced in \cite{Mo-02}, is another canonical measure, associated with the sR metric and with the flag structure, in the equiregular case. It is even ``doubly intrinsic", in the sense that it commutes with nilpotentization, as recalled below.

In $M\setminus\S$, the Popp volume is defined as the inverse image of $\vert\nu_1\wedge\cdots\wedge\nu_r\vert$ under the canonical isomorphism%
\footnote{Indeed, following \cite{ABGR}, considering a basis $(e_1,\ldots,e_n)$ of $T_{q}M$ that is adapted to the flag, that is, such that $e_i\in D^{w_i(q)}(q)$, the wedge product $e_1\wedge\cdots\wedge e_n$ depends only on $e_i\mod D_{q}^{w_i(q)-1}$. This induces the canonical isomorphism.}
$$
\Lambda^n(T^\star_{q} M) \simeq \Lambda^n\bigg(  \bigoplus_{k=1}^{r(q)} D^k(q)/D^{k-1}(q) \bigg)^* ,
$$
where $\nu_k$ is the canonical volume form on $D(q)^k/D(q)^{k-1}$ induced by the Euclidean structure coming from the surjection $D(q)^{\otimes k} \rightarrow D(q)^k/D(q)^{k-1}$ defined with Lie brackets modulo $D(q)$.
The corresponding measure $P$, smooth $M\setminus\S$, is called the \emph{Popp measure}.

By construction, the Popp measure is invariant under local sR isometries. 
Actually, if the group of sR isometries acts transitively on $M$, then the Popp measure is the unique invariant measure, up to scaling (see \cite{BarilariRizzi_AGMS2013}).
If $M$ is a Lie group equipped with a left-invariant sR structure (and thus in particular if $M$ is a Carnot group), then, since the left action is an isometry, the Popp measure is left-invariant.
In this case, by uniqueness (up to scaling) of the Haar measure on a locally compact topological group, the Popp measure is therefore a constant multiple of the Haar measure. In particular, this is the case on the nilpotentization of the sR structure at some given point.

Moreover, 
the construction of the Popp measure commutes with nilpotentization, in the following sense. Let $q\in M\setminus\S$ be arbitrary. We denote temporarily by $P_M$ the Popp measure on $M$ associated with the sR structure $(M,D,g)$, and by $P_{\widehat{M}^{q}}$ the Popp measure on $\widehat{M}^{q}$ associated with the sR structure $(\widehat{M}^{q},\widehat{D}^{q},\widehat{g}^{q})$. Note that, since the sR structure $(\widehat{M}^{q},\widehat{D}^{q},\widehat{g}^{q})$ is a class of equivalence under sR isometries and that the Popp measure is invariant under sR isometries, it follows that $P_{\widehat{M}^{q}}$ is an intrinsic measure on $\widehat{M}^{q}$.
Let $\psi^q$ be a chart of local privileged coordinates at $q$ (note that this is as well a local sR isometry), and let $\widehat{P_M}^{q}$ be the nilpotentization of $P_M$ at $q$ in this chart. Then $\widehat{P_M}^{q} = P_{\widehat{M}^{q}}$. 
Since it is an intrinsic object, we simply denote it by $\widehat{P}^{q}$.


When the singular set $\S$ is stratified by equisingular smooth submanifolds, the Popp measure can also be defined along each stratum $\S_j$ of $\S$ (see \cite{GhezziJean_TSG2015}): the construction is the same as above, considering the sR flags of $D\cap T\S_j$, and gives a smooth measure on each stratum.

As noticed in \cite[Corollary 4.5]{GhezziJean_TSG2015}, it is remarkable that, given any compact subset $K$ of $M$, there exists $C>0$ (depending on $K$) such that $\frac{1}{C}\leq\widehat{P}^{q}(\widehat{B}^q(0,1))\leq C$ for every $q\in M\setminus\S$ (even near $\S$). Indeed, it follows from the explicit expression of the Popp measure given in \cite{BarilariRizzi_AGMS2013} that, in $M\setminus\S$, the Radon-Nikodym derivative $\frac{dP}{d\mu}$ is bounded above and below, up to scaling, by $\frac{1}{\mathtt{w}_\mu}$. The claim then follows from \eqref{density_nilp} and \eqref{equiv_muhat}.


In Section \ref{sec_Weyl_measures}, we introduce a new canonical sR measure, that we call the \emph{Weyl measure}, which is of a spectral nature, in contrast to the Popp measure that is of algebraic nature.

\subsection{Schwartz kernels, heat kernels}\label{appendix_Schwartz}
We set $\mathscr{D}(M)=C_c^\infty(M)$ and we denote by $\mathscr{D}'(M)$ the  space of distributions on $M$, i.e., the topological dual of $\mathscr{D}(M)$ endowed with the weak topology.
Let $\mu$ be a smooth measure on $M$.

\subsubsection{Schwartz kernels}
According to the Schwartz kernel theorem, there is a linear bijection between $\mathscr{D}'(M\times M)$ and the set of bilinear continuous functionals on $\mathscr{D}(M)\times \mathscr{D}(M)$.
Given a linear continuous mapping $A: \mathscr{D}(M)\rightarrow \mathscr{D}'(M)$, the Schwartz kernel of $A$ is the unique distribution $[A]\in \mathscr{D}'(M\times M)$ defined by 
$$
\langle Af,g\rangle_{\mathscr{D}'(M),\mathscr{D}(M)} = \langle [A],g\otimes f\rangle 
\qquad \forall f,g\in \mathscr{D}(M)
$$
where $\langle\cdot,\cdot\rangle$ is the duality bracket.

When $[A]\in C^0(M\times M)$, identifying the distribution bracket by an integral with respect to the measure $\mu\otimes\mu$ and denoting by $[A]_\mu$ the density function, we have the familiar formula 
$$
Af(q) = \int_M [A]_\mu(q,q') f(q')\, d\mu(q') \qquad \forall q\in M\qquad \forall f\in\mathscr{D}(M).
$$
We stress that, although the density function $[A]_\mu$ depends on $\mu$, given any $q\in M$, the absolutely continuous measure $[A]_\mu(q,\cdot)\, d\mu(\cdot)$ depends only on $A$: it does not depend on the smooth measure $\mu$, in the sense that 
$$
[A]_\mu(q,\cdot)\, d\mu(\cdot) = [A]_\nu(q,\cdot)\, d\nu(\cdot)
$$
for any other smooth measure $\nu$ on $M$.

Actually, in geometric terms, $[A]$ is a continuous section of the bundle $\pi_2^*(\Omega_M)$ on $M\times M$, where $\Omega_M$ is the line bundle of smooth measures (densities) on $M$ and $\pi_2:M\times M\rightarrow M$ is the projection defined by $\pi_2(q,q')=q'$.

Similarly, the diagonal part $[A]_\mu(q,q)\, d\mu(q)$ is an absolutely continuous measure, which does not depend on $\mu$. Denoting by $\mathcal{M}_f$ the operator of multiplication by $f$, we have
$$
\Tr(A \mathcal{M}_f) = \int_M [A]_\mu(q,q) f(q)\, d\mu(q) \qquad\forall f\in \mathscr{D}(M).
$$

\subsubsection{Heat kernels}
Let $A:D(A)\rightarrow L^2(M,\mu)$ be a densely defined operator on $L^2(M,\mu)$, generating a strongly continuous semigroup $(e^{tA})_{t\geq 0}$. 
For every $t>0$, the heat kernel $e_A(t)$ associated with $A$ is the measure on $M$ defined as the Schwartz kernel of $e^{tA}$, i.e., $e_A(t)=[e^{tA}]$. 
Of course, it does not depend on $\mu$.


When this measure has a density $[e^{tA}]_\mu$ with respect to $\mu$ which is locally integrable, we define the heat kernel $e_{A,\mu}(t,\cdot,\cdot)$ associated with $A$ and with the measure $\mu$ by $e_{A,\mu}(t,q,q') = [e^{tA}]_\mu(q,q')$. This means that 
$$
u(t,q) = (e^{tA}f)(q) = \int_M e_{A,\mu}(t,q,q')f(q')\, d\mu(q')
$$
is the unique solution to $\partial_t u-Au=0$ for $t>0$, $u(0,\cdot)=f(\cdot)$, for every $f\in\mathscr{D}(M)$.
In other words, we have
$$
e_A(t)(q,q') = [e^{tA}](q,q') = e_{A,\mu}(t,q,q')\, d\mu(q') \qquad\forall t>0\qquad \forall q,q'\in M.
$$
As said above, this expression depends only on $A$, not on the smooth measure $\mu$.

\medskip

Extending $e_{A,\mu}$ by $0$ for $t<0$, for any fixed $q'\in M$ the mapping $(t,q)\mapsto e_{A,\mu}(t,q,q')$ is also solution of $(\partial_t-A)e_{A,\mu}(\cdot,\cdot,q')=\delta_{(0,q')}$ in the sense of distributions, where the distribution pairing is considered with respect to the measure $dt\times d\mu(q)$ on $\R\times M$. 

We gather hereafter some useful facts.

\begin{itemize}[leftmargin=*,parsep=0cm,itemsep=0cm,topsep=0cm]
\item
Let $\varphi:M\rightarrow M$ be a diffeomorphism, representing a change of variable in the manifold $M$. We have $\varphi^*\mu = \vert J_\mu(\varphi)\vert\mu$, where $J_\mu(\varphi)$ is the Jacobian of $\varphi$ with respect to $\mu$, and where $\varphi^*\mu$ is the pullback of $\mu$ under $\varphi$.
Then
\begin{equation}\label{formulas_kernel}
\begin{split}
e_{\varphi^*A\varphi_*,\mu}(t,q,q') &= \vert J_\mu(\varphi)(q')\vert\, e_{A,\mu}(t,\varphi(q),\varphi(q'))  \\
e_{A,\varphi^*\mu}(t,q,q') &= \frac{1}{\vert J_\mu(\varphi)(q')\vert}\, e_{A,\mu}(t,q,q')  \\
e_{\varphi^*A\varphi_*,\varphi^*\mu}(t,q,q') &= e_{A,\mu}(t,\varphi(q),\varphi(q'))
\end{split}
\end{equation}
for every $t>0$ and all $q,q'\in M$.
Note that the last one follows from the two first ones, in which we have replaced $A$ with $\varphi^*A\varphi_*$ in the second one.
The two first formulas in \eqref{formulas_kernel} are not symmetric, but there is no contradiction there: indeed if $A$ is selfadjoint in $L^2(M,\mu)$ then $e_{A,\mu}$ is symmetric, but $A$ need not be selfadjoint in $L^2(M,\varphi^*\mu)$ and thus $e_{A,\varphi^*\mu}$ need not be symmetric.
Actually, given any other smooth measure $\nu$ on $M$, we have
$$
e_{\varphi^*A\varphi_*,\mu}(t,q,q')\, d(\varphi^*\mu)(q') = e_{\varphi^*A\varphi_*,\nu}(t,q,q')\, d\nu(q') .
$$

\item As a particular case, given any $\lambda>0$, we have
$\lambda\, e_{A,\lambda\mu} = e_{A,\mu} $.

\item
Given any $\varepsilon>0$, the kernel associated with $\varepsilon^2 A$ and with the measure $\nu$ is 
\begin{equation*}
e_{\varepsilon^2 A, \nu}(t,q,q') = e_{A,\nu}(\varepsilon^2 t,q,q')
\qquad \forall t>0\qquad \forall q,q'\in M .
\end{equation*}

\item
We assume that $\mu=h\nu$ with $h$ a positive smooth function on $M$ (density of $\mu$ with respect to $\nu$). Then
$h(q')e_{A,\mu}(t,q,q')=e_{A,\nu}(t,q,q')$
for all $(t,q,q')\in(0,+\infty)\times M\times M$, or equivalently,
\begin{equation*} 
e_{A,\mu}(t,q,q')\, d\mu(q') = e_{A,\nu}(t,q,q')\, d\nu(q').
\end{equation*}



\item 
Let $(M_1,\mu_1)$ and $(M_2,\mu_2)$ be smooth manifolds with smooth measures. For $i=1,2$, let $A_i:D(A_i)\rightarrow L^2(M_i,\mu_i)$ be a densely defined operator on $L^2(M_i,\mu_i)$, assumed to generate a heat kernel that has a density $e_{\triangle_i,\mu_i}=e_{\triangle_i,\mu_i}(t,q_i,q_i')$ with respect to $\mu_i$ which is locally integrable. We define $M=M_1\times M_2$ and $\mu=\mu_1\otimes\mu_2$ and we consider on $M$ the operator 
$$
A=(A_1)_{q_1}+(A_2)_{q_2} = A_1\otimes\mathrm{id}_{M_2} + \mathrm{id}_{M_1} \otimes A_2 .
$$
We have
\begin{equation}\label{tensorproduct}
e_{A,\mu}(t,(q_1,q_2),(q_1',q_2')) = e_{A_1,\mu_1}(t,q_1,q_1') \, e_{A_2,\mu_2}(t,q_2,q_2')
\end{equation}
for every $t>0$, all $q_1,q_1'\in M_1$ and all $q_2,q_2'\in M_2$.
\end{itemize}

\subsubsection{Sub-Riemannian heat kernels}\label{app_sR_kernel}
In this paper, the above facts facts are applied to the nonpositive selfadjoint operator $\triangle:D(\triangle)\rightarrow L^2(M,\mu)$ (defined by \eqref{app_def_triangle}), or to the operator $\widehat{\triangle}^{q}$ (nilpotentization of $\triangle$, defined by \eqref{def_triangle_nilp}) defined on $D(\widehat{\triangle}^{q}) = \{ f\in L^2(\widehat M^{q},\widehat\mu^{q})\ \mid\ \widehat{\triangle}^{q} f\in L^2(\widehat M^{q},\widehat\mu^{q}) \}$. Note that, according to Remark \ref{rem_nilpLap}, the operator $\widehat{\triangle}^{q}:D(\widehat{\triangle}^{q})\rightarrow L^2(\widehat M^{q},\widehat\mu^{q})$ is essentially selfadjoint, and since $\widehat M^{q}$ is complete (indeed, sR balls of small radius are compact, and $\widehat M^{q}$ is invariant by dilatations) $\widehat{\triangle}^{q}$ is selfadjoint (see \cite{Strichartz_JDG1986}). Therefore, both operators generate strongly continuous contraction semigroups.

Under the H\"ormander condition $\mathrm{Lie}(D)=TM$, the operators $\partial_t-\triangle$ and $\partial_t-\widehat{\triangle}^{q}$ are hypoelliptic and therefore the corresponding heat kernels have smooth densities: we denote by $e=e_{\triangle,\mu}$ the density of the heat kernel of $\triangle$ with respect to $\mu$, defined on $(0,+\infty)\times M\times M$, and by $\widehat{e}^q = e_{\widehat{\triangle}^{q},\widehat{\mu}^{q}}$ the density of the heat kernel of $\widehat{\triangle}^q$ with respect to $\widehat{\mu}^q$, defined on $(0,+\infty)\times\widehat{M}^{q}\times\widehat{M}^{q}$.
By the maximum principle for hypoelliptic operators (see \cite{Bony}), the smooth functions $e$ and $\widehat{e}^q$ are positive symmetric 
(see \cite{Strichartz_JDG1986}).

Note that the nilpotentized heat kernel $\widehat{e}^q$ satisfies the homogeneity property
\begin{equation}\label{homog_ehat}
\vert\varepsilon\vert^{\mathcal{Q}^M(q)} \, \widehat{e}^q(\varepsilon^2t,\delta_\varepsilon(x),\delta_\varepsilon(x')) = \widehat{e}^q(t,x,x')
\end{equation}
for every $\varepsilon\in\R\setminus\{0\}$ and for all $(t,x,x')\in(0,+\infty)\times\R^n\times\R^n$.

\paragraph{Exponential estimates for sR heat kernels.}
It is well known that, for every compact subset $K$ of $M$, for every $\varepsilon\in(0,1]$ and for every $T>0$, there exist $C_1,C_2>0$ such that
\begin{equation}\label{exp_estimates}
\frac{C_1}{\mu(\BsR(q,\sqrt{t}))} \exp \left(-\frac{\dsR(q,q')^2}{(4-\varepsilon)t}\right) \leq e(t,q,q') \leq \frac{C_2}{\mu(\BsR(q,\sqrt{t}))} \exp \left( -\frac{\dsR(q,q')^2}{(4+\varepsilon)t} \right)
\end{equation}
and
\begin{equation}\label{upper_estim_kernel}
\left\vert \partial_t^m X_q^I e(t,q,q') \right\vert \leq \frac{1}{t^{m+\vert I\vert/2}} \frac{C_2}{\mu(\BsR(q,\sqrt{t}))} \exp \left( -\frac{\dsR(q,q')^2}{(4+\varepsilon)t} \right)
\end{equation}
for all $q,q'\in K$, for every $t\in(0,T)$, for all $i_1,\ldots,i_s\in\{1,\ldots,m\}$ and for every $s\in\N^*$, 
where $X^I = X_{i_1}\cdots X_{i_s}$, $I=(i_1,\ldots,i_s)$ and $\vert I\vert=s$. Here, $X^I_q$ means that the derivation $X^I$ is applied with respect to the variable $q$.
In particular, 
\begin{equation}\label{exp_estimates_diago}
\frac{C_1}{\mu(\BsR(q,\sqrt{t}))} \leq e(t,q,q) \leq \frac{C_2}{\mu(\BsR(q,\sqrt{t}))} 
\qquad \forall q\in K\qquad \forall t\in(0,T].
\end{equation}
These exponential estimates have been established, e.g., in \cite{CoulhonSikora_PLMS2008, Je-Sa-86, KusuokaStroock, Saloff-Coste_IMRN1992, Sac-84, Varopoulos} (see also \cite[Appendix C]{CHT_AHL} for a survey and more results on this issue). 
Actually, the estimates \eqref{exp_estimates_diago} along the diagonal imply the general estimates \eqref{exp_estimates} by standard considerations.

\section{Parameter-dependent sR heat kernels}\label{app_parameter_kernels}
We recall here results that have been established in \cite{CHT_AHL} in the more general framework of H\"ormander operators. Hereafter, to avoid technicalities, we specify the statements to sR Laplacians.

Let $M$ be a smooth connected manifold and let $\Omega$ be an open subset of $M$.
Let $m\in\N^*$ and let $\mathcal{K}$ be a compact set. For every $\tau\in\mathcal{K}$, 
let $\mu^\tau$ be a smooth density on $M$,
let $X_1^\tau,\ldots,X_m^\tau$ be smooth vector fields on $M$, all of them depending continuously on $\tau$ in $C^\infty$ topology. We denote by $g^\tau$ the corresponding sR metric. We consider the sR Laplacian
\begin{equation*}
\triangle^\tau = \triangle_{g^\tau,\mu^\tau} = -\sum_{i=1}^m (X_i^\tau)^*X_i^\tau = \sum_{i=1}^m \big( (X_i^\tau)^2 + \mathrm{div}_{\mu^\tau}(X_i^\tau) X_i^\tau \big)
\end{equation*}
where the star is the transpose in $L^2(M,\mu^\tau)$.
We assume that the Lie algebra $\mathrm{Lie}(X_1^\tau,\ldots,X_m^\tau)$ generated by the vector fields is equal to $T_qM$ at any point $q\in M$, with a degree of nonholonomy that is uniform with respect to $\tau\in\mathcal{K}$ (\emph{uniform H\"ormander condition}).

We still denote by $\mu^\tau$ the volume induced on $\Omega$.
Let $D(\triangle^\tau)$ be a subset of $\{ f\in L^2(\Omega,\mu^\tau) \mid (\triangle^\tau f)_{\vert \Omega} \in L^2(\Omega,\mu^\tau) \}$, standing for a domain of $\triangle^\tau$ for which $(\triangle^\tau,D(\triangle^\tau))$ is selfadjoint and generates a strongly continuous contraction semigroup on $L^2(\Omega,\mu^\tau)$ and thus a smooth positive symmetric heat kernel $e^\tau(t,q,q')=e_{\triangle^\tau,\mu^\tau}(t,q,q')$ on $(0,+\infty) \times\Omega\times\Omega$.

\subsection{Hypoelliptic Kac's principle}\label{app_kac}
Let $\Omega_1$ be another arbitrary open subset of $M$. We define the operator $\triangle^\tau_1$ on $L^2(\Omega_1,\mu^\tau)$ exactly as we did above on $\Omega$, so that the selfadjoint operator $(\triangle^\tau_1,D(\triangle^\tau_1))$ generates another smooth positive symmetric heat kernel $e^\tau_1(t,q,q')=e_{\triangle^\tau_1,\mu^\tau}(t,q,q')$ on $(0,+\infty) \times\Omega_1\times\Omega_1$.

\begin{theorem}[{\cite[Theorem 3.1]{CHT_AHL}}]\label{thm_Kac}
For all $(k,\alpha,\beta)\in \N\times\N^d\times\N^d$, we have
$$
(\partial_t^k \partial_q^\alpha \partial_{q'}^\beta e^\tau) (t,q,q') = (\partial_t^k \partial_q^\alpha \partial_{q'}^\beta e^\tau_1) (t,q,q') + \mathrm{O}(t^\infty) 
$$
as $t\rightarrow 0^+$, uniformly with respect to $\tau\in\mathcal{K}$ and to $q,q'$ varying in any compact subset of $\Omega\cap \Omega_1$.
\end{theorem}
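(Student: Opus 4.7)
The plan is to exploit the key structural observation that as a \emph{differential operator} on the overlap $\Omega\cap\Omega_1$, the Laplacian $\triangle^\tau$ is identical regardless of whether it is viewed as the restriction of the operator on $\Omega$ or on $\Omega_1$; the two semigroups differ only through their domain conditions at the respective boundaries. Fix a compact subset $K$ of $\Omega\cap\Omega_1$, pick a slightly larger compact neighborhood $K'\subset\Omega\cap\Omega_1$, and set $\delta=\dsR(K,M\setminus K')>0$. Choose $\chi\in C^\infty_c(\Omega\cap\Omega_1)$ with $\chi\equiv1$ on a neighborhood of $K$ and $\supp\chi\subset\{\dsR(\cdot,K)<\delta/2\}\subset K'$.

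First I would consider the difference $w^\tau(t,q,q')=e^\tau(t,q,q')-e^\tau_1(t,q,q')$ on $(0,+\infty)\times(\Omega\cap\Omega_1)^2$. By the preceding observation, for each fixed $q'\in K$, the function $(t,q)\mapsto w^\tau(t,q,q')$ solves $(\partial_t-\triangle^\tau)w^\tau=0$ on $(0,+\infty)\times(\Omega\cap\Omega_1)$ and tends to $0$ (as a distribution) as $t\to0^+$. The exponential estimates \eqref{exp_estimates} applied to both kernels give, uniformly in $\tau\in\mathcal{K}$ thanks to the uniform H\"ormander condition (which yields uniform ball-box and uniform Gaussian bounds),
\[
|w^\tau(t,q,q')|\leq \frac{C}{\mu^\tau(\BsR(q,\sqrt{t}))}\exp\!\left(-\frac{\delta^2}{5t}\right)\qquad\forall(q,q')\in(M\setminus K')\times K,\ t\in(0,1],
\]
and similarly for the derivatives via \eqref{upper_estim_kernel}. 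So $w^\tau$ and all its space-time derivatives are exponentially small in $1/t$ away from a neighborhood of the diagonal in $K$.

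The core step is then to propagate this boundary smallness to the interior. Applying $\triangle^\tau$ to $\chi w^\tau$ gives $(\partial_t-\triangle^\tau)(\chi w^\tau)=[\chi,\triangle^\tau]w^\tau$, where the commutator is a first-order differential operator supported in $\{\chi\neq 1\}\cap K'$, a region at sR distance $\geq \delta/2$ from $K$. Hence by Duhamel's formula (using the semigroup $e^{t\triangle^\tau}$ and that $\chi w^\tau$ vanishes at $t=0$),
\[
(\chi w^\tau)(t,q,q')=\int_0^t\bigl(e^{(t-s)\triangle^\tau}\,[\chi,\triangle^\tau]w^\tau(s,\cdot,q')\bigr)(q)\,ds,
\]
and on $K$ the left-hand side coincides with $w^\tau$. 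Inserting the exponential estimate for $w^\tau$ on $\supp[\chi,\triangle^\tau]$ together with \eqref{exp_estimates} for the semigroup kernel gives, for $q,q'\in K$ and $t\in(0,1]$,
\[
|w^\tau(t,q,q')|\leq C\int_0^t e^{-\delta^2/(20\,s)}\,ds=\mathrm{O}(t^\infty),
\]
uniformly in $\tau\in\mathcal{K}$. Derivative estimates $(\partial_t^k\partial_q^\alpha\partial_{q'}^\beta w^\tau)$ are obtained by the same scheme: differentiate the Duhamel identity, use \eqref{upper_estim_kernel} to control derivatives of the semigroup kernel, and use the subelliptic regularity (with uniform gain $2/r$ thanks to the uniform H\"ormander condition) to control derivatives of $w^\tau$ on $\supp[\chi,\triangle^\tau]$ via iterated application of the same estimate on slightly smaller neighborhoods.

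The main obstacle I anticipate is the careful bookkeeping needed to make the Duhamel argument and the subelliptic bootstrap simultaneously uniform in $\tau\in\mathcal{K}$: one must verify that the constants in the exponential and subelliptic estimates depend only on a uniform ball-box constant and a uniform subelliptic gain, both of which are guaranteed by the uniform H\"ormander condition but require tracking through the Rothschild-Stein lifting / Nagel-Stein-Wainger scheme in parameter-dependent form. For derivatives $\partial_{q'}^\beta$ acting on $q'$, one uses symmetry of the kernel (so $q'$-derivatives become $q$-derivatives after transposition), which reduces to the previous case.
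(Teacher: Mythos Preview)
The paper does not prove this theorem; it merely cites \cite{CHT_AHL} and remarks that it ``follows from uniform local subellipticity estimates''. Your Duhamel-with-cutoff approach is a standard and correct way to establish such locality results, and it is consistent with that remark. One minor slip: you claim that $\supp[\chi,\triangle^\tau]\subset\{\chi\neq1\}\cap K'$ lies at sR distance $\geq\delta/2$ from $K$, but your own choice $\supp\chi\subset\{\dsR(\cdot,K)<\delta/2\}$ forces the opposite inequality. What actually makes the argument work is only that $\chi\equiv1$ on a neighborhood of $K$, so $\supp(\nabla\chi)$ sits at some positive distance $\epsilon>0$ from $K$; replace $\delta/2$ by this $\epsilon$ throughout and the estimates go through.

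A more substantive point: you invoke the Gaussian bounds \eqref{exp_estimates}--\eqref{upper_estim_kernel} for the heat kernels $e^\tau$ and $e_1^\tau$ on the domains $\Omega$ and $\Omega_1$, uniformly in $\tau$. In the paper those estimates are stated only for the kernel on the closed manifold $M$. Extending them to the domain problems with the stated uniformity is exactly where the ``uniform local subellipticity estimates'' enter, and is part of what \cite{CHT_AHL} supplies; be aware that one cannot simply quote \eqref{exp_estimates} here without that additional input (otherwise the argument risks being circular, since one common route to Gaussian bounds on a domain is precisely via a Kac-type comparison). Your closing paragraph correctly identifies this as the main technical burden.
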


Theorem \ref{thm_Kac} reflects Kac's principle of ``not feeling the boundary", showing that the small-time asymptotic behavior of the heat kernel is purely local. The above version is moreover uniform with respect to parameters.  

This result, which follows from uniform local subellipticity estimates, is particularly useful to develop local arguments using the heat kernel in small time.

\subsection{Continuity with respect to parameters}\label{app_smooth}

\begin{theorem}[{\cite[Theorem 3.2]{CHT_AHL}}]\label{thm_parameter_heat_smooth}
The heat kernel $e^\tau$ is smooth on $(0,+\infty)\times\Omega\times\Omega$, for every $\tau\in\mathcal{K}$, and depends continuously on $\tau\in\mathcal{K}$ in $C^\infty((0,+\infty)\times\Omega\times\Omega)$ topology.
\end{theorem}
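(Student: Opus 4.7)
The plan is to decouple the two claims — smoothness in $(t,q,q')$ for each fixed parameter, and continuous $\tau$-dependence in $C^\infty$ topology — and then glue them using a Duhamel argument driven by uniform subelliptic estimates.

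First, I would establish smoothness of $e^\tau$ on $(0,+\infty)\times\Omega\times\Omega$ for each fixed $\tau\in\mathcal{K}$. This is classical: under the H\"ormander condition $\mathrm{Lie}(X_1^\tau,\ldots,X_m^\tau)=TM$, the operator $\partial_t-\triangle^\tau$ is hypoelliptic on $\R\times\Omega$, and the kernel distribution $e^\tau(\cdot,\cdot,q')$ solves $(\partial_t-\triangle^\tau)e^\tau(\cdot,\cdot,q')=\delta_{(0,q')}$. Hypoellipticity then yields smoothness off the source, i.e.\ on $(0,+\infty)\times\Omega$, and symmetry together with the hypoellipticity of $\partial_t-\triangle^\tau$ acting on the $q'$ variable gives joint smoothness in $(t,q,q')$.

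The heart of the matter is uniformity in $\tau$. Here I would first establish \emph{uniform subelliptic estimates}: under the assumed uniform H\"ormander condition (uniform degree of nonholonomy $r$), there exist $C>0$ and $\varepsilon>0$, independent of $\tau\in\mathcal{K}$, such that on any fixed compact $K\Subset\Omega$, one has estimates of the form $\|u\|_{H^{s+\varepsilon}}\leq C(\|\triangle^\tau u\|_{H^s}+\|u\|_{H^s})$ for $u\in C^\infty_c(K)$. Such uniformity follows by tracking the constants in H\"ormander's original proof (or the Rothschild--Stein lifting/approximation approach): the subelliptic constants depend only on the uniform Lie-bracket generation order and on a finite number of $C^\infty$-seminorms of the $X_i^\tau$, both of which are controlled on the compact $\mathcal{K}$ by the continuity of $\tau\mapsto X_i^\tau$ in $C^\infty$ topology. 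This also provides uniform bounds on the smoothing properties of $e^{t\triangle^\tau}$ on any compact time interval $[t_0,T]\subset(0,+\infty)$, together with uniform exponential estimates for $e^\tau$ and all its derivatives (see \eqref{exp_estimates}--\eqref{upper_estim_kernel}).

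Next I would prove continuity of $\tau\mapsto e^\tau$ in $C^\infty$ topology. Fix $\tau_0\in\mathcal{K}$ and write the Duhamel identity
\[
e^{t\triangle^\tau}-e^{t\triangle^{\tau_0}}=\int_0^t e^{(t-s)\triangle^\tau}(\triangle^\tau-\triangle^{\tau_0})e^{s\triangle^{\tau_0}}\,ds,
\]
reading $R_\tau:=\triangle^\tau-\triangle^{\tau_0}$ as a second-order differential operator whose coefficients tend to $0$ in $C^\infty$ as $\tau\to\tau_0$. Testing against a distributional $\delta$, this translates to a pointwise formula for $e^\tau(t,q,q')-e^{\tau_0}(t,q,q')$ as an integral against the two kernels. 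Using the uniform smoothing / exponential estimates on compacts $[t_0,T]\times K\times K$ and the fact that the coefficients of $R_\tau$ are uniformly small in $C^\infty$, one obtains that $e^\tau\to e^{\tau_0}$ uniformly on such compacts. To upgrade to $C^\infty$-convergence, I would differentiate the Duhamel formula: derivatives in $t$ are converted to $\triangle^\tau$ derivatives using the heat equation and reabsorbed via the uniform subelliptic estimates; derivatives in $q,q'$ are handled by the uniform kernel estimates \eqref{upper_estim_kernel} applied to both factors in Duhamel (splitting the integral at $s=t/2$). A bootstrap/iteration of the Duhamel expansion then gives continuous $\tau$-dependence in every $C^k$ norm, hence in $C^\infty$.

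The main obstacle I anticipate is precisely the uniformity of the subelliptic estimates in $\tau$: one must argue that the constants produced by H\"ormander's theorem (or by the Rothschild--Stein lifting) depend only on a finite jet of the $X_i^\tau$ and on the uniform step $r$. Once this uniformity is in hand, the hypoelliptic Kac principle (Theorem \ref{thm_Kac}) reduces everything to a local problem on a compact chart, and the Duhamel bootstrap described above concludes both smoothness of $e^\tau$ and its continuous dependence on $\tau$ in $C^\infty$ topology.
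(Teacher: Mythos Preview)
Your approach is sound, but the paper takes a softer route that avoids the Duhamel bootstrap entirely. Instead of directly estimating $e^\tau-e^{\tau_0}$, the paper argues in two steps: first, the Trotter--Kato theorem from general semigroup theory gives that $\tau\mapsto e^{t\triangle^\tau}$ is continuous in the strong operator topology, hence the kernels converge in $C^{-\infty}$ (weak-star); second, the same uniform subelliptic estimates you invoke show that the family $(e^\tau)_{\tau\in\mathcal{K}}$ is bounded in $C^\infty$ on compact subsets of $(0,+\infty)\times\Omega\times\Omega$, and the Heine--Borel property of the Fr\'echet space $C^\infty$ then makes the family precompact. Precompactness in $C^\infty$ together with uniqueness of the $C^{-\infty}$ limit immediately upgrades the convergence to $C^\infty$. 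Your Duhamel argument is more hands-on and would yield explicit moduli of continuity, but it requires care at two points you gloss over: the domains $D(\triangle^\tau)$ and the measures $\mu^\tau$ both vary with $\tau$, so the naive Duhamel identity needs justification (you gesture at localizing via Kac's principle, which does work but adds steps). The paper's compactness argument sidesteps these issues because Trotter--Kato only needs strong resolvent convergence, which is robust to such variations.
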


Theorem \ref{thm_parameter_heat_smooth} is obtained, first, by applying the Trotter-Kato theorem in general semigroup theory (see, e.g., \cite[Chapter III]{EngelNagel}), which gives dependence in $C^{-\infty}$ for the weak-star topology; second, dependence in $C^\infty$ topology is obtained by using the Heine-Borel property, because, by uniform subellipticity, the family $(e^\tau)_{\tau\in\mathcal{K}}$ is bounded.

This result can be applied to singular perturbations $\triangle^\tau$ of $\triangle^0$. It generalizes to hypoelliptic operators well known results established for elliptic operators (see, e.g., \cite{Lions_perturbations}).

\section{Small-time asymptotic expansion of sR heat kernels near the diagonal}\label{app_lemfondam}
Using the notations and assumptions made in Appendix \ref{sec_sR_geom}, let $q\in M$ be arbitrary (regular or not) and let $U$ be a relatively compact open connected neighborhood of $q$ in $M$. 
Recall that $e=e_{\triangle,\mu}$ is the sR heat kernel associated with the sR Laplacian $\triangle$ defined by \eqref{app_def_triangle} (see Appendix \ref{app_sR_kernel}).

Hereafter, we identify $\widehat{M}^{q}\simeq\R^n$ (with a sR isometry).

Let $\psi^q:U\rightarrow V\subset\R^n$ be a chart of privileged coordinates at $q$, such that $\psi^q(q)=0$, 
where $U$ is a neighborhood of $q$ in $M$ and $V$ is a neighborhood of $0$ in $\R^n$.
%
Let $\varepsilon_0>0$ be small enough such that $\delta_\varepsilon^q(V)\subset U$ (and $\delta_\varepsilon(V)\subset V$) for every $\varepsilon\in(-\varepsilon_0,\varepsilon_0)$, where the dilations are defined by \eqref{def_deltaepsilon}. 
By definition of the nilpotentization, for every $i\in\{1,\ldots,m\}$, the vector field $(X_i)_\varepsilon^q = \varepsilon(\delta_\varepsilon^q)^*X_i$ converges to $\widehat{X}^q_i$ in $C^\infty$ topology as $\varepsilon\rightarrow 0$ (see Section \ref{sec_nilp_smoothsection}). 
Also, the metric $g_\varepsilon^q = \varepsilon^{-2}(\delta_\varepsilon^q)^*g$ converges to the nilpotentized metric $\widehat{g}^q$ (see Section \ref{sec_def_nilpotentization}), and the smooth measure $\mu_\varepsilon^q = \vert\varepsilon\vert^{-\mathcal{Q}^M(q)}(\delta_\varepsilon^q)^*\mu$ converges to the nilpotentized measure $\widehat{\mu}^q$ (see Section \ref{sec_nilp_mesures}).
Hence the operator 
$$
\triangle_\varepsilon^q 
= \varepsilon^2(\delta_\varepsilon^q)^*\triangle(\delta_\varepsilon^q)_*  
= \sum_{i=1}^m \Big( \big( (X_i)_\varepsilon^q \big)^2 + \mathrm{div}_{\mu_\varepsilon^q} \big( (X_i)_\varepsilon^q \big) \, (X_i)_\varepsilon^q  \Big)
$$
converges to $\widehat{\triangle}^{q} = \sum_{i=1}^m \big( \widehat{X}^q_i\big)^2$ in $C^\infty$ topology (use Remark \ref{rem_nilpLap}). 

Extending the vector fields $(X_i)_\varepsilon^q$ (and thus the differential operator $\triangle_\varepsilon^q$) and the measure $\mu_\varepsilon^q$ by $0$ outside of the neighborhood $V$, we obtain a selfadjoint operator $(\triangle_\varepsilon^q,D(\triangle_\varepsilon^q))$ on $L^2(\R^n,\mu_\varepsilon^q)$, which generates a strongly continuous contraction semigroup. Its Schwartz kernel restricted to $(0,+\infty)\times V\times V$ has a smooth density, which is the smooth positive symmetric heat kernel denoted by $e_\varepsilon^q = e_{\triangle_\varepsilon^q, \mu_\varepsilon^q}$. The subscript $q$ underlines that the nilpotentization is performed at the point $q$.
It follows from Theorem \ref{thm_Kac} (in Appendix \ref{app_kac}) that the way we extend has no impact on the small-time asymptotics of the heat kernel. Therefore we have
\begin{equation}\label{relation_eeps_e}
e_\varepsilon^q(s,x,x') = \vert\varepsilon\vert^{\mathcal{Q}^M(q)}\, e(\varepsilon^2s, \delta_\varepsilon^q(x), \delta_\varepsilon^q(x')) + \mathrm{O}(\vert\varepsilon\vert^\infty)
\end{equation}
as $\varepsilon\rightarrow 0$, in $C^\infty$ topology.
Besides, by Theorem \ref{thm_parameter_heat_smooth} (in Appendix \ref{app_smooth}), $e_\varepsilon^q$ converges to $\widehat{e}^q$ in $C^\infty$ topology as $\varepsilon\rightarrow 0$.
Hence, at this step, we have obtained that
\begin{equation}\label{lim_eeps}
\lim_{\varepsilon\rightarrow 0} e_\varepsilon^q(s,x,x') = \lim_{\varepsilon\rightarrow 0} \vert\varepsilon\vert^{\mathcal{Q}^M(q)}\, e(\varepsilon^2s, \delta_\varepsilon^q(x), \delta_\varepsilon^q(x')) = \widehat{e}(s,x,x')
\end{equation}
uniformly with respect to $(s,x,x')$ on every compact subset of $(0,+\infty)\times\R^n\times\R^n$. 
Moreover, when $q$ is regular, taking smaller neighborhoods $U$ and $V$ if necessary so that every point of $U$ is regular, all operators and functions above depend smoothly on $q$ and the convergence \eqref{lim_eeps} is uniform with respect to $q$. When $q$ is singular and when the singular set is Whitney stratified, the same conclusion is true along each stratum along which the sR weights remain constant.
In other words, setting $X_i^0 = \widehat{X}^q_i$, $g^0=\widehat{g}^q$, $\mu^0=\widehat{\mu}^q$, $\triangle^0=\widehat{\triangle}^q$ and $e^0_q=\widehat{e}^q$, we have the following result.

\begin{lemma}\label{lem_eeps}
The family $(e_\varepsilon^q)_{\varepsilon\in[-\varepsilon_0,\varepsilon_0]}$ depends continuously on $\varepsilon$ in $C^\infty((0,+\infty)\times V\times V)$ topology.
If the singular set (set of singular points) is Whitney stratified by equisingular smooth strata, then the family depends continuously on $(\varepsilon,q)$ along each stratum.
\end{lemma}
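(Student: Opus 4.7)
The plan is to apply Theorem \ref{thm_parameter_heat_smooth} directly to the parametrized family of sR Laplacians $(\triangle_\varepsilon^q, \mu_\varepsilon^q)$, taking $\mathcal{K} = [-\varepsilon_0, \varepsilon_0]$ for the first assertion, and $\mathcal{K} = [-\varepsilon_0, \varepsilon_0] \times K$ with $K$ a compact subset of an equisingular stratum for the second. The key is to verify that the hypotheses of Theorem \ref{thm_parameter_heat_smooth} are met uniformly in the parameters.

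For the first assertion, with $q$ fixed, I would check three things. First, the vector fields $(X_i)_\varepsilon^q = \varepsilon (\delta_\varepsilon^q)^* X_i$ depend continuously on $\varepsilon \in [-\varepsilon_0, \varepsilon_0]$ in $C^\infty$ topology, with value $\widehat{X}_i^q$ at $\varepsilon = 0$; this is recalled in Appendix \ref{sec_nilp_smoothsection} via the Taylor expansion $(X_i)_\varepsilon^q = \widehat{X}_i^q + \varepsilon Z_{i,\varepsilon}^q$ with $Z_{i,\varepsilon}^q$ depending smoothly on $\varepsilon$. Similarly, $\mu_\varepsilon^q$ depends continuously on $\varepsilon$ in $C^\infty$ topology with limit $\widehat{\mu}^q$. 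Second, the uniform H\"ormander condition: for $\varepsilon \neq 0$ the map $\delta_\varepsilon^q$ is a diffeomorphism, so the tuple $(X_i)_\varepsilon^q$ has the same degree of nonholonomy $r(q)$ as $X$ at $q$, and at $\varepsilon = 0$ the nilpotentization $\widehat{X}^q$ also satisfies the H\"ormander condition with step $r(q)$ (Section \ref{sec_def_nilpotentization}), giving a uniform degree $r(q)$ on $\mathcal{K}$. Third, to fit the framework of Theorem \ref{thm_parameter_heat_smooth} on a fixed ambient manifold, I would extend the vector fields by zero outside a slightly larger relatively compact set containing $\overline{V}$, and invoke Theorem \ref{thm_Kac} to guarantee that this extension contributes only $\mathrm{O}(t^\infty)$ errors uniformly in $\varepsilon$ on compact subsets of $V \times V$, and hence does not affect the claimed $C^\infty$--continuity statement.

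For the second assertion about joint continuity in $(\varepsilon,q)$ along a stratum, the main obstacle is ensuring that the chart $\psi^q$ (and hence the dilation $\delta_\varepsilon^q$) can be chosen to depend smoothly on $q$ as $q$ varies in an equisingular stratum $N$. This is precisely what is ensured by the straightening construction recalled in Appendix \ref{app_privileged} (see in particular \cite[Lemma 4.3]{GhezziJean_NA2015}): along $N$, one can choose a frame adapted both to the sR flag of $D$ and to the sR flag of $D \cap TN$, and the associated exponential privileged coordinates depend smoothly on $q \in N$. Because $N$ is equisingular, the weights $w_i^N(D)$ are constant along $N$, so the dilations $\delta_\varepsilon$ in $\R^n$ are the same for every $q \in N$, and only the inverse chart $(\psi^q)^{-1}$ varies with $q$, smoothly. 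Consequently, $(X_i)_\varepsilon^q$ and $\mu_\varepsilon^q$ depend continuously on $(\varepsilon,q) \in [-\varepsilon_0, \varepsilon_0] \times K$ in $C^\infty$ topology, with a uniform degree of nonholonomy (since the growth vector is constant along $N$). Applying Theorem \ref{thm_parameter_heat_smooth} with this enlarged compact parameter space then yields the claimed joint continuity of $e_\varepsilon^q$ in $C^\infty((0,+\infty) \times V \times V)$, completing the proof.
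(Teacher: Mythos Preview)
Your proposal is correct and follows essentially the same approach as the paper: the argument for Lemma~\ref{lem_eeps} is given in the text immediately preceding the lemma, where the continuous dependence of $(X_i)_\varepsilon^q$ and $\mu_\varepsilon^q$ on $\varepsilon$ is noted, the extension by zero is handled via Theorem~\ref{thm_Kac}, and Theorem~\ref{thm_parameter_heat_smooth} is applied (with the details deferred to \cite[Part I]{CHT_AHL}). Your explicit verification of the uniform H\"ormander condition and of the smooth dependence of privileged coordinates along equisingular strata spells out precisely the ingredients the paper leaves implicit.
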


\begin{remark}\label{rem_eeps}
In particular, the function $\varepsilon\mapsto e_\varepsilon^q(1,0,0)$ (resp., the function $(\varepsilon,q)\mapsto e_\varepsilon^q(1,0,0)$ along strata of the singular set)
is continuous, and its value at $\varepsilon=0$ is $\widehat{e}^q(1,0,0)$. 
\end{remark}

These facts are developed in detail in \cite[Part I]{CHT_AHL}.
In particular, \eqref{lim_eeps} gives the first term of the small-time asymptotic expansion of the heat kernel. Smoothness with respect to $\varepsilon$ and the complete expansion, which are much more difficult to obtain, are given in the next theorem, which is the main result of \cite{CHT_AHL}.

\begin{theorem}[\cite{CHT_AHL}]\label{lemfondamental}
%
The family $(e_\varepsilon^q)_{\varepsilon\in[-\varepsilon_0,\varepsilon_0]}$ depends smoothly on $\varepsilon$ in $C^\infty((0,+\infty)\times V\times V)$ topology.
Given any $N\in\N^*$, 
we have the asymptotic expansion in $C^\infty((0,+\infty)\times V\times V)$
\begin{equation}\label{complete_expansion}
\begin{split}
e_\varepsilon^q(s,x,x') &= \vert\varepsilon\vert^{\mathcal{Q}^M(q)}\, e(\varepsilon^2s, \delta_\varepsilon^q(x), \delta_\varepsilon^q(x')) + \mathrm{O}(\vert\varepsilon\vert^\infty) \\
&= \widehat{e}^{q}(s,x,x') + \sum_{i=1}^N \varepsilon^i f^{q}_i(s,x,x') + \mathrm{o}\big(\vert\varepsilon\vert^N\big) 
\end{split}
\end{equation}
as $\varepsilon\rightarrow 0$, 
where the functions $f^{q}_i$ are smooth and satisfy the homogeneity property
\begin{equation}\label{homog_property_f_i}
f^q_i(s,x,x') = \varepsilon^{-i} \vert\varepsilon\vert^{\mathcal{Q}^M(q)} f^q_i(\varepsilon^2 s,\delta_\varepsilon(x),\delta_\varepsilon(x')) 
\end{equation}
for all $(s,x,x')\in(0,+\infty)\times\R^n\times\R^n$ and for every $\varepsilon\neq 0$. In particular, $f_i^q(s,0,0)=0$ if $i$ is odd.

Taking $s=1$, $\varepsilon=\sqrt{t}$ and setting $a_i^{q}(x,x')=f^{q}_i(1,x,x')$, it follows that, given any $N\in\N$, 
we have the asymptotic expansion in $C^\infty(V\times V)$
\begin{equation}\label{complete_expansion_1}
t^{\mathcal{Q}^M(q)/2}\, e\left(t, \delta_{\sqrt{t}}^q(x), \delta_{\sqrt{t}}^q(x')\right) 
= \widehat{e}^{q}(1,x,x') + \sum_{i=1}^N t^{i/2} a_i^{q}(x,x')  + \mathrm{o}(t^{N/2})  
\end{equation}
as $t\rightarrow 0^+$, where the functions $a_i^{q}$ are smooth and satisfy $a_{2j-1}^{q}(0,0)=0$ for every $j\in\N^*$.

Moreover, if $q$ is regular, then the above convergence and asymptotic expansion are locally uniform with respect to $q$, and the functions $\widehat{e}^{q}$, $f^{q}_i$ and $a_i^{q}$ depend smoothly (in $C^\infty$ topology) on $q$ in any open neighborhood of $q$ consisting of regular points. If the manifold $M$ is Whitney stratified by equisingular smooth strata (i.e., the sR weights $w_1(q),\ldots, w_n(q)$ are constant along each stratum) then the latter property is satisfied along strata.
\end{theorem}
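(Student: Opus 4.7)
\textbf{Proof plan for Theorem \ref{lemfondamental}.}

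The strategy is to work with the rescaled operators $\triangle_\varepsilon^q$ and view $\varepsilon \mapsto e_\varepsilon^q$ as a smooth family of heat kernels, from which a full Taylor expansion at $\varepsilon=0$ can be extracted. First, thanks to the hypoelliptic Kac principle (Theorem \ref{thm_Kac} in Appendix \ref{app_kac}), the way the vector fields $(X_i)_\varepsilon^q$ are extended outside of $V$ is irrelevant up to $\mathrm{O}(\vert\varepsilon\vert^\infty)$, so we may regard $\triangle_\varepsilon^q$ as a well-defined selfadjoint operator on $L^2(\R^n,\mu_\varepsilon^q)$. The relation
$$
e_\varepsilon^q(s,x,x') = \vert\varepsilon\vert^{\mathcal{Q}^M(q)}\, e(\varepsilon^2 s, \delta_\varepsilon^q(x), \delta_\varepsilon^q(x')) + \mathrm{O}(\vert\varepsilon\vert^\infty)
$$
is then the change of variable formula \eqref{formulas_kernel} (applied with $\varphi=\delta_\varepsilon^q$ and with the rescaling $\triangle \mapsto \varepsilon^2 \triangle$), plus the Kac correction.

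Next, as recalled in Appendix \ref{sec_nilp_smoothsection}, each $(X_i)_\varepsilon^q = \varepsilon(\delta_\varepsilon^q)^* X_i$ admits a full Taylor expansion in $\varepsilon$ at $\varepsilon=0$ in $C^\infty$ topology, with leading term $\widehat{X}_i^q$. Consequently, $\triangle_\varepsilon^q = \widehat{\triangle}^q + \sum_{i\geq 1} \varepsilon^i L_i^q$ in $C^\infty$ topology, where the $L_i^q$ are second-order differential operators. The uniform H\"ormander condition along the family $\varepsilon \in [-\varepsilon_0,\varepsilon_0]$ gives uniform subelliptic estimates, so by the iterated Duhamel formula
$$
e^{s\triangle_\varepsilon^q} = e^{s\widehat{\triangle}^q} + \sum_{i=1}^N \varepsilon^i \sum_{\substack{k\geq 1\\ i_1+\cdots+i_k = i}} \int_{\Delta_k(s)} e^{u_0\widehat{\triangle}^q} L_{i_1}^q e^{u_1\widehat{\triangle}^q}\cdots L_{i_k}^q e^{u_k\widehat{\triangle}^q}\, du + \mathrm{O}(\vert\varepsilon\vert^{N+1}),
$$
where each summand is a smoothing operator by Theorem \ref{thm_parameter_heat_smooth} (whose hypothesis is satisfied here). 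Identifying Schwartz kernels yields smooth functions $f_i^q$ on $(0,+\infty)\times V\times V$ satisfying \eqref{complete_expansion}. Smoothness in $\varepsilon$ as a function valued in $C^\infty$ follows from smoothness of all Duhamel remainders and an induction on $N$.

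The homogeneity property \eqref{homog_property_f_i} is then obtained by writing \eqref{complete_expansion} with $\varepsilon$ replaced by $\varepsilon\lambda$, rewriting the left-hand side via
$$
(\varepsilon\lambda)^{\mathcal{Q}^M(q)} e((\varepsilon\lambda)^2 s,\delta_{\varepsilon\lambda}^q(x),\delta_{\varepsilon\lambda}^q(x')) = \lambda^{\mathcal{Q}^M(q)} \vert\varepsilon\vert^{\mathcal{Q}^M(q)} e(\varepsilon^2(\lambda^2 s),\delta_\varepsilon^q(\delta_\lambda(x)),\delta_\varepsilon^q(\delta_\lambda(x')))
$$
(using $\delta_{\varepsilon\lambda}=\delta_\varepsilon\circ\delta_\lambda$ in privileged coordinates), re-expanding the right-hand side in $\varepsilon$, and matching powers. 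The parity statement $a_{2j-1}^q(0,0)=0$ then follows by taking $x=x'=0$: the left-hand side of \eqref{complete_expansion_1} reduces to $t^{\mathcal{Q}^M(q)/2} e(t,q,q)$, which is a function of $t$ only; equivalently, the first line of \eqref{complete_expansion} at $x=x'=0$ depends only on $\varepsilon^2$, so all odd coefficients must vanish at the origin. For smooth/uniform dependence on $q$ along a stratum of regular or equisingular points, one observes that all ingredients (privileged coordinates, $(X_i)_\varepsilon^q$, $\mu_\varepsilon^q$, and the operators $L_i^q$) depend smoothly on $q$ with bounds uniform in $\varepsilon$, so the Duhamel expansion above is smooth in $q$ as well.

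The main technical obstacle is justifying that the Duhamel expansion genuinely converges in $C^\infty((0,+\infty)\times V \times V)$ with smooth remainders, and not merely in a weaker norm: this requires uniform subelliptic (hence smoothing) estimates for the entire family $(\triangle_\varepsilon^q)$ that are stable under the composition with the unbounded operators $L_i^q$, and this is precisely the content of the preliminary article \cite{CHT_AHL}. Once this technical smoothing machinery is available, Theorem \ref{lemfondamental} follows by Taylor's theorem applied in the Fr\'echet space $C^\infty((0,+\infty)\times V\times V)$.
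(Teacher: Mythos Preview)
Your proposal is essentially correct and matches the approach that the paper attributes to \cite{CHT_AHL}; note that the present paper does \emph{not} actually prove Theorem~\ref{lemfondamental} but only recalls it from that companion article. The Duhamel-expansion strategy you outline is exactly the one hinted at in Section~\ref{sec_add_Grushin} (where the paper writes $e^{t\triangle_{\tau,\varepsilon}^{q,\sigma}} = e^{t\triangle_\tau} + e^{t\triangle_{\tau,\varepsilon}^{q,\sigma}}\star R e^{t\triangle_\tau}$ and remarks that the smoothing property of each term ``is highly nontrivial'' and proved in \cite[Section~6]{CHT_AHL}), and your derivation of the homogeneity \eqref{homog_property_f_i} via the substitution $\varepsilon\mapsto\varepsilon\lambda$ together with $\delta_{\varepsilon\lambda}=\delta_\varepsilon\circ\delta_\lambda$ is the right one; the parity statement $f_i^q(s,0,0)=0$ for $i$ odd can equivalently be read off directly from \eqref{homog_property_f_i} with $\varepsilon=-1$, as the paper does in Remark~\ref{rem_J_nosqrt}. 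Your final paragraph correctly identifies the genuine technical difficulty: proving that the iterated Duhamel terms are smoothing operators with $C^\infty$ kernels, uniformly in $\varepsilon$, is the substantial analytic content that \cite{CHT_AHL} was written to establish.
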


\begin{remark}
As a particular case, take $x=x'=0$ in \eqref{complete_expansion_1} and set $c_j(q) = a_{2j}^{q}(0,0)$.
Since $a_{2j-1}^{q}(0,0)=0$, it follows that, given any $N\in\N$, for every $q\in M$,
\begin{equation}\label{expansion_along_diagonal}
t^{\mathcal{Q}^M(q)/2}\, e(t,q,q) \\
= \widehat{e}^{q}(1,0,0) + c_1(q) t + \cdots + c_N(q) t^N + \mathrm{o}(t^{N})  
\end{equation}
as $t\rightarrow 0^+$. Moreover if $q$ is regular then the functions $c_j$ are smooth locally around $q$.
This small-time expansion of the heat kernel along the diagonal was already known (see \cite{BenArous_AIF1989}, see also \cite{Metivier1976}). The equivalent $t^{\mathcal{Q}^M(q)/2} e(t,q,q)\sim \widehat{e}^q(1,0,0)$ gives the main term in the local Weyl law in the equiregular case. 

The expansion \eqref{complete_expansion} is more general because, in addition to have identified the main coefficients in terms of the nilpotentization, the expansion is valid in an \emph{asymptotic neighborhood of the diagonal}, which is instrumental to derive the microlocal Weyl law and to treat the case of singular sR structures, as done in the present paper. 
\end{remark}

\section{Asymptotic expansions of some integrals}\label{app_integral}
\subsection{Integrals with a single layer}
In the following, given any $G\in C^\infty(\R^2)$, any $k\in\Z$ and any $j\in\N$, we define
$$
I_k^j[G](x) = \int_x^1 \tau^k\, G\left( \tau, \frac{x}{\tau} \right) \ln^j\frac{\tau}{x}\, d\tau  \qquad\forall x\in (0,1).
$$

\begin{proposition}\label{prop_general_expansion}
We have
\begin{equation}\label{expansionIkj}
I_k^j[G](x) = x^{\min(0,k+1)} F_0(x) + x^{\max(0,k+1)} \sum_{i=1}^{j+1} F_i(x) \ln^i\frac{1}{x}  \qquad\forall x\in (0,1)
\end{equation}
for some $F_0, \ldots, F_{j+1}\in\C^\infty(\R)$, and more precisely,
$$
I_k^j[G](x) =  \left\{ \begin{array}{lll} 
\displaystyle \sum_{i=0}^k \frac{x^i}{i!} \int_0^1 \tau^{k-i} \, \partial_2^iG(\tau,0) \ln^j\frac{\tau}{x}\, d\tau + x^{k+1} \sum_{i=0}^{j+1} F_i(x) \ln^i\frac{1}{x}  & \textrm{if} & k\in\N , \\[2mm]
\displaystyle \frac{G(0,0)}{j+1}\ln^{j+1}\frac{1}{x}  + \int_0^1 \frac{G(0,\varepsilon)-G(0,0)}{\varepsilon} \ln^j\frac{1}{\varepsilon} \, d\varepsilon  && \\[1mm]
\displaystyle \qquad\qquad\qquad\quad  + \int_0^1 \frac{G(\tau,0)-G(0,0)}{\tau} \ln^j\frac{\tau}{x} \, d\tau + x \sum_{i=0}^{j+1} \tilde F_i(x) \ln^i\frac{1}{x}  & \textrm{if} & k=-1 , \\[2mm]
\displaystyle \sum_{i=0}^{-k-2} \frac{x^{k+1+i}}{i!} \int_0^1 \varepsilon^{-k-2-i}\, \partial_1^iG(0,\varepsilon) \ln^j\frac{1}{\varepsilon} \, d\varepsilon + \sum_{i=0}^{j+1} F_i(x) \ln^p\frac{1}{x}  & \textrm{if} & k\leq -2 ,
\end{array}\right.
$$
for every $x\in(0,1)$, for some $\tilde F_0, \ldots, \tilde F_{j+1}\in\C^\infty(\R)$. 
The function $I_k^j[G](x)$ can be written as a sum (over $p$) of formal series as $x\rightarrow 0^+$, multiplied by $\ln^i\frac{1}{x}$, where the coefficients are distributions.

When $G$ is only continuous, we get only the first term in the asymptotics as $x\rightarrow 0^+$. When $G$ is only of class $C^1$ and $k=-1$, we get only the three first terms (i.e., we replace the sum involving the $\tilde F_i$'s with a remainder term). 

Moreover, for every $i\in\N$, we have
\begin{equation}\label{Fj+1}
F_{j+1}^{(i)}(0) = \left\{ \begin{array}{lll}
\displaystyle \frac{1}{j+1}\frac{1}{(k+1+2i)!} \binom{k+1+2i}{i} \partial_1^i \partial_2^{k+1+i}G(0,0)  & \textrm{if} & k\in\N , \\[4mm]
\displaystyle \frac{1}{j+1}\frac{1}{(-k-1+2i)!} \binom{-k-1+2i}{i} \partial_1^{-k-1+i} \partial_2^{i}G(0,0)  & \textrm{if} & k\in-\N^* .
\end{array}\right.
\end{equation}
\end{proposition}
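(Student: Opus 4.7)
My plan is to exploit the substitution $\varepsilon = x/\tau$, which turns $\tau^k G(\tau,x/\tau)\ln^j(\tau/x)\,d\tau$ into $x^{k+1}\varepsilon^{-k-2} G(x/\varepsilon,\varepsilon)\ln^j(1/\varepsilon)\,d\varepsilon$, yielding the duality
\[
I_k^j[G](x) \;=\; x^{k+1}\,I_{-k-2}^j[\check G](x),\qquad \check G(\tau,\varepsilon)=G(\varepsilon,\tau).
\]
This reduces the case $k\leq -2$ to $k\geq 0$, so I only need to treat $k\geq 0$ and the self-dual boundary case $k=-1$. In each case I shall Taylor-expand $G$ in the variable associated with an integrable power, and isolate an explicit ``main'' contribution from a smoothly integrable remainder.

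\textbf{Case $k\geq 0$.} Here $\tau^k$ is bounded near $\tau=0$, so I Taylor-expand in the second variable: $G(\tau,\varepsilon) = \sum_{i=0}^{k}\frac{\varepsilon^i}{i!}\partial_2^i G(\tau,0) + \varepsilon^{k+1}R(\tau,\varepsilon)$ with $R\in C^\infty$. Setting $\varepsilon=x/\tau$ and splitting $\int_x^1 = \int_0^1 - \int_0^x$, the polynomial part directly produces $\sum_{i=0}^k \frac{x^i}{i!}\int_0^1 \tau^{k-i}\partial_2^i G(\tau,0)\ln^j(\tau/x)\,d\tau$; expanding $\ln^j(\tau/x)=\sum_{\ell=0}^j \binom{j}{\ell}\ln^\ell\tau\cdot\ln^{j-\ell}(1/x)$ makes the $\ln^i(1/x)$ structure explicit with constant coefficients. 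The $\int_0^x$ pieces contribute, after the rescaling $\tau=xs$, to the factor $x^{k+1}\sum F_i(x)\ln^i(1/x)$. The remainder satisfies $x^{k+1}\int_x^1 \tau^{-1}R(\tau,x/\tau)\ln^j(\tau/x)\,d\tau = x^{k+1}I_{-1}^j[R](x)$, which reduces the problem to the $k=-1$ case.

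\textbf{Case $k=-1$.} After the substitution $\varepsilon = x/\tau$ I obtain $I_{-1}^j[G](x)=\int_x^1 \varepsilon^{-1}G(x/\varepsilon,\varepsilon)\ln^j(1/\varepsilon)\,d\varepsilon$. I split $G(x/\varepsilon,\varepsilon)=G(0,\varepsilon)+(x/\varepsilon)H(x/\varepsilon,\varepsilon)$ with $H$ smooth; the first piece, after the further decomposition $G(0,\varepsilon)=G(0,0)+[G(0,\varepsilon)-G(0,0)]$, yields the leading logarithmic term $G(0,0)\int_x^1\varepsilon^{-1}\ln^j(1/\varepsilon)\,d\varepsilon = \frac{G(0,0)}{j+1}\ln^{j+1}(1/x)$, the convergent constant $\int_0^1\frac{G(0,\varepsilon)-G(0,0)}{\varepsilon}\ln^j(1/\varepsilon)\,d\varepsilon$, and a boundary contribution $-\int_0^x\ldots$ which becomes part of the $x\,\tilde F_i(x)\ln^i(1/x)$ series. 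The second piece is $\int_x^1 H(\tau,x/\tau)\ln^j(\tau/x)\,d\tau = I_0^j[H](x)$, returning the computation to the $k\geq 0$ branch at one order deeper in the Taylor expansion. Since each such bootstrap strips a factor of $\varepsilon$ (equivalently, collects one extra power of $x$), the cross-recursion terminates at each finite order.

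\textbf{Smoothness of coefficients and identification of $F_{j+1}$.} The principal technical difficulty is to verify that each $F_i$ is smooth on $\R$ rather than merely continuous with logarithmic corrections. For boundary pieces of the form $\int_0^x \phi(\varepsilon)\ln^j(1/\varepsilon)\,d\varepsilon$ with $\phi\in C^\infty$, I will integrate by parts against an antiderivative $\Phi$ of $\phi$ that vanishes at $0$, obtaining $\Phi(x)\ln^j(1/x) + j\int_0^x \Phi(\varepsilon)\varepsilon^{-1}\ln^{j-1}(1/\varepsilon)\,d\varepsilon$; since $\Phi(\varepsilon)/\varepsilon$ is again smooth, iterating on $j$ strips logarithms one at a time and produces the desired sum $\sum_{i\leq j+1}\psi_i(x)\ln^i(1/x)$ with $\psi_i\in C^\infty$. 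For the explicit formula \eqref{Fj+1}, I will track the coefficient of $x^i\ln^{j+1}(1/x)$ through the recursion: it can only arise from the leading $\frac{1}{j+1}\ln^{j+1}(1/x)$ term of iterated $I_{-1}^j$ instances, and after $k{+}1{+}i$ Taylor operations alternating between the two variables it evaluates on $\partial_1^i\partial_2^{k+1+i}G(0,0)$; the binomial factor $\binom{k+1+2i}{i}/(k+1+2i)!$ emerges as the combinatorial count of how the $i$ $\partial_1$-derivatives and the $k+1+i$ $\partial_2$-derivatives can be interleaved along the alternating chain of Taylor remainders, which I expect will be the most delicate bookkeeping step.
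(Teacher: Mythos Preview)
Your approach and the paper's share the key duality $I_k^j[G](x)=x^{k+1}I_{-k-2}^j[\check G](x)$, but then diverge. The paper introduces the companion integral $J_k^j[G](x)=\int_x^1\tau^k G(\tau,x/\tau)\ln^j(1/\tau)\,d\tau$, proves directly that $J_k^j[G]\in C^k$ via the differentiation identity $(J_k^j[G])'(x)=-x^k\ln^j(1/x)\,G(x,1)+J_{k-1}^j[\partial_2 G](x)$, and then uses the splitting $G=\tau^NG_1+\varepsilon^NG_2$ (for $G$ vanishing to order $2N$) to upgrade regularity arbitrarily; this reduces the verification of \eqref{expansionIkj} to monomials $G=\tau^p\varepsilon^q$, where $I_k^j$ is an elementary integral. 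Your cross-recursion between Taylor expansions in the two slots is a genuinely different organization: it produces the explicit leading terms in the three cases more transparently (in particular your derivation of the $k=-1$ trio via $I_{-1}^j[G]\to I_0^j[H]$ is clean), and tracking $R^{(m)}(0,0)$ through the iterated difference quotients does recover \eqref{Fj+1}.

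There is, however, a real gap. Your recursion yields $I_{-1}^j[G](x)=\sum_{m<N}x^m E_m(x)+x^N I_{-1}^j[G_N](x)$ with each $E_m$ of the form $\sum_i e_{m,i}(x)\ln^i(1/x)$ and $e_{m,i}\in C^\infty$; this is an \emph{asymptotic} expansion to every finite order, but it does not by itself produce honest functions $F_i\in C^\infty(\R)$ satisfying the identity \eqref{expansionIkj} for all $x\in(0,1)$. Collecting coefficients of $\ln^i(1/x)$ gives $F_i(x)\overset{?}{=}\sum_{m\geq 0}x^m e_{m,i}(x)$, and there is no reason for this series to converge: for example, the $\ln^{j+1}$-coefficient has formal Taylor series $\sum_m\frac{G_m(0,0)}{j+1}x^m$ where $G_m$ is obtained from $G$ by $m$ iterated second-order difference quotients, i.e.\ the ``diagonal'' of the Taylor series of $G$, which need not define a smooth function for merely $C^\infty$ data. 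Your integration-by-parts argument handles the boundary pieces $\int_0^x\phi\ln^j(1/\varepsilon)\,d\varepsilon$ appearing at each step, but it does not touch this convergence issue. To close the gap you would need either a Borel-type construction together with a proof that the remainder is flat (including all derivatives), or---more directly---the paper's route: use the derivative identity to get a priori $C^k$-regularity, then reduce to monomials where the decomposition is exact with polynomial $F_i$.
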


\begin{remark}\label{rem_prop_general_expansion_even}
If $G$ is even with respect to its second variable (i.e., if $G(\tau,\varepsilon)$ is a function of $(\tau,\varepsilon^2)$), then $\partial_2^iG(\tau,0)=0$ for every $i\in\N$ odd and hence many terms vanish in the above expansion; in particular, \eqref{Fj+1} implies that $F_{j+1}$ has the parity of $k+1$ if $k\in\N$ and is even if $k\in-\N^*$.
\end{remark}

\begin{remark}\label{rem_prop_general_expansion_tau}
If $G$ does not depend on $\tau$ then $\partial_1^iG(\tau,\varepsilon)=0$ for every $i\in\N^*$, and, by \eqref{Fj+1}, $F_{j+1}(x)=F_{j+1}(0)+\mathrm{O}(x^\infty)$ with $F_{j+1}(0)=\frac{1}{j+1}\frac{1}{(k+1)!}\partial_2^{k+1}G(0,0)$ if $k\geq -1$ and $F_{j+1}(0)=0$ if $k\leq -2$.
Moreover, for $k=-1$, the functions $F_i$, $i=0,\ldots,j+1$, are even.
\end{remark}

\begin{proof}
We define 
$$
J_k^j[G](x) = \int_x^1 \tau^k\, G\left(\tau, \frac{x}{\tau} \right) \ln^j\frac{1}{\tau}\, d\tau .
$$
Using the change of variable $u=\frac{x}{\tau}$, we have 
$I_k^j[G](x) = x^{k+1} J_{-k-2}^j[\check G](x)$
where $\check G(\tau,\varepsilon) = G(\varepsilon,\tau)$. Actually, we are going to prove that, given any $G\in C^\infty(\R^2)$, $J_k^j[G](x)$ has the same expansion as $I_k^j[G](x)$, except that $\ln^j\frac{\tau}{x}$ is changed to $\ln^j\frac{1}{\tau}$, and $\ln^j\frac{1}{\varepsilon}$ is changed to $\ln^j\frac{\varepsilon}{x}$. This gives the proposition. Let us then establish the result for $J_k^j[G](x)$.

Using the change of variable $u=\frac{x}{\tau}$, we first note that 
$$
J_k^j[G](x) = \sum_{i=0}^j (-1)^{j-i} \binom{j}{i} x^{k+1} \ln^i\frac{1}{x} \  J_{-k-2}^{j-i}[\check G](x) ,
$$
and thus it suffices to prove the statement for $k\in\N$. 

Second, since $J_0^j[G]$ is continuous and 
$$
J_k^j[G]'(x) = -x^k \ln^j\frac{1}{x}\, G(x,1) + x J_{k-1}^j[\partial_2 G](x) ,
$$
it follows that $J_k^j[G]$ is at least of class $C^k$ when $k\in\N$.

Now, if $G(\tau,\varepsilon) = \mathrm{O}(\tau^{2N}+\varepsilon^{2N})$ for some $N\in\N^*$, then $G(\tau,\varepsilon) = \tau^N G_1(\tau,\varepsilon) + \varepsilon^N G_2(\tau,\varepsilon)$ for some $G_1,G_2\in C^\infty(\R^2)$, and it follows that
$$
J_k^j[G](x) = J_{k+N}^j[G_1](x) + \sum_{i=0}^j (-1)^{j-i} \binom{j}{i} x^{k+1} \ln^i\frac{1}{x}\, J_{N-k-2}^{j-i}[\check G_2](x)
$$
and thus $J_k^j[G]$ is at least of class $C^{N-k-2}$.

Therefore, taking $N$ sufficiently large, it suffices to check the result on monomials $G(\tau,\varepsilon)=\tau^p\varepsilon^q$ with $p,q\in\N$: we have
$J_k^j[G](x) = x^q \int_x^1 \tau^{k+p-q} \ln^j\frac{1}{\tau}\, d\tau$
and the result is then easily established.
The case $k=-1$ is treated separately, writing $G(\tau,u) = G(0,u)+\tau G_1(\tau,u)$ with $G_1$ continuous. 
\end{proof}

\begin{remark}\label{rem_expansion_4}
When $k=-1$ and $j=0$, assuming that $G$ is only of class $C^2$, we have
\begin{multline*}
I_{-1}^0[G](x) = \int_x^1 \frac{1}{\tau} \, G\left( \tau, \frac{x}{\tau} \right) d\tau = G(0,0)\ln\frac{1}{x}  + \int_0^1 \frac{G(0,\varepsilon)-G(0,0)}{\varepsilon}\, d\varepsilon + \int_0^1 \frac{G(\tau,0)-G(0,0)}{\tau}\, d\tau \\
+ \partial_1\partial_2 G(0,0)\, x\ln\frac{1}{x}
+ \bigg( -\partial_1 G(0,0) -\partial_2 G(0,0) + \int_0^1 \frac{ \partial_1 G(0,\varepsilon)-\partial_1 G(0,0)-\varepsilon\partial_1\partial_2 G(0,0) }{\varepsilon^2} \, d\varepsilon \\
+ \int_0^1 \frac{ \partial_2 G(\tau,0)-\partial_2 G(0,0)-\tau\partial_1\partial_2 G(0,0) }{\tau^2} \, d\tau \bigg) x + \mathrm{o}(x)
\end{multline*}
as $x\rightarrow 0^+$, i.e., we have identified the four first terms of the expansion.
\end{remark}

\subsection{Nested integrals} \label{sec_nested}
Let $p\in\N^*$ be arbitrary.
Given any $G\in C^\infty(\R^{p+1})$ and any $(k_1,\ldots,k_p)\in\Z^p$, we define
$$
I_{k_1,\ldots,k_p}[G](x) = \int_x^1 \tau_1^{k_1} \int_{\frac{x}{\tau_1}}^1 \tau_2^{k_2} \int_{\frac{x}{\tau_1\tau_2}}^1 \tau_3^{k_3} \cdots \int_{\frac{x}{\tau_1\cdots\tau_{p-1}}}^1 \tau_p^{k_p} \, G\left(\tau_1,\ldots,\tau_p,\frac{x}{\tau_1\cdots\tau_p}\right) d\tau_p \cdots d\tau_1
$$
for every $x\in(0,1)$. 
Setting $k_{p+1}=-1$, we define $s\in\N^*$ and $(j_1,\ldots,j_s)\in\Z^s$ so that $\{j_1,\ldots,j_s\} = \{ k_1,\ldots, k_{p+1}\}$ and $j_1=\min(k_1,\ldots,k_{p+1}) < j_2< \cdots < j_s=\max(k_1,\ldots,k_{p+1})$. For every $i\in\{1,\ldots,s\}$, we denote by $m_i$ the ``multiplicity" of $j_i$, that is, the number of integers $l\in\{1,\ldots,p+1\}$ such that $k_l=j_i$. In particular, $m_1=\#\mathrm{argmin}(k_1,\ldots,k_{p+1})\in\{1,\ldots,s+1\}$ is the number of elements $k_l$ reaching the minimum $j_1$.

\begin{proposition}\label{prop_equiv_iter}
We have
\begin{equation}\label{prop_equiv_iter_peries}
I_{k_1,\ldots,k_p}[G](x) = \sum_{i=1}^{s} F_i(x) x^{j_i+1} \vert\ln x\vert^{m_i-1} \qquad \forall x\in(0,1),
\end{equation}
i.e., the function $I_{k_1,\ldots,k_p}[G](x)$ can be written as a sum of $s$ formal series multiplied by $\ln^i\frac{1}{x}$, where the coefficients are distributions (thus, depending on integrals and derivatives of $G$). 
Moreover,
\begin{equation*}
I_{k_1,\ldots,k_p}[G](x) = C(G) \, x^{j_1+1} \vert\ln x\vert^{m_1-1} + \mathrm{o}\left( x^{j_1+1} \vert\ln x\vert^{m_1-1} \right)
\end{equation*}
as $x\rightarrow 0^+$, for some $C(G)\in\R$. Actually, $C$ is a Radon measure.

\smallskip
\noindent\textbf{Concentration properties.}
Denoting $G(\tau_1,\ldots,\tau_p,\varepsilon)$, we say that we have ``$\varepsilon$-concentration" whenever $C(G)$ depends only on $G$ restricted to $\varepsilon=0$.
We say that we have ``$\tau$-concentration" if there exist at least one index $i\in\{1,\ldots,p\}$ such that we have concentration on $\tau_i=0$, i.e., $C(G)$ depends only on $G$ restricted to $\tau_i=0$ (in other words, $C$ is Dirac in its $i^\textrm{th}$ variable). In this case, we say that we have $\tau$-concentration at minimal index $i\in\{1,\ldots,p\}$ if we have concentration on $\tau_i=0$ but not on $\tau_j=0$ for any $j<i$, i.e., $C$ is absolutely continuous with respect to its $(i-1)$ first variables and is Dirac in its $i^\textrm{th}$ one.

We have $\varepsilon$-concentration if and only if $k_i\geq -1$ for every $i\in\{1,\ldots,p\}$.
We have $\tau$-concentration if and only if $j_1<0$ (i.e., at least one of the integers $k_1,\ldots,k_p$ is negative); in this case, we have concentration on $\tau_i=0$ at least for all indices $i\in\{1,\ldots,p\}$ such that $k_i=j_1=\min(k_1,\ldots,k_{p+1})$ and, denoting $i_1$ the minimal index of those $k_i$, we do not have concentration on $\tau_i=0$ if $i<i_1$.

The explicit expression of $C(G)$ is complicated in general. It is given below for $p=1$ and $p=2$. 
In the particular case where $k_1=\cdots=k_p=j_1<0$:
\begin{itemize}[parsep=0.5mm,itemsep=0.5mm,topsep=0.5mm]
\item if $j_1\leq -2$ then $m_s=s$ and $C(G) = \frac{1}{(s-1)!} \int_0^1 \varepsilon^{-j_1-2}G(0,\ldots,0,\varepsilon)\, d\varepsilon$;
\item if $j_1=-1$ then $m_s=s+1$ and $C(G) = \frac{1}{s!} G(0,\ldots,0)$.
\end{itemize}
\end{proposition}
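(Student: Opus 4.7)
The natural approach is induction on $p \in \N^*$. The base case $p=1$ is precisely Proposition \ref{prop_general_expansion} applied with $j=0$: the function $I_{k_1}[G](x) = I^0_{k_1}[G](x)$ admits the expansion \eqref{expansionIkj}, and inspection of the three cases ($k_1 \in \N$, $k_1 = -1$, $k_1 \leq -2$) together with the fact that $k_2 = k_{p+1} = -1$ yields exactly the form \eqref{prop_equiv_iter_peries} with the claimed dominating term. The concentration statements for $p=1$ are read directly from that proposition: for $k_1 \geq 0$ one has $j_1 = -1$, $m_1 = 1$, and the leading constant equals $\int_0^1 \tau^{k_1} G(\tau,0)\, d\tau$, giving $\varepsilon$-concentration but no $\tau$-concentration; for $k_1 = -1$ we have $j_1 = -1$, $m_1 = 2$, and $C(G) = G(0,0)$, giving both forms of concentration; for $k_1 \leq -2$ we have $j_1 = k_1$, $m_1 = 1$, and $C(G) = \int_0^1 \varepsilon^{-k_1-2} G(0,\varepsilon)\, d\varepsilon$, giving $\tau$-concentration but no $\varepsilon$-concentration.

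For the inductive step $p-1 \rightsquigarrow p$, I would write
\[
I_{k_1,\ldots,k_p}[G](x) = \int_x^1 \tau_1^{k_1} H\Bigl(\tau_1, \frac{x}{\tau_1}\Bigr)\, d\tau_1,
\qquad
H(\tau_1, y) = I_{k_2,\ldots,k_p}[G(\tau_1,\cdot)](y),
\]
and apply the induction hypothesis to $H(\tau_1, \cdot)$, treating $\tau_1$ as a smooth parameter (the smoothness in $\tau_1$ of all the distributions $C$ appearing in the series expansion of $H$ follows from the explicit formulas in Proposition \ref{prop_general_expansion}, together with Leibniz differentiation through the parameter). This yields a formal expansion
\[
H(\tau_1, y) = \sum_{i=1}^{s'} \tilde F_i(\tau_1, y)\, y^{j'_i+1} |\ln y|^{m'_i-1},
\]
where $(j'_i, m'_i)$ are the data associated with the $(p-1)$-tuple $(k_2,\ldots,k_p)$ (with $k_{p+1}=-1$ appended). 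Setting $y = x/\tau_1$ and using $|\ln(x/\tau_1)|^{m'_i-1} = \sum_{\ell=0}^{m'_i-1} \binom{m'_i-1}{\ell}(-1)^\ell \ln^{m'_i-1-\ell}(1/x)\, \ln^\ell(1/\tau_1)$, each contribution becomes a single integral of the type
\[
x^{j'_i+1} \ln^{m'_i-1-\ell}\!\tfrac{1}{x} \int_x^1 \tau_1^{k_1 - j'_i - 1}\, \tilde F_i\!\left(\tau_1, \tfrac{x}{\tau_1}\right) \ln^\ell\!\tfrac{1}{\tau_1}\, d\tau_1,
\]
to which Proposition \ref{prop_general_expansion} (with exponent $k_1 - j'_i - 1$ and logarithm order $\ell$) applies directly. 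Collecting the powers of $x$ and $\ln(1/x)$ that arise then produces the desired expansion \eqref{prop_equiv_iter_peries} after reindexing by the ordered distinct values $j_1 < \cdots < j_s$ of $\{k_1,\ldots,k_{p+1}\}$.

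The main obstacle, and the real combinatorial heart of the argument, is tracking how the multiplicities $m_i$ (and in particular $m_1$) accumulate through the induction. The point is that each application of Proposition \ref{prop_general_expansion} to an inner single integral can increase the power of $\ln(1/x)$ by at most one, and it does so \emph{precisely} when the exponent $k_1 - j'_i - 1$ in the outer integral equals $-1$, i.e., when $k_1 = j'_i$. Iterating, the power of $\ln(1/x)$ multiplying $x^{j_1+1}$ in the final expansion equals the number of indices $l \in \{1,\ldots,p+1\}$ for which $k_l = j_1$, minus one, which is exactly $m_1 - 1$. A parallel bookkeeping shows the same multiplicity count for every $j_i$, giving \eqref{prop_equiv_iter_peries}. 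The concentration assertions then follow by inspecting which of the three cases of Proposition \ref{prop_general_expansion} is invoked at each step: $\varepsilon$-concentration is preserved as long as every exponent $k_l - j'_i - 1$ is $\geq -1$, which translates to $k_l \geq -1$ for all $l$; conversely, $\tau$-concentration first occurs at the smallest index $i_1$ such that $k_{i_1} = j_1 < 0$, because this is the first integration where the $k \leq -2$ branch (or the critical $k=-1$ with $j \geq 1$) contributes the leading Dirac behavior in the variable $\tau_{i_1}$.

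Finally, for the explicit constant in the uniform case $k_1 = \cdots = k_p = j_1 < 0$, a direct induction on $p$ using the explicit formulas of Proposition \ref{prop_general_expansion} in the cases $k=-1$ and $k \leq -2$ yields $C(G) = \frac{1}{(s-1)!}\int_0^1 \varepsilon^{-j_1-2} G(0,\ldots,0,\varepsilon)\, d\varepsilon$ when $j_1 \leq -2$ (here $s=p$, $m_s = s$), and $C(G) = \frac{1}{s!} G(0,\ldots,0)$ when $j_1 = -1$ (here $s = 1$, $m_s = p+1 = s+1$), by repeated integration of $\ln^j(1/\tau)/\tau$ which produces the factorial denominators.
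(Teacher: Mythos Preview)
Your approach is essentially the same as the paper's: both exploit the recursive identity
\[
I_{k_1,\ldots,k_p}[G](x)=\int_x^1 \tau_1^{k_1}\, I_{k_2,\ldots,k_p}[G_{\tau_1}]\!\left(\frac{x}{\tau_1}\right) d\tau_1
\]
and iterate Proposition~\ref{prop_general_expansion}. Two minor remarks. First, your binomial expansion of $\lvert\ln(x/\tau_1)\rvert^{m'_i-1}$ is an unnecessary detour: since $\lvert\ln y\rvert = \ln(\tau_1/x)$ when $y=x/\tau_1$, the inner contribution is already of the exact form $x^{j'_i+1}\, I_{k_1-j'_i-1}^{\,m'_i-1}[\tilde F_i](x)$, to which Proposition~\ref{prop_general_expansion} applies verbatim; after your expansion you instead land on integrals with $\ln^\ell(1/\tau_1)$, which is the $J_k^j$ form treated only inside the \emph{proof} of that proposition. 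Second, the paper pins down the multiplicity combinatorics by computing the elementary integral \eqref{Helem} (case $G\equiv 1$) in closed form and reading off the coalescence pattern, whereas you track it inductively; both are fine. Your parenthetical identifications ``$s=p$'' and ``$m_s=p+1=s+1$'' in the final paragraph are not correct in general (for $k_1=\cdots=k_p=j_1\le -2$ one has $s=2$, and for $j_1=-1$ one has $s=1$); the factorials in the stated constants are really $1/(p-1)!$ and $1/p!$, as the paper's proof confirms.
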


To prove Proposition \ref{prop_equiv_iter}, 
setting $G_{\tau_1}(\tau_2,\ldots,\tau_p,x_1)=G(\tau_1,\tau_2,\ldots,\tau_p,x_1)$ for every $x_1>0$ (we will take $x_1=x/\tau_1$), we first note that 
$$
I_{k_1,\ldots,k_p}[G](x) = \int_x^1 \tau_1^{k_1} I_{k_2,\ldots,k_p}[G_{\tau_1}]\Big(\frac{x}{\tau_1}\Big) \, d\tau_1 .
$$
Then, similarly, we have 
$$
I_{k_2,\ldots,k_p}[G_{\tau_1}](x_1) = \int_{x_1}^1 \tau_2^{k_2} I_{k_3,\ldots,k_p}[G_{\tau_1,\tau_2}]\Big(\frac{x_1}{\tau_2}\Big) \, d\tau_2
$$
where $G_{\tau_1,\tau_2}(\tau_3,\ldots,\tau_p,x_2)=G(\tau_1,\ldots,\tau_p,x_2)$ (we will take $x_2=x_1/\tau_2$), and so on 
until we reach 
$$
I_{k_p}[G_{\tau_1,\ldots,\tau_{p-1}}](x_{p-1}) = \int_{x_{p-1}}^1 \tau_p^{k_p} G_{\tau_1,\ldots,\tau_{p-1}}\Big( \tau_p, \frac{x_{p-1}}{\tau_p} \Big) \, d\tau_p .
$$
By applying iteratively \eqref{expansionIkj} in Proposition \ref{prop_general_expansion}, we obtain \eqref{prop_equiv_iter_peries}. 
Besides, recalling that $k_{p+1}=-1$, setting $\varepsilon_i=1$ for $i=1,\ldots,s$, and $\varepsilon_{p+1}=-1$, we establish by induction on $s$ that
\begin{equation}\label{Helem}
\int_x^1 \tau_1^{k_1} \int_{\frac{x}{\tau_1}}^1 \tau_2^{k_2} \int_{\frac{x}{\tau_1\tau_2}}^1 \tau_3^{k_3} \cdots \int_{\frac{x}{\tau_1\cdots\tau_{p-1}}}^1 \tau_p^{k_p} \, d\tau_p \cdots d\tau_1 
= (-1)^s \sum_{i=1}^{p+1} \varepsilon_i \prod_{j=1\atop j\neq i}^{p+1}(k_i-k_j)^{-1} \, x^{k_i+1}
\end{equation}
for any $(k_1,\ldots,k_p)\in\R^p$ (not only integer) if $k_1,\ldots,k_{p+1}$ are paiwise distinct; when $m$ of them are equal, $k_{i_1}=\cdots=k_{i_m}$, the linear combination of $x^{k_{i_1}+1},\ldots,x^{k_{i_m}+1}$ is replaced with the linear combination $x^{k_{i_1}+1} \left( c_0+c_1\vert\ln x\vert+\cdots + c_{m-1} \vert\ln x\vert^{m-1} \right)$, where the coefficients $c_i$ can be explicitly computed by Taylor expansions. 
In particular, if $k_1=\cdots=k_p=j_1$ then \eqref{Helem} is equal to $\frac{1}{p!} \vert\ln x\vert^p$ when $j_1=-1$, and to $\frac{1}{(p-1)! \, \vert j_1+1\vert} x^{j_1+1} \vert\ln x\vert^{p-1}(1+\mathrm{o}(1))$ (as $x\rightarrow 0^+$) when $j_1\leq -2$.

The concentration properties are established by Taylor expansions and by considering monomial functions $G$ as in the proof of Proposition \ref{prop_general_expansion}. 

\paragraph{Case $p=1$.} By Proposition \ref{prop_general_expansion}, the dominating term of $I_{k_1}[G](x)$ as $x\rightarrow 0^+$ is:
\begin{itemize}[parsep=0.1cm,itemsep=0.1cm,topsep=0.1cm]
\item $\int_0^1 \tau_1^{k_1} G(\tau_1,0)\, d\tau_1$ if $k_1\geq 0$ (no $\tau$-concentration, concentration on $\varepsilon=0$);
\item $G(0,0)\ln\frac{1}{x}$ if $k_1=-1$ (concentration on $\tau_1=0$ and on $\varepsilon=0$);
\item $\left( \int_0^1 \varepsilon^{-k_1-2} G(0,\varepsilon)\, d\varepsilon\right) x^{k_1+1}$ if $k_1\leq -2$ (concentration on $\tau_1=0$, no $\varepsilon$-concentration).
\end{itemize}

\paragraph{Case $p=2$.} 
Starting from $I_{k_1,k_2}[G](x)=\int_x^1 \tau_1^{k_1} I_{k_2}[G_{\tau_1}]\big(\frac{x}{\tau_1}\big) \, d\tau_1$, using Proposition \ref{prop_general_expansion}, we obtain that the dominating term of $I_{k_1,k_2}[G](x)$ as $x\rightarrow 0^+$ is:
\begin{align*}
\int_0^1 \tau_1^{k_1} \int_0^1 \tau_2^{k_2} G(\tau_1,\tau_2,0)\, d\tau_2\, d\tau_1  & \qquad \textrm{if }  k_1\geq 0  \textrm{ and }  k_2\geq 0 \\
\left(\int_0^1 \tau_2^{k_2} G(0,\tau_2,0)\, d\tau_2\right)\vert\ln x\vert & \qquad\textrm{if }  k_1=-1  \textrm{ and }  k_2\geq 0 \\
\left( \int_0^1 \varepsilon^{-k_1-2} \int_\varepsilon^1 \tau_2^{k_2} G(0,\tau_2,\varepsilon)\, d\tau_2\, d\varepsilon \right) x^{k_1+1} & \qquad\textrm{if } k_1\leq -2  \textrm{ and } k_2\geq 0 \\
\left( \int_0^1 \tau_1^{k_1} G(\tau_1,0,0)\, d\tau_1 \right) \vert\ln x\vert  &\qquad \textrm{if } k_1\geq 0  \textrm{ and } k_2=-1 \\
\frac{1}{2} G(0,0,0) \vert\ln x\vert^2  & \qquad\textrm{if }  k_1=-1   \textrm{ and } k_2=-1 \\
\left( \int_0^1 \varepsilon^{-k_1-2} \int_\varepsilon^1 \frac{1}{\tau_2} G(0,\tau_2,\varepsilon)\, d\tau_2\, d\varepsilon \right) x^{k_1+1} & \qquad\textrm{if }  k_1\leq -2  \textrm{ and } k_2=-1 \\
\left( \int_0^1 \tau_1^{k_1-k_2-1} \int_0^1 \varepsilon^{-k_2-2} G(\tau_1,0,\varepsilon)\, d\varepsilon\, d\tau_1 \right) x^{k_2+1}  & \qquad\textrm{if } k_1\geq k_2+1 \textrm{ and } k_2\leq -2 \\
\left( \int_0^1 \varepsilon^{-k_1-2} G(0,0,\varepsilon)\, d\varepsilon \right) x^{k_1+1}\vert\ln x\vert  & \qquad\textrm{if } k_1=k_2 \textrm{ and } k_2\leq -2 \\
\left( \int_0^1 u^{k_2-k_1-2} \int_u^1 \varepsilon^{-k_2-2} G(0,u,\varepsilon)\, d\varepsilon\, du \right) x^{k_1+1}  & \qquad\textrm{if } k_1\leq k_2-1  \textrm{ and } k_2\leq -2
\end{align*}

\section{Subanalytic sets and functions}\label{app_subanalytic}
In this appendix we concisely recall the definitions and main properties of subanalytic sets and functions, as well as some desingularization and cell preparation theorems for subanalytic functions, from which we derive some useful results.
This section is based on the references \cite{BierstoneMilman_PMIHES1988, CluckersMillerRolinServi_DMJ2018, CluckersMiller_DMJ2011, Hironaka_1973, Kurdyka_AIF1998, LionRolin_AIF1997, LionRolin_AIF1998, Lojasiewicz_AIF1993, NguyenValette_ASENS2016, Parusinski_2001, Tamm_AM1981, Valette_subanalytic, vandenDries}.

\subsection{Reminders on subanalytic geometry}\label{app_subanalytic_reminders}

\paragraph{Definitions.}
Given any $n\in\N^*$ and any real analytic manifold $M$ of dimension $n$, a subset $X\subset M$ is \emph{locally semianalytic} if for every $x\in M$ there exists an open neighborhood $U$ of $x$ in $M$ such that, in a local analytic chart, $X\cap U$ can be defined by a finite number of equalities and inequalities using real analytic functions on $U$. 

The projection of a locally semianalytic set, even compact, may fail to be locally semianalytic (see \cite{Lojasiewicz_AIF1993}), this is what has motivated the definition of \emph{subanalytic} sets. 

The subset $X$ is \emph{locally subanalytic} if for every $x\in M$ there exists an open neighborhood $U$ of $x$ in $M$ such that, in a local analytic chart, $X\cap U$ is the projection of a relatively compact semianalytic set, i.e., there exists a real analytic manifold $N$ and a relatively compact semianalytic subset $A\subset M\times N$ such that $X\cap U=\pi(A)$ where $\pi:M\times N\rightarrow M$ is the canonical projection (see \cite{BierstoneMilman_PMIHES1988}).

Assuming that $M$ is an analytic submanifold of $\R^n$, the subset $X\subset M$ is \emph{globally semianalytic} (resp., \emph{globally subanalytic}) if its image under the natural embedding $(x_1,\ldots,x_n)\mapsto(1:x_1:\cdots:x_n)$ from $\R^n$ to the projective space $\mathcal{P}^n(\R)$ is a locally semianalytic (resp., locally subanalytic) subset of the analytic manifold $\mathcal{P}^n(\R)$ (see \cite{CluckersMiller_DMJ2011, Kurdyka_AIF1998, LionRolin_AIF1997, Valette_subanalytic}). Here, $(1:x_1:\cdots:x_n)$ is the equivalence class of the collinearity equivalence relation in $\R^{n+1}\setminus\{0\}$. 
One can equivalently use other embeddings provided they have a subanalytic graph (see \cite{Kurdyka_AIF1998}).

Hereafter, when we speak of a globally subanalytic subset $X\subset M$, it is understood that $M$ is an analytic submanifold of $\R^n$.

Of course, globally semianalytic or subanalytic sets are locally semianalytic or subanalytic.
Any relatively compact subset of $M$ is locally subanalytic if and only if it is globally subanalytic. 
For instance, $\N$ is analytic but is not globally subanalytic. The graph of the sine function in $\R^2$ is locally but not globally subanalytic.


The class of (locally or globally) subanalytic sets is stable under locally finite unions and intersections, taking complement, interior, closure, product, image under proper analytic mapping, taking connected components. Moreover, the family of connected components of such a set is locally finite (see \cite{BierstoneMilman_PMIHES1988, Lojasiewicz_AIF1993}).

Given a real analytic finite-dimensional manifold $N$, a mapping $f:X\rightarrow N$ is (locally or globally) subanalytic if its graph is (locally or globally) subanalytic in $M\times N$. 
The composition of two globally subanalytic mappings is globally subanalytic, while the property may fail for locally subanalytic mappings (unless one adds an adequate properness property, see \cite{Kurdyka_AIF1998}). A function $f:X\rightarrow\R$ on a globally subanalytic set $X$ is globally subanalytic if and only if $f$ and $1/f$ (defined on $X\setminus f^{-1}(0)$) are locally subanalytic.

\paragraph{Smoothness and stratification properties.}
Any (locally or globally) subanalytic subset $X$ of a real analytic finite-dimensional manifold $M$ is Whitney stratifiable, with a locally finite number of strata that are connected analytic submanifolds of $M$ and are also (locally or globally) subanalytic sets (see \cite{BierstoneMilman_PMIHES1988, Tamm_AM1981}). 
The topological dimension $\dim X$ of $X$ is defined as the maximal topological dimension of its strata.
The singular set of $X$, which is a closed (locally or globally) subanalytic subset of $X$ of codimension at least one, is the union of strata of dimension less than $\dim X$.

We recall that a stratification of $M$ is a locally finite partition in smooth submanifolds (strata), such that, if the stratum $\S_1$ has a nonempty intersection with the closure $\overline{\S_2}$ of another stratum $\S_2$ then $\S_1\subset\overline{\S_2}$. The strata are glued one to another, at their boundary, according to some rules. The most standard ones are the Whitney (A) and (B) conditions: assuming that $\S_1\subset\overline{\S_2}$, (A) if a sequence of $q_k\in\S_2$ converges to $q\in\S_1$ then $T_q\S_1\subset\lim_k T_{q_k}\S_2$; (B) if two sequences $q_k\in\S_2$ and $q_k'\in\S_1$ converge to the same $q\in\S_1$ then the limit of the chords $[q_k,q_k']$ (in local coordinates) is contained in $\lim_kT_{q_k}\S_2$ provided that both limits exist. We then speak of a Whitney stratification.

Finer stratifications concepts and results can be found, e.g., in \cite{Lojasiewicz_AIF1993, NguyenValette_ASENS2016}.

\paragraph{Uniformization and rectilinearization.}
Let $X$ be a closed (locally or globally) subanalytic subset of a real analytic manifold $M$ of dimension $n$.

According to the \emph{uniformization theorem}, there exists a real analytic manifold $N$ of the same dimension as $X$ and a proper real analytic mapping $\phi:N\rightarrow M$ such that $X=\phi(N)$. 

According to the \emph{rectilinearization theorem}, on every compact subset of $M$ there exists a locally finite covering  on each component of which there exists an analytic mapping $\phi:\R^n\rightarrow M$ such that, locally, $\phi^{-1}(X)$ is a union of quadrants of $\R^n$, where a quadrant is a subset of $\R^n$ defined by $x_i=0$ or $x_i>0$ or $x_i<0$, for $i\in\{1,\ldots,n\}$.

These theorems have first been proved in \cite{Hironaka_1973} by desingularization and resolution of singularities (following the deep celebrated \cite{Hironaka_1964}), and later in \cite{BierstoneMilman_PMIHES1988} by a more direct and elementary approach in the context of what is now called \emph{subanalytic geometry}.

As concerns functions, any 
bounded globally subanalytic (not necessarily continuous) function $f:X\rightarrow\R$ can be desingularized as follows: there exists a real analytic manifold $N$ of the same dimension as $X$ and a proper analytic mapping $\phi:N\rightarrow M$ (which is a local diffeomorphism on a dense subset of $N$ if moreover $X=M$) such that $X=\phi(N)$ and $f\circ\phi$ is locally normal crossings on the components of $N$ where it does not vanish identically, i.e., $f\circ\phi$ can be written in some local charts as $f\circ\phi(x) = \prod_{i=1}^n x_i^{\alpha_i} g(x)$ for some $\alpha_i\in\N$, for $i\in\{1,\ldots,n\}$, and some \emph{unit} subanalytic function $g$ (i.e., bounded and not vanishing in the chart).
Actually, it is always possible to define $\phi$, locally and piecewise, as a finite composition of blowings-up, substitutions of powers and shifts (see \cite{BierstoneMilman_PMIHES1988, Parusinski_2001}).
Unfolding such transforms then yields various versions of rectilinearization or preparation theorems, all of them aiming at writing $f$, locally, in a ``cusp-prepared" (locally fractional normal crossings) form, as the product of a fractional monomial (i.e., as above but with $\alpha_i\in\Q$) with a unit subanalytic function (see \cite{Parusinski_2001}), as explained hereafter.

\paragraph{Subanalytic cell decompositions.}
A \emph{subanalytic cell decomposition} of $\R^n$ is a finite partition of $\R^n$ into so-called \emph{subanalytic cells} that are disjoint globally subanalytic subsets and analytic connected submanifolds of $\R^n$ having a particular cylindrical form. 
There exist various possible cell decompositions (see \cite{CluckersMiller_DMJ2011, LionRolin_AIF1997, NguyenValette_ASENS2016, Valette_subanalytic, vandenDries}). Here, we follow \cite{CluckersMiller_DMJ2011, vandenDries}.

Given any $n\in\N^*$, we denote by $\Pi_n:\R^{n+1}\rightarrow\R^n$ the projection onto the first $n$ coordinates, i.e., taking coordinates $(x,y)\in\R^{n+1}=\R^n\times\R$ with $x\in\R^n$ and $y\in\R$, we have $\Pi_n(x,y)=x$. 

Subanalytic cell decompositions are defined by induction. 
For $n=1$, a subanalytic cell decomposition $\mathcal{C}_1$ of $\R$ is given by a partition on $\R$ in a finite number of singletons $\{a\}$ and open intervals $(b,c)$ with $a\in\R$, $b\in\{-\infty\}\cup\R$ and $c\in\R\cup\{+\infty\}$. Then, by induction, a subanalytic cell decomposition $\mathcal{C}_{n+1}$ of $\R^{n+1}$ is given by a subanalytic cell decomposition $\mathcal{C}_n$ of $\R^n=\Pi_n(\R^{n+1})$ and each subanalytic cell $C\in\mathcal{C}_{n+1}$ of $\R^{n+1}$ is such that $\Pi_n(C)\in\mathcal{C}_n$ (called \emph{basis} of the cell $C$) and either $C$ is of the form
$$
C = \{ (x,y)\in \Pi_n(C)\times\R \ \mid\ y=a(x) \} 
$$
called a \emph{thin cell} in $y$, where $a$ is an analytic and globally subanalytic function on $\Pi_n(C)$, i.e., $C$ is the graph of $a$ above the basis $\Pi_n(C)$, or $C$ is of the form
$$
C = \{ (x,y)\in \Pi_n(C)\times\R \ \mid\  a(x) < y <  b(x)   \} 
$$
called a \emph{fat cell} in $y$, where $a$ and $b$ are analytic and globally subanalytic functions on $\Pi_n(C)$, with possibly $a\equiv-\infty$ or $b\equiv+\infty$, i.e., $C$ is a a cylindrical set between $a$ and $b$ above the basis $\Pi_n(C)$.
Moreover, in the fat case, following \cite[Definition 3.4]{CluckersMillerRolinServi_DMJ2018} or \cite[Definition 3.2 and Definitions 3.4]{CluckersMiller_DMJ2011}, it is always possible to write $C$ in the form 
$$
C = \{ (x,y)\in \Pi_n(C)\times\R \ \mid\  \tilde a(x) < \tilde y < \tilde b(x) \} 
\qquad\textrm{with}\qquad
\tilde y = \varepsilon(y-\zeta(x))^\delta 
$$
for some $\varepsilon,\delta\in\{\pm 1\}$ and for some analytic and globally subanalytic functions $\tilde a$, $\tilde b$ and $\zeta$ on $\Pi_n(C)$ satisfying $0\leq \tilde a(\cdot)<\tilde b(\cdot)\leq 1$, with either $\tilde a(\cdot)>0$ or $\tilde a(\cdot)\equiv 0$, the graph of $\zeta$ being disjoint from $C$. 
There is an infinite number of possible choices for the function $\zeta$, which is called a \emph{center} for $C$. When $0\leq a<b\leq 1$, we can take $\zeta=0$ and $\varepsilon=\delta=1$ (this is what we use in Section \ref{sec_cell_decomp_VF}).

Every subanalytic cell of $\R^n$ is an analytic connected submanifold of $\R^n$ and is also a globally subanalytic subset of $\R^n$.

A subanalytic cell decomposition is said to be compatible with a finite number of given globally subanalytic sets if each of those sets is itself a union of cells. 
The subanalytic cell decomposition theorem states that, given a finite number of arbitrary globally subanalytic subsets of $\R^n$, there exists a (finite) subanalytic cell decomposition of $\R^n$ compatible with those sets (see \cite[Chapter 3, Section 2]{vandenDries} or \cite[Theorem 1.1]{NguyenValette_ASENS2016} or \cite{Valette_subanalytic}).

\paragraph{Subanalytic preparation theorems.}
Preparation theorems for subanalytic functions are far-reaching versions of the celebrated Weierstrass and Malgrange preparation theorems.
As alluded above, there are various existing versions in the literature (see \cite{CluckersMillerRolinServi_DMJ2018, CluckersMiller_DMJ2011, LionRolin_AIF1997, LionRolin_AIF1998, Parusinski_2001, Valette_subanalytic}), with possible variants in the reduced normal form or in the definition of the unit functions. 

The statement that we give hereafter is based on \cite[Theorem 2.4]{CluckersMiller_DMJ2011}, which extends \cite[Th\'eor\`eme 1]{LionRolin_AIF1997} to a finite set of functions, with unit functions defined in \cite[Definition 3.8]{CluckersMillerRolinServi_DMJ2018} (this form of unit function being better suited to our needs; see \cite[Proof of Proposition 3.10]{CluckersMillerRolinServi_DMJ2018} to see how it can be obtained from the more usual unit functions found in \cite{CluckersMiller_DMJ2011, LionRolin_AIF1997}).

Given any $n\in\N$, we consider $\R^{n+1}=\R^n\times\R$ with a system of coordinates $(x,y)$ with $x\in\R^n$ and $y\in\R$. As before, we denote $\Pi_n(x,y)=x$.
Let $X$ be a globally subanalytic subset of $\R^{n+1}$,
let $\mathcal{F}$ be a finite set of globally subanalytic functions on $X$,
and let $\mathcal{X}$ be a finite set of globally subanalytic subsets of $X$.
Then, there exists a (finite) subanalytic cell decomposition of $\R^{n+1}$, compatible with $\mathcal{X}$, such that, for any subanalytic cell $C\subset\R^{n+1}$ of that decomposition:
\begin{itemize}
\item either $C$ is thin in $y$ and for every $f\in\mathcal{F}$ there exists an analytic and globally subanalytic function $a$ on the cell $\Pi_n(C)$ such that $f(x,y)=a(x)$ on $C$;
\item or $C$ is fat in $y$, of the form $C=\{(x,y)\in\Pi_n(C)\times\R\ \mid\ \tilde a(x) < \tilde y < \tilde b(x) \}$ where $\tilde y = \varepsilon(y-\zeta(x))^\delta$ and each function $f\in\mathcal{F}$ can be written in $C$ as
$$
f(x,y) = F(x)\, \tilde y^\alpha \, U \left(  \left( c_i(x) \right)_{1\leq i\leq N}, \left( \frac{\tilde a(x)}{\tilde y} \right)^{1/\ell} , \left( \frac{\tilde y}{\tilde b(x)} \right)^{1/\ell} \right)   
$$
for some $\alpha\in\Q$, $N,\ell\in\N^*$, $\varepsilon,\delta\in\{\pm 1\}$, for some analytic and globally subanalytic functions $F$, $\zeta$, $a$, $b$, $c_1,\ldots,c_N$ on the cell $\Pi_n(C)$, with $0\leq \tilde a(\cdot)<\tilde b(\cdot)\leq 1$ on $\Pi_n(C)$ and either $\tilde a(\cdot)>0$ or $\tilde a(\cdot)\equiv 0$, the graph of $\zeta$ being disjoint from $C$ and the functions $c_1,\ldots,c_N$ taking their values in $[0,1]$, and for some unit (i.e., not vanishing) analytic function $U$ on an open subset of $\R^{N+2}$ containing $[0,1]^{N+2}$. 
\end{itemize}
We say that we have \emph{prepared the functions $f\in\mathcal{F}$ with respect to the variable $y$}. Note that, by permutation, we can prepare $f$ with respect to any of its variables.

\medskip

Now, the above statement can be iterated, as done in \cite[Theorem 3.9]{CluckersMiller_DMJ2011}, leading to the following result. Given any $n,p\in\N$, we consider $\R^{n+p}=\R^n\times\R^p$ with a system of coordinates $(x,y)$ with $x\in\R^n$ and $y=(y_1,\ldots,y_p)\in\R^p$. For every $i\in\{1,\ldots,p\}$, we denote $\Pi_{n+i}(x,y)=(x,y_{\leq i})$ where $y_{\leq i}=(y_1,\ldots,y_i)$, and $\Pi_n(x,y)=x$. We also use the notation $y_{<i}=(y_1,\ldots,y_{i-1})$ for $i>1$.

Let $X$ be a globally subanalytic subset of $\R^{n+p}$,
let $\mathcal{F}$ be a finite set of globally subanalytic functions on $X$,
and let $\mathcal{X}$ be a finite set of globally subanalytic subsets of $X$.
Then, there exists a (finite) subanalytic cell decomposition of $\R^{n+p}$, compatible with $\mathcal{X}$, such that, for any \emph{open} subanalytic cell $C\subset\R^{n+p}$ of that decomposition (here, we give the result only for open cells because this is enough for our needs; for more general results, see \cite[Section 3]{CluckersMiller_DMJ2011}), for every $i\in\{0,\ldots,p\}$, we have
$$
\Pi_{n+i}(C) = \{ (x,y_{\leq i})\in\Pi_{n+i-1}(C)\times\R\ \mid\ a_i(x,y_{<i}) < \tilde y_i < b_i(x,y_{<i}) \}
$$
where $\tilde y_i = \varepsilon_i(y_i-\zeta_i(x,y_{<i}))^{\delta_i}$
and each function $f\in\mathcal{F}$ can be written in $C$ as
$$
f(x,y) = F(x)\, \prod_{i=1}^p \tilde y_i^{\alpha_i} \, U \left(  \left( c_i(x) \right)_{1\leq i\leq N}, \left( \frac{a_i(x,y_{<i})}{\tilde y_i} \right)^{1/\ell_i}_{1\leq i\leq p} , \left( \frac{\tilde y_i}{b_i(x,y_{<i})} \right)^{1/\ell_i}_{1\leq i\leq p} \right)   
$$
for some $\alpha_i\in\Q$, $N,\ell_i\in\N^*$, $\varepsilon_i,\delta_i\in\{\pm 1\}$ ($1\leq i\leq p$), for some analytic and globally subanalytic functions $F$, $c_1,\ldots,c_N$ (defined on $\Pi_n(C)$), $\zeta_i$, $a_i$, $b_i$ (defined on $\Pi_{n+i-1}(C)$) satisfying $0\leq a_i(\cdot)<b_i(\cdot)\leq 1$, the graph of $\zeta_i$ being disjoint from $\Pi_{n+i}(C)$ and the functions $c_1,\ldots,c_N$ taking their values in $[0,1]$, and for some unit (i.e., not vanishing) analytic function $U$ on an open subset of $\R^{N+2p}$ containing $[0,1]^{N+2p}$.

\subsection{A useful result}\label{app_subanalytic_useful}
The following result\footnote{We thank Jean-Philippe Rolin for a useful discussion on this topic.} is, in some sense, a parametric version of Hironaka's uniformization theorem, in the spirit of \cite[Proof of Theorem 6.1]{Parusinski_2001}. 

\begin{lemma}\label{lem_parametric_Hironaka}
Let $P$ be a real analytic compact manifold of dimension $p$ and let $X$ be a globally subanalytic compact subset of $P\times\R^n$ such that $\Pi_P(X)=P$, where $\Pi_P:P\times\R^n\rightarrow P$ is the canonical projection.
We assume that the fibers $X_q=\{x\in\R^n\ \mid\ (q,x)\in X\}$ are bounded and have dimension at most $k$, for any $q\in P$. Then, there exist a real analytic compact manifold $N$ of dimension $k$ and a bounded globally subanalytic mapping $\Phi:P\times N\rightarrow\R^n$ such that $X=\Phi(P\times N)$ and $X_q=\Phi(q,N)$ for every $q\in P$.
\end{lemma}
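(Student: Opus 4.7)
The plan is to combine Hardt's subanalytic trivialization theorem, applied to the projection $\Pi_P|_X : X \to P$, with Hironaka's uniformization theorem, applied fiberwise, and then to patch the pieces into a single compact real analytic manifold $N$ of pure dimension $k$. Throughout, I would exploit compactness of $P$ (and hence of $X$) by fixing an analytic embedding of $P$ into some affine space, so that the notion of global subanalyticity on $P \times \R^n$ is unambiguous.

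First I would invoke Hardt's theorem in the subanalytic category (see \cite{BierstoneMilman_PMIHES1988, Valette_subanalytic}) applied to the globally subanalytic map $\Pi_P|_X : X \to P$. This produces a finite subanalytic partition $P = \bigsqcup_{\alpha=1}^{s} P_\alpha$ and, for each $\alpha$, a compact globally subanalytic set $F_\alpha \subset \R^n$ with $\dim F_\alpha \leq k$ together with a globally subanalytic homeomorphism $h_\alpha : X \cap (P_\alpha \times \R^n) \to P_\alpha \times F_\alpha$ commuting with the projection onto the first factor. Next I would apply Hironaka's uniformization theorem (recalled in Appendix \ref{app_subanalytic_reminders}) to each compact $F_\alpha$ to obtain a compact real analytic manifold $N_\alpha$ of dimension $\dim F_\alpha$ and a proper real analytic surjection $\psi_\alpha : N_\alpha \to F_\alpha$; compactness of $N_\alpha$ follows from properness of $\psi_\alpha$ and compactness of $F_\alpha$. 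To equalize dimensions I would set $\tilde N_\alpha := N_\alpha \times T^{\,k - \dim N_\alpha}$, where $T^d$ denotes the $d$-torus (with $T^0$ a single point), and extend $\psi_\alpha$ to $\tilde N_\alpha$ by projecting out the torus factor.

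Then I would define $N := \bigsqcup_{\alpha=1}^{s} \tilde N_\alpha$, a compact real analytic manifold of pure dimension $k$. For each $\alpha$ I pick a base point $p_\alpha \in F_\alpha$ and define $\Phi : P \times N \to \R^n$ piecewise by
\[
\Phi(q,x) = \begin{cases} \pi_{\R^n} \circ h_\alpha^{-1}(q, \psi_\alpha(x)) & \text{if } q \in P_\alpha \text{ and } x \in \tilde N_\alpha, \\ \pi_{\R^n} \circ h_\alpha^{-1}(q, p_\alpha) & \text{if } q \in P_\alpha \text{ and } x \in \tilde N_\beta,\ \beta \neq \alpha, \end{cases}
\]
where $\pi_{\R^n}$ denotes the second-factor projection of $P \times \R^n$. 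The graph of $\Phi$ is a finite union of globally subanalytic sets, so $\Phi$ is globally subanalytic; boundedness of $\Phi$ follows from compactness of $X$. The identity $\Phi(q, N) = X_q$ holds because, for $q \in P_\alpha$, the map $y \mapsto \pi_{\R^n} \circ h_\alpha^{-1}(q, y)$ is a bijection from $F_\alpha$ onto $X_q$ and $\psi_\alpha$ surjects onto $F_\alpha$, while the off-stratum contributions already sit inside $X_q$ by construction.

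The main obstacle will be ensuring the correctness of the subanalytic structure across the different strata: Hardt's theorem yields trivializations that are only globally subanalytic homeomorphisms (not analytic), but this is enough to guarantee that $\Phi$ is globally subanalytic, which is all that the statement requires. A secondary technical point is the passage from the abstract compact analytic manifold $P$ to a subset of some $\R^N$ carrying a well-defined notion of global subanalyticity; this is handled by a finite analytic atlas with subanalytic transitions, after which the construction above can be carried out globally.
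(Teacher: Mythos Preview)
Your approach is correct but takes a genuinely different route from the paper. The paper argues directly from the subanalytic cell decomposition theorem: it writes $X$ as a finite union of cells, chooses coordinates so that each cell is cylindrical with base a cell in $P$, and parametrizes each cell by linear interpolation $x_i(q,\theta) = (1-\theta_i)\,a_i(q,x_{\leq i-1}) + \theta_i\, b_i(q,x_{\leq i-1})$ between the bounding functions in each fiber coordinate, producing an explicit globally subanalytic map from (a piece of) $P \times [0,1]^k$ onto the cell; the cells are then pasted together as in Bierstone--Milman. Your argument instead separates the problem into two off-the-shelf steps---Hardt trivialization over subanalytic strata of $P$, followed by Hironaka uniformization of the compact typical fibers---and handles the patching by a constant off-stratum assignment into the current fiber. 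Both produce a bounded globally subanalytic $\Phi$ with $\Phi(q,N)=X_q$. The paper's route is more elementary (it uses only cell decomposition, already recalled in the same appendix) and yields an explicit formula for the parametrization; yours is conceptually cleaner and avoids the cylindrical-coordinate bookkeeping and the ad hoc pasting, at the cost of invoking two heavier black boxes.
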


\begin{proof}
By the subanalytic cell decomposition theorem, 
$X$ is the finite union of (disjoint) relatively compact cells. Let us first assume that $X$ is a single cell, of dimension $p+k$. Assuming that $X$ is sufficiently small (otherwise, do what follows on a finite covering), performing if necessary an orthogonal change of coordinates and reordering variables, it is always possible to choose a system of coordinates $x=(x_1,\ldots,x_n)$ on $\R^n$ such that, denoting by $\Pi_i:P\times\R^n\rightarrow\R^i$ the projection defined by $\Pi_i(q,x)=(q,x_{\leq i})$, where $q\in P$ and $x_{\leq i}=(x_1,\ldots,x_i)$:
\begin{itemize}[parsep=0.5mm,itemsep=0.2mm,topsep=0.5mm]
\item for every $i\in\{k+1,\ldots,n\}$ (if $k<n$), the subanalytic cell $\Pi_i(X)$ is thin and there exists a bounded analytic and globally subanalytic function $a_i$ on $\Pi_{i-1}(X)$ such that $x_i = a_i(q,x_{\leq i-1})$ for every $(q,x)\in X$;
\item for every $i\in\{1,\ldots,k\}$, the subanalytic cell $\Pi_i(X)$ is fat, of dimension $p+i$, and there exist bounded analytic and globally subanalytic functions $a_i$ and $b_i$ on the subanalytic cell $\Pi_i(X)$ such that $a_i(q,x_{\leq i-1}) < x_i < b_i(q,x_{\leq i-1})$ for every $(q,x)\in X$ (with the agreement that $a_1(q)<x_1<b_1(q)$ for $i=1$).
\end{itemize}
Now, setting $\theta=(\theta_1,\ldots,\theta_k)\in[0,1]^k$, we define the parametrization $x_1(q,\theta) = (1-\theta_1) b_1(q) + \theta_1 c_1(q)$, then $x_2(q,\theta) = (1-\theta_2) b_2(q,x_1(q,\theta)) + \theta_2 c_2(q,x_1(q,\theta))$, etc, until $x_k$, and then $x_{k+1},\ldots,x_n$ are then defined accordingly in function of $(q,\theta)$ by composing with $a_i$.
Finally, defining the globally subanalytic function $\phi(q,\theta) = (x_1(q,\theta),\ldots,x_n(q,\theta))$ on $P\times[0,1]^k$, we have $X=\phi(P\times[0,1]^k)$ and $X_q=\phi(q\times[0,1]^k)$.
The lemma is thus obtained when $X$ is a cell. In the general case, $X$ is a finite union of cells and to obtain the global statement of lemma, it suffices to proceed for example as in the proof of \cite[Corollary 4.9 or Theorem 5.1]{BierstoneMilman_PMIHES1988} by ``pasting" copies of $[0,1]^k$.
\end{proof}

{
\small
\bibliographystyle{plain}
\bibliography{bib_sR}

\def\cprime{$'$} \def\cprime{$'$}
\begin{thebibliography}{10}

\bibitem{AdamiBoscainFranceschiPrandi}
Riccardo Adami, Ugo Boscain, Valentina Franceschi, and Dario Prandi.
\newblock Point interactions for 3{D} sub-{L}aplacians.
\newblock {\em Ann. Inst. H. Poincar\'{e} C Anal. Non Lin\'{e}aire},
  38(4):1095--1113, 2021.

\bibitem{AgrachevBarilariBoscain_CV2012}
Andrei Agrachev, Davide Barilari, and Ugo Boscain.
\newblock On the {H}ausdorff volume in sub-{R}iemannian geometry.
\newblock {\em Calc. Var. Partial Differential Equations}, 43(3-4):355--388,
  2012.

\bibitem{AgrachevBarilariBoscain_book2019}
Andrei Agrachev, Davide Barilari, and Ugo Boscain.
\newblock {\em A Comprehensive Introduction to Sub-Riemannian geometry}, volume
  181 of {\em Cambridge Studies in Advanced Mathematics}.
\newblock Cambridge University Press, 2019.

\bibitem{ABCK97}
Andrei Agrachev, Bernard Bonnard, Monique Chyba, and Ivan Kupka.
\newblock Sub-{R}iemannian sphere in {M}artinet flat case.
\newblock {\em ESAIM Control Optim. Calc. Var.}, 2:377--448 (electronic), 1997.

\bibitem{ABGR}
Andrei Agrachev, Ugo Boscain, Jean-Paul Gauthier, and Francesco Rossi.
\newblock The intrinsic hypoelliptic {L}aplacian and its heat kernel on
  unimodular {L}ie groups.
\newblock {\em J. Funct. Anal.}, 256(8):2621--2655, 2009.

\bibitem{ABS08}
Andrei Agrachev, Ugo Boscain, and Mario Sigalotti.
\newblock A {G}auss-{B}onnet-like formula on two-dimensional
  almost-{R}iemannian manifolds.
\newblock {\em Discrete Contin. Dyn. Syst.}, 20(4):801--822, 2008.

\bibitem{AgrachevMarigo_2005}
Andrei Agrachev and Alessia Marigo.
\newblock Rigid {C}arnot algebras: a classification.
\newblock {\em J. Dyn. Control Syst.}, 11(4):449--494, 2005.

\bibitem{Ba-13}
Davide Barilari.
\newblock Trace heat kernel asymptotics in 3{D} contact sub-{R}iemannian
  geometry.
\newblock {\em J. Math. Sci. (N. Y.)}, 195(3):391--411, 2013.
\newblock Translation of Sovrem. Mat. Prilozh. No. 82 (2012).

\bibitem{BarilariRizzi_AGMS2013}
Davide Barilari and Luca Rizzi.
\newblock A formula for {P}opp's volume in sub-{R}iemannian geometry.
\newblock {\em Anal. Geom. Metr. Spaces}, 1:42--57, 2013.

\bibitem{BGG-00}
Richard Beals, Bernard Gaveau, and Peter~C. Greiner.
\newblock Hamilton-{J}acobi theory and the heat kernel on {H}eisenberg groups.
\newblock {\em J. Math. Pures Appl. (9)}, 79(7):633--689, 2000.

\bibitem{BealsGreinerStanton_JDG1984}
Richard Beals, Peter~C. Greiner, and Nancy~K. Stanton.
\newblock The heat equation on a {CR} manifold.
\newblock {\em J. Differential Geom.}, 20(2):343--387, 1984.

\bibitem{Bellaiche}
Andr{\'e} Bella{\"{\i}}che.
\newblock The tangent space in sub-{R}iemannian geometry.
\newblock In {\em Sub-{R}iemannian geometry}, volume 144 of {\em Progr. Math.},
  pages 1--78. Birkh\"auser, Basel, 1996.

\bibitem{BenArous_AIF1989}
G{\'e}rard Ben~Arous.
\newblock D\'eveloppement asymptotique du noyau de la chaleur hypoelliptique
  sur la diagonale.
\newblock {\em Ann. Inst. Fourier (Grenoble)}, 39(1):73--99, 1989.

\bibitem{Berger_2003}
Marcel Berger.
\newblock {\em A panoramic view of {R}iemannian geometry}.
\newblock Springer-Verlag, Berlin, 2003.

\bibitem{BierstoneMilman_PMIHES1988}
Edward Bierstone and Pierre~D. Milman.
\newblock Semianalytic and subanalytic sets.
\newblock {\em Inst. Hautes \'{E}tudes Sci. Publ. Math.}, (67):5--42, 1988.

\bibitem{BonnardTrelat_AFST2001}
Bernard Bonnard and Emmanuel Tr\'{e}lat.
\newblock On the role of abnormal minimizers in sub-{R}iemannian geometry.
\newblock {\em Ann. Fac. Sci. Toulouse Math. (6)}, 10(3):405--491, 2001.

\bibitem{Bony}
Jean-Michel Bony.
\newblock Principe du maximum, in\'egalit\'e de {H}arnack et unicit\'e du
  probl\`eme de {C}auchy pour les op\'erateurs elliptiques d\'eg\'en\'er\'es.
\newblock {\em Ann. Inst. Fourier (Grenoble)}, 19(fasc. 1):277--304 xii, 1969.

\bibitem{BoscainCharlotGhezzi_DGA2013}
Ugo Boscain, Gr\'egoire Charlot, and Roberta Ghezzi.
\newblock Normal forms and invariants for 2-dimensional almost-{R}iemannian
  structures.
\newblock {\em Differential Geom. Appl.}, 31(1):41--62, 2013.

\bibitem{BL-13}
Ugo Boscain and Camille Laurent.
\newblock The {L}aplace-{B}eltrami operator in almost-{R}iemannian geometry.
\newblock {\em Ann. Inst. Fourier (Grenoble)}, 63(5):1739--1770, 2013.

\bibitem{BPS-14}
Ugo Boscain, Dario Prandi, and Marcello Seri.
\newblock Spectral analysis and the {A}haronov-{B}ohm effect on certain
  almost-{R}iemannian manifolds.
\newblock {\em Comm. Partial Differential Equations}, 41(1):32--50, 2016.

\bibitem{BoutetGrigisHelffer_JEDP1976}
Louis Boutet~de Monvel, Alain Grigis, and Bernard Helffer.
\newblock Parametrixes d'op\'{e}rateurs pseudo-diff\'{e}rentiels \`a
  caract\'{e}ristiques multiples.
\newblock In {\em Journ\'{e}es: \'{E}quations aux {D}\'{e}riv\'{e}es
  {P}artielles de {R}ennes (1975)}, pages 93--121. Ast\'{e}risque, No. 34--35.
  1976.

\bibitem{Casarino}
Valentina Casarino, Paolo Ciatti, and Alessio Martini.
\newblock From refined estimates for spherical harmonics to a sharp multiplier
  theorem on the {G}rushin sphere.
\newblock {\em Adv. Math.}, 350:816--859, 2019.

\bibitem{ChangLi_2015}
Der-Chen Chang and Yutian Li.
\newblock Heat kernel asymptotic expansions for the {H}eisenberg
  sub-{L}aplacian and the {G}rushin operator.
\newblock {\em Proc. A.}, 471(2175):20140943, 19, 2015.

\bibitem{ChenChenLi_2022}
Hua Chen, Hong-Ge Chen, and Jin-Ning Li.
\newblock Asymptotic behaviour of dirichlet eigenvalues for homogeneous
  h\"ormander operators and algebraic geometry approach.
\newblock 2022.

\bibitem{ChitourPrandiRizzi_2022}
Yacine Chitour, Dario Prandi, and Luca Rizzi.
\newblock Weyl's law for singular riemannian manifolds.
\newblock {\em hal-01902740v4}, 2019.

\bibitem{CluckersMillerRolinServi_DMJ2018}
Raf Cluckers, Georges Comte, Daniel~J. Miller, Jean-Philippe Rolin, and Tamara
  Servi.
\newblock Integration of oscillatory and subanalytic functions.
\newblock {\em Duke Math. J.}, 167(7):1239--1309, 2018.

\bibitem{CluckersMiller_DMJ2011}
Raf Cluckers and Daniel~J. Miller.
\newblock Stability under integration of sums of products of real globally
  subanalytic functions and their logarithms.
\newblock {\em Duke Math. J.}, 156(2):311--348, 2011.

\bibitem{yCdV-82}
Yves Colin~de Verdi\`ere.
\newblock Pseudo-laplaciens. {I}.
\newblock {\em Ann. Inst. Fourier (Grenoble)}, 32(3):xiii, 275--286, 1982.

\bibitem{yCdV-85}
Yves Colin~de Verdi{\`e}re.
\newblock Ergodicit\'e et fonctions propres du laplacien.
\newblock {\em Comm. Math. Phys.}, 102(3):497--502, 1985.

\bibitem{CHT-SEDP}
Yves Colin~de Verdi\`ere, Luc Hillairet, and Emmanuel Tr\'elat.
\newblock Quantum ergodicity and quantum limits for sub-{R}iemannian
  {L}aplacians.
\newblock In {\em S\'eminaire {L}aurent {S}chwartz---\'Equations aux
  d\'eriv\'ees partielles et applications. {A}nn\'ee 2014--2015}, pages Exp.
  No. XX, 17. Ed. \'Ec. Polytech., Palaiseau, 2016.

\bibitem{CHT-I}
Yves Colin~de Verdi\`ere, Luc Hillairet, and Emmanuel Tr\'{e}lat.
\newblock Spectral asymptotics for sub-{R}iemannian {L}aplacians, {I}:
  {Q}uantum ergodicity and quantum limits in the 3-dimensional contact case.
\newblock {\em Duke Math. J.}, 167(1):109--174, 2018.

\bibitem{CHT_AHL}
Yves Colin~de Verdi{\`e}re, Luc Hillairet, and Emmanuel Tr{\'e}lat.
\newblock Small-time asymptotics of hypoelliptic heat kernels near the
  diagonal, nilpotentization and related results.
\newblock {\em Ann. H. Lebesgue}, 4:897--971, 2021.

\bibitem{CoulhonSikora_PLMS2008}
Thierry Coulhon and Adam Sikora.
\newblock Gaussian heat kernel upper bounds via the
  {P}hragm\'{e}n-{L}indel\"{o}f theorem.
\newblock {\em Proc. Lond. Math. Soc. (3)}, 96(2):507--544, 2008.

\bibitem{EngelNagel}
Klaus-Jochen Engel and Rainer Nagel.
\newblock {\em One-parameter semigroups for linear evolution equations}, volume
  194 of {\em Graduate Texts in Mathematics}.
\newblock Springer-Verlag, New York, 2000.

\bibitem{FeffermanPhong_1981}
Charles~L. Fefferman and Duong~Hong Phong.
\newblock Subelliptic eigenvalue problems.
\newblock In {\em Conference on harmonic analysis in honor of {A}ntoni
  {Z}ygmund, {V}ol. {I}, {II} ({C}hicago, {I}ll., 1981)}, Wadsworth Math. Ser.,
  pages 590--606. Wadsworth, Belmont, CA, 1983.

\bibitem{FeffermanSanchezCalle_AnnMath1986}
Charles~L. Fefferman and Antonio S\'{a}nchez-Calle.
\newblock Fundamental solutions for second order subelliptic operators.
\newblock {\em Ann. of Math. (2)}, 124(2):247--272, 1986.

\bibitem{Feller2}
William Feller.
\newblock {\em An introduction to probability theory and its applications.
  {V}ol. {II}}.
\newblock Second edition. John Wiley \& Sons, Inc., New York-London-Sydney,
  1971.

\bibitem{Garofalo_thoughts}
Nicola Garofalo.
\newblock Fractional thoughts.
\newblock In {\em New developments in the analysis of nonlocal operators},
  volume 723 of {\em Contemp. Math.}, pages 1--135. Amer. Math. Soc.,
  Providence, RI, 2019.

\bibitem{Ga-77}
Bernard Gaveau.
\newblock Principe de moindre action, propagation de la chaleur et estim\'ees
  sous elliptiques sur certains groupes nilpotents.
\newblock {\em Acta Math.}, 139(1-2):95--153, 1977.

\bibitem{GhezziJean_NA2015}
Roberta Ghezzi and Fr\'ed\'eric Jean.
\newblock Hausdorff volume in non equiregular sub-{R}iemannian manifolds.
\newblock {\em Nonlinear Anal.}, 126:345--377, 2015.

\bibitem{GhezziJean_TSG2015}
Roberta Ghezzi and Fr\'ed\'eric Jean.
\newblock On measures in sub-{R}iemannian geometry.
\newblock {\em Actes S\'em. Theorie Spectrale G\'eom.}, 33:17--46, 2015-2016.

\bibitem{Glaeser}
Georges Glaeser.
\newblock Fonctions compos\'ees diff\'erentiables.
\newblock {\em Ann. of Math. (2)}, 77:193--209, 1963.

\bibitem{Grigor'yan}
Alexander Grigor'yan.
\newblock {\em Heat kernel and analysis on manifolds}, volume~47 of {\em AMS/IP
  Studies in Advanced Mathematics}.
\newblock American Mathematical Society, Providence, RI; International Press,
  Boston, MA, 2009.

\bibitem{Gromov}
Mikhael Gromov.
\newblock Carnot-{C}arath\'eodory spaces seen from within.
\newblock In {\em Sub-{R}iemannian geometry}, volume 144 of {\em Progr. Math.},
  pages 79--323. Birkh\"auser, Basel, 1996.

\bibitem{Hermes_1989}
Henry Hermes.
\newblock Distributions and the {L}ie algebras their bases can generate.
\newblock {\em Proc. Amer. Math. Soc.}, 106(2):555--565, 1989.

\bibitem{HermesLundellSullivan_JDE1984}
Henry Hermes, Albert Lundell, and Dennis Sullivan.
\newblock Nilpotent bases for distributions and control systems.
\newblock {\em J. Differential Equations}, 55(3):385--400, 1984.

\bibitem{Hironaka_1964}
Heisuke Hironaka.
\newblock Resolution of singularities of an algebraic variety over a field of
  characteristic zero. {I}, {II}.
\newblock {\em Ann. of Math. (2) {\bf 79} (1964), 109--203; ibid. (2)},
  79:205--326, 1964.

\bibitem{Hironaka_1973}
Heisuke Hironaka.
\newblock Subanalytic sets.
\newblock In {\em Number theory, algebraic geometry and commutative algebra, in
  honor of {Y}asuo {A}kizuki}, pages 453--493. 1973.

\bibitem{Ho-67}
Lars H{\"o}rmander.
\newblock Hypoelliptic second order differential equations.
\newblock {\em Acta Math.}, 119:147--171, 1967.

\bibitem{Ho-68}
Lars H{\"o}rmander.
\newblock The spectral function of an elliptic operator.
\newblock {\em Acta Math.}, 121:193--218, 1968.

\bibitem{Jean_2001}
Fr{\'e}d{\'e}ric Jean.
\newblock Uniform estimation of sub-{R}iemannian balls.
\newblock {\em J. Dynam. Control Systems}, 7(4):473--500, 2001.

\bibitem{Jean_2014}
Fr{\'e}d{\'e}ric Jean.
\newblock {\em Control of nonholonomic systems: from sub-{R}iemannian geometry
  to motion planning}.
\newblock Springer Briefs in Mathematics. Springer, Cham, 2014.

\bibitem{Je-Sa-86}
David~S. Jerison and Antonio S{\'a}nchez-Calle.
\newblock Estimates for the heat kernel for a sum of squares of vector fields.
\newblock {\em Indiana Univ. Math. J.}, 35(4):835--854, 1986.

\bibitem{Ka-30}
Jovan Karamata.
\newblock Neuer {B}eweis und {V}erallgemeinerung einiger {T}auberian-{S}\"atze.
\newblock {\em Math. Z.}, 33(1):294--299, 1931.

\bibitem{Kurdyka_AIF1998}
Krzysztof Kurdyka.
\newblock On gradients of functions definable in o-minimal structures.
\newblock {\em Ann. Inst. Fourier (Grenoble)}, 48(3):769--783, 1998.

\bibitem{KusuokaStroock}
Shigeo Kusuoka and Daniel~W. Stroock.
\newblock Long time estimates for the heat kernel associated with a uniformly
  subelliptic symmetric second order operator.
\newblock {\em Ann. of Math. (2)}, 127(1):165--189, 1988.

\bibitem{LeDonne_book2021}
Enrico Le~Donne.
\newblock {\em Lecture notes on sub-Riemannian geometry}.
\newblock Lecture Notes. Ongoing work, 2021.

\bibitem{LionRolin_AIF1997}
Jean-Marie Lion and Jean-Philippe Rolin.
\newblock Th\'{e}or\`eme de pr\'{e}paration pour les fonctions
  logarithmico-exponentielles.
\newblock {\em Ann. Inst. Fourier (Grenoble)}, 47(3):859--884, 1997.

\bibitem{LionRolin_AIF1998}
Jean-Marie Lion and Jean-Philippe Rolin.
\newblock Int\'{e}gration des fonctions sous-analytiques et volumes des
  sous-ensembles sous-analytiques.
\newblock {\em Ann. Inst. Fourier (Grenoble)}, 48(3):755--767, 1998.

\bibitem{Lions_perturbations}
Jacques-Louis Lions.
\newblock {\em Perturbations singuli\`eres dans les probl\`emes aux limites et
  en contr\^{o}le optimal}.
\newblock Lecture Notes in Mathematics, Vol. 323. Springer-Verlag, Berlin-New
  York, 1973.

\bibitem{Lojasiewicz_AIF1993}
Stanislas {\L}ojasiewicz.
\newblock Sur la g\'{e}om\'{e}trie semi- et sous-analytique.
\newblock {\em Ann. Inst. Fourier (Grenoble)}, 43(5):1575--1595, 1993.

\bibitem{Marigo_2007}
Alessia Marigo.
\newblock Classification of {C}arnot algebras: the semi-rigid cases.
\newblock {\em J. Dyn. Control Syst.}, 13(1):95--119, 2007.

\bibitem{Ma-70}
Jean Martinet.
\newblock Sur les singularit\'es des formes diff\'erentielles.
\newblock {\em Ann. Inst. Fourier (Grenoble)}, 20(fasc. 1):95--178, 1970.

\bibitem{Me-84}
Richard Melrose.
\newblock The wave equation for a hypoelliptic operator with symplectic
  characteristics of codimension two.
\newblock {\em J. Analyse Math.}, 44:134--182, 1984/85.

\bibitem{Me-Sj-78}
Arthur Menikoff and Johannes Sj{\"o}strand.
\newblock On the eigenvalues of a class of hypoelliptic operators.
\newblock {\em Math. Ann.}, 235(1):55--85, 1978.

\bibitem{Metivier1976}
Guy M{\'e}tivier.
\newblock Fonction spectrale et valeurs propres d'une classe d'op\'erateurs non
  elliptiques.
\newblock {\em Comm. Partial Differential Equations}, 1(5):467--519, 1976.

\bibitem{Mohamed_CPDE1983}
Abd\'{e}r\'{e}mane Mohamed.
\newblock \'{E}tude spectrale d'op\'{e}rateurs hypoelliptiques \`a
  caract\'{e}ristiques multiples. {II}.
\newblock {\em Comm. Partial Differential Equations}, 8(3):247--316, 1983.

\bibitem{Mo-02}
Richard Montgomery.
\newblock {\em A tour of subriemannian geometries, their geodesics and
  applications}, volume~91 of {\em Mathematical Surveys and Monographs}.
\newblock American Mathematical Society, Providence, RI, 2002.

\bibitem{Mormul_2003}
Piotr Mormul.
\newblock Goursat distributions not strongly nilpotent in dimensions not
  exceeding seven.
\newblock In {\em Nonlinear and adaptive control ({S}heffield, 2001)}, volume
  281 of {\em Lect. Notes Control Inf. Sci.}, pages 249--261. Springer, Berlin,
  2003.

\bibitem{Mormul_2005}
Piotr Mormul.
\newblock Kumpera-{R}uiz algebras are optimal in small dimensions in {G}oursat
  flags.
\newblock {\em Sovrem. Mat. Prilozh.}, (9):120--135, 2003.

\bibitem{NSW-85}
Alexander Nagel, Elias~M. Stein, and Stephen Wainger.
\newblock Balls and metrics defined by vector fields. {I}. {B}asic properties.
\newblock {\em Acta Math.}, 155(1-2):103--147, 1985.

\bibitem{NguyenValette_ASENS2016}
Nhan Nguyen and Guillaume Valette.
\newblock Lipschitz stratifications in o-minimal structures.
\newblock {\em Ann. Sci. \'{E}c. Norm. Sup\'{e}r. (4)}, 49(2):399--421, 2016.

\bibitem{Parusinski_2001}
Adam Parusi\'{n}ski.
\newblock On the preparation theorem for subanalytic functions.
\newblock In {\em New developments in singularity theory ({C}ambridge, 2000)},
  volume~21 of {\em NATO Sci. Ser. II Math. Phys. Chem.}, pages 193--215.
  Kluwer Acad. Publ., Dordrecht, 2001.

\bibitem{Pawlucki_1984}
Wies{\l}aw Paw{\l}ucki.
\newblock Le th\'{e}or\`eme de {P}uiseux pour une application sous-analytique.
\newblock {\em Bull. Polish Acad. Sci. Math.}, 32(9-10):555--560, 1984.

\bibitem{Petersen}
Karl Petersen.
\newblock {\em Ergodic theory}, volume~2 of {\em Cambridge Studies in Advanced
  Mathematics}.
\newblock Cambridge University Press, Cambridge, 1989.
\newblock Corrected reprint of the 1983 original.

\bibitem{Rifford_2014}
Ludovic Rifford.
\newblock {\em Sub-{R}iemannian geometry and optimal transport}.
\newblock Springer Briefs in Mathematics. Springer, Cham, 2014.

\bibitem{RS1976}
Linda~Preiss Rothschild and Elias~M. Stein.
\newblock Hypoelliptic differential operators and nilpotent groups.
\newblock {\em Acta Math.}, 137(3-4):247--320, 1976.

\bibitem{Saloff-Coste_IMRN1992}
Laurent Saloff-Coste.
\newblock A note on {P}oincar\'{e}, {S}obolev, and {H}arnack inequalities.
\newblock {\em Internat. Math. Res. Notices}, (2):27--38, 1992.

\bibitem{Sac-84}
Antonio S{\'a}nchez-Calle.
\newblock Fundamental solutions and geometry of the sum of squares of vector
  fields.
\newblock {\em Invent. Math.}, 78(1):143--160, 1984.

\bibitem{Schwarz}
Gerald~W. Schwarz.
\newblock Smooth functions invariant under the action of a compact {L}ie group.
\newblock {\em Topology}, 14:63--68, 1975.

\bibitem{Shn-74}
Alexander~I. {\v{S}}nirel{\cprime}man.
\newblock Ergodic properties of eigenfunctions.
\newblock {\em Uspehi Mat. Nauk}, 29(6(180)):181--182, 1974.

\bibitem{SrivastavaChoi_book}
Hari~M. Srivastava and Junesang Choi.
\newblock {\em Zeta and {$q$}-{Z}eta functions and associated series and
  integrals}.
\newblock Elsevier, Inc., Amsterdam, 2012.

\bibitem{Strichartz_JDG1986}
Robert~S. Strichartz.
\newblock Sub-{R}iemannian geometry.
\newblock {\em J. Differential Geom.}, 24(2):221--263, 1986.

\bibitem{Tamm_AM1981}
Martin Tamm.
\newblock Subanalytic sets in the calculus of variation.
\newblock {\em Acta Math.}, 146(3-4):167--199, 1981.

\bibitem{Taylor_CPDE2020}
Michael Taylor.
\newblock Microlocal {W}eyl formula on contact manifolds.
\newblock {\em Comm. Partial Differential Equations}, 45(5):392--413, 2020.

\bibitem{Valette_subanalytic}
Guillaume Valette.
\newblock On subanalytic geometry.
\newblock \url{http://www2.im.uj.edu.pl/gkw/sub.pdf}.

\bibitem{vandenDries}
Lou van~den Dries.
\newblock {\em Tame topology and o-minimal structures}, volume 248 of {\em
  London Mathematical Society Lecture Note Series}.
\newblock Cambridge University Press, Cambridge, 1998.

\bibitem{Varopoulos}
Nicolas~Th. Varopoulos.
\newblock Small time {G}aussian estimates of heat diffusion kernels. {II}.
  {T}he theory of large deviations.
\newblock {\em J. Funct. Anal.}, 93(1):1--33, 1990.

\bibitem{Zel-87}
Steven Zelditch.
\newblock Uniform distribution of eigenfunctions on compact hyperbolic
  surfaces.
\newblock {\em Duke Math. J.}, 55(4):919--941, 1987.

\bibitem{Zh-92}
Michail Zhitomirski{\u\i}.
\newblock {\em Typical singularities of differential {$1$}-forms and {P}faffian
  equations}, volume 113 of {\em Translations of Mathematical Monographs}.
\newblock American Mathematical Society, Providence, RI; in cooperation with
  Mir Publishers, Moscow, 1992.
\newblock Translated from the Russian.

\end{thebibliography}
}

\end{document}